\newcommand{\noi}{\noindent}
\newcommand{\wt}{\widetilde}
\newtheorem{theorem}{Theorem}
\newtheorem{proposition}{Proposition}
\newtheorem{lemme}[proposition]{Lemma}
\newtheorem{corollaire}[proposition]{Corollary}
\newtheorem{remarque}[proposition]{Remark}
\numberwithin{equation}{section}
\numberwithin{proposition}{section}
\def\ov{\overline} 
\def\11{{\rm 1~\hspace{-1.4ex}l} }
\def\R{\mathbb R}
\def\C{\mathbb C}
\def\Z{\mathbb Z}
\def\N{\mathbb N}
\def\T{\mathbb T}
\begin{document}
	\title[Gibbs measure dynamics for the fractional NLS ]
	{
Gibbs measure dynamics for the fractional NLS
	}
	\author{Chenmin Sun, Nikolay Tzvetkov}
	\address{
		Universit\'e de Cergy-Pontoise,  Cergy-Pontoise, F-95000,UMR 8088 du CNRS}
	\email{nikolay.tzvetkov@u-cergy.fr}
	\email{chenmin.sun@u-cergy.fr}
	\begin{abstract} 
We construct global solutions on a full measure set with respect to the Gibbs measure for the one dimensional cubic fractional nonlinear Schr\"odinger equation (FNLS) with weak dispersion $(-\partial_x^2)^{\alpha/2}$,  $\alpha<2$ by quite different methods, depending on the value of $\alpha$. We show that if $\alpha>\frac{6}{5}$, the sequence of smooth solutions for FNLS with truncated initial data converges almost surely, and the obtained limit has recurrence properties as the time goes to infinity. The analysis requires to go beyond the available deterministic theory of the equation.  When $1<\alpha\leq \frac{6}{5}$,  we are not able so far to get the recurrence properties but we succeeded to use a method of Bourgain-Bulut to prove the convergence of the solutions of the FNLS equation with regularized both data and nonlinearity.  Finally, if $\frac{7}{8}<\alpha\leq 1$ we can construct global solutions in a much weaker sense by a classical compactness argument. 
\end{abstract}

	\maketitle

\section{Introduction}
\subsection{Motivation}
Invariant Gibbs measures for Hamiltonian PDE's were extensively studied in the last 35 years.
These studies aim to provide macroscopic properties for these PDE's. They have several perspectives. One of them (see the introduction of the seminal paper \cite{F}) is the extension of the recurrence properties of the solutions of Hamiltonian PDE's from integrable to non integrable models. Another (see \cite{B, bourgain, BB1,BB2,BB3,BTT-AIF,BTT-Toulouse,BT-IMRN,dS,Dem,Deng, FOSW, LRS,NORS,Oh1,Oh2,OTh,R,Tz-AIF,Z1,Z2,Z3}) is the construction of low regularity solutions.  As a consequence of the above mentioned works,  when considering the initial value problem of a Hamiltonian PDE for initial data on the support of the Gibbs measure, we now have methods to get weak solutions, to prove uniqueness of weak solutions and to get strong solutions (leading to recurrence properties).  It turns out that all these methods can be naturally applied in the context of the fractional NLS which is the goal of this article. It will be revealed that the strength of the dispersion will crucially influence on the nature of the obtained solutions. Our results leave the picture incomplete, several interesting problems remain to be understood.
\subsection{The fractional nonlinear Schr\"odinger equation}
We are interested in the one dimensional defocusing cubic fractional nonlinear Schr\"odinger equation (FNLS)
\begin{equation}\label{main-NLS}
i\partial_t u+|D_x|^\alpha u+|u|^{2}u=0, \quad (t,x)\in\R\times \T,
\end{equation}
where $u$ is complex-valued and $|D_x|^\alpha =(-\partial_x^2)^{\alpha/2}$ is defined as the Fourier-multiplier 
$\widehat{|D_x|^\alpha f}(n)=|n|^\alpha\widehat{f}(n)$. 
The parameter $\alpha$ measures the strength of the dispersion.
The equation  \eqref{main-NLS} is a Hamiltonian system with conserved energy functional
$$ H(u)=\int_{\T}||D_x|^{\frac{\alpha}{2}}u|^2dx+\frac{1}{2}\int_{\T}|u|^4dx\,.
$$ 
Moreover, the mass
$ M(u)=\int_{\T}|u|^2dx
$ 
is also conserved along the flow of \eqref{main-NLS}. 
The fractional Schr\"odinger equations was introduced in the
theory of the fractional quantum mechanics where the Feynmann path integrals
approach is generalized to $\alpha$-stable L\'evy process \cite{Laskin}. Also, it appears in the water
wave models (see \cite{Ionescu-Pusateri} and references therein). 
Finally, we refer to \cite{KLS} where the  fractional NLS on the line appears as a limit of the discrete NLS with long range interactions. 
\subsection{Construction of the Gibbs measure}
Roughly speaking, our aim in this article is to study how much dispersion $\alpha$ is needed to construct an invariant Gibbs measure for \eqref{main-NLS}. There are two aspects of the analysis. The first is the construction of the Gibbs measure, and the second is the construction of a dynamics on the support of the measure, leading to invariance of the Gibbs measure. In this subsection, we discuss the measure construction.
\\

Let $(g_n)_{n\in\Z}$ be a sequence of independent, standard complex-valued Gaussian random variables on a probability space 
$(\Omega,\mathcal{F},\mathbb{P})$. Let us consider the Gaussian measure $\mu$ on $H^{\frac{\alpha-1}{2}-\epsilon}(\T)$ for any $\epsilon>0$, induced by the map
\begin{equation}\label{induced-Gaussian}
\omega\longmapsto \sum_{n\in\Z} \frac{g_n(\omega)}{[n]^{\frac{\alpha}{2}}}e^{inx},
\end{equation}
where $[n]^{\frac{\alpha}{2}}=(1+|n|^{\alpha})^{\frac{1}{2}}$. Set $E_N=\textrm{span}\{e^{inx}: |n|\leq N
 \}$.
 We denote by 
 $$
 \Pi_N:  H^{\frac{\alpha-1}{2}-\epsilon}(\T) \longrightarrow E_N
 $$ 
 the corresponding projection.
\\

If $\alpha>1$, it is well-known that for $0\leq \sigma<\frac{\alpha-1}{2}$, $\||D|^{\sigma}u\|_{L^{\infty}(\T)}$ is $\mu$-almost surely finite. Then the Gibbs measure $\rho$ associated with \eqref{main-NLS} is 
$$ d\rho(u)=e^{-V(u)}d\mu(u),\quad V(u)=\frac{1}{2}\|u\|_{L^4(\T)}^4.
$$
Formally, the measure $\rho$ can be seen as $Z^{-1}\exp(-H(u)-M(u))du$.
\\

However, if $\alpha\leq 1$, due to the fact that $\|u\|_{L^4(\T)}=\infty$, $\mu$-almost surely, a renormalization is needed, as described for instance in \cite{BTT-Toulouse} for the case $\alpha=1$. More precisely, we set
$$ 
\alpha_N=\mathbb{E}_{\mu}\left[\|\Pi_Nu\|_{L^2(\T)}^2\right]
$$
and
$$
f_N(u)=\frac{1}{2}\|\Pi_Nu\|_{L^4(\T)}^4-2\alpha_N\|\Pi_Nu\|_{L^2}^2+\alpha_N^2.
$$
Further, we define
$$ d\rho_N(u)=\beta_N e^{-f_N(u)}d\mu(u),
$$
where $\beta_N$ is chosen so that $\rho_{N}$ is a probability measure. Denote by
$$ H_N(u)=\||D_x|^{\frac{\alpha}{2}}u\|_{L^2}^2+f_N(u)
$$
the renormalized Hamiltonian functional, and the associated Hamiltonian equation
$$ i\partial_tu=\frac{\delta H_N}{\delta\ov{u}}
$$
reads
\begin{equation*}\label{weakEQ}
i\partial_tu_N+|D_x|^{\alpha}u_N+F_N(u_N)=0,
\end{equation*}
where $F_N$ stands for
$$ F_N(u_N)=\Pi_N(|u_N|^2u_N)-2\alpha_Nu_N.
$$ 
Similarly to \cite{BTT-Toulouse}, we will prove the following statement.
\begin{proposition}\label{convergence-Gibbs}
Assume that $\alpha\in\left(\frac{7}{8},1\right]$ and $1\leq p<\infty$. Then the sequence $(f_N)_{N\geq 1}$ converges in $L^p(d\mu(u))$ to some limit denoted by $f(u)$. Moreover, 
$$ e^{-f(u)}\in L^p(d\mu(u)). 
$$
Therefore, we can define a probability measure $\rho$ by
$$ 
d\rho(u)=C_{\infty}e^{-f(u)}d\mu(u).
$$
\end{proposition}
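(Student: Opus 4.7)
The plan is to adapt the scheme of \cite{BTT-Toulouse}, which treats the borderline case $\alpha=1$, to the range $\alpha\in(\tfrac78,1]$. The key identification is that $f_N$ coincides (in the normalization where $\alpha_N=\mathbb{E}_\mu[|\Pi_Nu(x)|^2]\cdot|\T|$) with the integrated Wick fourth power,
\[
f_N(u)=\frac{1}{2}\int_{\T}:\!|\Pi_Nu(x)|^4\!:\,dx,
\]
where the complex Wick product is $:\!|\Pi_Nu|^4\!:=|\Pi_Nu|^4-4\alpha_N|\Pi_Nu|^2+2\alpha_N^2$. Hence $f_N$ is a mean-zero element of the fourth Wiener chaos generated by $(g_n)_{n\in\Z}$, and the counterterms $-2\alpha_N\|\Pi_Nu\|_{L^2}^2+\alpha_N^2$ precisely cancel the (divergent) zeroth and second chaos components of $\tfrac12\|\Pi_Nu\|_{L^4}^4$.

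I then prove $(f_N)$ is Cauchy in $L^2(d\mu)$. The complex Wick contraction formula yields, for $M\le N$,
\[
\mathbb{E}_\mu\bigl[(f_N-f_M)^2\bigr]=\int_{\T}\bigl(|K_N(z)|^4-|K_M(z)|^4\bigr)\,dz,\quad K_N(z):=\sum_{|n|\le N}\frac{e^{inz}}{[n]^{\alpha}}.
\]
Since $|K_N(z)|\lesssim|z|^{\alpha-1}$ uniformly in $N$ and $\int_{\T}|z|^{-4(1-\alpha)}\,dz<\infty$ whenever $\alpha>3/4$, dominated convergence drives this to $0$ as $M\to\infty$. Nelson's hypercontractivity on the fourth Wiener chaos, $\|G\|_{L^p(d\mu)}\le (p-1)^2\|G\|_{L^2(d\mu)}$, then upgrades this to convergence of $f_N$ to some $f$ in every $L^p(d\mu)$, $p\in[1,\infty)$.

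The main obstacle is the uniform integrability $\sup_N\int e^{-pf_N}\,d\mu<\infty$, from which $e^{-f}\in L^p(d\mu)$ follows by Fatou and the $L^p$ convergence. The subexponential tail $\mu(-f_N>\lambda)\lesssim e^{-c\lambda^{1/2}}$ coming from hypercontractivity alone is too weak to control $\int e^{p\lambda}\,\mu(-f_N>\lambda)\,d\lambda$, so one must exploit the positivity of the leading term $\|\Pi_Nu\|_{L^4}^4$ inside $f_N$. Following the Bourgain-type multi-scale decomposition used in \cite{BTT-Toulouse}, one splits $\Pi_Nu=\Pi_Mu+(\Pi_N-\Pi_M)u$ at a mesoscopic cutoff $M=M(\lambda)$ and reduces $\{-f_N\ge\lambda\}$ to: (i) a deterministic sublevel of $f_M$, controllable via the coercive inequality $\|\Pi_Mu\|_{L^4}^4\gtrsim\|\Pi_Mu\|_{L^2}^4$ to absorb the counterterm $2\alpha_N\|\Pi_Mu\|_{L^2}^2$; and (ii) a low-probability event measuring the $L^4$-concentration of the high-frequency tail $(\Pi_N-\Pi_M)u$, controlled by Gaussian large deviations. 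Balancing the polynomially divergent Wick constant $\alpha_N\sim N^{1-\alpha}$ against the Gaussian tail exponents at each dyadic scale is where the threshold $\alpha>\tfrac78$ enters, and is the technical heart of the argument; once the resulting uniform bound on $\int e^{-pf_N}\,d\mu$ is established, $\rho$ is well-defined and the proposition follows.
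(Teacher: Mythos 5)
Your high-level strategy is the right one and matches the paper in spirit: Wick-renormalized fourth power, $L^2$ Cauchy, hypercontractivity to upgrade to $L^p$, Nelson-type argument for the exponential moment, and the threshold $\alpha>7/8$ entering at the last step. However, there are two substantive gaps.

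\textbf{The $L^2$ step gives no rate.} Your dominated-convergence argument shows $\|f_N-f_M\|_{L^2(d\mu)}\to 0$ qualitatively, but the Nelson argument for $\sup_N\int e^{-pf_N}\,d\mu<\infty$ is not a soft uniform-integrability statement: it requires a \emph{quantitative} rate. The paper splits $f_N=g_N+b_N^2$ with $b_N(u)=\|\Pi_Nu\|_{L^2}^2-\alpha_N$ and $g_N(u)=\tfrac12\|\Pi_Nu\|_{L^4}^4-\|\Pi_Nu\|_{L^2}^4$, proves the explicit decays $\|g_N-g_M\|_{L^p(d\mu)}\lesssim(p-1)^2M^{-(4\alpha-3)/2}$ (Lemma~\ref{fN}) and sharp Gaussian-tail bounds for $b_N-b_M$ (Lemma~\ref{large-deviation}), and combines them into
$\mu\{|f_N-f_M|>a\}\lesssim e^{-ca^{1/2}M^{(4\alpha-3)/4}}$. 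This distributional bound, with $M$ polynomially large, is the engine of the final argument; DCT alone cannot produce it. (Your covariance-kernel computation $\mathbb E_\mu[(f_N-f_M)^2]=\int_\T(|K_N|^4-|K_M|^4)$ would in fact yield a rate if you kept track of the error of $K_N-K_M$ rather than passing to the limit, and the thresholds agree: $\alpha>3/4$ is needed both in your integrability computation and in Lemma~\ref{fN}.)

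\textbf{The exponential-moment step is deferred.} You describe the step you call ``the technical heart of the argument'' but do not carry it out, and the decomposition you sketch is not the one the paper uses. You propose splitting the \emph{field} $\Pi_Nu=\Pi_Mu+(\Pi_N-\Pi_M)u$ and invoking a coercive inequality to absorb the counterterm, in the style of Bourgain's 2D NLS argument. The paper instead splits the \emph{functional}: $-f_N=-f_M+(f_M-f_N)$, then uses the clean deterministic bound $-f_M(u)\le\alpha_M^2\lesssim M^{2(1-\alpha)}$, which follows simply from completing the square, $-f_M(u)=\alpha_M^2-\tfrac12\int_\T(|\Pi_Mu|^2-2\alpha_M)^2\,dx$. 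Choosing $M$ so that $M^{2(1-\alpha)}\sim\theta\log\lambda$ and combining with the distributional bound on $f_N-f_M$ yields $\mu\{-f_N>\log\lambda\}\lesssim\exp\bigl(-c(\log\lambda)^{\frac12+\frac{4\alpha-3}{8(1-\alpha)}}\bigr)$, which is superpolynomial in $\lambda^{-1}$ precisely when $\frac12+\frac{4\alpha-3}{8(1-\alpha)}>1$, i.e.\ $\alpha>7/8$. Your proposed route might also lead to this threshold, but as written you have not verified it, and without the quantitative rate from the first step you cannot close the dyadic balancing at all.

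So: right skeleton, same final constraints, but the two load-bearing estimates (the rate $M^{-(4\alpha-3)/2}$ and the derivation of the $7/8$ threshold) are either absent or merely asserted, and the field-splitting decomposition you propose is a genuinely different route from the paper's simpler functional splitting.
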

The lower bound $\alpha>\frac{7}{8}$ is by no means optimal, here we perform the simplest argument we found providing a framework for weak solutions techniques. Since $\alpha>\frac{1}{2}$ is the threshold for the renormalization of the squre of \eqref{induced-Gaussian}, we expect that the construction of the Gibbs measure can be performed for any $\alpha>\frac{1}{2}$.
\\

Observe that the measures $\mu$ and $\rho$ depend on $\alpha$ but for conciseness we omit the explicit mentioning of this dependence.  
\subsection{Weak solutions}
The measure construction of the previous subsection essentially implies the existence of weak solutions of \eqref{main-NLS} as we explain below.
Consider
\begin{equation}\label{truncated-NLS}
i\partial_tu+|D_x|^\alpha u+\Pi_N(|u|^2u)=0,\quad
u|_{t=0}=\displaystyle{\sum_{|n|\leq N}}\frac{g_n(\omega)}{[n]^{\frac{\alpha}{2}}}e^{inx}\,.
\end{equation}
The projection of the equation \eqref{truncated-NLS}onto $E_N$ is a Hamiltonian ODE  with a conserved energy
$$ H_N(u)=\int_{\T}|\Pi_Nu|^2dx+\frac{1}{2}\int_{\T}|\Pi_Nu|^4dx\,.
$$
Hence for any fixed $N$, \eqref{truncated-NLS} has almost surely a unique global solution $u^{\omega}_N$.
We have the following statement. 
\begin{theorem}\label{thm1}
Assume that $\alpha>1$ and $\sigma<\frac{\alpha-1}{2}$. 
There is a subsequence $(N_k)_{k\in\N}$, $N_k\rightarrow \infty$ of $(1,2,3,\cdots)$
and a sequence of $C(\R;H^{\sigma}(\T))$ valued random variables $(\widetilde{u}_{N_k})_{k\in\N}$ with the same law as $(u_{N_k}^{\omega})_{k\in\N}$ 
such that  $(\widetilde{u}_{N_k})_{k\in\N}$  converges a.s. in $C(\R;H^{\sigma}(\T))$ to some limit $u$ which solves \eqref{main-NLS} in the distributional sense. 
Moreover, $\rho$ is invariant under the map $u(0)\mapsto u(t)$, $t\in\R$.  
\end{theorem}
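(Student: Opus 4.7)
The plan is to implement the Albeverio--Cruzeiro / Burq--Tzvetkov compactness scheme, the additional key ingredient being the almost sure $L^\infty$ bound on the Gaussian series \eqref{induced-Gaussian} available because $\alpha > 1$. Since the initial data in \eqref{truncated-NLS} lies in $E_N$ and $\Pi_N$ projects onto $E_N$, the solution $u_N^\omega$ remains in $E_N$ for all time, and the equation reduces to a finite-dimensional Hamiltonian ODE. By Liouville's theorem, this ODE preserves both $H_N$ and the finite-dimensional Gibbs measure $d\rho_N^\sharp = Z_N^{-1}\exp(-\tfrac{1}{2}\|v\|_{L^4}^4)\,d\mu_N$ on $E_N$, where $\mu_N = (\Pi_N)_*\mu$. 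Combining this invariance with Gaussian tail bounds for $\|\phi^\omega\|_{H^{\sigma'}}$ and $\|\phi^\omega\|_{L^\infty}$ (the latter being where $\alpha > 1$ is crucial), I would obtain, for every $T > 0$, $p < \infty$ and $\sigma' \in (\sigma, (\alpha-1)/2)$, the uniform bound
\[
\sup_{N} \E\Bigl[\sup_{|t|\leq T}\|u_N^\omega(t)\|_{H^{\sigma'}}^p + \sup_{|t|\leq T}\|u_N^\omega(t)\|_{L^\infty}^p\Bigr] < \infty,
\]
with the upgrade from fixed-time to $\sup_t$-bounds relying on an equation-based Lipschitz-in-$t$ control in $H^{\sigma-\alpha}$ combined with a Chebyshev/union-bound argument over rational times.

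The compact embedding $H^{\sigma'} \hookrightarrow H^\sigma$ together with the $H^{\sigma-\alpha}$-equicontinuity in $t$ then yields tightness of the laws of $(u_N^\omega)$ on $C([-T,T]; H^\sigma)$ via a standard Arzel\`a--Ascoli type criterion. Prokhorov's theorem extracts a weakly convergent subsequence along $N_k \to \infty$; a diagonal argument in $T$ promotes this to convergence on the Polish space $C(\R; H^\sigma)$, and Skorokhod's representation theorem delivers $\widetilde{u}_{N_k}$, with the same law as $u_{N_k}^\omega$, converging almost surely in $C(\R; H^\sigma)$ to a limit $u$. To pass to the limit in the equation, the linear terms converge distributionally directly from the $C(\R; H^\sigma)$ convergence; for the nonlinearity $\Pi_{N_k}(|\widetilde{u}_{N_k}|^2 \widetilde{u}_{N_k})$ I would interpolate between this $H^\sigma$-convergence and the uniform $L^\infty$ control inherited from the first step to upgrade convergence to $C([-T,T]; L^q)$ for every $q<\infty$, which suffices to identify the limit of the nonlinearity as $|u|^2 u$ in $\mathcal{D}'(\R\times\T)$.

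For invariance of $\rho$, I would rerun the preceding steps after replacing the Gaussian initial data $(\Pi_N)_*\mu$ by $\rho_N^\sharp$ itself; the Radon--Nikodym derivative $e^{-\tfrac{1}{2}\|v\|_{L^4}^4}$ is bounded in every $L^p(d\mu)$, so the a priori bounds transfer without loss. By invariance of $\rho_N^\sharp$, the new random variables $u_N^\omega(t)$ are then $\rho_N^\sharp$-distributed for every $t \in \R$, and the weak convergence $\rho_N^\sharp \to \rho$ (a routine dominated-convergence statement using the $\mu$-a.s.~finiteness of $\|u\|_{L^4}$ for $\alpha>1$) combined with the almost sure Skorokhod convergence forces $\mathcal{L}(u(t)) = \rho$ for every $t$, which is the claimed statistical invariance. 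The main obstacle lies in the nonlinear passage to the limit: because $\sigma < (\alpha-1)/2$ can be taken arbitrarily small as $\alpha \downarrow 1$, no direct product Sobolev estimate on $|u|^2 u$ is available, so the whole scheme depends essentially on the almost-sure $L^\infty$ bound on the Gaussian free field, valid precisely when $\alpha > 1$, propagated in time through the invariance of $\rho_N^\sharp$.
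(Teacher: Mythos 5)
Your proposal implements the Albeverio--Cruzeiro / Burq--Thomann--Tzvetkov probabilistic compactness scheme, which is precisely what the paper itself invokes: it gives no independent proof of Theorem~\ref{thm1} but defers to the argument of \cite{BTT-Toulouse} ("the proof of Theorem~\ref{thm2} (the same for Theorem~\ref{thm1}) follows from the same probabilistic compactness argument..."). Your route — finite-dimensional Liouville invariance of $\rho_N^\sharp$, transfer to uniform-in-$N$ moment bounds using Gaussian tails and the $\mu$-a.s.\ $L^\infty$ bound available for $\alpha>1$, tightness via compact embedding plus equation-based time regularity, Prokhorov and Skorokhod, and then $L^\infty$-interpolation to pass to the limit in the cubic nonlinearity and to deduce $\mathcal{L}(u(t))=\rho$ — is exactly that scheme.

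One technical imprecision worth flagging: the a~priori bound
$\sup_N\E\big[\sup_{|t|\le T}\|u_N^\omega(t)\|_{H^{\sigma'}}^p\big]<\infty$
does not drop out of fixed-time invariance plus a "union bound over rational times." A union bound over a dense set diverges, and a finite grid of spacing $\delta$ only yields a sup bound in the \emph{weaker} norm $H^{\sigma'-\alpha}$ (because the Lipschitz control is only in that space), with a constant that is not uniform as $\delta\to 0$. The standard and correct mechanism is to derive instead a uniform bound on $\E\big[\|u_N\|^p_{L^q([-T,T];H^{\sigma'})}\big]$ (this follows directly from invariance and Minkowski, with a factor $T^{p/q}$), pair it with the equation-derived H\"older/Lipschitz control of $t\mapsto u_N(t)$ in $H^{\sigma'-\alpha}$, and invoke an Aubin--Lions/Simon type compactness criterion to conclude tightness in $C([-T,T];H^\sigma)$. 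This replaces your sup-bound step but leaves the rest of the architecture intact; in particular the nonlinear passage to the limit still rests, as you say, on the $L^\infty$ control inherited from $\alpha>1$ and propagated by invariance.
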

For $\alpha\leq 1$ we get convergence only after a renormalisation. Here is the precise statement.
\begin{theorem}\label{thm2}
Assume that $\alpha\in\left(\frac{7}{8},1 \right]$ and $\sigma<\frac{\alpha-1}{2}$.  Then there is a divergente sequence of real numbers $(c_N)_{N\in\N}$,
there is a subsequence $(N_k)_{k\in\N}$, $N_k\rightarrow \infty$ of $(1,2,3,\cdots)$ and a sequence of $C(\R;H^{\sigma}(\T))$ valued random variables  $(\widetilde{u}_{N_k})_{k\in\N}$ with the same law as $(u_{N_k}^{\omega})_{k\in\N}$, such that the sequence  $(e^{itc_{N_k}} \widetilde{u}_{N_k})_{k\in\N}$  converges a.s. in $C(\R;H^{\sigma}(\T))$ to some limit $u$. Moreover, $\rho$ defined by Proposition~\ref{convergence-Gibbs} 
is invariant under the map $u(0)\mapsto u(t)$, $t\in\R$.  
\end{theorem}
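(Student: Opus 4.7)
The plan is a Bourgain--Zhidkov compactness scheme, made possible by the invariance of the Gibbs measure $\rho_N$ for every finite $N$; the renormalization forces an explicit gauge in the final statement. First, I rewrite \eqref{truncated-NLS} via the gauge $w_N(t,x):=e^{-2i\alpha_N t}u_N(t,x)$. A direct computation shows that $w_N$ satisfies the Wick-ordered truncated equation
\[
i\partial_t w_N+|D_x|^\alpha w_N+\Pi_N(|w_N|^2 w_N)-2\alpha_N w_N=0,
\]
with the same random data as $u_N$. This is the Hamiltonian flow associated with the renormalized $H_N$ of the excerpt; since $M$, $H_N$ and the Gaussian measure $\mu$ are all preserved, the measure $\rho_N$ is invariant under $t\mapsto w_N(t)$. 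Because $\alpha_N\to\infty$ (logarithmically when $\alpha=1$, polynomially when $\alpha<1$), the choice $c_N:=-2\alpha_N$ will produce the divergent sequence claimed: once convergence of a subsequence $(\widetilde w_{N_k})$ is established, setting $\widetilde u_{N_k}(t):=e^{2i\alpha_{N_k}t}\widetilde w_{N_k}(t)$ yields a version of $u_{N_k}^\omega$ for which $e^{itc_{N_k}}\widetilde u_{N_k}=\widetilde w_{N_k}$ is convergent.

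Proposition~\ref{convergence-Gibbs} provides $\sup_N\|e^{-f_N}\|_{L^p(d\mu)}<\infty$ for every $p<\infty$; coupled with the $\rho_N$-invariance of the law of $w_N(t)$, Hölder's inequality, and the finiteness of all $\mu$-moments of $\|u\|_{H^\sigma}$ for $\sigma<\tfrac{\alpha-1}{2}$, this yields $\sup_{N,t}\mathbb{E}[\|w_N(t)\|_{H^\sigma}^p]<\infty$. For tightness I also need time regularity. Writing the equation in Duhamel form, the linear contribution $|D_x|^\alpha w_N$ is controlled in $H^{\sigma-\alpha}$ by the previous bound, so the remaining task is to prove, uniformly in $N$ and $t$,
\[
\mathbb{E}\bigl[\|\Pi_N(|w_N|^2 w_N)-2\alpha_N w_N\|_{H^{\sigma'}}^p\bigr]\le C_p
\]
for some $\sigma'<0$. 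Again by $\rho_N$-invariance and absolute continuity of $\rho_N$ with respect to $\mu$, this reduces to an $L^p(d\mu;H^{\sigma'}(\mathbb T))$ bound for the Wick-ordered cube $\Pi_N(|\Pi_N u|^2\Pi_N u)-2\alpha_N\Pi_N u$: a third Wiener-chaos computation analogous to (but heavier than) the one behind Proposition~\ref{convergence-Gibbs}, which just closes when $\alpha>\tfrac{7}{8}$. Combined with Kolmogorov's continuity criterion this upgrades to a uniform bound on $\|w_N\|_{C^\theta([-T,T];H^{\sigma'})}$ in every $L^p$, for some $\theta>0$.

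The preceding moment bounds together with the compact embedding $L^\infty([-T,T];H^\sigma)\cap C^\theta([-T,T];H^{\sigma'})\hookrightarrow C([-T,T];H^{\sigma_0})$ for $\sigma'<\sigma_0<\sigma$ give tightness of the laws of $(w_N)$ on the Polish space $C([-T,T];H^{\sigma_0})$; a Cantor diagonal over $T\in\mathbb{N}$ lifts this to $C(\mathbb{R};H^{\sigma_0})$. Prokhorov and Skorokhod then produce a subsequence $(N_k)$ and random variables $\widetilde{w}_{N_k}$ on a new probability space, with the same laws as $w_{N_k}$, converging a.s.\ to a limit $w$; passing through the gauge of the first paragraph delivers the claimed $\widetilde{u}_{N_k}$. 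Invariance of $\rho$ under $u(0)\mapsto u(t)$ follows from the weak convergence $\rho_{N_k}\to\rho$ (Proposition~\ref{convergence-Gibbs}) applied to the joint law of $(w_{N_k}(0),w_{N_k}(t))$: every marginal $\mathrm{Law}(w_{N_k}(t))$ equals $\rho_{N_k}$, so $\mathrm{Law}(w(t))=\rho$ for all $t\in\mathbb{R}$, and the same reasoning on the pair identifies the transition as preserving $\rho$. The hard part will be the Wick-cube chaos estimate of the second paragraph: the divergence is borderline, and it is exactly this bound that restricts the argument to $\alpha>\tfrac{7}{8}$, matching the regime in which Proposition~\ref{convergence-Gibbs} is available.
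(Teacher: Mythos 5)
Your proposal reconstructs the probabilistic compactness scheme that the paper defers to \cite{BTT-Toulouse}: the gauge $w_N=e^{-2i\alpha_N t}u_N$ with $c_N=-2\alpha_N$ turning the truncated flow into the Wick-ordered Hamiltonian flow preserving $\rho_N$, uniform moment bounds from that invariance, a Wiener-chaos bound on the Wick cube to get time regularity, and Prokhorov--Skorokhod to extract an almost surely convergent version whose marginal law is $\rho$. That is the same route as the paper's (citation-only) proof, and the main ingredients and their order are correct.

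One attribution is off, though. You assign the $\alpha>\frac{7}{8}$ threshold to the Wick-cube chaos estimate of the second paragraph. In fact the paper's Proposition~\ref{Cauchysequence} already supplies exactly that estimate -- $F_N$ is Cauchy, hence uniformly bounded, in $L^p(d\mu;H^{-\sigma''})$ for every $\sigma''>\frac{3(1-\alpha)}{2}$ -- under the much weaker hypothesis $\alpha>\frac{2}{3}$, and Lemma~\ref{fN} needs only $\alpha>\frac{3}{4}$. What actually forces $\alpha>\frac{7}{8}$ is Nelson's integrability argument for $e^{-f_N}\in L^p(d\mu)$, uniformly in $N$, in the proof of Proposition~\ref{convergence-Gibbs}; and that is precisely the input you invoke for the $H^\sigma$ moment bound at the \emph{start} of your second paragraph, so the restriction enters there, not in the cubic chaos computation. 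A related point worth making explicit: the moment bounds and the ensuing tightness must be taken with respect to $\rho_N$-distributed data, not $\mu$; the Wick flow does not preserve $\mu_N$, and $e^{f_N}$ is not $\mu$-integrable uniformly in $N$, so there is no Hölder passage from $\mathbb{E}_{\rho_N}$ back to $\mathbb{E}_{\mu}$ at positive times. Your appeal to $\rho_N$-invariance implicitly presupposes this, so the argument is sound, but stating it removes an ambiguity in the meaning of $\mathbb{E}[\,\cdot\,]$ in that step.
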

\subsection{Uniqueness of the weak solutions}
In the case  $\alpha>1$ we can strongly improve Theorem~\ref{thm1} by showing that almost surely, the whole sequence $(u_N)_{N\in\N}$ of solutions to \eqref{truncated-NLS} converges\footnote{In an appendix we shall extend Theorem~\ref{thm3} to higher dimensions.} (without changing it). 
\begin{theorem}\label{thm3}
Assume that $\alpha>1$ and $\sigma<\frac{\alpha-1}{2}$.  The sequence  $(u^{\omega}_{N})_{N\in\N}$ 
of solutions of \eqref{truncated-NLS} converges a.s. in $C(\R;H^{\sigma}(\T))$  to some limit $u$ which solves \eqref{main-NLS} in the distributional sense. 
Moreover, $\rho$ is invariant under the map $u(0)\mapsto u(t)$, $t\in\R$.  
\end{theorem}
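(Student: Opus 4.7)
The plan is to combine a randomized local well-posedness analysis with Bourgain's invariant measure scheme. Decompose $u_N^\omega(t)=z_N(t)+v_N(t)$, where $z_N(t)=S(t)\Pi_N\phi^\omega$ is the truncated free evolution (which converges a.s.\ in $C(\R;H^\sigma(\T))$ to $z(t)=S(t)\phi^\omega$), and $v_N$ solves the Duhamel equation
\begin{equation*}
v_N(t)=-i\int_0^t S(t-s)\,\Pi_N\bigl(|z_N+v_N|^2(z_N+v_N)\bigr)(s)\,ds,\qquad v_N(0)=0.
\end{equation*}
The goal becomes showing that $v_N$ converges a.s.\ to some $v$ in $C(\R;H^s(\T))$ at a regularity $s$ strictly greater than $\sigma$, so that $u_N^\omega\to z+v=:u$ in $C(\R;H^\sigma)$.

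For the local theory I would work in a Bourgain-type space $X^{s,b}_\alpha$ adapted to $i\partial_t+|D_x|^\alpha$ and close a contraction for $v_N$ on a random time interval $[-T_\omega,T_\omega]$, uniformly in $N$, together with Cauchy estimates for $v_N-v_M$ driven by $z_N-z_M\to 0$. The trilinear bound on $\Pi_N\bigl(|z_N+v_N|^2(z_N+v_N)\bigr)$ splits into purely random, purely deterministic, and mixed contributions; for $\alpha>\tfrac{6}{5}$ the $L^4$ Strichartz estimate combined with a probabilistic gain on the random pieces closes the fixed point, while for $1<\alpha\leq\tfrac{6}{5}$ one is below the threshold where the deterministic trilinear estimate holds and the gain must come from the outer projection $\Pi_N$ via the Bourgain--Bulut scheme developed elsewhere in the paper. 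This local step is the \emph{main obstacle}: one must exploit simultaneously the randomness of $z_N$ and the frequency cap imposed by $\Pi_N$ to produce estimates strong enough to yield Cauchy differences in $N$, not merely uniform bounds.

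Globalisation proceeds via Bourgain's invariant measure argument. Restricted to the invariant finite-dimensional subspace $E_N$, equation \eqref{truncated-NLS} is a Hamiltonian ODE preserving the Gibbs measure $d\rho_N(u)=\beta_N\exp\bigl(-\tfrac12\|\Pi_Nu\|_{L^4}^4\bigr)d\mu(u)$. For given $T>0$ and $\epsilon>0$, choose $R$ large so that the local theory applies on each subinterval of length $T_0(R)$ for data of norm $\leq R$ in the resolution space, then iterate over $\lceil T/T_0(R)\rceil$ steps and use invariance of $\rho_N$ at each step to obtain a good set $\Sigma_{N,R,T}$ of $\rho_N$-measure $\geq 1-\epsilon$. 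Since for $\alpha>1$ the densities converge in $L^p(d\mu)$ for every $p<\infty$ to $\exp(-\tfrac12\|u\|_{L^4}^4)$ (by dominated convergence and a.s.\ finiteness of $\|\phi^\omega\|_{L^4}$), this statement passes to the limit; together with the local Cauchy estimate and Borel--Cantelli, it upgrades to a.s.\ convergence of the full sequence $u_N^\omega$ in $C([-T,T];H^\sigma)$ for every $T>0$. The $C_tH^\sigma$ convergence combined with Sobolev embedding (valid since $\sigma$ can be picked positive for $\alpha>1$) yields $u_N\to u$ in $L^4_{t,x}$, which suffices to pass to the distributional limit in \eqref{truncated-NLS} and conclude that $u$ solves \eqref{main-NLS}.

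Invariance of $\rho$ under $u(0)\mapsto u(t)$ is then a standard final step: for any $F\in C_b(H^\sigma)$ and $t\in\R$,
\begin{equation*}
\int F(u(t))\,d\rho=\lim_{N\to\infty}\int F(u_N^\omega(t))\,d\rho_N=\lim_{N\to\infty}\int F(u_N^\omega(0))\,d\rho_N=\int F(u(0))\,d\rho,
\end{equation*}
where the outer equalities use a.s.\ convergence together with uniform integrability of the densities, and the middle one is invariance of $\rho_N$.
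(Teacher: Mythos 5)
Your proposal does not match the paper's method for Theorem~\ref{thm3}, and it contains a genuine gap in the local step, which you yourself flag as the ``main obstacle'' but do not resolve. The paper's Section~7 argument \emph{is} the Bourgain--Bulut method, and it does not use the decomposition $u_N=z_N+v_N$, Bourgain $X^{s,b}$ spaces, or any contraction mapping. The starting point is that the truncated Gibbs measure $\rho_N$ is invariant under $\Phi_N(t)$; a pointwise-in-$x$ version of this invariance (Lemma~\ref{pointwiseinvariance}) yields unconditional probabilistic a priori bounds on the full truncated solution in $L^q_t W^{\sigma_1,r}_x$ for any $\sigma_1<\frac{\alpha-1}{2}$ and $r<\infty$ (Lemma~\ref{apriori-prob}, Corollary~\ref{apriori-prob-cor}). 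These a priori bounds are then fed into a purely deterministic quasilinear energy estimate comparing $v_{N_1}=\Pi_{N_1}\Phi_{N_1}(t)\phi$ and $v_N=\Pi_N\Phi_N(t)\phi$, in which the tail term $\Pi_N^\perp\Pi_{N_1}F(v_{N_1})$ produces the $N^{-(\sigma_1-\sigma)}$ gain that drives the Cauchy estimate, and the difference of nonlinearities is absorbed by taking the local time step $\eta$ inversely proportional to a power of $\log N$. No fixed point is set up: the $v_N$ are finite-dimensional ODE solutions that already exist globally, and one performs an a posteriori Gronwall-type Cauchy estimate using the Besov product rules of Lemmas~\ref{nonlinear1}--\ref{nonlinear2}.

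Your plan---closing a contraction in an $X^{s,b}$ space with a random-plus-smooth ansatz---is essentially the strategy of Theorems~\ref{thm5} and~\ref{thm6}, which for that reason require $\alpha>\frac{6}{5}$. Theorem~\ref{thm3} is claimed for all $\alpha>1$, and in the range $1<\alpha\le\frac{6}{5}$ the needed trilinear $X^{s,b}$ estimates (deterministic and probabilistic) are not available. The clause ``the gain must come from the outer projection $\Pi_N$ via the Bourgain--Bulut scheme developed elsewhere in the paper'' is circular: the Bourgain--Bulut scheme \emph{is} the proof of Theorem~\ref{thm3}, and its purpose is to sidestep local well-posedness, not to supply it inside a contraction framework. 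To repair your argument you would need to abandon the fixed point and replace it with the measure-invariance a priori bounds plus the $\Pi_N^\perp$ gain in an energy estimate, which is exactly what Section~7 does. A secondary slip: $H^\sigma(\T)\hookrightarrow L^4(\T)$ requires $\sigma\ge\frac14$, hence $\alpha>\frac32$, so for $1<\alpha\le\frac32$ Sobolev embedding alone does not yield the $L^4_{t,x}$ convergence you invoke to pass to the distributional limit; the paper relies instead on the $L^q_t W^{\sigma,r}_x$ bounds furnished by measure invariance.
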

The proof of Theorem~\ref{thm3} uses a method introduced by  Bourgain-Bulut in \cite{BB1,BB2,BB3}.
We also mention that similar arguments were used by N.~Burq and the second author in the context of the probabilistic continuous dependence with respect to the initial data for the nonlinear wave equation with data of super-critical regularity (see \cite{BT-JEMS}).
\\

We point out that in Theorems~\ref{thm1},~\ref{thm2},~\ref{thm3} we do not show that the obtained limit satisfy the flow property which prevents us to apply the Poincar\'e recurrence theorem.
\subsection{Strong  solutions}
In this article we call strong solutions these solutions which are the \emph{ unique } limits of smooth solutions of \eqref{main-NLS}, satisfying the flow property.
For that purpose we need to define the global flow of \eqref{main-NLS} for smooth data. 
The following theorem of J.~Thirouin assures the global well-posedness of \eqref{main-NLS} for smooth data.
\begin{theorem}[\cite{Thir}]\label{thm-Thir}
Assume that $\alpha>\frac{2}{3}$. Then for every $u_0\in C^{\infty}(\T)$ there is a unique solution $u \in C(\R;C^{\infty}(\T))$ of
$$
i\partial_tu+|D_x|^{\alpha}u+|u|^2u=0,\quad u|_{t=0}=u_0.
$$
Moreover, the flow map has a unique extension to the energy space $H^{\frac{\alpha}{2}}(\T)$.
\end{theorem}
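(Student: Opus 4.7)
The plan is to follow the by-now standard three-step scheme: establish local well-posedness in the energy space $H^{\alpha/2}(\T)$, upgrade it to global existence via the two conservation laws, and then propagate higher Sobolev regularity so that $C^\infty$ data yield $C^\infty$ solutions. The threshold $\alpha>\frac23$ should emerge solely from the local theory; the globalization and the regularity propagation are then (essentially) cheap.

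For the local step, I would work in Bourgain's restriction spaces $X^{s,b}$ adapted to the fractional symbol $|n|^\alpha$, namely with weight $\langle \tau+|n|^\alpha\rangle^b$. The contraction is reduced to a trilinear estimate of the form
$$
\bigl\| \,|u|^2 u \,\bigr\|_{X^{\alpha/2,b-1}} \lesssim \|u\|_{X^{\alpha/2,b}}^3
$$
for some $b>\tfrac12$. After the standard duality and Littlewood–Paley reductions, this is equivalent to a weighted $\ell^2$ estimate on the resonance function
$$
\Phi(n,n_1,n_2,n_3)=|n|^\alpha-|n_1|^\alpha+|n_2|^\alpha-|n_3|^\alpha,\qquad n=n_1-n_2+n_3,
$$
and boils down to a lattice-point count controlling how often $\Phi$ is small on a dyadic shell. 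The extraction of the diagonal term $2\|u\|_{L^2}^2 u$ (non-resonant vs.\ resonant splitting, i.e.\ subtracting the mean of $|u|^2$) gains extra smoothing and is what allows the argument to descend below $\alpha=1$. The threshold $\alpha>\tfrac23$ then drops out of the resulting divisor estimate; this counting step is the main technical obstacle.

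Once the local well-posedness in $H^{\alpha/2}$ is in hand with an existence time depending only on $\|u_0\|_{H^{\alpha/2}}$, globalization is immediate: the mass $M(u)$ and the energy $H(u)$ are conserved by the smooth truncated flow and pass to the limit, so in the defocusing case they furnish a uniform a priori bound
$$
\|u(t)\|_{H^{\alpha/2}}^2 \leq H(u_0)+M(u_0),
$$
which is iterated to any time interval. The unique extension of the flow map to the whole energy space $H^{\alpha/2}(\T)$ then follows from the continuous dependence built into the fixed-point argument, together with the fact that the a priori bound only involves the energy norm.

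Finally, to obtain a $C^\infty$ solution from $C^\infty$ data, I would run a persistence-of-regularity bootstrap: differentiating the equation, commuting $|D_x|^s$ past the nonlinearity, and using fractional Leibniz estimates, one controls $\|u(t)\|_{H^s}$ on the local interval by a Gronwall-type inequality whose coefficient depends only on the energy norm. Iterating with the global energy bound propagates the $H^s$ norm for every $s$, giving $u\in C(\R;C^\infty(\T))$. Uniqueness in $C(\R;C^\infty(\T))$ is a direct consequence of uniqueness in $H^{\alpha/2}$. The delicate point, as indicated above, is locating exactly where the dispersion strength $\alpha>\tfrac23$ is needed; everything else is fairly mechanical once that trilinear estimate is secured.
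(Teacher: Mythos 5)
This statement is not proved in the paper; it is imported verbatim from Thirouin~\cite{Thir}, so there is no in-paper proof to compare against. I will therefore assess your sketch on its own terms.

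Your overall scheme (local well-posedness via $X^{s,b}$, globalization by the two conserved quantities, persistence of regularity) is the right one, and your identification of the threshold is essentially correct: the counting argument yields local well-posedness at regularities $s\geq \frac{2-\alpha}{4}$, and the energy space $H^{\alpha/2}$ sits at or above this threshold precisely when $\alpha/2\geq\frac{2-\alpha}{4}$, i.e.\ $\alpha\geq\frac{2}{3}$. So $\alpha>\frac{2}{3}$ comes from matching the energy regularity to the local threshold, not directly from the divisor estimate as you phrase it, but that is a minor imprecision.

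There is, however, a genuine gap in your persistence-of-regularity step in exactly the regime the theorem is interesting, namely $\frac{2}{3}<\alpha\leq 1$. You propose to control $\|u(t)\|_{H^s}$ by a Gronwall inequality obtained from fractional Leibniz, ``whose coefficient depends only on the energy norm.'' The tame Leibniz/Kato--Ponce estimate applied to $|u|^2u$ produces a factor of $\|u\|_{L^\infty}^2$ in the Gronwall coefficient, and $H^{\alpha/2}(\T)\hookrightarrow L^\infty(\T)$ fails once $\alpha\leq 1$ (the embedding needs $\alpha/2>1/2$). Energy conservation therefore does not close this differential inequality for $\alpha\leq 1$: the coefficient cannot be bounded by the energy norm alone. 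The correct repair is to carry out persistence of regularity inside the $X^{s,b}$ machinery itself, exploiting the tame form of the trilinear estimate (in which the $\langle n\rangle^s$ weight is placed on one factor only, the other two being at the base regularity $s_0=\frac{2-\alpha}{4}$). This gives a local existence time at regularity $s$ that depends only on $\|u_0\|_{H^{\alpha/2}}$, which, combined with the conserved energy, yields global bounds in every $H^s$ and hence a solution in $C(\R;C^\infty(\T))$. Relatedly, you should verify that the bilinear Strichartz/counting lemma (the analogue of Proposition~\ref{counting-1} in this paper, which is only stated there for $1<\alpha<2$) in fact extends to $\alpha\in(\frac{2}{3},1]$, where the phase $|\xi|^\alpha$ changes from convex to concave; this is contained in \cite{Cho} but is not automatic from the $\alpha>1$ case.
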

In view of Theorem~\ref{thm-Thir}  and the remarkable recent work by F.~Flandoli on the Euler equation \cite{Fl} one may ask whether it is possible to construct weak solutions for $\alpha\in(\frac{7}{8},1]$ by using the smooth solutions of Theorem~\ref{thm-Thir} as an approximation sequence (compare with Theorem~\ref{thm1} and Theorem~\ref{thm2}).
\\

It tuns out that if the dispersion is slightly stronger than $\alpha>1$, we have the following convergence result.
\begin{theorem}\label{thm5}
Assume that $\alpha>\frac{6}{5}$ and $\sigma<\frac{\alpha-1}{2}$.
Then the sequence of smooth solutions $(u_N)_{N\in\N}$ of
\begin{equation*}\label{FNLS-sequence}
i\partial_tu_N+|D_x|^\alpha u_N+|u_N|^2u_N=0,\quad
u|_{t=0}=\displaystyle{\sum_{|n|\leq N}\frac{g_n(\omega)e^{inx}}{[n]^{\frac{\alpha}{2}}}},
\end{equation*}
defined by Theorem~\ref{thm-Thir} converges almost surely in $C(\R;H^{\sigma}(\T))$ to a limit which solves 
\eqref{main-NLS} in the distributional sense. 
\end{theorem}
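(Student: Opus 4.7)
The plan is to use the standard Da Prato--Debussche / Bourgain decomposition $u_N = z_N + w_N$, where $z_N(t) := e^{-it|D_x|^\alpha}\Pi_N u_0^\omega$ is the truncated free random evolution. The remainder $w_N$ solves
\begin{equation*}
i\partial_t w_N + |D_x|^\alpha w_N + \mathcal{N}(z_N, w_N) = 0, \qquad w_N|_{t=0}=0,
\end{equation*}
with $\mathcal{N}(z_N,w_N) = |z_N+w_N|^2(z_N+w_N)$ expanding into the purely stochastic cube $|z_N|^2 z_N$ plus five mixed multilinear terms, each containing at least one copy of $w_N$. Since $z_N \to z$ a.s. in $C(\R;H^\sigma(\T))$ for $\sigma<\frac{\alpha-1}{2}$ by the usual Gaussian series estimate, the theorem reduces to proving that $(w_N)_{N\in\N}$ is a.s. Cauchy in $C(\R;H^\sigma)$, or better, in a higher-regularity norm.

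The core analytic step is the local Cauchy theory for $w_N$ at the energy level $H^{\alpha/2}$. I would set this up in a Bourgain-type space $X^{\alpha/2, b}_T$ adapted to the symbol $|\xi|^\alpha$, and prove probabilistic multilinear estimates of the form
\begin{equation*}
\bigl\||z_N|^2 z_N\bigr\|_{X^{\alpha/2,b-1}_T} \leq T^\delta\, R(\omega), \qquad R(\omega)<\infty \text{ a.s.},
\end{equation*}
together with matching Cauchy versions (replacing one or more copies of $z_N$ by $z_N-z_M$) and the deterministic trilinear estimates needed for the mixed terms involving $w_N$. After dyadic decomposition these reduce to counting triples $(n_1,n_2,n_3)$ living close to the resonance hyperplane $\tau\pm[n_1]^\alpha\pm[n_2]^\alpha\pm[n_3]^\alpha=0$, together with a Wiener chaos / large-deviation estimate that exploits the independence of the phases $g_n(\omega)$. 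A contraction in $X^{\alpha/2,b}_T$ then gives local convergence of $w_N$ in $C_{T_0} H^{\alpha/2}$ on a random time interval $T_0 = T_0(\omega)>0$.

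To globalize, I would run a Bourgain-style invariant-measure iteration, not on the dynamics of $u_N$ itself, but on the auxiliary projected-nonlinearity flow $\wt u_N$ of Theorem~\ref{thm3}, whose invariant measure $\rho_N$ provides a.s. polynomial bounds on $\|\wt u_N(t)\|_{H^\sigma}$ on any time interval. Writing $r_N = u_N - \wt u_N$ and using the Cauchy theory above with $r_N$ in place of the difference $w_N-w_M$, I would propagate the short-time estimate on time intervals of fixed length by combining the convergence of $\wt u_N$ (Theorem~\ref{thm3}), the convergence of $z_N$, and a Borel--Cantelli argument that controls the exceptional set at each step by large deviation bounds on Gaussian functionals of $\omega$. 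Iterating to any prescribed time $T$ and letting $T\to\infty$ yields the a.s. convergence in $C(\R;H^\sigma)$, and passing to the limit in the equation identifies the limit as a distributional solution of \eqref{main-NLS}.

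The main obstacle is the cubic probabilistic estimate in the \emph{absence} of a projection on the nonlinearity. In the setting of Theorem~\ref{thm3} the cutoff $\Pi_N$ in front of $|u|^2u$ suppresses the problematic high-frequency output of the product, and $\alpha>1$ suffices; here the unprojected product $|z_N|^2 z_N$ can generate frequencies of arbitrary size through low-low-high resonances, and the Bourgain-space estimate at the energy level only closes when the dispersive gain $|D_x|^\alpha$ is strong enough, which is precisely the threshold $\alpha>\frac{6}{5}$. Getting this counting right, uniformly in $N$ and with a power $T^\delta$ of the time, is the technical heart of the argument.
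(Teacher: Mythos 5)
Your decomposition $u_N = z_N + w_N$ is the right starting point, but the regularity at which you propose to place $w_N$ is not achievable, and this is fatal to the local theory. You want to close a contraction for $w_N$ in $X^{\alpha/2,b}$, i.e.\ at the energy level $H^{\alpha/2}$. But for $1<\alpha<2$ one has $\alpha-1 < \alpha/2$, and the probabilistic trilinear estimate for the stochastic cube (Lemma~\ref{eq:z3} and Remark~\ref{fut}) only holds for $s<\alpha-1$: the high--low--low interactions in $\mathcal{N}(z,z,z)$ produce an output that genuinely cannot be placed above $H^{\alpha-1}$, even after all the Gaussian cancellations and no matter how sharp your counting is. The paper therefore works in $X^{s,\frac12+\epsilon}$ with $s\in[\frac12-\frac{\alpha}{4},\,\alpha-1)$, and the threshold $\alpha>\frac65$ is precisely the condition $\frac12-\frac{\alpha}{4}<\alpha-1$ making this interval nonempty (the lower bound comes from the deterministic bilinear Strichartz input, the upper bound from the probabilistic estimate). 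Your description of $\frac65$ as the point where the estimate closes at the energy level, blamed on low--low--high resonances in the unprojected product, misidentifies both the regularity and the source of the constraint.

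A second, related gap: you never Wick-order or gauge-transform the nonlinearity. Even for $\alpha>1$, the resonant contribution $\propto \|z\|_{L^2}^2\,z$ inside $|z+w|^2(z+w)$ does not gain any regularity over $z$ itself, so it cannot be absorbed into a remainder living in $H^s$ with $s>\frac{\alpha-1}{2}$. The paper removes it by the gauge $v=u\,e^{\frac{it}{\pi}\int|u|^2}$, which is what produces the phase $e^{\frac{it}{\pi}\|\phi\|_{L^2}^2}$ in the uniqueness class \eqref{unicite} and makes $\mathcal{N}_0$, $\mathcal{N}_1$ (the Wick-ordered pieces \eqref{wicknonlinear}) amenable to the smoothing estimates. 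Without this step, your fixed-point scheme for $w_N$ does not even formally close. Finally, on globalization: your idea of iterating against the auxiliary flow $\wt u_N$ of Theorem~\ref{thm3} is a genuinely different organization than the paper's (which proves Theorem~\ref{thm6} first and then obtains Theorem~\ref{thm5} by comparing $\Phi(t)(\Pi_N\phi)$ with $\Phi(t)\phi$ via the local stability Proposition~\ref{enhanced-localconvergenec2}); it might be made to work, but it would still require a correct local theory, and it also glosses over the issue that the nonlinear remainder lives in a \emph{different} function space than the linear part, which is exactly why the paper introduces sum spaces $\mathcal{V}^{q,\epsilon}+H^s$ and the trilinear quantity $\mathcal{W}_{s,\epsilon}$ in the construction of the invariant set $\Sigma$.
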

More importantly, the unique limit satisfies the flow property. The following statement is essentially a more precise formulation of Theorem~\ref{thm5}.
\begin{theorem}\label{thm6}
Assume that $\alpha>\frac{6}{5}$. There exists a measurable set $\Sigma$ of full $\rho$ measure, so that for any $\phi\in\Sigma$, the Cauchy problem
$$ i\partial_tu+|D_x|^\alpha u+|u|^2u=0,\quad u|_{t=0}=\phi
$$
has a global solution such that
$$ 
u(t,\cdot)-e^{\frac{it}{\pi}\|\phi\|_{L^2(\T)}^2}e^{it|D_x|^\alpha}\phi\in C(\R;H^s(\T))
$$
for some $s\in\big(\frac{1}{2}-\frac{\alpha}{4},\alpha-1\big)$. The solution is unique in the sense that for every $T>0$, 
\begin{equation}\label{unicite}
 e^{-\frac{it}{\pi}\|\phi\|_{L^2(\T)}^2 } u(t,\cdot)-e^{it|D_x|^{\alpha}}\phi  \in X_T^{s,b},\quad b>1/2,
\end{equation}
where $X_T^{s,b}$ is the Bourgain space localized on $[-T,T]$ (see \eqref{localizedBourgain} below).
If we denote by $\Phi(t)$ the solution map then $\Phi(t)$ satisfies:
\begin{align*}
\Phi(t)(\Sigma)=\Sigma,\quad \forall t\in\R\,\text{ and }\,\, \Phi(t_1)\circ\Phi(t_2)=\Phi(t_1+t_2),\quad \forall t_1,t_2\in\R. 
\end{align*}
Moreover, for all $\sigma<\frac{\alpha-1}{2}$ and $t\in\R$,
$$ \|u(t,\cdot)\|_{H^{\sigma}(\T)}\leq
\Lambda(\phi)
 \log^3(1+|t|),
$$
where $\Lambda(\phi)$ is a constant depending on $\phi\in\Sigma$. Finally, for any $\rho$ measurable set $A\subset\Sigma$ and for any $t\in\R$, $\rho(A)=\rho(\Phi(t)A)$.
\end{theorem}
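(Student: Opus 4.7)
The proof refines Theorem~\ref{thm5} by recasting the a.s. convergence as the convergence of a well-defined solution map satisfying the flow property, in the spirit of Bourgain's invariant measure argument. The strategy is to work with the gauged variable $v(t)=e^{-it\|\phi\|_{L^2}^2/\pi}u(t)$, which formally satisfies the Wick-ordered FNLS, and then decompose
\[
v(t)=e^{it|D_x|^\alpha}\phi + w(t), \qquad z(t):=e^{it|D_x|^\alpha}\phi,
\]
so that the rough linear evolution $z$ of the random initial data $\phi\in\mathrm{supp}(\mu)$ is separated off and the remainder $w$ is solved for at the regularity $s\in(\tfrac12-\tfrac{\alpha}{4},\alpha-1)$ appearing in \eqref{unicite}. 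The target is a fixed point for $w$ in a ball of the localized Bourgain space $X_T^{s,b}$, $b>1/2$.

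The local theory is done by expanding the Wick cubic $(|z+w|^2-c)(z+w)$ into terms classified by the number of factors of $w$. The pure-$z$ and quadratic-in-$z$ terms are handled probabilistically: Wiener chaos / large deviation estimates on the Gaussian Fourier modes of $z$, combined with divisor-type counting for the resonance function of $|D_x|^\alpha$, yield $X^{s,b}$ bounds on a set $A_R\subset\Omega$ of $\mathbb{P}$-measure $\geq 1-Ce^{-cR^2}$. The terms containing two or three factors of $w$ are handled by deterministic trilinear $X^{s,b}$ estimates for the dispersion $|D_x|^\alpha$. Combining these and running a contraction in $X_T^{s,b}$ produces, for $\phi\in A_R$, a local solution $w$ on a time interval $[-\tau,\tau]$ with $\tau\gtrsim R^{-\kappa}$, depending only on $R$ and $\alpha$. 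The threshold $\alpha>\tfrac{6}{5}$ is forced precisely by the worst among these trilinear estimates, and is the crucial obstruction to extending the argument to weaker dispersion.

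To globalize, one uses the invariance of the truncated Gibbs measure $\rho_N$ under the truncated flow $\Phi_N(t)$ on $E_N$, which is a standard consequence of Liouville's theorem together with the conservation of $H_N$ and $M$. For each $N$ and each $R$, the local theory applied with data $\Pi_N\phi$ yields a set $A_{N,R}$ in phase space of complementary $\rho_N$-measure $\leq Ce^{-cR^2}$ on which the truncated solution $v_N$ satisfies uniform $X_\tau^{s,b}$ bounds. Invariance of $\rho_N$ and iteration on $\simeq T/\tau$ consecutive intervals promotes this to a set $\Sigma_{N,T,R}$ of $\rho_N$-measure $\geq 1-C(T/\tau)e^{-cR^2}$ on which $v_N$ is controlled in $X_T^{s,b}$; choosing $R=R(T)$ of order $\log^{1/2}(T)$ keeps the excluded measure summable in $T$. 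Passing to the limit $N\to\infty$ using Proposition~\ref{convergence-Gibbs} and the convergence from Theorem~\ref{thm5}, and intersecting over a countable family of $(T,R)$, produces the full $\rho$-measure set $\Sigma$ on which the limit $u$ exists globally in time and obeys the uniqueness statement \eqref{unicite}.

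The remaining properties follow from uniqueness. The flow property $\Phi(t_1)\circ\Phi(t_2)=\Phi(t_1+t_2)$ and the equality $\Phi(t)\Sigma=\Sigma$ are obtained by gluing local solutions and invoking uniqueness in $X_T^{s,b}$; the invariance $\rho(\Phi(t)A)=\rho(A)$ is then inherited from the invariance of $\rho_N$ by $\Phi_N(t)$ together with the a.s. convergence $\Phi_N(t)\Pi_N\phi\to\Phi(t)\phi$ in $C([-T,T];H^\sigma)$. Finally, the logarithmic growth bound $\|u(t)\|_{H^\sigma}\leq\Lambda(\phi)\log^3(1+|t|)$ is produced by combining the dyadic decomposition of $\R$ in time with the choice $R(T)\sim\log^{1/2}(T)$ used above: on each dyadic window the $H^\sigma$ increment of $u$ is controlled by a power of $R$, and the Borel--Cantelli lemma applied to the summable bad-set measures pins $\Lambda(\phi)$ as a finite constant $\rho$-a.s. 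The principal technical obstacle in this plan is the second step, namely the random--deterministic multilinear $X^{s,b}$ calculus at the sharp regularity $s<\alpha-1$ adapted to the weak dispersion $|D_x|^\alpha$; this is where the constraint $\alpha>\tfrac65$ enters, and it is what prevents the argument from reaching the range $1<\alpha\leq\tfrac65$ treated by the weaker Bourgain--Bulut method of Theorem~\ref{thm3}.
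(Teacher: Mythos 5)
Your overall architecture is the right one and matches the paper's in its broad strokes: the gauge transform to the Wick-ordered FNLS, the ansatz $v = S_\alpha(t)\phi + w$ with $w$ sought in $X_T^{s,b}$ at the intermediate regularity $s\in(\tfrac12-\tfrac{\alpha}{4},\alpha-1)$, trilinear estimates classified by how many inputs are free evolutions of the random data versus elements of the Bourgain space, and a Bourgain-style globalization via invariance of $\rho_N$ under $\Phi_N(t)$ with $R\sim\log^{1/2} T$. That said, there are two genuine gaps.

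First, the logic is circular at the globalization step. You invoke ``the convergence from Theorem~\ref{thm5}'' to pass $N\to\infty$, but Theorem~\ref{thm5} is not an input to Theorem~\ref{thm6}: in the paper it is proved \emph{after} the set $\Sigma$ of Theorem~\ref{thm6} has been constructed (using Proposition~\ref{enhanced-localconvergenec2} and the fact that $\phi\in\Sigma\cap\widetilde\Sigma$). What you actually need at this stage is a convergence result for the \emph{truncated} flows $\Phi_{N_k}(t)\phi_k\to\Phi(t)\phi$ under the a priori bounds that will define $\Sigma$, which is a different statement and must be established independently.

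Second, and more seriously, you skip the central technical obstacle that makes the globalization nonstandard here: the nonlinear evolution $w$ lives in $H^s$, while the linear evolution part is controlled in Fourier--Lebesgue/Besov-type norms $\mathcal V^{q,\epsilon}$, and $H^s$ does \emph{not} embed into $\mathcal V^{q,\epsilon}$. So a point $\Phi_N(t)\phi$ cannot be re-fed into the local theory as if it belonged to a single function space, and the invariant set $\Sigma$ cannot be defined by a simple $X^{s,b}$- or $H^\sigma$-type bound. The paper overcomes this by working in the sum space $\mathcal V^{q,\epsilon}+H^s$, introducing the multilinear quantities $\Gamma_{N,s,\epsilon}^{i_1,i_2,i_3}$ and the pseudo-distance $\mathbf d$, and proving an ``enhanced local convergence'' (Proposition~\ref{enhanced-localconvergence}) that propagates exactly these bookkeeping quantities through one local step so that they can be re-used at the next; the set $\Sigma^m$ is then defined as a limsup of truncated good sets $\widetilde\Sigma_{N}^m$ subject to convergence of $\mathbf d$ and the tail quantities $\Gamma_{N_k,s,\epsilon}^{i_1,i_2,i_3}$. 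Without this sum-space machinery, the assertion $\Phi(t)\Sigma=\Sigma$ and the passage from $\Phi_N$-bounds to $\Phi$-bounds do not follow from ``gluing local solutions and invoking uniqueness in $X_T^{s,b}$'' as your sketch states. Related, but omitted, is the bookkeeping of the phase factor $e^{\frac{it}{\pi}\|\Pi_N\phi\|_{L^2}^2}$, which differs between the truncated and limiting equations and must be tracked through the convergence.
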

If $\alpha>\frac{4}{3}$, from the deterministic local well-posedness result in \cite{Cho}, the proof of Theorem~\ref{thm6} is much easier, see \cite{Dem}.
In fact, FNLS is known to be locally well-posed for initial data in $H^s(\T)$ with $s\geq\frac{1}{2}-\frac{\alpha}{4}$. If $\alpha>\frac{4}{3}$, we have $\frac{\alpha-1}{2}>\frac{1}{2}-\frac{\alpha}{4}$. Since the initial data is $\mu$-a.s. supported on $H^{\frac{\alpha-1}{2}-}(\T)$, the deterministic theory applies. 
However, if $\frac{6}{5}<\alpha\leq\frac{4}{3}$ then we need to prove a new probabilistic local well-posedness result. 
We conjecture that it is possible to extend Theorem~\ref{thm6} to the range $\alpha>1$ by adapting a more involved resolution ansatz (see Remark~\ref{fut} below). 
We will address this issue in a forthcoming work.
\\

For $\alpha>1$, a typical function with respect to $\mu$ is an $L^\infty$ function. 
As a consequence, if we were dealing with a similar problem for a parabolic PDE then thanks to the nice $L^\infty$ mapping properties of the heat flow the analysis would become essentially trivial. On the other hand, since we are dealing with a dispersive PDE, the linear problem is only well-posed in $L^2$ in the scale of the $L^p$ spaces which makes that even at positive regularities,  refined detereministic estimates and probabilistic considerations are essential in the analysis. A similar comment applies in the context of \cite{BT1/2,BT-JEMS} and all subsequent works.
\\

The proof of Theorem \ref{thm6} is divided into two parts. Firstly, we need to establish a local well-posedness theory. For this, we follow the roadmap of \cite{bourgain} (see also the subsequent works \cite{Co-Oh}, \cite{Nah-Staffilani}). An important new feature is that in sharp contrast with the case $\alpha=2$, for a general $\alpha$, the values of $$|n_1|^{\alpha}-|n_2|^{\alpha}+|n_3|^{\alpha}-|n_1-n_2+n_3|^{\alpha},\quad  n_1,n_2,n_3\in\Z
 $$ 
may be dense in an interval of size $1$. This causes losses of regularity which are delicate to control. We also emphasize that the phase factor $e^{-\frac{it}{\pi}\|\phi\|_{L^2(\T)}^2}$ in \eqref{unicite} makes the uniqueness class different from \cite{bourgain,Co-Oh,Nah-Staffilani}. Secondly, we need to extend the local solution to the global one and to prove the invariance of the good data set $\Sigma$ along the flow by using the measure invariance argument introduced by Bourgain in \cite{B}. Compared with the existing literature (see for example \cite{BTT-AIF, BT-IMRN,Sun-Tz} and references therein), the smoother part in the Bourgain space does not belong to the initial data space. This fact makes the choice of the $\Sigma$ more delicate. In particular, we make use of spaces with sum structure. 
\\

As a consequence of Theorem~\ref{thm6} and the Poincar\'e recurrence theorem, we get the following statement (we consider $\Sigma$ equipped with the topology inherited by the separable space $H^\sigma(\T)$). 
\begin{corollaire}\label{recurrence}
In the context of Theorem~\ref{thm6} for $\mu$ almost every $u_0\in\Sigma$ and all $t\in\R$, there is a subsequence $(n_k)_{k\in\N}$,  $n_k\rightarrow \infty$ of $(1,2,3,\cdots)$ , such that the solution of
$$ 
 i\partial_tu+ |D_x|^\alpha u+|u|^2u=0,\quad u|_{t=0}=u_0
 $$
 satisfies 
$$
\lim_{n_k\rightarrow \infty}\|u(n_kt)-u_0\|_{H^\sigma(\T)}=0,\quad \sigma<\frac{\alpha-1}{2}\,.
$$
\end{corollaire}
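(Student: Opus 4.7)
The plan is to deduce the recurrence from the classical Poincar\'e recurrence theorem applied to the iterates of the time-$t$ map $\Phi(t)$ furnished by Theorem~\ref{thm6}.

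First I would observe that, since $\alpha>1$, the functional $V(u)=\frac{1}{2}\|u\|_{L^4(\T)}^4$ is finite $\mu$-almost surely, so the Radon--Nikodym density $C_{\infty}e^{-V(u)}$ is strictly positive and finite $\mu$-a.e. Hence $\mu$ and $\rho$ are mutually absolutely continuous on $\Sigma$, and it is enough to establish the conclusion for $\rho$-almost every $u_0$.

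Next I would fix $t\in\R$ (the case $t=0$ being trivial) and set $T:=\Phi(t)$. By Theorem~\ref{thm6}, $T$ is a bijection of $\Sigma$ onto itself with $T_{\ast}\rho=\rho$, so $(\Sigma,\rho,T)$ is a measure-preserving system on a finite measure space. Since $\Sigma\subset H^{\sigma}(\T)$ inherits a separable metrizable topology, I fix a countable basis of open sets $\{U_j\}_{j\in\N}$. Applying Poincar\'e recurrence on each $U_j$ with $\rho(U_j)>0$ produces a subset $A_j\subseteq U_j$ with $\rho(U_j\setminus A_j)=0$ such that $T^n u_0\in U_j$ for infinitely many $n\in\N$ whenever $u_0\in A_j$. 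Removing the countable union $\bigcup_j(U_j\setminus A_j)$ from $\Sigma$ leaves a set $\Sigma_t$ of full $\rho$-measure on which every point is topologically recurrent under $T$.

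To conclude, for $u_0\in\Sigma_t$ I would perform a diagonal extraction: at each step $k$ choose a basis element $U_{j_k}\ni u_0$ of $H^{\sigma}(\T)$-diameter less than $1/k$ and an integer $n_k>n_{k-1}$ with $T^{n_k}u_0\in U_{j_k}$. This yields $\|\Phi(n_kt)u_0-u_0\|_{H^{\sigma}(\T)}<2/k$, i.e.\ the desired convergence $u(n_kt)\to u_0$ in $H^{\sigma}(\T)$. I do not anticipate a serious obstacle here: the two non-trivial ingredients---invariance of $\rho$ and of $\Sigma$ under the flow $\Phi(t)$---are supplied directly by Theorem~\ref{thm6}, while the remainder of the argument is standard measure-theoretic and topological bookkeeping using only the separability of $H^{\sigma}(\T)$.
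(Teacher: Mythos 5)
Your proof is essentially the approach the paper intends: the authors explicitly attribute the corollary to Theorem~\ref{thm6} combined with the Poincar\'e recurrence theorem and give no further details, and your argument is a correct, careful execution of exactly that---invoking the flow property and measure invariance of $\Phi(t)$ on $\Sigma$ from Theorem~\ref{thm6}, the finiteness of $\rho$, the separability of $H^\sigma(\T)$, and the mutual absolute continuity of $\mu$ and $\rho$ on the support. One small point worth flagging: your construction produces, for each fixed $t$, a full-measure set $\Sigma_t$ of recurrent points, so strictly speaking you prove ``for every $t$, for $\mu$-a.e.~$u_0$'' rather than the literal ``for $\mu$-a.e.~$u_0$, for all $t$'' of the statement; this is the standard reading of such corollaries in the invariant-measure literature (and is what the paper's one-line justification supports), but if one insists on the stronger order of quantifiers, some additional argument (e.g.~intersecting over a countable dense set of $t$ together with a continuity-in-$t$ estimate at large times) would be needed, and that is not supplied by the bare Poincar\'e theorem.
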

Another application of the flow property is the following stability result.
\begin{corollaire}\label{stability}
Let $f_1,f_2\in L^1(d\mu)$ and let $\Phi(t)$ be the flow of 
$$ 
 i\partial_tu+ |D_x|^\alpha u+|u|^2u=0,\quad u|_{t=0}=u_0
 $$
 defined $\mu$ a.s.   Then for every $t\in\R$, the transports of the measures
  $$
  f_1(u) d \mu(u),\quad f_2(u)d\mu(u)
  $$ 
  by $\Phi(t)$ are given by
 $$
 F_1(t,u)d\mu(u),\quad  F_2(t,u)d\mu(u)
 $$
 respectively, for suitable $F_1(t,\cdot),F_2(t,\cdot)\in L^1(d\mu)$.
 Moreover
 $$
 \|F_1(t,\cdot)-F_2(t,\cdot)\|_{L^1(d\mu)}=\|f_1-f_2\|_{L^1(d\mu)} \,.
 $$
\end{corollaire}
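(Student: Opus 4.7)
The plan is to realize $F_i(t,\cdot)\,d\mu$ as the push-forward of $f_i\,d\mu$ under $\Phi(t)$, using the equivalence of the measures $\mu$ and $\rho$ together with the $\rho$-invariance supplied by Theorem~\ref{thm6}.

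First I would note that $\mu$ and $\rho$ are mutually absolutely continuous: writing $d\rho=Ce^{-V(u)}\,d\mu$ with $V(u)=\frac{1}{2}\|u\|_{L^4(\T)}^4$, the potential $V$ is finite $\mu$-a.s. and $e^{-V}\in L^p(d\mu)$ for every $p<\infty$, so both Radon--Nikodym derivatives are strictly positive $\mu$-a.s. By Theorem~\ref{thm6}, the set $\Sigma$ carries full $\rho$-measure (and hence full $\mu$-measure), and $\Phi(t):\Sigma\to\Sigma$ is a measurable bijection with measurable inverse $\Phi(-t)$; extending by the identity on $\Sigma^c$, we may regard $\Phi(t)$ as a bimeasurable bijection of the ambient space modulo a $\mu$-null set.

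To construct $F_i(t,\cdot)$, I would rewrite $f_i\,d\mu=h_i\,d\rho$ with $h_i=C^{-1}f_i e^{V}\in L^1(d\rho)$ and push forward using the $\rho$-invariance from Theorem~\ref{thm6}:
\[
\Phi(t)_*(f_i\,d\mu)=\Phi(t)_*(h_i\,d\rho)=(h_i\circ\Phi(-t))\,d\rho=F_i(t,\cdot)\,d\mu,
\]
which identifies the Radon--Nikodym derivative explicitly as
\[
F_i(t,u)=f_i(\Phi(-t)u)\exp\!\big(V(\Phi(-t)u)-V(u)\big).
\]
Splitting $f_i=f_i^+-f_i^-$ and observing that the push-forward of a positive measure has the same total mass yields $F_i(t,\cdot)\in L^1(d\mu)$ with $\|F_i(t,\cdot)\|_{L^1(d\mu)}\leq\|f_i\|_{L^1(d\mu)}$.

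For the $L^1$-stability identity, linearity of the push-forward gives
\[
(F_1(t)-F_2(t))\,d\mu=\Phi(t)_*\big((f_1-f_2)\,d\mu\big).
\]
Since $\Phi(t)$ is a bimeasurable bijection, it sends mutually singular positive measures to mutually singular positive measures, hence commutes with the Jordan decomposition, hence preserves the total variation norm. Consequently
\[
\|F_1(t,\cdot)-F_2(t,\cdot)\|_{L^1(d\mu)}=\|(f_1-f_2)\,d\mu\|_{TV}=\|f_1-f_2\|_{L^1(d\mu)}.
\]
All the substance is absorbed in the flow and invariance properties of $\Phi(t)$ provided by Theorem~\ref{thm6}; once these are granted, the remainder is standard measure theory and I do not anticipate any serious obstacle.
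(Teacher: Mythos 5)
Your argument is correct and reaches the same conclusion as the paper, but it does so by a mildly different route. The paper invokes the Radon--Nikodym theorem only abstractly: it observes that $\rho$-invariance together with the mutual absolute continuity of $\rho$ and $\mu$ forces $\Phi(t)_*\mu\ll\mu$, calls the density $G(t)$, and then establishes both the representation $F_j(t,u)=f_j(\Phi(-t)u)\,G(t,u)$ and the $L^1$-norm identity by a direct change-of-variables computation with $G(t)\,d\mu=d(\Phi(t)_*\mu)$. You instead make the density explicit, $G(t,u)=\exp\big(V(\Phi(-t)u)-V(u)\big)$, by first passing through $\rho$ via $f_i\,d\mu=h_i\,d\rho$, and you obtain the $L^1$ identity by the structural observation that a bimeasurable bijection preserves the Jordan decomposition and hence the total variation norm, rather than by computing. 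Both routes rest on the same two ingredients (the invariance of $\rho$ under $\Phi(t)$ from Theorem~\ref{thm6} and the mutual absolute continuity $\mu\sim\rho$); yours gives a closed form for the weight $G(t)$, while the paper's is more self-contained in that it never needs to name the weight explicitly. One small remark: your bound $\|F_i(t,\cdot)\|_{L^1(d\mu)}\le\|f_i\|_{L^1(d\mu)}$ is in fact an equality (take $f_2=0$ in the identity you prove, or note that the push-forwards of $f_i^\pm\,d\mu$ remain mutually singular), but the inequality already suffices for $F_i\in L^1$.
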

Corollary~\ref{stability} describes a general feature. A similar statement holds each time we deal with a PDE defining a flow under which a measure is quasi-invariant. 
For example,  thanks to a recent work by Forlano-Trenberth the result of Corollary~\ref{stability} remains true if the measure $\mu$ is replaced by the measure induced by the map 
$$
\omega\longmapsto \sum_{n\in\Z} \frac{g_n(\omega)}
{
(1+|n|^{s})^{\frac{1}{2}}
}
e^{inx},
$$
for $s>\alpha$ large enough. We refer to \cite{FT} for the  precise restriction on $s$.  There is a gap between the best $s$ and $\alpha$ leaving an interesting open problem. 
\begin{remarque}
{\rm 
As already mentioned,  it is not clear to us how to get the the flow property described by Theorem~\ref{thm6} by the method of Bourgain-Bulut. At the present moment,  in the case $\alpha\in (1,\frac{6}{5}]$ we only know how to prove  almost sure convergence of the solutions of the ODE's :
$$ i\partial_tu+|D_x|^\alpha u+\Pi_N(|\Pi_Nu|^2\Pi_Nu)=0,\quad u|_{t=0}= \sum_{n\in\Z}\frac{g_n(\omega)}{[n]^{\frac{\alpha}{2}}}e^{inx}. 
$$
A similar comment applies to \cite{BB1,BB2,BB3}.
}
\end{remarque}

This article is organized as follows. In Section 2, we collect some preliminaries including the bilinear Strichartz inequality for the fractional NLS which has its own interest. From Section 3 to Section 6, we deal with the case $\alpha>\frac{6}{5}$ and prove Theorem \ref{thm5} and Theorem \ref{thm6}. More precisely, in Section 3 we prove the probabilistic local well-posedness by assuming the crucial deterministic and probabilistic tri-linear estimates which will be proved in Section 4 and Section 5. Then in Section 6, we detail the globalization procedure which allows us to obtain interesting dynamical properties, i.e. Corollary \ref{recurrence} and Corollary \ref{recurrence}.  Section 7 is devoted to the proof of Theorem \ref{thm3} by using the argument of Bourgain-Bulut. In Section 8 we deal with the case $\alpha<1$ and prove Theorem \ref{thm2} by standard Nelson type argument and probabilistic compactness method.  Finally we add an appendix to generalize the Bourgain-Bulut argument to high dimensional fractional NLS on any compact Riemannian manifold without boundary.

\subsection*{Acknowledgements}
The authors are supported by the ANR grant ODA (ANR-18-CE40- 0020-01). We would like to thank Sahbi Keraani for his comments while the first author visited the Laboratoire Paul Painlev\'e of Lille University. We are grateful to Phil Sosoe for several nice discussions while the authors visited Cornell University, in particular for pointing out the reference \cite{F}. 

\subsection*{Additional acknowledgements}
The authors would like to warmly thank Yuzhao Wang and Engin Başakoğlu who pointed out an error in Corollary 2.9 of the published version on SIAM J. Math. Analysis. The current manuscript is a  corrected version of that article. The corrections are only local which take place in Corollary 2.9, Corollary 2.11, and Subsection 4.1.2.

\section{Preliminaries}
\subsection{Calculus inequalities}
\begin{lemme}[\cite{FT}]\label{bound:resonance}
	If $n_1-n_2+n_3-n=0$, we define the resonant function $\Phi(\ov{n}):=|n_1|^{\alpha}-|n_2|^{\alpha}+|n_3|^{\alpha}-|n|^{\alpha}$. If $\{n_1,n_3\}\neq \{n_2,n\}$, $\Phi(\ov{n})$ never vanishes. Moreover,
	$$ |\Phi(\ov{n})|\gtrsim |n_1-n_2||n_2-n_3||n|_{\mathrm{max}}^{\alpha-2},
	$$
	where $|n|_{\mathrm{max}}=\max\{|n_1|,|n_2|,|n_3|, |n| \}$.
\end{lemme}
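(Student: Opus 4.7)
The plan is to reduce $\Phi(\ov{n})$ to a double integral of $F''$, where $F(t) := |t|^\alpha$, and then exploit the sign-definiteness of $F''$ away from the origin. First, the constraint $n_1 - n_2 + n_3 - n = 0$ is naturally symmetric: setting $a := n_1 - n_2 = n - n_3$, I would rewrite
\[
\Phi(\ov{n}) = \bigl[F(n_2+a) - F(n_2)\bigr] - \bigl[F(n_3+a) - F(n_3)\bigr],
\]
and then apply the fundamental theorem of calculus twice, using $F''(u) = \alpha(\alpha-1)|u|^{\alpha-2}$ for $u \neq 0$, to obtain
\[
\Phi(\ov{n}) = \alpha(\alpha-1) \int_0^a\!\!\int_{n_3+s}^{n_2+s} |u|^{\alpha-2}\, du\, ds.
\]
This representation is valid for $\alpha \in (1,2)$ even when the inner path crosses the origin, thanks to the local integrability of $|u|^{\alpha-2}$.

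Next, I would verify that the hypothesis $\{n_1, n_3\} \neq \{n_2, n\}$ combined with the zero-sum constraint forces both $a \neq 0$ and $n_2 \neq n_3$: indeed, $n_1 = n_2$ would force $n_3 = n$, which is the excluded configuration, and symmetrically for $n_2 = n_3$. Hence the domain of integration has strictly positive two-dimensional Lebesgue measure, and since $\alpha(\alpha-1)|u|^{\alpha-2}$ is of one sign (strictly positive away from $0$ for $\alpha \in (1,2)$, strictly negative for $\alpha \in (0,1)$), one immediately concludes $\Phi(\ov{n}) \neq 0$.

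For the quantitative lower bound, the key observation is that $\alpha - 2 < 0$ makes $|u|^{\alpha-2}$ a decreasing function of $|u|$. In the integration region, $|u| \leq \max(|n_3+s|, |n_2+s|) \leq |n|_{\max} + |a| \lesssim |n|_{\max}$, which yields the pointwise bound $|u|^{\alpha-2} \gtrsim |n|_{\max}^{\alpha-2}$. Since the area of the integration region equals $|a|\cdot|n_2-n_3| = |n_1-n_2|\cdot|n_2-n_3|$, one arrives at
\[
|\Phi(\ov{n})| \;\gtrsim\; |n_1-n_2|\,|n_2-n_3|\,|n|_{\max}^{\alpha-2},
\]
as required. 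The main subtlety in this scheme is the non-smoothness of $|t|^\alpha$ at the origin, but for $\alpha \in (1,2)$ this is absorbed by the integrability of $|u|^{\alpha-2}$ and the fact that $F \in C^1(\R)$; at the borderline $\alpha = 1$ the second derivative degenerates to a Dirac distribution and the lower bound genuinely fails (explicit integer quadruples with $\Phi = 0$ outside the resonant set are easy to produce), which explains the implicit restriction $\alpha \neq 1$.
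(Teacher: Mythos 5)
Your proof is correct. The paper itself does not supply an argument for this lemma — it simply cites Lemma~2.1 of \cite{FT} — so there is nothing to compare line by line; but your self-contained derivation is exactly the standard mean-value / second-difference technique one expects behind such a bound. The reduction $\Phi(\ov n) = \int_0^a\int_{n_3+s}^{n_2+s} F''(u)\,du\,ds$ with $a=n_1-n_2=n-n_3$ cleanly encodes both statements at once: the excluded configuration $\{n_1,n_3\}=\{n_2,n\}$ is precisely $a=0$ or $n_2=n_3$, so the domain has positive area exactly in the non-degenerate case, and the sign-definiteness of $F''=\alpha(\alpha-1)|u|^{\alpha-2}$ gives the non-vanishing. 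For the lower bound, the pointwise estimate $|u|^{\alpha-2}\geq |n|_{\max}^{\alpha-2}$ on the integration region (valid because $\alpha<2$ and $|u|\leq\max(|n_j+s|)\leq |n|_{\max}$, the latter since $n_2+s$ interpolates $n_2,n_1$ and $n_3+s$ interpolates $n_3,n$) combined with the area $|a||n_2-n_3|$ gives the claim with constant $|\alpha(\alpha-1)|$. Two small points worth keeping straight: (i) the justification for the double FTC through the origin — local integrability of $|u|^{\alpha-2}$ hence absolute continuity of $F'$ — genuinely requires $\alpha>1$, so the parenthetical aside about $\alpha\in(0,1)$ concerns only the sign of $F''$, not the validity of the representation; (ii) the bound $|u|\le |n|_{\max}$ actually holds without the $+|a|$ slack, so no extra constant is needed there.
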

\begin{proof}
	See Lemma 2.1  of \cite{FT}.
\end{proof}
\begin{lemme}\label{convolution}
	Let $a>1\geq b\geq 0$ with $a+b>1$. Then there exists $C>0$, such that
	$$ \int_{\R}\frac{dy}{\langle x-y\rangle^a \langle y\rangle^{b} }\leq \frac{C}{\langle x\rangle^{b}},
	$$
	for any $x\in\R$.	
\end{lemme}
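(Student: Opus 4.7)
The plan is to prove the bound by a standard case analysis based on the relative sizes of $|x|$ and $|y|$. For $|x|\lesssim 1$ the inequality is trivial: the integral $\int_\R \langle x-y\rangle^{-a}\langle y\rangle^{-b}\,dy$ is finite uniformly in $|x|\leq 1$, since $a>1$ ensures integrability near $y=\pm\infty$ even after the $\langle y\rangle^{-b}$ factor (using $a+b>1$), and all singular behavior is absent. So the interesting case is $|x|\gg 1$, where $\langle x\rangle\sim |x|$.

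For $|x|$ large, I would split $\R$ into three pieces:
\begin{itemize}
\item[(i)] $|y|\leq |x|/2$, where $|x-y|\geq |x|/2$, so $\langle x-y\rangle\gtrsim \langle x\rangle$,
\item[(ii)] $|y|\geq 2|x|$, where $|x-y|\geq |y|/2$, so $\langle x-y\rangle\gtrsim \langle y\rangle$,
\item[(iii)] $|x|/2\leq |y|\leq 2|x|$, where $\langle y\rangle\sim \langle x\rangle$.
\end{itemize}
On region (iii) we factor out $\langle x\rangle^{-b}$ and use $\int_\R \langle x-y\rangle^{-a}\,dy<\infty$, which holds since $a>1$, giving a bound of order $\langle x\rangle^{-b}$ as desired.

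For region (i), the integrand is bounded by $\langle x\rangle^{-a}\langle y\rangle^{-b}$, and integrating in $y$ over $|y|\leq |x|/2$ produces a factor $\langle x\rangle^{1-b}$ when $b<1$ (or $\log\langle x\rangle$ when $b=1$), yielding a bound $\lesssim \langle x\rangle^{1-a-b}$ (with at most a harmless log), which is $\leq \langle x\rangle^{-b}$ because $a\geq 1$. For region (ii), the integrand is bounded by $\langle y\rangle^{-(a+b)}$, and since $a+b>1$ the tail integral is $\lesssim \langle x\rangle^{1-a-b}\leq \langle x\rangle^{-b}$, again using $a\geq 1$.

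I do not anticipate a real obstacle; the only subtlety is the borderline case $b=1$, where region (i) contributes a logarithmic factor, but this is absorbed by the strict inequality $a>1$ (so that $\langle x\rangle^{-a}\log\langle x\rangle\lesssim \langle x\rangle^{-1}$). Summing the three contributions yields the desired estimate with a constant $C$ depending only on $a$ and $b$.
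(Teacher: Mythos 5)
Your proof is correct and follows essentially the same route as the paper: the paper splits at $|y|\leq|x|/2$ versus $|y|>|x|/2$, using $\langle x-y\rangle\gtrsim\langle x\rangle$ on the first piece and $\langle y\rangle\gtrsim\langle x\rangle$ on the second, exactly as you do on your regions (i) and (iii). Your further subdivision of $|y|>|x|/2$ into (ii) and (iii) is harmless but unnecessary, since the single bound $\langle y\rangle\gtrsim\langle x\rangle$ already handles both.
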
	
\begin{proof}
We break the integral into $\int_{|y|\leq |x|/2}$ and $\int_{|y|>|x|/2}$. When $|y|\leq |x|/2$, we have $$ \int_{|y|\leq |x|/2}\frac{dy}{\langle x-y\rangle^a \langle y\rangle^{b} }\leq C\langle x\rangle^{-a+1-b}\log\langle x\rangle\leq C\langle x\rangle^{-b}.
$$ 
When $|y|>|x|/2$, we have
$$ \int_{|y|>|x|/2}\frac{dy}{\langle x-y\rangle^a \langle y\rangle^{b}}\leq C\langle x\rangle^{-b}.
$$ 
\end{proof}	

\begin{lemme}\label{lemme-summation}
	 Assume that $\frac{1}{2}<\beta\leq 1$, then for all $\gamma<2\beta-1$, there exists $C_{\gamma}>0$, such that for any $a\in\R$,
		$$ \sum_{n\in\Z}\frac{1}{\langle n\rangle^{\beta}\langle n-a\rangle^{\beta}}\leq\frac{C_{\gamma}}{\langle a\rangle^{\gamma}}. 
		$$
	
\end{lemme}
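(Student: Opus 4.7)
The plan is to compare the sum to an integral and then split the integration into regions determined by which of the two weights dominates.

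First, by comparison on unit intervals centered at the integers (each term $\langle n\rangle^{-\beta}\langle n-a\rangle^{-\beta}$ is bounded, up to a universal constant, by $\int_{n-1/2}^{n+1/2}\langle y\rangle^{-\beta}\langle y-a\rangle^{-\beta}\,dy$ plus a harmless boundary contribution), it suffices to prove
$$
I(a):=\int_{\R}\frac{dy}{\langle y\rangle^{\beta}\langle y-a\rangle^{\beta}}\leq C_{\gamma}\langle a\rangle^{-\gamma}.
$$
When $\langle a\rangle\lesssim 1$ this is immediate since $2\beta>1$ makes the integrand lie in $L^{1}(\R)$.

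Otherwise one may assume $a\geq 1$ by symmetry $y\mapsto -y$, and decompose $\R=R_{1}\cup R_{2}\cup R_{3}$ with $R_{1}=\{|y|\leq a/2\}$, $R_{2}=\{|y-a|\leq a/2\}$, and $R_{3}$ the complement. On $R_{1}$ the weight $\langle y-a\rangle$ is comparable to $\langle a\rangle$, so
$$
\int_{R_{1}}\frac{dy}{\langle y\rangle^{\beta}\langle y-a\rangle^{\beta}}\leq \frac{C}{\langle a\rangle^{\beta}}\int_{-a/2}^{a/2}\frac{dy}{\langle y\rangle^{\beta}}\lesssim \langle a\rangle^{1-2\beta}\bigl(1+\log\langle a\rangle\bigr),
$$
and the region $R_{2}$ is handled identically after the translation $y\mapsto y-a$. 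On $R_{3}$ both $\langle y\rangle$ and $\langle y-a\rangle$ are $\gtrsim \langle a\rangle$ and $\gtrsim |y|$, so using $2\beta>1$,
$$
\int_{R_{3}}\frac{dy}{\langle y\rangle^{\beta}\langle y-a\rangle^{\beta}}\leq \int_{|y|\geq a/2}\frac{dy}{\langle y\rangle^{2\beta}}\lesssim \langle a\rangle^{1-2\beta}.
$$
Combining the three contributions yields $I(a)\lesssim \langle a\rangle^{1-2\beta}(1+\log\langle a\rangle)$, which, after absorbing the logarithm into any positive power, gives $I(a)\lesssim \langle a\rangle^{-\gamma}$ whenever $\gamma<2\beta-1$.

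The only genuinely delicate point is the endpoint $\beta=1$, where the integral over $R_{1}$ produces a logarithmic factor; this is precisely why the statement requires the strict inequality $\gamma<2\beta-1$ rather than the endpoint exponent. Apart from this, the argument is an elementary calculus estimate requiring no input from the dispersive analysis; it could alternatively be recast as two applications of Lemma~\ref{convolution} after a dyadic decomposition of one of the two factors, but the direct region splitting above is the most transparent route.
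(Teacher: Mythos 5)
Your proof is correct and follows essentially the same strategy as the paper: split according to which weight dominates, obtain the clean rate $\langle a\rangle^{1-2\beta}$ up to a logarithm, and absorb the logarithm by taking the strictly smaller exponent $\gamma<2\beta-1$. The paper works directly with the discrete sum and a coarser two-region split ($|n|\le|a|/2$ versus $|n|>|a|/2$), leaving the near-$a$ contribution implicit; your passage to the integral and three-region decomposition is a more explicit rendering of the same idea, not a different method.
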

\begin{proof}
	We cut the sum in two parts $$\sum_{|n|\leq |a|/2}\langle n\rangle^{-\beta}\langle n-a\rangle^{-\beta}+\sum_{|n|>|a|/2}\langle n\rangle^{-\beta}\langle n-a\rangle^{-\beta}.$$
	Then the first term can be majorized by
	$$ C\langle a\rangle^{-\beta}\sum_{|n|\leq \frac{|a|}{2}}\langle n\rangle^{-\beta}\leq C\langle a\rangle^{1-2\beta}\log\langle a\rangle.
	$$
	The second term can be bounded by $C_{\gamma}\langle a\rangle^{-\gamma}$, thanks to $2\beta-1>0$.
\end{proof}
\subsection{Strichartz estimates and applications}
We proceed by the standard argument reducing the $L^4$ Strichartz estimate to a counting lemma. Denote by 
$$ S_{\alpha}(t)=e^{it|D_x|^{\alpha}}
$$
the Schr\"odinger semi-group.
Recall that the Bourgain space $X^{s,b}$ is associated with  the norm
$$ \|u\|_{X^{s,b}}^2:=\int_{\R}\sum_{n\in\Z}\langle n\rangle^{2s}\langle\tau-|n|^{\alpha}\rangle^{2b}|\widehat{u}(\tau,n)|^2d\tau.
$$
For finite time interval $I\subset\R$, the localized Bourgain space $X_I^{s,b}$ is defined via the norm 
\begin{equation}\label{localizedBourgain} \|u\|_{X_I^{s,b}}:=\inf\left\{\|v\|_{X^{s,b}}: v|_{I}=u \right\}.
\end{equation}
We will also use the notation $X_T^{s,b}$ to stand for $X_{[-T,T]}^{s,b}$. We have the following standard statements. 
\begin{lemme}[\cite{Tao}]\label{time-localization}
	Let $\eta\in \mathcal{S}(\R)$. Then for $0<T<1$, $s\in\R$ and $-\frac{1}{2}<b'\leq b<\frac{1}{2}$, we have the estimate 
	$$ \|\eta(t/T)u\|_{X^{s,b'}}\lesssim T^{b-b'}\|u\|_{X^{s,b}}.
	$$
\end{lemme}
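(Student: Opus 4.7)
The plan is to reduce the claim to a purely one-dimensional Sobolev estimate in time, and then establish the latter by combining two endpoint inequalities with interpolation and duality.

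\textbf{Reduction to a scalar estimate.} Set $v(t,x) := e^{-it|D_x|^{\alpha}} u(t,x)$. The definition of the Bourgain norm together with Plancherel yields the isometry $\|u\|_{X^{s,b}} = \|v\|_{H^b_t H^s_x}$. Since $e^{it|D_x|^{\alpha}}$ is a Fourier multiplier in $x$ alone, it commutes with multiplication by the scalar function $\eta(t/T)$, and therefore $\eta(t/T)u = e^{it|D_x|^{\alpha}}[\eta(t/T)v]$. Hence $\|\eta(t/T)u\|_{X^{s,b'}} = \|\eta(t/T)v\|_{H^{b'}_t H^s_x}$, and expanding $v$ in its spatial Fourier series reduces the inequality to the one-dimensional bound
$$
\|\eta(t/T) g\|_{H^{b'}(\R)} \lesssim_\eta T^{b-b'} \|g\|_{H^b(\R)}, \qquad -\tfrac{1}{2} < b' \le b < \tfrac{1}{2},\ 0 < T \le 1,
$$
for an arbitrary scalar $g \in \mathcal{S}(\R)$.

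\textbf{The scalar estimate.} For fixed $b \in [0,\tfrac{1}{2})$ I would establish two endpoint bounds. The first is the uniform multiplier inequality $\|\eta(t/T) g\|_{H^b} \lesssim \|g\|_{H^b}$: using a Littlewood--Paley decomposition $g = \sum_j g_j$, the high-frequency pieces with $2^j \gtrsim 1/T$ are handled by Young's convolution inequality (the Fourier-side spread induced by $\eta(t/T)$ is only of size $O(1/T)$), while the low-frequency pieces contribute a factor $(2^j T)^{1-2b}$, summable precisely because $b < \tfrac{1}{2}$. The second is the gain bound $\|\eta(t/T) g\|_{L^2} \lesssim T^b \|g\|_{H^b}$, obtained by splitting $\hat g$ at the threshold $|\sigma| \sim 1/T$ and exploiting $\langle \sigma\rangle^{-b} \lesssim T^b$ on the low-frequency part, together with Plancherel and $|\eta(t/T)| \le \|\eta\|_{L^\infty}$ on the high-frequency part. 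Complex interpolation between these two endpoints yields the desired bound for all $0\le b'\le b < \tfrac{1}{2}$. For the remaining range, duality applies: the adjoint of multiplication by $\eta(t/T)$ is multiplication by $\overline{\eta(t/T)}$, which is an operator of the same form and sends $(b,b') \mapsto (-b',-b)$, hence extending the estimate to $-\tfrac{1}{2} < b' \le b \le 0$; the mixed case $b' < 0 < b$ is reached by interpolating between the $b'=b$ case and the $b'=-b$ case (itself a composition of direct and dual endpoints).

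\textbf{Main obstacle.} The delicate point is the uniform boundedness of $\eta(t/T)$ as a multiplier on $H^b(\R)$ for $|b| < \tfrac{1}{2}$; this is the sharp range, and it is precisely what forces the hypothesis $b, b' \in (-\tfrac{1}{2}, \tfrac{1}{2})$ in the statement (at the endpoint $b = \tfrac{1}{2}$ the bound fails, as is already seen for sharp cutoffs). Once this endpoint is secured, the $T^{b-b'}$ gain itself is a routine scaling/interpolation consequence.
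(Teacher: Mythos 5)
Your overall strategy is the standard one (and matches the route in the cited reference, Lemma~2.11 of Tao's book): conjugate by $e^{it|D_x|^\alpha}$ to reduce to the scalar bound $\|\eta(t/T)g\|_{H^{b'}(\R)}\lesssim T^{b-b'}\|g\|_{H^b(\R)}$, establish a uniform endpoint $H^b\to H^b$ and a gain endpoint $H^b\to L^2$, then interpolate the targets and dualize. That architecture is correct. However, the sketch of the gain endpoint $\|\eta(t/T)g\|_{L^2}\lesssim T^b\|g\|_{H^b}$ is inverted and does not close as written. The inequality $\langle\sigma\rangle^{-b}\lesssim T^b$ holds on the \emph{high}-frequency region $|\sigma|\gtrsim 1/T$ (at $\sigma=0$ one has $\langle\sigma\rangle^{-b}=1\gg T^b$), and on that region it combines with the $L^\infty$ bound on $\eta$ to give $\|\eta(t/T)g_{\mathrm{hi}}\|_{L^2}\le\|\eta\|_{L^\infty}\|g_{\mathrm{hi}}\|_{L^2}\lesssim T^b\|g\|_{H^b}$. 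So the two tools you name both belong to the high-frequency piece, and the low-frequency piece $|\sigma|\lesssim 1/T$ is left unaddressed. The correct low-frequency argument — and the place where $b<\tfrac12$ genuinely enters the gain estimate — is $\|\eta(t/T)g_{\mathrm{lo}}\|_{L^2}\le\|\eta(\cdot/T)\|_{L^2}\|g_{\mathrm{lo}}\|_{L^\infty}\lesssim T^{1/2}\|\widehat{g_{\mathrm{lo}}}\|_{L^1}$, followed by Cauchy--Schwarz: $\|\widehat{g_{\mathrm{lo}}}\|_{L^1}\le\|\langle\sigma\rangle^{-b}\mathbf{1}_{|\sigma|\lesssim1/T}\|_{L^2}\|g\|_{H^b}\sim T^{b-1/2}\|g\|_{H^b}$, whence the net factor $T^{1/2}\cdot T^{b-1/2}=T^b$. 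Without this step the gain endpoint is not established.

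Two smaller remarks. First, your Littlewood--Paley sketch of the uniform multiplier bound $\|\eta(t/T)g\|_{H^b}\lesssim\|g\|_{H^b}$ invokes a per-block factor $(2^jT)^{1-2b}$ without derivation, and the pieces $\eta(t/T)g_j$ with $2^j\lesssim 1/T$ all have overlapping Fourier support near $|\sigma|\lesssim 1/T$, so one cannot simply sum triangle-inequality bounds; a Schur-test argument on the kernel $\widehat{\eta(\cdot/T)}(\sigma-\tau)\langle\sigma\rangle^b\langle\tau\rangle^{-b}$ is cleaner and avoids the issue. Second, in the mixed range $b'<0<b$ you reach the endpoint $b'=-b$ by ``a composition of direct and dual endpoints''; composing the operator with its adjoint yields multiplication by $|\eta(t/T)|^2$, not by $\eta(t/T)$, so this needs a word (for instance, note that the scalar estimate is being proved for every Schwartz multiplier simultaneously, so that the statement for $|\eta|^2$ is available once it is proved in general; or handle the mixed case directly by a weighted Schur test).
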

\begin{lemme}[\cite{GTV}]\label{inhomo-linear}
Let $\eta\in\mathcal{S}(\R)$. Then for $s\in\R, 1>b>\frac{1}{2}$, we have the estimate 
\begin{align*}
\Big\|\eta(t)\int_0^tS_{\alpha}(t-t')F(t')dt' \Big\|_{X^{s,b}}\lesssim \|F\|_{X^{s,b-1}}.
\end{align*}
\end{lemme}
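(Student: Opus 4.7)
The plan is to reduce the statement to a one-dimensional (in time) scalar inequality by a series of standard manipulations, and then prove that inequality by a Littlewood--Paley-style split in the time-frequency variable $\tau$.

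First, observe that the spatial weight $\langle n\rangle^s$ commutes with the time integration and with multiplication by $\eta(t)$, so it suffices to treat $s=0$. Next, I would pass to the spatial Fourier side. For each fixed $n\in\Z$, taking Fourier coefficients reduces the operator $\eta(t)\int_0^t S_\alpha(t-t')F(t')\,dt'$ to $\eta(t)e^{it|n|^\alpha}\int_0^t e^{-it'|n|^\alpha}\widehat{F}(t',n)\,dt'$. Setting $g(t):=e^{-it|n|^\alpha}\widehat{F}(t,n)$ and recalling that the $X^{0,b}$-norm equals $\bigl\|\langle\partial_t\rangle^b\bigl(e^{-it|n|^\alpha}\widehat{u}(t,n)\bigr)\bigr\|_{\ell^2_n L^2_t}$ (and similarly for $X^{0,b-1}$), the problem reduces to proving the scalar bound
\begin{equation*}
\Bigl\|\eta(t)\int_0^t g(t')\,dt'\Bigr\|_{H^b_t(\R)}\lesssim \|g\|_{H^{b-1}_t(\R)}
\end{equation*}
uniformly in $g$, for every fixed $1/2<b<1$.

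To prove this scalar estimate I would write $g(t)=\int_\R e^{it\tau}\widehat{g}(\tau)\,d\tau$, so that
\begin{equation*}
\int_0^t g(t')\,dt'=\int_\R \widehat{g}(\tau)\,\frac{e^{it\tau}-1}{i\tau}\,d\tau,
\end{equation*}
and split the $\tau$-integral into the low-frequency region $|\tau|\le 1$ and the high-frequency region $|\tau|>1$. On the high-frequency piece I would further decompose as $e^{it\tau}/(i\tau)-1/(i\tau)$. The first term becomes $\eta(t)h(t)$ with $\widehat{h}(\tau)=\mathbf{1}_{|\tau|>1}\widehat{g}(\tau)/(i\tau)$; then $\|h\|_{H^b_t}^2\le \int \langle\tau\rangle^{2b-2}|\widehat{g}(\tau)|^2\,d\tau=\|g\|_{H^{b-1}}^2$, and multiplication by the Schwartz function $\eta$ is bounded on $H^b_t$. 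The second term is $\eta(t)$ times the constant $-\int_{|\tau|>1}\widehat{g}(\tau)/(i\tau)\,d\tau$, whose modulus is controlled via Cauchy--Schwarz by $\|g\|_{H^{b-1}}\bigl(\int_{|\tau|>1}\langle\tau\rangle^{2-2b}/\tau^2\,d\tau\bigr)^{1/2}$; this last integral converges for every $b<3/2$, hence in particular for $b<1$.

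For the low-frequency piece I would expand
\begin{equation*}
\frac{e^{it\tau}-1}{i\tau}=\sum_{k\ge 0}\frac{(i\tau)^k t^{k+1}}{(k+1)!},
\end{equation*}
which yields $\eta(t)\sum_{k\ge 0}\frac{t^{k+1}}{(k+1)!}\int_{|\tau|\le 1}(i\tau)^k\widehat{g}(\tau)\,d\tau$. The $H^b_t$-norm of $\eta(t)t^{k+1}/(k+1)!$ grows at most like $C^k/k!$ since $\eta$ is Schwartz, while Cauchy--Schwarz gives $|\int_{|\tau|\le 1}\tau^k\widehat{g}(\tau)\,d\tau|\le \|g\|_{H^{b-1}}$ times a uniformly bounded factor; the resulting series is summable. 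The main technical obstacle, and the reason one cannot avoid this split, is the singularity of $(e^{it\tau}-1)/(i\tau)$ at $\tau=0$, which prevents a naive application of Plancherel; the low-frequency Taylor expansion handles it at the price of the bookkeeping above, while the range $b<1$ is exactly what makes the high-frequency weight summable.
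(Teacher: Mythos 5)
Your reduction to the scalar bound and the split into low and high $\tau$ is the right standard strategy, and the high-frequency part is essentially correct (though $\int_{|\tau|>1}\langle\tau\rangle^{2-2b}\tau^{-2}\,d\tau$ converges because $b>1/2$, not because $b<3/2$; it is the lower bound on $b$ that is doing the work there). The genuine gap is in the low-frequency Taylor expansion. You claim $\|\eta(t)\,t^{k+1}/(k+1)!\|_{H^b_t}\lesssim C^k/k!$ ``since $\eta$ is Schwartz,'' but Schwartz decay gives no quantitative control of the seminorms $\|\eta\, t^m\|_{L^2}$ as $m\to\infty$. Already for $\eta(t)=e^{-t^2}$ one finds $\|\eta\, t^m\|_{L^2}\sim (m/(2e))^{m/2}$, which is not $O(m\,C^m)$ (the series happens to remain summable, but your stated bound is false); and for a Schwartz function with slower decay, say $\eta(t)\sim e^{-(\log|t|)^2}$ for large $|t|$, one has $\|\eta\, t^m\|_{L^2}\gtrsim e^{cm^2}$, so $\sum_m \|\eta\,t^m\|_{H^b}/m!$ actually diverges and the Taylor-series argument collapses. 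The bound you invoke is valid only when $\eta$ is compactly supported, in which case $\|\eta\, t^m\|_{H^1}\lesssim m R^m$ with $R$ the radius of the support; that restriction covers every application of the lemma in this paper, but it is neither what your proof nor the hypothesis $\eta\in\mathcal S$ asserts.

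The standard fix avoids the Taylor expansion entirely. Set $\phi(t):=\int_{|\tau|\le 1}\widehat{g}(\tau)\,\frac{e^{it\tau}-1}{i\tau}\,d\tau$. Then $\phi(0)=0$ and $\phi'(t)=\int_{|\tau|\le 1}e^{it\tau}\widehat{g}(\tau)\,d\tau$, so Cauchy--Schwarz with $\langle\tau\rangle^{1-b}\in L^2(|\tau|\le 1)$ gives $\|\phi'\|_{L^\infty}\lesssim \|g\|_{H^{b-1}}$, and by the fundamental theorem of calculus $|\phi(t)|\lesssim |t|\,\|g\|_{H^{b-1}}$. Since $\eta$ and $\eta'$ decay rapidly while $\phi$ grows at most linearly with $\phi'$ uniformly bounded, both $\eta\phi$ and $(\eta\phi)'=\eta'\phi+\eta\phi'$ lie in $L^2$ with norm $\lesssim\|g\|_{H^{b-1}}$. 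This gives $\|\eta\phi\|_{H^1}\lesssim\|g\|_{H^{b-1}}$ and hence the $H^b$ bound for all $b\le 1$, for arbitrary $\eta\in\mathcal S$.
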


Now we are going to derive some linear and bilinear $X^{s,b}$ estimates. Define the set of integers 
$$ 
A_{a,l,N_1,N_2}(r):=\{k\in\Z\,:\,
N_1\leq |k|\leq  2N_1,\,  N_2\leq |a-k|\leq 2N_2,\, ||k|^{\alpha}+|a-k|^{\alpha}-l|\leq r \}
$$
and
$A_{a,l,N}(r):=A_{a,l,N,N}(r).
$ For a dyadic number $N\geq 1$, we denote by $\mathbf{P}_N$ the Fourier projector on
$$ N\leq \langle n\rangle\leq 2N.
$$
For an interval $J\subset\R$, we denote by $\mathbf{P}_J$ the Fourier projector:
$$ \widehat{\mathbf{P}_Jf}(n)=\mathbf{1}_J(n)\widehat{f}(n).
$$
We have the following estimate. 
\begin{lemme}\label{Reduction-1}
	For any finite time interval $I\subset \R$, there exists $C>0$, depending only on $|I|$, such that
	$$ \|S_{\alpha}(t)\mathbf{P}_Nf\|_{L^4(I;L^4(\T) )}^2\leq C \sup_{a,l}\big(\# A_{a,l,N}(1/2)\big)^{1/2}\|\mathbf{P}_Nf\|_{L^2(\T)}^2.
	$$ 
\end{lemme}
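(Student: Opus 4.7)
The approach is the standard reduction of an $L^4$ Strichartz bound to a counting estimate on the dispersion relation, as used by Bourgain for $\alpha=2$. Writing
$$u(t,x) := S_\alpha(t)\mathbf{P}_N f = \sum_{N\leq |n|\leq 2N} c_n\, e^{inx + it|n|^\alpha}, \qquad c_n := \widehat{\mathbf{P}_N f}(n),$$
I would start from the elementary identity $\|u\|_{L^4_I L^4}^4 = \|u^2\|_{L^2_I L^2_x}^2$. It is crucial to use $u^2 = u\cdot u$ (not $|u|^2$), so that the relevant phase is the \emph{sum} $\Lambda(n_1,a) := |n_1|^\alpha + |a-n_1|^\alpha$ appearing in the definition of $A_{a,l,N}$. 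Introducing a Schwartz cutoff $\eta\in\mathcal{S}(\R)$ with $\eta\equiv 1$ on $I$ and support of size $\sim |I|$, the task reduces to bounding $\|\eta u^2\|_{L^2_{t,x}}^2$ by $CK\|\mathbf{P}_N f\|_{L^2}^4$, where $K := \sup_{a,l}\#A_{a,l,N}(1/2)$.

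Expanding $u^2(t,x) = \sum_a e^{iax} U_a(t)$ with
$$U_a(t) = \sum_{n_1\in B_a} c_{n_1} c_{a-n_1}\, e^{it\Lambda(n_1,a)}, \qquad B_a := \{n_1\in\Z : N\leq |n_1|\leq 2N,\ N\leq |a-n_1|\leq 2N\},$$
Plancherel in $x$ and then in $t$ gives
$$\|\eta u^2\|_{L^2_{t,x}}^2 = \sum_{a\in\Z}\int_\R \Bigl|\sum_{n_1\in B_a} c_{n_1}c_{a-n_1}\, \widehat{\eta}\bigl(\tau - \Lambda(n_1,a)\bigr)\Bigr|^2 d\tau.$$
I would group the summation over $n_1$ into the cells $B_{a,l} := \{n_1\in B_a : \Lambda(n_1,a)\in [l, l+1/2)\}$ indexed by $l\in\frac{1}{2}\Z$; each such cell is contained in some $A_{a,l',N}(1/2)$, hence $\#B_{a,l}\leq K$, while Schwartz decay of $\widehat{\eta}$ yields $|\widehat{\eta}(\tau - \Lambda(n_1,a))|\lesssim_M (1+|\tau-l|)^{-M}$ uniformly on $B_{a,l}$ for any $M$.

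The heart of the argument is two successive Cauchy--Schwarz inequalities: one inside each cell $B_{a,l}$, producing a factor $(\#B_{a,l})^{1/2}\leq K^{1/2}$ and exposing $|c_{n_1}|^2|c_{a-n_1}|^2$; and a second over $l$ against the weight $(1+|\tau-l|)^{-M}$, using that $\sum_l (1+|\tau-l|)^{-M}\leq C_M$ uniformly in $\tau$ when $M>1$. These together give the pointwise bound
$$\Bigl|\sum_{n_1\in B_a} c_{n_1}c_{a-n_1}\widehat{\eta}(\tau-\Lambda(n_1,a))\Bigr|^2\lesssim K\sum_l (1+|\tau-l|)^{-M}\sum_{n_1\in B_{a,l}}|c_{n_1}|^2|c_{a-n_1}|^2.$$
Integrating in $\tau$ (using $\int(1+|\tau-l|)^{-M}\,d\tau<\infty$ uniformly in $l$ for $M>1$) collapses the cell decomposition, and summing over $a$ recovers $K\|\mathbf{P}_N f\|_{L^2}^4$ via $\sum_a\sum_{n_1}|c_{n_1}|^2|c_{a-n_1}|^2 = \|\mathbf{P}_N f\|_{L^2}^4$. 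Taking a square root yields the claim, with the constant depending on $|I|$ only through fixed Schwartz norms of $\widehat{\eta}$. The only delicate bookkeeping point is arranging the two Cauchy--Schwarz steps so that $K$ appears with exponent $1/2$ rather than $1$: a naive single application would overshoot.
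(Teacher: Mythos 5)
Your proof is correct and follows essentially the same route as the paper: pass from the $L^4$ norm to $\|u^2\|_{L^2}$, take Plancherel in $x$ to reduce to the one-dimensional sums over each frequency pair summing to $a$, decompose into cells where $\Lambda(n_1,a)$ is localized to length-$1/2$ intervals (each contained in some $A_{a,l,N}(1/2)$), and use one Cauchy--Schwarz inside a cell to produce the factor $K^{1/2}$ and a second across cells against the rapid decay coming from the compactly supported time cutoff. The only cosmetic difference is that you handle the time almost-orthogonality by applying Plancherel in $t$ and then Cauchy--Schwarz in $l$ against the weight $(1+|\tau-l|)^{-M}$, whereas the paper stays with the bilinear form $\int\phi(t)|g_a(t)|^2\,dt$ and invokes Schur's test with the kernel $(1+|l-l'|^2)^{-1}$; these are interchangeable ways to exploit the same almost-orthogonality, and both produce the critical exponent $1/2$ on $K$ rather than $1$.
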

\begin{proof}
We use an almost orthogonality argument in the time variable. 
Without loss of generality, we assume that $I=[0,1]$ and $f=\mathbf{P}_Nf$.  From a direct computation, we have
	\begin{equation}\label{desired}
	\|S_{\alpha}(t)f\|_{L^4(I;L_x^4)}^2=\Big(\sum_{a\in \Z}\|g_a(t)\|_{L_t^2(I)}^2 \Big)^{1/2},
	\end{equation} 
	where
	$$ g_a(t)=\sum_{k\in\Z}\widehat{f}(k)\widehat{f}(a-k)e^{it\varphi_a(k)},\quad \varphi_{a}(k)=|k|^{\alpha}+|a-k|^{\alpha}.
	$$
	We fix $\phi\in C_c^{\infty}(\widetilde{I})$, such that $\phi|_{I}\equiv 1$ where $\widetilde{I}$ is a slight enlargement of $I$. Thus
	\begin{equation*}
	\begin{split}
	\int_I|g_a(t)|^2dt\leq &\int_{\R}\phi(t)\Big|\sum_{k} \widehat{f}(k)\widehat{f}(a-k)e^{it\varphi_a(k)}\Big|^2dt\\
	=&\int_{\R}\phi(t)\Big|\sum_{l}\sum_{k:|\varphi_a(k)-l|\leq \frac{1}{2}} \widehat{f}(k)\widehat{f}(a-k)e^{it\varphi_a(k)} \Big|^2dt\\
	=&\sum_{l,l'}\sum_{|\varphi_a(k)-l|\leq \frac{1}{2}}\sum_{|\varphi_a(k')-l'|\leq \frac{1}{2}}\widehat{f}(k)\widehat{f}(a-k)\ov{\widehat{f}(k')}\ov{\widehat{f}(a-k')}\widehat{\phi}(\varphi_a(k')-\varphi_a(k) )\\
	\leq & C\sum_{l,l'}\frac{1}{1+|l-l'|^2}\sum_{k,k'}\mathbf{1}_{A_{a,l,N}(1/2)}(k)\mathbf{1}_{A_{a,l',N}(1/2)}(k')|F(a,k)F(a,k')|,
	\end{split}
	\end{equation*}
	where $F(a,k)=\widehat{f}(k)\widehat{f}(a-k)$ (here we use a slight abuse of notation : by $|\varphi_a(k)-l|\leq \frac{1}{2}$, we mean 
	$-\frac{1}{2}<\varphi_a(k)-l\leq \frac{1}{2}$).
	\\
	
	Now, by Schur's test, we arrive at 
	$$ \int_I|g_a(t)|^2dt  \leq C \sum_{l}\Big|\sum_{k}\mathbf{1}_{A_{a,l,N}(1/2)}(k)|F(a,k)|\Big|^2.
	$$
	Therefore, by Cauchy-Schwarz, we have
	\begin{equation*}
	\begin{split}
	\eqref{desired}\leq &C\Big( \sum_{a,l}\Big|\sum_{k}\mathbf{1}_{A_{a,l,N}(1/2)}(k)|\widehat{f}(k)\widehat{f}(a-k)|\Big|^2 \Big)^{1/2}\\
	\leq& C\Big(\sum_{l,a}\sum_{k}|\widehat{f}(k)\widehat{f}(a-k)|^2\mathbf{1}_{A_{a,l,N}(1/2)}(k)\#(A_{a,l,N}(1/2)) \Big)^{1/2}\\
	\leq & C\sup_{a,l}\big(\# A_{a,l,N}(1/2)\big)^{1/2}\|f\|^2_{L^2(\T)}.
	\end{split}
	\end{equation*}
	This completes the proof of Lemma \ref{Reduction-1}.
\end{proof}
We shall use the following elementary lemma.
\begin{lemme}\label{counting-lemma}
	Let $I,J$ be two intervals and $\varphi$ be a $C^1$ function, then
	\begin{equation*}
	\begin{split}
	&
	\quad \quad \#\{k\in J\cap \Z:\varphi(k)\in I  \}\leq 1+\frac{|I|}{\inf_{\xi\in J}|\varphi'(\xi)| }.
	\end{split}
	\end{equation*}
\end{lemme}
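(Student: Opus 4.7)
The plan is to prove this by a direct mean value theorem argument. First, I would dispose of the trivial case: if $\inf_{\xi\in J}|\varphi'(\xi)|=0$, the right-hand side is $+\infty$ and the inequality is vacuous, so we may assume $m:=\inf_{\xi\in J}|\varphi'(\xi)|>0$.

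Once $m>0$, since $\varphi\in C^1$ the derivative $\varphi'$ does not vanish on $J$ and, being continuous on an interval, keeps a constant sign there by the intermediate value theorem. Consequently $\varphi$ is strictly monotone on $J$, and the preimage $\varphi^{-1}(I)\cap J$ is itself an interval. The set to be counted is therefore a (possibly empty) block of consecutive integers $k_1<k_2<\cdots<k_N$ contained in $J$.

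Now I would apply the mean value theorem to the pair $(k_1,k_N)$: there exists $\xi\in[k_1,k_N]\subset J$ with
\[
|\varphi(k_N)-\varphi(k_1)|=|\varphi'(\xi)|\,(k_N-k_1)\ge m\,(k_N-k_1).
\]
Since both $\varphi(k_1)$ and $\varphi(k_N)$ lie in $I$, the left-hand side is at most $|I|$, giving $k_N-k_1\le |I|/m$, hence $N-1\le k_N-k_1\le |I|/m$. Rearranging yields $N\le 1+|I|/\inf_{\xi\in J}|\varphi'(\xi)|$, which is the claimed bound. There is really no obstacle here; the only point worth flagging is the observation that monotonicity of $\varphi$ forces the counted integers to be consecutive, so that the MVT can be applied to the extreme pair rather than to each successive pair separately.
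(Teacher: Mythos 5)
Your proof is correct, and since the paper states this lemma as ``elementary'' without giving a proof, there is no paper argument to compare against; yours is the standard mean value theorem counting argument one would expect. One small remark: the monotonicity digression is true but dispensable. Even without knowing the counted integers are consecutive, you can let $k_{\min}$ and $k_{\max}$ be the smallest and largest integers in the set, apply the MVT to that pair to get $k_{\max}-k_{\min}\le |I|/m$, and then observe that the set is contained in $\{k\in\Z : k_{\min}\le k\le k_{\max}\}$, which has exactly $k_{\max}-k_{\min}+1$ elements; the bound $N\le 1+|I|/m$ follows. So the observation you flag as ``the only point worth flagging'' is in fact not needed for the MVT to be applicable to the extreme pair, and the lemma would hold under the weaker hypothesis that $\varphi$ is merely differentiable on the convex hull of $J\cap\Z$.
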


\begin{proposition}\label{counting-1}
	For $r\geq \frac{1}{100}$and $1<\alpha<2$, we have
	$$ \#A_{a,l,N_1,N_2}(r)\leq C \min(N_1,N_2)^{1-\frac{\alpha}{2}}r^{1/2}.
	$$	
\end{proposition}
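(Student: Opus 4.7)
The plan is to reduce the problem to the one-variable counting Lemma \ref{counting-lemma} applied to the phase $\varphi_a(k) := |k|^{\alpha}+|a-k|^{\alpha}$, with the only genuine subtlety being a critical point of $\varphi_a$ at $k = a/2$ in the ``same sign'' case, whose quadratic degeneracy is the source of the exponent $1-\alpha/2$. By the symmetry $k \mapsto a-k$, I may assume $N_1 \leq N_2$, so that $\min(N_1,N_2)=N_1$. Partitioning the admissible $k$ further according to the signs of $k$ and of $a-k$ gives finitely many subregions on each of which $\varphi_a$ is smooth in $k$; it suffices to bound the count on each piece.

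A direct computation gives
$$\varphi_a'(k)=\alpha\bigl(\mathrm{sgn}(k)|k|^{\alpha-1}-\mathrm{sgn}(a-k)|a-k|^{\alpha-1}\bigr), \qquad \varphi_a''(k)=\alpha(\alpha-1)\bigl(|k|^{\alpha-2}+|a-k|^{\alpha-2}\bigr).$$
Since $1<\alpha<2$ and $|k|\sim N_1$, $|a-k|\sim N_2$, I have $\varphi_a''(k)\gtrsim N_1^{\alpha-2}$ throughout the admissible region. When $k$ and $a-k$ have opposite signs, $\varphi_a'(k)$ is a sum of two terms of the same sign, so $\varphi_a$ is monotone with $|\varphi_a'(k)|\gtrsim N_2^{\alpha-1}$. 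When they share a sign, $\varphi_a$ is strictly convex with a unique critical point at $k^{\star}=a/2$, and integrating the lower bound on $\varphi_a''$ yields the quantitative estimate $|\varphi_a'(k)|\gtrsim N_1^{\alpha-2}|k-k^{\star}|$.

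In the opposite-sign regions, Lemma \ref{counting-lemma} applied to an interval of length $2r$ gives $\#A_{a,l,N_1,N_2}(r)\lesssim 1+rN_2^{1-\alpha}$; combined with the trivial bound $\#A\lesssim N_1$, a short case-split according to whether $r$ is smaller or larger than a fixed multiple of $N_1^{\alpha}$ (together with $N_1\leq N_2$) delivers the claim. In the same-sign regions I introduce a scale $\delta>0$ and split the admissible $k$ into $\{|k-k^{\star}|\leq \delta\}$, which contributes at most $2\delta+1$ integers, and its complement, on which $\varphi_a$ is monotone on each of the two halves with $|\varphi_a'|\gtrsim N_1^{\alpha-2}\delta$; Lemma \ref{counting-lemma} yields at most $C(1+rN_1^{2-\alpha}/\delta)$ per half. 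Optimizing with $\delta=(rN_1^{2-\alpha})^{1/2}$ produces $\#A\lesssim r^{1/2}N_1^{1-\alpha/2}+1$, and the additive ``$+1$'' is absorbed using the assumption $r\geq 1/100$ (together with $N_1\geq 1$), which forces $r^{1/2}N_1^{1-\alpha/2}\gtrsim 1$.

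The main difficulty is the same-sign case when $N_1\sim N_2$, where $k^{\star}=a/2$ lies inside the dyadic range and a naive monotonicity estimate on $\varphi_a'$ alone would yield only the too-weak bound $1+rN_1^{1-\alpha}$. The improved exponent $1-\alpha/2$ on $N_1$ and, crucially, the gain of a square root $r^{1/2}$ rather than $r$ arise from exploiting the quadratic vanishing of $\varphi_a$ at its critical point, which is in turn guaranteed by the uniform convexity bound $\varphi_a''\gtrsim N_1^{\alpha-2}$ valid in the range $1<\alpha<2$.
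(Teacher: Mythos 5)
Your overall plan and the optimization at scale $\delta$ in the balanced case $N_1 \sim N_2$ reproduce the paper's argument, but the step that ``integrating the lower bound on $\varphi_a''$'' gives $|\varphi_a'(k)| \gtrsim N_1^{\alpha-2}|k-k^\star|$ in the same-sign region is not valid when $N_1 \ll N_2$. The bound $\varphi_a''(\xi) \gtrsim N_1^{\alpha-2}$ only holds for $\xi$ in the admissible dyadic block, whereas when $N_1 \ll N_2$ the critical point $k^\star = a/2 \sim N_2$ lies far outside it, and the segment joining $k^\star$ to $k$ passes through points $\xi \sim N_2$ where $\varphi_a''(\xi) \sim N_2^{\alpha-2} \ll N_1^{\alpha-2}$ (since $\alpha<2$). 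Concretely, take $\alpha=3/2$, $a=101$, $k=1$: then $|\varphi_a'(1)| = \alpha(10-1)\approx 13.5$, while your claimed lower bound $N_1^{\alpha-2}|k-a/2| = 1^{-1/2}\cdot 49.5 \approx 49.5$ exceeds the actual value, so the inequality is simply false in this regime.

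The gap is easy to repair and does not affect your final count. When $N_1 \ll N_2$ and $k$, $a-k$ have the same sign, the high-frequency term alone dominates the first derivative: $|\varphi_a'(k)| = \alpha\big||k|^{\alpha-1}-|a-k|^{\alpha-1}\big| \sim N_2^{\alpha-1}$, which is exactly the bound you already use in the opposite-sign region, so this case folds into that analysis with no $\delta$-split. This is, in effect, how the paper organizes the proof: it first treats $N_1 \ll N_2$ with the single uniform estimate $|\varphi_a'|\gtrsim N_2^{\alpha-1}$ (valid irrespective of sign, then combined with the trivial bound $\#A\lesssim N_1$), and reserves the near-$a/2$/sign decomposition and the optimization over the auxiliary scale for the comparable regime $N_1\sim N_2$ only. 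You should restrict the convexity-integration estimate to that regime and route the remaining same-sign configurations through the monotonicity bound instead.
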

\begin{proof}	
	First we assume that $N_1\ll N_2$ (a similar argument applies in the case  $N_2\ll N_1$). Then for $\varphi_a(\xi)=|\xi|^{\alpha}+|a-\xi|^{\alpha}$, we have $|\varphi_a'(\xi)|\gtrsim N_2^{\alpha-1}$. From Lemma \ref{counting-lemma}, we have $\#A_{a,l,N_1,N_2}(r)\lesssim rN_2^{-(\alpha-1)}+1$. On the other hand, we have the trivial bound $\#A_{a,l,N_1,N_2}(r)\lesssim N_1$. We can conclude in this case since
	$$ \min(N_1, rN_2^{-(\alpha-1)}+1 )\lesssim N_1^{1-\frac{\alpha}{2}}r^{1/2}.
	$$
	Now we assume that $N_1\sim N_2\sim N$. If $r\gtrsim N^{\alpha}$, we have the trivial estimate
	$$ \#A_{a,l,N}(r)\lesssim N\lesssim N^{1-\frac{\alpha}{2}}r^{\frac{1}{2}}.
	$$
	Now we assume that $r\ll N^{\alpha}$.
	 Let $0<\theta<1$ to be chosen later. We have 
	$$ \# A_{a,l,N}(r)=\#A_{1}(\theta)+\#A_{2}(\theta)+\#A_{3}(\theta),$$
	where
	\begin{align*}
	&  A_{1}(\theta)=A_{a,l,N}(r)\cap \{k: |k-a/2|\leq \theta^{-1}  \},\\
	& A_{2}(\theta)=A_{a,l,N}(r)\cap \{k: |k-a/2|>\theta^{-1},\,k(a-k)<0 \},\\
	 &A_3({\theta})=A_{a,l,N}(r)\cap \{k:|k-a/2|>\theta^{-1},\, k(a-k)\geq 0  \}.
	\end{align*}
We have trivially that $\#A_1(\theta)\leq 2\theta^{-1}$.  If $\xi$ and $a-\xi$ have different signs, we have
$$ |\varphi_a'(\xi)|=\alpha\big||\xi|^{\alpha-1}+|a-\xi|^{\alpha-1} \big|\gtrsim N^{\alpha-1}.
$$
Thus
$ \#A_2(\theta)\lesssim rN^{1-\alpha}.
$
If $\xi$ and $a-\xi$ have the same signs, we deduce that 
	$$ |\varphi_a'(\xi)|= \alpha| |\xi|^{\alpha-1}-|a-\xi|^{\alpha-1}|\gtrsim \frac{|2\xi-a|}{\max\{ |\xi|^{2-\alpha}, |a-\xi|^{2-\alpha}\}  }\geq \frac{\theta^{-1}}{N^{2-\alpha}},
	$$
	hence $\#A_3(\theta)\lesssim r\theta N^{2-\alpha}$.
	 Therefore,
	\begin{align}\label{!!!!}
	 \#A_{a,l,N}(r)\lesssim \theta^{-1}+r\theta N^{2-\alpha}+rN^{1-\alpha}.
		\end{align}
	If $r\ll N^{\alpha}$, we choose $\theta$ such that the first two terms have the same size. Therefore, $\theta=r^{-1/2}N^{\frac{\alpha}{2}-1}$. 
	It follows that
	$ \# A_{a,l,N}(r)\leq N^{1-\frac{\alpha}{2}}r^{1/2}, 
	$ where we used the fact that $r\ll N^{\alpha}$, in order to estimate the third term in the r.h.s. of \eqref{!!!!}.
	This completes the proof of Proposition \ref{counting-1}.
\end{proof}
\begin{corollaire}\label{Strichartz-bilinear}
	Let $1<\alpha\leq 2$. We have the following linear and bilinear Strichartz estimates:
	\begin{equation*}
		\begin{split}
			& (1)\quad \|S_{\alpha}(t)\mathbf{P}_Nf\|_{L^4(I;L^4(\T))}
			\leq C N^{\frac{1}{2}\left(\frac{1}{2}-\frac{\alpha}{4}\right)}
			\|\mathbf{P}_Nf\|_{L^2(\T)}.\\
			& (2)\quad \|S_{\alpha}(t)\mathbf{P}_Mf\cdot S_{\alpha}(t)\mathbf{P}_Ng\|_{L^2(I;L^2(\T))}
			\leq C\min\{M,N\}^{\frac{1}{2}-\frac{\alpha}{4}}
			\|\mathbf{P}_Mf\|_{L^2(\T)}\|\mathbf{P}_Ng\|_{L^2(\T)}.
		\end{split}
	\end{equation*}
	Here the constant $C$ depends on $\alpha$ and on the length of $I$.
\end{corollaire}

\begin{proof}
	The estimate (1) is the direct consequence of Lemma~\ref{Reduction-1} and
	Proposition~\ref{counting-1}, applied with $r=1$.
	
	We now prove (2). By symmetry, it is enough to consider arbitrary dyadic $M,N$.
	Set
	\[
	u(t)=S_{\alpha}(t)\mathbf{P}_Mf,\qquad
	v(t)=S_{\alpha}(t)\mathbf{P}_Ng .
	\]
	For each $a\in\Z$, write the $a$-th Fourier coefficient in $x$ of $uv$ as
	\[
	G_a(t)
	=
	\sum_{k\in\Z}
	\widehat{\mathbf{P}_Mf}(k)
	\widehat{\mathbf{P}_Ng}(a-k)
	e^{it\varphi_a(k)},
	\qquad
	\varphi_a(k)=|k|^{\alpha}+|a-k|^{\alpha}.
	\]
	Then
	\[
	\|uv\|_{L^2(I;L^2(\T))}^2
	\lesssim
	\sum_{a\in\Z}\|G_a\|_{L^2(I)}^2 .
	\]
	Arguing exactly as in the proof of Lemma~\ref{Reduction-1}, using an almost
	orthogonality decomposition in the time frequency, we obtain
	\[
	\|G_a\|_{L^2(I)}^2
	\lesssim
	\sum_{l\in\Z}
	\left(
	\sum_{k}
	\mathbf{1}_{A_{a,l,M,N}(1/2) }(k)\left|
	\widehat{\mathbf{P}_Mf}(k)
	\widehat{\mathbf{P}_Ng}(a-k)
	\right|
	\right)^2 .
	\]
	Hence, by Cauchy--Schwarz,
	\[
	\begin{split}
		\|uv\|_{L^2(I;L^2(\T))}^2
		&\lesssim
		\sum_{a,l}
		\#A_{a,l,M,N}\Big(\frac{1}{2}\Big)
		\sum_{k\in A_{a,l,M,N}(1/2)}
		\left|
		\widehat{\mathbf{P}_Mf}(k)
		\widehat{\mathbf{P}_Ng}(a-k)
		\right|^2\\
		&\lesssim
		\sup_{a,l}\#A_{a,l,M,N}\Big(\frac{1}{2}\Big)
		\sum_{a,l}\sum_{k\in A_{a,l,M,N}(1/2)}
		\left|
		\widehat{\mathbf{P}_Mf}(k)
		\widehat{\mathbf{P}_Ng}(a-k)
		\right|^2 .
	\end{split}
	\]
	For fixed $a$ and $k$, there are only $O(1)$ integers $l$ such that
	$k\in A_{a,l,M,N}(1/2)$. Therefore
	\[
	\sum_{a,l}\sum_{k\in A_{a,l,M,N}(1/2)}
	\left|
	\widehat{\mathbf{P}_Mf}(k)
	\widehat{\mathbf{P}_Ng}(a-k)
	\right|^2
	\lesssim
	\|\mathbf{P}_Mf\|_{L^2(\T)}^2
	\|\mathbf{P}_Ng\|_{L^2(\T)}^2 .
	\]
	Using Proposition~\ref{counting-1}, we get
	\[
	\sup_{a,l}\#A_{a,l,M,N}(1/2)
	\lesssim
	\min\{M,N\}^{1-\frac{\alpha}{2}}.
	\]
	 Consequently,
	\[
	\|uv\|_{L^2(I;L^2(\T))}^2
	\lesssim
	\min\{M,N\}^{1-\frac{\alpha}{2}}
	\|\mathbf{P}_Mf\|_{L^2(\T)}^2
	\|\mathbf{P}_Ng\|_{L^2(\T)}^2,
	\]
	which gives (2) after taking the square root.
	This completes the proof of Corollary~\ref{Strichartz-bilinear}.
\end{proof}


\begin{proposition}\label{bilinear-spacetime}
Let $1<\alpha\leq 2$.	For $u_1,u_2\in L^2(\R\times \T)$ such that 
	$$ \widehat{u}_j(\tau,k)=\mathbf{1}_{K_j\leq | \tau-|k|^{\alpha}| < 2K_j}\mathbf{1}_{N_j\leq |k|<2N_j}\widehat{u}_j(\tau,k),\quad j=1,2,
	$$
	we have the estimate 
	\begin{align}\label{bilinearXsb}
	\|u_1u_2\|_{L^2}\notag  \lesssim \min (N_1,N_2)^{\frac{1}{2}-\frac{\alpha}{4}}\cdot
	 \min(K_1,K_2)^{1/2} \max(K_1,K_2)^{1/4} \|u_1\|_{L^2} \cdot \|u_2\|_{L^2}.
	\end{align}
\end{proposition}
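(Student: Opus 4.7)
The plan is to work directly on the space-time Fourier side and combine Cauchy--Schwarz with the sharp counting in Proposition \ref{counting-1}. By Plancherel, it suffices to estimate $\|\widehat{u_1 u_2}\|_{\ell^2_k L^2_\tau}$; writing the convolution,
\[
\widehat{u_1 u_2}(\tau,k)=\sum_{k_1\in\Z}\int_\R \widehat{u_1}(\tau_1,k_1)\,\widehat{u_2}(\tau-\tau_1,k-k_1)\,d\tau_1,
\]
the support hypotheses localise $(k_1,\tau_1)$ to the set where $|k_1|\sim N_1$, $|k-k_1|\sim N_2$, $|\tau_1-|k_1|^\alpha|\lesssim K_1$ and $|(\tau-\tau_1)-|k-k_1|^\alpha|\lesssim K_2$. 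For fixed $(\tau,k)$, I would first observe that $\tau_1$ lies in the intersection of two intervals of lengths $\sim K_1$ and $\sim K_2$, hence in an interval of length $\lesssim \min(K_1,K_2)$, while on this interval the constraint on $k_1$ becomes $||k_1|^\alpha+|k-k_1|^\alpha-\tau|\lesssim \max(K_1,K_2)$.

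This last condition matches the hypothesis of Proposition~\ref{counting-1} with $r=C\max(K_1,K_2)$, giving at most $C\min(N_1,N_2)^{1-\alpha/2}\max(K_1,K_2)^{1/2}$ admissible values of $k_1$. Applying Cauchy--Schwarz to the $(k_1,\tau_1)$ sum-integral restricted to this support set then produces
\[
|\widehat{u_1 u_2}(\tau,k)|^2\lesssim \min(N_1,N_2)^{1-\tfrac{\alpha}{2}}\min(K_1,K_2)\max(K_1,K_2)^{1/2}\,G(\tau,k),
\]
where $G(\tau,k)=\sum_{k_1}\int|\widehat{u_1}(\tau_1,k_1)|^2|\widehat{u_2}(\tau-\tau_1,k-k_1)|^2\,d\tau_1$. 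A standard Fubini-plus-change-of-variables argument gives $\sum_k\int G(\tau,k)\,d\tau=\|u_1\|_{L^2}^2\|u_2\|_{L^2}^2$, and summing/integrating in $(\tau,k)$ and taking square roots yields exactly \eqref{bilinearXsb}.

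The only subtle point is keeping the two modulation parameters straight: $\max(K_1,K_2)$ is the scale that feeds into the counting lemma (because the sum of the two modulation restrictions forces $|k_1|^\alpha+|k-k_1|^\alpha$ to lie in a window of that size once $\tau_1$ has been eliminated), whereas $\min(K_1,K_2)$ is the separate, unavoidable length of the $\tau_1$-interval itself. The remaining nuisance is the regime $\max(K_1,K_2)<1/100$, which is outside the range of Proposition~\ref{counting-1}; there the trivial monotonicity $\#A(r)\le \#A(1/100)$ combined with the fact that $\min(K)^{1/2}\max(K)^{1/4}=O(1)$ gives the claim at once. I do not expect any genuine obstacle — the entire content of the estimate is already packaged in the sharpness of the counting bound for the symbol $|k_1|^\alpha+|k-k_1|^\alpha$.
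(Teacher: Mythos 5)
Your proof is correct and is essentially the paper's proof. The paper bounds $|\int u_1 u_2 v\,dt\,dx|$ by duality rather than $\|\widehat{u_1 u_2}\|_{\ell^2_k L^2_\tau}$ by Plancherel, but after that the two arguments are identical: Cauchy--Schwarz in $(\tau_1,k_1)$ restricted to the support, where eliminating $\tau_1$ gives an interval of length $\lesssim\min(K_1,K_2)$ and the sum of the two modulation constraints localizes $|k_1|^\alpha+|k-k_1|^\alpha$ to a window of size $\lesssim\max(K_1,K_2)$, followed by Proposition~\ref{counting-1} with $r\sim\max(K_1,K_2)$ and Fubini.

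One caveat: your patch for the regime $\max(K_1,K_2)<1/100$ does not actually close it. Replacing $r$ by $1/100$ and using $\#A(r)\le\#A(1/100)\lesssim\min(N_1,N_2)^{1-\alpha/2}$ only yields the estimate without the factor $\max(K_1,K_2)^{1/4}$; since that factor is $<1$ in this regime, the target is \emph{stronger}, not weaker, and the monotonicity argument is vacuous. In fact the stated inequality fails for very small modulations: taking $u_1=u_2$ with a single spatial mode $k=N$, $N_1=N_2=N$, and $\widehat u_j=\mathbf{1}_{k=N}\mathbf{1}_{K\le|\tau-N^\alpha|<2K}$, one computes $\|u_j\|_{L^2}^2\sim K$ and $\|u_1u_2\|_{L^2}\sim K^{3/2}$, so the inequality reduces to $K\gtrsim N^{\alpha-2}$, which breaks down when $K\ll N^{\alpha-2}$. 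The intended (and only used) range is dyadic modulations $K_j\ge 1$, for which $r\ge1/100$ is automatic and $K\ge 1>N^{\alpha-2}$ trivially; the paper leaves this restriction implicit as well, so the gap is cosmetic --- but your proposed fix is not the right way to fill it.
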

\begin{proof}
	By duality, it is sufficient to show that for any $v\in L^2(\R\times \T)$, $\|v\|_{L^2}=1$, we have
	\begin{equation}\label{Prop-2.7}
	\Big|\int_{\R\times \T}u_1u_2vdxdt\Big|\leq \min(N_1,N_2)^{\frac{1}{2}-\frac{\alpha}{4}}\cdot 
	 \min(K_1,K_2)^{1/2} \max(K_1,K_2)^{1/4}
	\|u_1\|_{L^2}\|u_2\|_{L^2}.
	\end{equation}
	The left hand-side of \eqref{Prop-2.7} can be written as 
	\begin{equation}\label{Prop2.7-0}
	\Big|\int_{\tau_1+\tau_2+\tau_3=0}\sum_{k_1+k_2+k_3=0}\widehat{u_1}(\tau_1,k_1)\widehat{u_2}(\tau_2,k_2)\widehat{v}(\tau_3,k_3)\Big|.
	\end{equation} 
	By the Cauchy-Schwarz inequality, \eqref{Prop2.7-0} can be bounded by 
	\begin{equation*}\label{Prop2.7-1}
	\begin{split}
	&\|\widehat{u_1}\|_{L_{\tau,k}^2}\|\widehat{u_2}\|_{L_{\tau,k}^2} \|\widehat{v}\|_{L_{\tau,k}^2}\cdot \sup_{(\tau_3,k_3)}(\mathrm{mes}(A(\tau_3,k_3)))^{1/2},
	\end{split}
	\end{equation*}
	where
	\begin{equation*}
	\begin{split}
	A(\tau_3,k_3)=&\{(\tau_1,k_1):K_1\leq |\tau_1-|k_1|^{\alpha}|<2K_1,  K_2\leq |\tau_3+\tau_1+|k_3+k_1|^{\alpha} |< 2K_2  \}\\
	\cap & 
	\{(\tau_1,k_1):  N_1\leq |k_1|<2N_1, N_2\leq |k_3+k_1|<2N_2 \}.
	\end{split}
	\end{equation*}
	Eliminating $\tau_1$, we can write 
	$
	A(\tau_3,k_3)\leq \min(K_1,K_2)\#B(k_3),
	$
	where
	\begin{equation*}
	\begin{split}
	B(\tau_3,k_3)=&\{k_1: N_1\leq|k_1|<2N_1, N_2\leq |k_3+k_1|<2N_2\}\\
	\cap &\{k_1: |\tau_3+|k_1|^{\alpha}+|k_3+k_1|^{\alpha}|\lesssim\max(K_1,K_2)  \}.
	\end{split}
	\end{equation*}
	Applying Proposition~\ref{counting-1}, we have $\# B(\tau_3,k_3)\lesssim \min(N_1,N_2)^{1-\frac{\alpha}{2}}\max(K_1,K_2)^{1/2}. $
	Therefore,
	$$ \mathrm{mes}(A(\tau_3,k_3))^{1/2}\leq \min(K_1,K_2)^{1/2}\max(K_1,K_2)^{1/4}\cdot \min(N_1,N_2)^{\frac{1}{2}-\frac{\alpha}{4}}
	$$
	and we obtain \eqref{Prop-2.7}. This completes the proof of Proposition~\ref{bilinear-spacetime}.
\end{proof}	
\begin{corollaire}\label{bilinear-Xsb}
	Let $1<\alpha\leq 2$ and set
	\[
	s_0=\frac{1}{2}-\frac{\alpha}{4}.
	\]
	For any $s\geq s_0$ and dyadic numbers $N\gg M$, we have
	\begin{equation*}
		\begin{split}
			& (1)\quad
			\|\mathbf{P}_Nf\|_{L_{t,x}^4}
			\lesssim
			N^{\frac{s}{2}}\|\mathbf{P}_Nf\|_{X^{0,\frac{3}{8}}}\,,\\
			& (2)\quad
			\|\mathbf{P}_Nf\cdot \mathbf{P}_Mg\|_{L_{t,x}^2}
			\lesssim
			M^{s}\|\mathbf{P}_Nf\|_{X^{0,\frac{3}{8}}}
			\|\mathbf{P}_Mg\|_{X^{0,\frac{3}{8}}}\,.
		\end{split}
	\end{equation*}
	Moreover, for any $2<q<\infty$ and any $b>\frac{1}{q}$, one has the mixed estimate
	\begin{equation}\label{mixed-typeI-typeII}
		\|\mathbf{P}_Nf\cdot \mathbf{P}_Mg\|_{L_{t,x}^2}
		\lesssim_{q,b}
		\|\mathbf{P}_Nf\|_{X^{0,b}}
		\|\mathbf{P}_Mg\|_{L_t^qL_x^\infty},
	\end{equation}
	and the same estimate holds with the two factors interchanged.
\end{corollaire}

\begin{proof}
	We first prove (1). Let $u=\mathbf{P}_Nf$. We decompose $u$ dyadically in
	modulation:
	\[
	u=\sum_{K\geq 1}u_K,
	\qquad
	u_K=Q_Ku,
	\]
	where $Q_K$ is defined by
	$$ \widehat{u_K}(\tau,n)=\mathbf{1}_{\langle\tau-|n|^{\alpha}\rangle\sim K }\widehat{u}(\tau,n),
	$$
		with the usual convention that $K=1$ denotes the region
	$\langle \tau-|n|^\alpha\rangle\lesssim 1$. 
	Write
	\[
	\|u\|_{L^4_{t,x}}^2=\|u^2\|_{L^2_{t,x}},
	\]
	we use Proposition~\ref{bilinear-spacetime} and obtain
	\[
	\begin{split}
		\|u\|_{L^4_{t,x}}^2
		&\leq
		\sum_{K_1,K_2}
		\|u_{K_1}u_{K_2}\|_{L^2_{t,x}}\\
		&\lesssim
		N^{s_0}
		\sum_{K_1,K_2}
		\min(K_1,K_2)^{1/2}
		\max(K_1,K_2)^{1/4}
		\|u_{K_1}\|_{L^2_{t,x}}
		\|u_{K_2}\|_{L^2_{t,x}}.
	\end{split}
	\]
	Set
	\[
	a_K=K^{\frac38}\|u_K\|_{L^2_{t,x}}.
	\]
	If $K_1\leq K_2$, then
	\[
	K_1^{1/2}K_2^{1/4}
	\|u_{K_1}\|_{L^2}
	\|u_{K_2}\|_{L^2}
	=
	\left(\frac{K_1}{K_2}\right)^{1/8}
	a_{K_1}a_{K_2}.
	\]
	The same bound with $K_1$ and $K_2$ exchanged holds when $K_2\leq K_1$.
	Thus
	\[
	\|u\|_{L^4_{t,x}}^2
	\lesssim
	N^{s_0}
	\sum_{K_1,K_2}
	\left(
	\frac{\min(K_1,K_2)}{\max(K_1,K_2)}
	\right)^{1/8}
	a_{K_1}a_{K_2}.
	\]
	 Schur's test therefore gives
	\[
	\sum_{K_1,K_2}
	\left(
	\frac{\min(K_1,K_2)}{\max(K_1,K_2)}
	\right)^{1/8}
	a_{K_1}a_{K_2}
	\lesssim
	\sum_K a_K^2.
	\]
	Consequently,
	\[
	\|u\|_{L^4_{t,x}}^2
	\lesssim
	N^{s_0}
	\sum_KK^{\frac34}\|u_K\|_{L^2_{t,x}}^2
	\lesssim
	N^{s_0}\|u\|_{X^{0,\frac38}}^2.
	\]
	Taking the square root gives
	\[
	\|\mathbf{P}_Nf\|_{L^4_{t,x}}
	\lesssim
	N^{\frac{s_0}{2}}\|\mathbf{P}_Nf\|_{X^{0,\frac38}}.
	\]
	Since $s\geq s_0$, this proves (1).
	
	The proof of (2) follows from the same argument using (2) of Corollary \ref{Strichartz-bilinear}, and we omit the detail.

	It remains to prove \eqref{mixed-typeI-typeII}. Let
	\[
	p=\frac{2q}{q-2},
	\qquad
	\frac1p+\frac1q=\frac12.
	\]
	By H\"older's inequality,
	\[
	\|\mathbf{P}_Nf\cdot \mathbf{P}_Mg\|_{L^2_{t,x}}
	\leq
	\|\mathbf{P}_Nf\|_{L_t^pL_x^2}
	\|\mathbf{P}_Mg\|_{L_t^qL_x^\infty}.
	\]
	Since $b>\frac1q=\frac12-\frac1p$, by Minkowski (since $p\geq 2$) and the one-dimensional Sobolev embedding in
	time gives
	\begin{align*}
	\|\mathbf{P}_Nf\|_{L_t^pL_x^2}
	=
	&\|S_\alpha(-t)\mathbf{P}_Nf\|_{L_t^pL_x^2}\\
	\leq &\|S_{\alpha}(-t)\mathbf{P}_Nf\|_{L_x^2L_t^p}\\ 
	\lesssim
	&\|S_\alpha(-t)\mathbf{P}_Nf\|_{L_x^2H_t^b}
	=
	\|\mathbf{P}_Nf\|_{X^{0,b}}.
		\end{align*}
	This proves \eqref{mixed-typeI-typeII}. The proof of corollary \ref{bilinear-Xsb} is now complete.

\end{proof}

Another consequence of Proposition \ref{bilinear-spacetime} is the following trilinear $X^{s,b}$ estimate, which yields the deterministic local well-posedness result in \cite{Cho}.
\begin{corollaire}\label{Trilinear}
Let $1<\alpha\leq 2$.	For $s\geq \frac{1}{2}-\frac{\alpha}{4}$, $0<\epsilon\ll1$, we have
	$$ \|u_1\ov{u_2}u_3\|_{X^{s,-\frac{1}{2}+\epsilon}}\lesssim \|u_1\|_{X^{s,\frac{3}{8}}}\|u_2\|_{X^{s,\frac{3}{8}}}\|u_3\|_{X^{s,\frac{3}{8}}}.
	$$	
	\end{corollaire}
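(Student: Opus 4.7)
By duality, the estimate reduces to controlling
$$J := \int_{\R \times \T} u_1 \bar u_2 u_3 \bar v \, dt \, dx$$
for $v$ on the unit ball of $X^{-s, \frac{1}{2} - \epsilon}$. I perform a Littlewood--Paley decomposition of each factor, $u_j = \sum_{N_j} \mathbf{P}_{N_j} u_j$ and $v = \sum_N \mathbf{P}_N v$, reducing $J$ to a sum over dyadic quadruples $(N_1, N_2, N_3, N)$. The constraint $n_1 - n_2 + n_3 - n = 0$ coming from integration over $\T$ forces the two largest among $N_1, N_2, N_3, N$ to be comparable, and by the symmetry $u_1 \leftrightarrow u_3$ I may assume $N_1 \geq N_3$. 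This leaves two regimes: the off-diagonal case $N \sim N_1 \geq N_2, N_3$ and the diagonal case $N_1 \sim N_2 \geq N_3, N$.

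In either regime I apply Cauchy--Schwarz in $L^2(dt\, dx)$ to pair the four pieces into two products, then bound each product by Corollary \ref{bilinear-Xsb}(2) when the two paired frequencies are far apart, or by H\"older together with Corollary \ref{Strichartz-bilinear}(1) when they are comparable. Each pairing yields a gain equal to the smaller of the two paired frequencies raised to the power $s = \frac{1}{2} - \frac{\alpha}{4}$; after converting the $X^{0, 3/8}$ norms back to the target $X^{s, 3/8}$ and $X^{-s, \frac{1}{2}-\epsilon}$ scales via $\|\mathbf{P}_{N_j} u_j\|_{X^{0, 3/8}} = N_j^{-s} \|\mathbf{P}_{N_j} u_j\|_{X^{s, 3/8}}$ and the analogous identity for $v$, the gains exactly balance the regularity bookkeeping and produce an $O(1)$ bound per dyadic quadruple.

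To sum the resulting geometric series in the four dyadic parameters I exploit two sources of slack. First, the strict gap $\frac{3}{8} < \frac{1}{2} - \epsilon$ lets me transfer a small amount of modulation regularity from $v$ into the bilinear estimates. Second, Lemma \ref{bound:resonance} gives the resonance bound $\max(L_1, L_2, L_3, L) \gtrsim |n_1 - n_2||n_2 - n_3||n|_{\max}^{\alpha - 2}$, which, once a modulation decomposition $L_j$ is inserted inside Proposition \ref{bilinear-spacetime}, contributes a small negative power of the dominant dyadic scale. Combining these two gains promotes the per-quadruple $O(1)$ estimate to one decaying as $(\text{some dyadic ratio})^{-\delta}$ with $\delta > 0$, ensuring absolute summability.

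The main obstacle is the fully resonant configuration $N_1 \sim N_2 \gg N_3 \sim N$, where any Cauchy--Schwarz pairing must group two high and two low frequencies, and Corollary \ref{bilinear-Xsb}(2) provides no intrinsic dyadic gain in the dominant high--high pair. Only the resonance lower bound from Lemma \ref{bound:resonance}, which forces at least one of the four modulations to be $\gtrsim N_1^{\alpha - 1} N_3$, supplies the missing summability in $N_1$. Organizing the modulation bookkeeping consistently across all permutations of $\{u_1, \bar u_2, u_3, \bar v\}$ is the technical heart of the argument; the overall scheme otherwise follows the framework of Bourgain \cite{bourgain}.
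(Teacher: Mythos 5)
The paper does not actually give a proof of Corollary~\ref{Trilinear}; it is simply announced as a consequence of Proposition~\ref{bilinear-spacetime} (and references \cite{Cho}). So there is nothing in the paper to compare against, and I will assess your argument on its own.

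Your overall architecture --- duality, Littlewood--Paley decomposition of all four factors, Cauchy--Schwarz pairing into two bilinear pieces, and application of Proposition~\ref{bilinear-spacetime} / Corollary~\ref{bilinear-Xsb} after converting between the $X^{0,3/8}$ scale and the target regularities --- is the standard and essentially correct skeleton, and the bookkeeping showing $O(1)$ per dyadic block at the endpoint $s=\tfrac12-\tfrac{\alpha}{4}$ is right.

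The step that is not sound is the use of Lemma~\ref{bound:resonance} to recover summability in the hard case $N_1 \sim N_2 \gg N_3 \sim N$. That lemma only gives a nonzero lower bound when $\{n_1,n_3\}\neq\{n_2,n\}$. But Corollary~\ref{Trilinear} is stated for the \emph{full} product $u_1\ov{u_2}u_3$, with no Wick-ordering; in particular it includes the pairings $n_1=n_2$ (forcing $n_3=n$) and $n_3=n_2$ (forcing $n_1=n$). On these configurations $\Phi(\ov{n})=0$ identically, so no modulation is forced to be large and the claimed gain "$\gtrsim N_1^{\alpha-1}N_3$" simply does not exist --- indeed, what Lemma~\ref{bound:resonance} provides off the diagonal is only $|\Phi|\gtrsim|n_1-n_2|\,N_1^{\alpha-1}$ with $|n_1-n_2|=|n-n_3|\lesssim N_3$, which ranges all the way down to $0$; you are implicitly assuming $|n_1-n_2|\sim N_3$. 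The resonant pieces $n_1=n_2$ and $n_3=n_2$ therefore need a separate treatment (they are essentially $\big(\int_{\T}u_1\ov{u_2}\,dx\big)u_3$ and $\big(\int_{\T}u_3\ov{u_2}\,dx\big)u_1$, which can be handled directly in the spirit of Proposition~\ref{trilinear-N0}), and for the off-diagonal but nearly resonant blocks $1\le|n_1-n_2|\ll N_3$ the summability must come from some mechanism other than the resonance bound alone. As written, your argument has a genuine gap at precisely the point you identify as "the technical heart."

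A second, smaller, concern: the claim that the slack $\tfrac38<\tfrac12-\epsilon$ in the modulation exponent of $v$ "transfers" into a dyadic frequency gain is not automatic. The inequality $\|\mathbf{P}_Nv\|_{X^{0,3/8}}\le\|\mathbf{P}_Nv\|_{X^{0,1/2-\epsilon}}$ carries no power of $N$ by itself; one only extracts a frequency gain after correlating modulation size with $\Phi$, which brings you back to the resonance issue above. You should either make that correlation explicit (splitting on the dominant modulation and invoking the improved constant $\min(K_1,K_2)^{1/2}\max(K_1,K_2)^{1/4}$ in Proposition~\ref{bilinear-spacetime}) or drop the claim.
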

\subsection{Probabilistic estimates}
We present two probabilistic lemmas related to the Gaussian random variables. Recall that 
$(g_n)_{n\in\Z}$ denotes a family of independent standard complex-valued Gaussian random variables on a probability space $(\Omega,\mathcal{F},\mathbb{P})$. 
\begin{lemme}[Wiener chaos estimates]\label{Wiener-Chaos}
Let $c:\Z^k\rightarrow\C$. Set
$$ 
S(\omega)=\sum_{(n_1,\cdots,n_k)\in \Z^k}c(n_1,\cdots, n_k)g_{n_1}(\omega)\cdots g_{n_k}(\omega).	
$$
Suppose that $S\in L^2(\Omega)$.  Then there is a constant $C_k$ such that for every $p\geq 2$,
$$
\|S\|_{L^p(\Omega)}\leq C_k\, p^{\frac{k}{2}}\|S\|_{L^2(\Omega)}.
$$
 \end{lemme}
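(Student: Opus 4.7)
The strategy is to realize $S$ as an element of the direct sum of the first $k$ Wiener chaoses and then invoke Nelson's hypercontractivity. First, write each complex Gaussian as $g_n=\frac{1}{\sqrt{2}}(X_n+iY_n)$ with $X_n,Y_n$ independent real standard Gaussians; then the multilinear expression defining $S$ becomes a polynomial of total degree at most $k$ in a family of independent real standard Gaussians. Expanding each monomial in the Hermite basis, we may write $S=\sum_{j=0}^{k}S_j$, where $S_j$ belongs to the $j$-th homogeneous Wiener chaos $\mathcal{H}_j$, and the pieces are mutually orthogonal in $L^2(\Omega)$, so in particular $\|S_j\|_{L^2(\Omega)}\leq \|S\|_{L^2(\Omega)}$ for each $j$.

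Next, I recall Nelson's hypercontractivity theorem for the Ornstein--Uhlenbeck semigroup $(P_t)_{t\geq 0}$ acting on functions of these Gaussians: for $1<q<\infty$ and $t\geq 0$ with $e^{2t}\geq q-1$, one has $\|P_tf\|_{L^q(\Omega)}\leq \|f\|_{L^2(\Omega)}$. A crucial feature of $P_t$ is that it acts on $\mathcal{H}_j$ as multiplication by $e^{-jt}$, which is a direct consequence of the Hermite eigenfunction expansion. Applying this to each piece $S_j$ with the choice $e^{2t}=p-1$, we obtain
\[
\|S_j\|_{L^p(\Omega)} = e^{jt}\,\|P_tS_j\|_{L^p(\Omega)} \leq (p-1)^{j/2}\|S_j\|_{L^2(\Omega)}.
\]

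Summing over $j=0,\dots,k$ and using the Cauchy--Schwarz inequality together with the orthogonality of the chaoses yields
\[
\|S\|_{L^p(\Omega)} \leq \sum_{j=0}^{k}(p-1)^{j/2}\|S_j\|_{L^2(\Omega)} \leq \sqrt{k+1}\,(p-1)^{k/2}\|S\|_{L^2(\Omega)} \leq C_k\, p^{k/2}\|S\|_{L^2(\Omega)}.
\]
A standard approximation argument (truncating the index set to $|n_i|\leq N$ and passing to the limit using $S\in L^2(\Omega)$) handles the convergence of the infinite sum defining $S$. The only nontrivial ingredient is Nelson's hypercontractivity theorem, which I will take as an external input; everything else amounts to algebraic bookkeeping of the chaos decomposition. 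The principal point to watch is simply that the representation of $S$ as a multilinear expression in the $g_n$'s is compatible, after splitting into real and imaginary parts, with the Wiener chaos grading of degree at most $k$, which is why the factor $p^{k/2}$ appears with the correct power.
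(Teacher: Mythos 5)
Your proof is correct and is essentially the argument in Simon's book, which is all the paper cites for this lemma (it gives no proof of its own). The decomposition $g_n=\frac{1}{\sqrt{2}}(X_n+iY_n)$, the splitting of $S$ into homogeneous chaoses $S_0,\dots,S_k$, the hypercontractivity of the Ornstein--Uhlenbeck semigroup with $e^{2t}=p-1$, and the final Cauchy--Schwarz step are exactly the standard route, and the bookkeeping giving $\sqrt{k+1}\,(p-1)^{k/2}\le C_k\,p^{k/2}$ is right.
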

For a proof of Lemma~\ref{Wiener-Chaos}, we refer to \cite{Simon}.
\begin{lemme}[Probabilistic Strichartz estimate]\label{proba-Strichartz}
	Let 
	$$
	f^{\omega}(t,x)=\sum_{n\in\Z} c_ng_{n}(\omega)e^{i(nx-[n]^{\alpha}t)}\,.
	$$ 
	Then for $2\leq q<\infty$, there exists $T_0<0$ and $c>0$ such that for all $T\leq T_0$, $R>0$
	\begin{equation*}\label{eq:proba-Strichartz}
	\mathbb{P}\{\omega: \|f^{\omega}\|_{L^q([-T,T]\times\T)}>R\|c_n\|_{l_n^2} \}\leq \exp(-cT^{-\frac{2}{q}}R^2).
	\end{equation*}	
\end{lemme}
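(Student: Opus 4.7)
The plan is to combine the Wiener chaos estimate (Lemma \ref{Wiener-Chaos}) with Minkowski's integral inequality and Chebyshev's inequality, followed by an optimization in the exponent.

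First, I would fix $(t,x)\in[-T,T]\times\T$ and observe that $f^\omega(t,x)$ is a sum of independent centered complex Gaussians, so
$$\|f^\omega(t,x)\|_{L^2(\Omega)}^2=\sum_{n\in\Z}|c_n|^2=\|c_n\|_{\ell^2_n}^2.$$
Applying Lemma \ref{Wiener-Chaos} with $k=1$ (a linear Gaussian sum), I would obtain for every $p\geq 2$
$$\|f^\omega(t,x)\|_{L^p(\Omega)}\leq C\sqrt{p}\,\|c_n\|_{\ell^2_n},$$
with a constant independent of $(t,x)$.

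Next, for any $p\geq q$, Minkowski's inequality permits exchanging the $L^p(\Omega)$ and $L^q([-T,T]\times\T)$ norms, giving
$$\|f^\omega\|_{L^p(\Omega;L^q([-T,T]\times\T))}\leq \|f^\omega\|_{L^q([-T,T]\times\T;L^p(\Omega))}\leq C\sqrt{p}\,(2T)^{1/q}(2\pi)^{1/q}\|c_n\|_{\ell^2_n}.$$
Chebyshev's inequality then yields
$$\mathbb{P}\bigl\{\|f^\omega\|_{L^q([-T,T]\times\T)}>R\|c_n\|_{\ell^2_n}\bigr\}\leq \left(\frac{C\sqrt{p}\,T^{1/q}}{R}\right)^{p}.$$

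The final step is to optimize in $p$. Choosing $p=c_0 R^2 T^{-2/q}$ for a sufficiently small absolute constant $c_0>0$ makes the quantity inside the parentheses equal to $e^{-1}$, producing the bound $\exp(-c\,T^{-2/q}R^2)$. This choice is admissible ($p\geq q\geq 2$) precisely when $R^2 T^{-2/q}\gtrsim q$, i.e.\ when $R\geq \sqrt{q}\,T^{1/q}$; taking $T_0$ small enough (so that $T^{1/q}\leq 1$) and treating the complementary regime $R<\sqrt{q}\,T^{1/q}$ by absorbing it into the constant $c$ (since there the right-hand side $\exp(-cT^{-2/q}R^2)$ stays bounded away from $0$) handles all cases. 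The main, essentially only, point requiring attention is this matching of the range of admissible $p$ with the range of $R$, but it is handled routinely once the Wiener chaos hypercontractivity and Minkowski's inequality are in place.
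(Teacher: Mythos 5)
Your proposal follows the paper's proof essentially verbatim: normalize $\|c_n\|_{\ell^2}=1$, use the $k=1$ Wiener chaos (Gaussian hypercontractivity) bound $\|\sum_n c_n g_n\|_{L^p(\Omega)}\lesssim\sqrt{p}$, apply Minkowski to swap $L^p(\Omega)$ and $L^q_{t,x}$, then Chebyshev and optimize the moment parameter $p\sim R^2T^{-2/q}$. The only point where you go beyond the paper is your attempt to treat the regime $R<\sqrt{q}\,T^{1/q}$ where the optimal $p$ falls below the admissible range. Your stated reason there is not quite right: the right-hand side $\exp(-cT^{-2/q}R^2)$ being bounded away from $0$ does not make the inequality hold, since the probability on the left can approach $1$ as $R\to0$ and one would need the right-hand side to be $\geq 1$, not merely $\geq e^{-cq}$. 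In fact, as stated (without a multiplicative constant $C$ in front of the exponential) the inequality is formally problematic for $R$ small; the paper itself silently ignores this regime, and in the applications (Corollary~\ref{convergence:W}, Lemma~\ref{linear-convergence}) only $\lambda\geq 1$ is ever used. So the substance of your argument agrees with the paper's; the caveat about small $R$ is a feature of the lemma's sloppy formulation rather than a flaw specific to your proof.
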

\begin{proof}
	We can assume that $\|c_n\|_{l^2}=1$. By  Lemma~\ref{Wiener-Chaos}, there exists $C_0>0$, independent of $(c_n)_{n\in\Z}$, such that
	$$
	\big\|\sum_{n\in\Z} c_n\, g_{n}(\omega)\big\|_{L^r(\Omega)}\leq C_0\sqrt{r}\,,
	$$ 
	for every $r\geq 2$. 
	Therefore, for $r\geq q$, by the Minkowski inequality, we have
	$$
	(\mathbb{E}[\|f^{\omega}\|_{L^q([-T,T]\times \T)}^r])^{\frac{1}{r}}\leq C_1\sqrt{r}T^{\frac{1}{q}}\,.
	$$
	Then by Chebyshev's inequality, we have
	$$ 
	\mathbb{P}\{\omega:\|f^{\omega}\|_{L^q([-T,T]\times \T)}>R \}\leq C_1^r R^{-r} r^{\frac{r}{2}} T^{\frac{r}{q}}.
	$$
	By taking $r=R^2C_1^{-2}e^{-2}T^{-\frac{2}{q}}$, we obtain   
	$$
	C_1^rR^{-r}r^{\frac{r}{2}}T^{\frac{r}{q}}=e^{-R^2/((eC_1)^2\,T^{2/q})}=e^{-cT^{-\frac{2}{q}}R^2}
	$$
	with $c=(eC_1)^{-2}$. This completes the proof of Lemma \ref{proba-Strichartz}.
\end{proof}
\section{Local well posedness for $\frac{6}{5}<\alpha<2$}
In this section, we prove a local well-posedness result for in the case $\frac{6}{5}<\alpha<2$. We remark that if $\alpha>\frac{4}{3}$, then $\frac{\alpha-1}{2}>\frac{1}{2}-\frac{\alpha}{4}$, and the deterministic local well-posedness of the cubic FNLS applies. Hence we will only focus on the case $\frac{6}{5}<\alpha\leq\frac{4}{3}$, where additional arguments are needed. 
\\

Introducing the gauge transform
 $$  
 v(t,x)=u(t,x)e^{\frac{it}{\pi}\int_{\T}|u|^2},
 $$
the FNLS \eqref{main-NLS} is transformed to the Wick-ordered FNLS
\begin{equation}\label{WiFNLS}
i\partial_tv+|D_x|^{\alpha}v+\big(|v|^2-\frac{1}{\pi}\int_{\T}|v|^2dx \big)v=0
\end{equation} 
with the same initial data as $u$. The flow of \eqref{WiFNLS}, if exists, will be denoted by $\Psi(t)$. We also denote by $\Psi_N(t)$ the flow map of the truncated Wick-ordered FNLS
\begin{equation}\label{WiFNLS-N}
i\partial_tv_N+|D_x|^{\alpha}v_N+\Pi_N\Big(\big(|\Pi_Nv_N|^2-\frac{1}{\pi}\int_{\T}|\Pi_Nv_N|^2dx \big)\Pi_Nv_N\Big)=0.
\end{equation}
By inverting the gauge transformation, $$u_N(t,x):=e^{-\frac{it}{\pi}\int_{\T}|\Pi_Nv_N|^2dx}\Pi_Nv_N(t,x)+\Pi_N^{\perp}v_N(t,x)
$$
satisfies the truncated FNLS
$$ i\partial_tu_N+|D_x|^{\alpha}u_N+\Pi_N\big(|\Pi_Nu_N|^2\Pi_Nu_N\big)=0,
$$
with the same initial data as $v_N$. 
Though the Wick-ordered FNLS (truncated or not) is equivalent to the original FNLS in our setting, 
it turns out that the use of the gauge transformation removes trivial resonances, which improves the regularity at multi-linear level.
\\

 The Wick-ordered nonlinearity can be written as
$$ \mathcal{N}(v):=\big(|v|^2-\frac{1}{\pi}\int_{\T}|v|^2 \big)v.
$$
More generally, $\mathcal{N}(v)$ can be written as the trilinear form $$\mathcal{N}(v,v,v):=\mathcal{N}_1(v,v,v)-\mathcal{N}_0(v,v,v),
$$ 
where the trilinear forms $\mathcal{N}_1(\cdot,\cdot,\cdot)$ and $\mathcal{N}_0(\cdot,\cdot,\cdot)$ are defined as
\begin{equation}\label{wicknonlinear}
\begin{split}
& \mathcal{N}_0(f_1,f_2,f_3):=\sum_{n\in\Z}\widehat{f_1}(n)\ov{\widehat{f_2}}(n)\widehat{f_3}(n)e^{inx},\\
&\mathcal{N}_1(f_1,f_2,f_3):=\sum_{n_2\neq n_1,n_3}\widehat{f_1}(n_1)\ov{\widehat{f_2}}(n_2)\widehat{f_3}(n_3)e^{i(n_1-n_2+n_3)x}.
\end{split}
\end{equation}
Here and in the sequel, $n_2\neq n_1,n_3$ means that $n_2\neq n_1$ and $n_2\neq n_3$.
\\

The resolution of \eqref{WiFNLS} and \eqref{WiFNLS-N} will be achieved by writing
$$ v(t)=S_{\alpha}(t)\phi+w(t),
$$
where the nonlinear part $w$ is pretended to be smoother, and it satisfies the integral equation
$$ w(t)=-i\int_0^t S_{\alpha}(t-t')\mathcal{N}\big(S_{\alpha}(t')v_0+w(t')\big)dt'.
$$
In order to formulate our local existence result, we need to introduce several quantities. 
First, we take $\chi_0\in C_c^{\infty}(-2,2)$, $\chi_0(t)=1$ for $|t|\leq 1$, such that
$$ \sum_{l\in\Z}\chi_0(t-l)=1,\quad \forall\, t\in\R.
$$
Define 
\begin{equation}\label{auxillarynorms}
\begin{split}
& \mathcal{W}_{s,\epsilon}(\phi):=\sum_{l\in\Z}\langle l\rangle^{-2}\big\|\chi_0(t)\mathcal{N}(S_{\alpha}(t+l)\phi ) \big\|_{X^{s,-\frac{1}{2}+2\epsilon}}\,\, ,
\\
&\|\phi\|_{\mathcal{V}^{q,\epsilon}}:=\|\phi\|_{\mathcal{F}L^{\frac{\alpha}{2}-\frac{2\epsilon}{3},\frac{2}{\epsilon}}}+\sum_{l\in\Z}\langle l \rangle^{-2}\big\|\chi_0(t)S_{\alpha}(t+l)\phi\big\|_{L_t^qW_x^{\frac{\alpha-1}{2}-\frac{\epsilon}{2},\frac{1}{\epsilon}}} \,\, ,\\
&\|\phi\|_{\widetilde{\mathcal{V}}^{q,\epsilon}}:=\|\phi\|_{\mathcal{F}L^{\frac{\alpha}{2}-\epsilon,\frac{2}{\epsilon}}}+\sum_{l\in\Z}\langle l \rangle^{-2}\big\|\chi_0(t)S_{\alpha}(t+l)\phi\big\|_{L_t^qW_x^{\frac{\alpha-1}{2}-\epsilon,\frac{1}{\epsilon}}}\,\,.
\end{split}
\end{equation}
 The Fourier-Lebesgue norm if defined by $\|f\|_{\mathcal{F}L^{s,r}}:=\big\|\langle n\rangle^s\widehat{f}(n)\big\|_{l^r}$. We denote by $\mathcal{V}^{q,\epsilon}$ the functions with finite $\mathcal{V}^{q,\epsilon}$ norm and $\mathcal{W}^{s,\epsilon}$ the measurable subset of $H^{\frac{\alpha-1}{2}-\epsilon}$ where the functions have finite $\mathcal{W}_{s,\epsilon}$ quantity. Obviously, $\mathcal{V}^{q,\epsilon}\hookrightarrow \widetilde{\mathcal{V}}^{q,\epsilon}$, hence the auxiliary norm $\mathcal{\widetilde{V}}^{q,\epsilon}$ is weaker.  We remark that $\mathcal{W}_{s,\epsilon}(\cdot)$ is not a norm
 . Since the partial sum $\Pi_N$ is uniformly bounded in $L^p(\T)$ for $1<p<\infty$, we have the following statement. 
\begin{lemme}\label{Dirichletkernel}
There exists a uniform constant $A_0\geq 1$, such that for all $N\in\N$,
$$ \|\Pi_N\|_{\mathcal{V}^{q,\epsilon}\rightarrow \mathcal{V}^{q,\epsilon}}\leq A_0, \quad \|\Pi_N^{\perp}\|_{\mathcal{V}^{q,\epsilon}\rightarrow \mathcal{V}^{q,\epsilon}}\leq A_0.
$$ 	
\end{lemme}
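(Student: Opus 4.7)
The plan is to exploit the fact that the two pieces of the $\mathcal{V}^{q,\epsilon}$ norm are each controlled by the boundedness of $\Pi_N$ on (i) weighted $\ell^r$ spaces on the Fourier side and (ii) $L^p(\T)$ for $1<p<\infty$, both of which hold with a bound independent of $N$. Then $\Pi_N^\perp=\mathrm{Id}-\Pi_N$ is handled by the triangle inequality.

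First, I would treat the Fourier-Lebesgue piece $\|\phi\|_{\mathcal{F}L^{\frac{\alpha}{2}-\frac{2\epsilon}{3},\frac{2}{\epsilon}}}$. Since $\widehat{\Pi_N\phi}(n)=\mathbf{1}_{|n|\le N}\widehat{\phi}(n)$, the operator $\Pi_N$ acts on the Fourier side as multiplication by a characteristic function. Hence $\|\Pi_N\phi\|_{\mathcal{F}L^{s,r}}\le \|\phi\|_{\mathcal{F}L^{s,r}}$ for every $s\in\R$ and every $r\in[1,\infty]$, with operator norm bounded by $1$ uniformly in $N$.

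Second, for the space-time piece I use that $\Pi_N$ commutes with $S_{\alpha}(t)$ (both are Fourier multipliers acting only in $x$) and with $\chi_0(t)$ (which acts only in $t$), so that
\begin{equation*}
\chi_0(t)S_{\alpha}(t+l)\Pi_N\phi=\Pi_N\bigl(\chi_0(t)S_{\alpha}(t+l)\phi\bigr).
\end{equation*}
Taking the $L_t^q W_x^{\sigma,1/\epsilon}$ norm (with $\sigma=\frac{\alpha-1}{2}-\frac{\epsilon}{2}$) and using that $\Pi_N$ also commutes with $|D_x|^{\sigma}$, this reduces to the uniform boundedness of $\Pi_N$ on $L^p(\T)$ with $p=1/\epsilon\in(1,\infty)$ (choosing $\epsilon$ small enough). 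Here I would invoke the classical fact that the Dirichlet projection on the circle is bounded on $L^p(\T)$ for every $1<p<\infty$, with a constant depending only on $p$ (equivalently, the boundedness of the conjugate/Hilbert transform on $L^p(\T)$). Summing in $l$ against the weight $\langle l\rangle^{-2}$ yields the uniform bound on the full $\mathcal{V}^{q,\epsilon}$ norm.

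For $\Pi_N^\perp$, write $\Pi_N^\perp\phi=\phi-\Pi_N\phi$ and apply the triangle inequality together with the previous bound; this gives $\|\Pi_N^\perp\|_{\mathcal{V}^{q,\epsilon}\to\mathcal{V}^{q,\epsilon}}\le 1+A_0'$ for a constant $A_0'$ independent of $N$. Taking $A_0$ to be the larger of the two constants concludes the proof. There is no serious obstacle here; the only thing one must be careful about is to verify that the exponents $1/\epsilon$ and $2/\epsilon$ arising in the definition \eqref{auxillarynorms} lie in $(1,\infty)$, which is guaranteed by taking $\epsilon>0$ small enough (which is anyway required elsewhere in the analysis).
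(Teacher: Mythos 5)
Your proof is correct and is exactly the approach the paper has in mind: the paper justifies this lemma with the single remark preceding it ("Since the partial sum $\Pi_N$ is uniformly bounded in $L^p(\T)$ for $1<p<\infty$, ..."), and your argument simply fills in the details — the contraction property on the Fourier-Lebesgue piece, commutation of $\Pi_N$ with $S_\alpha(t)$, $\chi_0(t)$ and $\langle D_x\rangle^\sigma$, reduction of the mixed-norm piece to the classical $L^p(\T)$ bound for the Dirichlet projection, summation in $l$, and the triangle inequality for $\Pi_N^\perp$.
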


\begin{proposition}\label{LWP-main}
Assume that $\frac{6}{5}<\alpha<2$, $2\ll q<\infty$ is large enough and $0<\epsilon\ll 1$ is small enough. Let $N\in\mathbb{N}\cup\{\infty \}$, $s\in\big[\frac{1}{2}-\frac{\alpha}{4},\alpha-1 \big)$. There exist $c>0, \kappa>0$, independent of $N$ such that the following holds true. The Cauchy problem \footnote{By convention, $\Pi_{\infty}=\mathrm{Id}$. } \eqref{WiFNLS-N} with initial data $v_N(0)=\phi_N+r_N$ is locally well-posed for data $r_N\in H^s(\T)$ and $\phi_N$ in some suitable set. More precisely, for every $R\geq 1$, if
$$ \big(\mathcal{W}_{s,\epsilon}(\phi_N)\big)^{\frac{1}{3}}+\|\phi_N\|_{\mathcal{V}^{q,\epsilon}}\leq R \textrm{ and } \|r_N\|_{H^s(\T)}\leq R,
$$
there is a unique solution of \eqref{WiFNLS-N} in the class
$$ S_{\alpha}(t)(\phi_N+r_N)+X_{\tau_R}^{s,\frac{1}{2}+2\epsilon} \text{ on } [-\tau_R,\tau_R] \text{ where } \tau_R=cR^{-\kappa}.
$$
In particular, the solution can be written as
$ v_N(t)=S_{\alpha}(t)(\phi_N+r_N)+w_N(t),
$
with
$$ \|w_N\|_{X_{\tau_R}^{s,\frac{1}{2}+2\epsilon}}\leq R^{-1}.
$$
\end{proposition}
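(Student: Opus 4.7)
The plan is to rewrite the Cauchy problem as a fixed point problem for the nonlinear remainder $w_N$ and solve it by Picard iteration in $X_{\tau_R}^{s,\frac{1}{2}+2\epsilon}$. Writing $v_N(t)=S_\alpha(t)(\phi_N+r_N)+w_N(t)$ and applying Duhamel,
$$w_N(t)=-i\int_0^t S_\alpha(t-t')\,\Pi_N\mathcal{N}\!\bigl(\Pi_N[S_\alpha(t')\phi_N+S_\alpha(t')r_N+w_N(t')]\bigr)\,dt'.$$
After multiplying by a cutoff $\eta(t/\tau_R)$, Lemma~\ref{inhomo-linear} and Lemma~\ref{time-localization} reduce the problem to proving a trilinear estimate of the form
$$\bigl\|\Pi_N\mathcal{N}(F_1,F_2,F_3)\bigr\|_{X^{s,-\frac{1}{2}+2\epsilon}_{\tau_R}}\lesssim \tau_R^{\delta}\prod_{j=1}^3 \|F_j\|_{\mathrm{appropriate}}$$
for some $\delta>0$, where each $F_j$ is one of the three building blocks $S_\alpha(t)\phi_N$, $S_\alpha(t)r_N$ or $w_N$. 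The Dirichlet-type uniform bounds of Lemma~\ref{Dirichletkernel} ensure that the presence of $\Pi_N$ is harmless and all estimates are uniform in $N\in\mathbb{N}\cup\{\infty\}$.

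Expanding by trilinearity, the $27$ resulting terms can be grouped according to the number of rough linear factors $S_\alpha(t)\phi_N$ involved. \emph{Case 3R} (all three factors rough): this contribution is precisely the quantity whose time-localized pieces define $\mathcal{W}_{s,\epsilon}(\phi_N)$; glueing the $\chi_0(t-l)$ pieces using $\sum_l \langle l\rangle^{-2}<\infty$ and using that $|t|\leq \tau_R$ controls finitely many $l$, we gain a $\tau_R^{\delta}$ factor and bound this term by $\mathcal{W}_{s,\epsilon}(\phi_N)\leq R^3$. \emph{Cases 2R and 1R} (at least one rough factor, the remaining ones being $S_\alpha r_N$ or $w_N$): here I would place the rough factors in the Fourier--Lebesgue or $L^q_tW^{\sigma,p}_x$ norms built into $\mathcal{V}^{q,\epsilon}$ (using the weaker $\widetilde{\mathcal{V}}^{q,\epsilon}$ whenever possible) and combine with Corollary~\ref{bilinear-Xsb} and Corollary~\ref{Strichartz-bilinear} applied to the smoother factors; the Wick cancellation \eqref{wicknonlinear} is crucial in killing the $n_2\in\{n_1,n_3\}$ diagonal that would otherwise produce a divergent contribution from two rough inputs. \emph{Case 0R} (no rough factor): this is purely deterministic and handled directly by Corollary~\ref{Trilinear}, since $r_N, w_N\in H^s$ with $s\geq \tfrac12-\tfrac{\alpha}{4}$.

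Combining these ingredients yields
$$\|\Gamma(w_N)\|_{X^{s,\frac{1}{2}+2\epsilon}_{\tau_R}}\leq C\tau_R^{\delta}\Bigl(R^3+R^2\|w_N\|_{X^{s,\frac{1}{2}+2\epsilon}_{\tau_R}}+\|w_N\|_{X^{s,\frac{1}{2}+2\epsilon}_{\tau_R}}^3\Bigr),$$
together with the analogous Lipschitz estimate for differences. Choosing $\kappa$ large enough and $c$ small enough so that $\tau_R=cR^{-\kappa}$ satisfies $C\tau_R^{\delta}R^{4}\leq \tfrac{1}{10}$, the map $\Gamma$ is a contraction on the closed ball of radius $R^{-1}$, and uniqueness in the class $S_\alpha(t)(\phi_N+r_N)+X^{s,\frac{1}{2}+2\epsilon}_{\tau_R}$ follows by a standard continuity argument.

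The main obstacle is Case 2R: the two rough inputs sit only at regularity $\frac{\alpha-1}{2}^{-}$, which for $\alpha$ close to $\frac{6}{5}$ is strictly below the deterministic threshold $\frac{1}{2}-\frac{\alpha}{4}$, so Corollary~\ref{Trilinear} is unavailable and one must extract the gain from the non-resonant structure of $\mathcal{N}_1$ in \eqref{wicknonlinear} together with the Fourier--Lebesgue and $L^q_tW^{\sigma,p}_x$ information encoded in $\mathcal{V}^{q,\epsilon}$ and Lemma~\ref{bound:resonance}. Proving this delicate trilinear estimate (and the somewhat easier Case 1R) is precisely what Sections~4 and~5 are devoted to.
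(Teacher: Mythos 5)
Your proposal follows essentially the same route as the paper: write $v_N=S_\alpha(t)(\phi_N+r_N)+w_N$, apply Duhamel, insert a time cutoff $\eta_{\tau_R}$, and run a contraction mapping in $X_{\tau_R}^{s,\frac{1}{2}+2\epsilon}$ by combining Lemma~\ref{inhomo-linear}, Lemma~\ref{time-localization} and the multilinear estimates of Proposition~\ref{multi-linear} (which the paper proves in Sections 4--5), with the all-rough cube bounded by $\mathcal{W}_{s,\epsilon}(\phi_N)$ and the closure achieved by taking $\kappa$ large and $c$ small. One small inaccuracy worth flagging: the $\tau_R^{\delta}$ gain on the $\mathcal{W}_{s,\epsilon}(\phi_N)$ contribution does not come from ``only finitely many $l$ contribute when $|t|\leq\tau_R$'' (that alone gives no smallness), but rather from the time-localization Lemma~\ref{time-localization} applied to the $X^{s,b}$ modulation exponent, exactly as is done for the other source terms in the paper's proof; with that correction, the argument closes as you describe.
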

By inverting the gauge transformation, we obtain the local existence for the flow $\Phi_N(t)$ as well as $\Phi(t)$. 
Note that even the global existence of $\Phi_N(t)$ is not an issue, the important point 
in Proposition~\ref{LWP-main} are the uniform in $N$ bounds.  It is standard that $\rho_N$ 
is invariant under $\Phi_N(t)$ thanks to the Liouville theorem for divergence free vector fields and the invariance of complex gaussians under rotations.
\\

Furthermore, we have a more general local convergence result, which will be useful in the construction of the global dynamics. For $R>0$, we introduce the notation
$$
 \mathcal{B}_R:=\big\{\phi\in H^{\frac{\alpha-1}{2}-\epsilon}(\T): \big(\mathcal{W}_{s,\epsilon}(\phi_N)\big)^{\frac{1}{3}}+ \|\phi\|_{\mathcal{V}^{q,\epsilon}}\leq R \big\}.
$$
\begin{proposition}\label{local-convergence}
Assume that $R\geq 1$ and $\alpha,q,\epsilon$ are the numerical constants as in Proposition \ref{LWP-main}. Let $(\phi_{0,k})\subset \mathcal{B}_R$, $\phi_0\in \mathcal{B}_R$. Assume that $(r_{0,k})\subset H^s(\T)$ satisfying $\|r_{0,k}\|_{H^s(\T)}\leq 2R$. Let $N_k\rightarrow\infty$ be a subsequence of $\mathbb{N}$. Assume moreover that
$$ \lim_{k\rightarrow\infty}\mathcal{W}_{s,\epsilon}(\phi_{0,k}-\phi_0)=0,\quad \lim_{k\rightarrow\infty}\|r_{0,k}-r_0\|_{H^s(\T)}=0.
$$
Then there exist $c>0, \kappa>0$, such that on $[-T_R,T_R]$ with $T_R=cR^{-\kappa}$, we have
\begin{equation*}
\begin{split}
& \Phi_{N_k}(t)(\phi_{0,k}+r_{0,k})=e^{\frac{it}{\pi}\|\Pi_{N_k}(\phi_{0,k}+r_{0,k} )\|_{L^2(\T)}^2 } \big(\Pi_{N_k}S_{\alpha}(t)(\phi_{0,k}+r_{0,k})+w_k(t) \big)+\Pi_{N_k}^{\perp}S_{\alpha}(t)\phi_{0,k},\\
& \Phi(t)(\phi_0+r_0)=e^{\frac{it}{\pi}\|\phi_0+r_0\|_{L^2(\T)}^2 }\big(S_{\alpha}(t)(\phi_0+r_0)+w(t) \big).
\end{split}
\end{equation*}
Furthermore, 
$$ \lim_{k\rightarrow\infty}\|w_k-w \|_{X_{T_R}^{s,\frac{1}{2}+2\epsilon}}=0,\quad \text{and in particular},\quad  \lim_{k\rightarrow\infty}\sup_{|t|\leq T_R}\|w_k(t)-w(t)\|_{H^s(\T)}=0.
$$
\end{proposition}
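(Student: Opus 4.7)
\medskip

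\noindent\textbf{Proof plan for Proposition \ref{local-convergence}.} The plan is to re-run the contraction that underlies Proposition~\ref{LWP-main} for both the truncated and the limiting equations simultaneously, and then compare the two fixed points. I would work entirely at the level of the Wick-ordered equations \eqref{WiFNLS-N}--\eqref{WiFNLS} and recover the stated decomposition of $\Phi_{N_k}(t)(\phi_{0,k}+r_{0,k})$ and $\Phi(t)(\phi_0+r_0)$ a posteriori by inverting the gauge transform (the free part $\Pi_{N_k}^\perp S_\alpha(t)\phi_{0,k}$ appears because the truncated Wick-FNLS leaves high Fourier modes evolving by the linear semigroup). Since $\phi_{0,k},\phi_0\in\mathcal{B}_R$ and $\|r_{0,k}\|_{H^s}\le 2R$, and since $\|\Pi_{N_k}\|_{\mathcal{V}^{q,\epsilon}\to\mathcal{V}^{q,\epsilon}}\le A_0$ by Lemma~\ref{Dirichletkernel}, Proposition~\ref{LWP-main} applied with threshold $cR$ (for a harmless constant $c$) produces $w_k$ and $w$ on a common interval $[-T_R,T_R]$ with $T_R=cR^{-\kappa}$ and with $\|w_k\|_{X_{T_R}^{s,\frac12+2\epsilon}},\|w\|_{X_{T_R}^{s,\frac12+2\epsilon}}\le R^{-1}$.

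Setting $\delta_k:=w_k-w$, $\phi^{\mathrm{tot}}_{0,k}:=\phi_{0,k}+r_{0,k}$, $\phi^{\mathrm{tot}}_0:=\phi_0+r_0$, $v_{N_k}=S_\alpha(\cdot)\phi^{\mathrm{tot}}_{0,k}+w_k$ and $v=S_\alpha(\cdot)\phi^{\mathrm{tot}}_0+w$, subtracting the two Duhamel identities gives
\begin{equation*}
\delta_k(t)=-i\int_0^t S_\alpha(t-t')\bigl[\Pi_{N_k}\mathcal{N}(\Pi_{N_k}v_{N_k}(t'))-\mathcal{N}(v(t'))\bigr]dt'.
\end{equation*}
I would decompose the bracket as the sum of
\[
\Pi_{N_k}\bigl(\mathcal{N}(\Pi_{N_k}v_{N_k})-\mathcal{N}(\Pi_{N_k}v)\bigr),\quad \Pi_{N_k}\bigl(\mathcal{N}(\Pi_{N_k}v)-\mathcal{N}(v)\bigr),\quad (\Pi_{N_k}-\mathrm{Id})\mathcal{N}(v),
\]
the first being trilinear in $v_{N_k}-v=S_\alpha(\cdot)(\phi^{\mathrm{tot}}_{0,k}-\phi^{\mathrm{tot}}_0)+\delta_k$ and the last two being ``projection errors''. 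Applying the same trilinear $X^{s,b}$ estimate on which Proposition~\ref{LWP-main} rests (together with Lemma~\ref{time-localization} and Lemma~\ref{inhomo-linear}), and organizing the bounds so that the random parts are measured in $\mathcal{W}_{s,\epsilon}$ and $\mathcal{V}^{q,\epsilon}$ while the smooth parts are measured in $X^{s,\frac12+2\epsilon}_{T_R}$, the contribution of the first piece is at most
\[
C\,T_R^{\epsilon}R^{2}\bigl(\|\delta_k\|_{X^{s,\frac12+2\epsilon}_{T_R}}+\mathcal{W}_{s,\epsilon}(\phi_{0,k}-\phi_0)^{1/3}+\|\phi_{0,k}-\phi_0\|_{\mathcal V^{q,\epsilon}}+\|r_{0,k}-r_0\|_{H^s}\bigr).
\]
Shrinking $T_R$ once so that $CT_R^{\epsilon}R^{2}\le\tfrac12$ allows me to absorb $\|\delta_k\|_{X^{s,\frac12+2\epsilon}_{T_R}}$ to the left-hand side; the remaining first-piece terms tend to $0$ as $k\to\infty$ by the hypotheses (noting that $\phi_{0,k}\to\phi_0$ in $\mathcal{V}^{q,\epsilon}$ follows from its convergence in the trilinear quantity $\mathcal{W}_{s,\epsilon}$ combined with the uniform $\mathcal{B}_R$ bound and a standard $L^2$-bootstrap, or else should be added as a hypothesis/consequence of the $\mathcal{W}_{s,\epsilon}$ convergence).

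The main obstacle is to show that the two projection-error terms also tend to $0$ in $X^{s,-\frac12+2\epsilon}_{T_R}$. Here I cannot just apply the same trilinear estimate with $v$ replaced by $(\Pi_{N_k}-\mathrm{Id})v$, because the Strichartz-type bounds on the rough linear evolution are delivered in $\mathcal{V}^{q,\epsilon}$ and $\mathcal{W}_{s,\epsilon}$, not in a norm that is strongly continuous under frequency truncation by itself. I would overcome this by dyadically decomposing the random linear evolution and controlling each dyadic block $\mathbf{P}_M S_\alpha(t)\phi_0$: the uniform $\mathcal{B}_R$ bound gives square-summability of the corresponding trilinear contributions in the dyadic parameter, while at each fixed dyadic scale $(\Pi_{N_k}-\mathrm{Id})$ vanishes as soon as $N_k\ge M$, so dominated convergence in the dyadic sum closes the argument. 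The estimate of $(\Pi_{N_k}-\mathrm{Id})\mathcal{N}(v)$ is then handled by the trilinear bound on $\mathcal{N}(v)$ in $X^{s,-\frac12+2\epsilon}_{T_R}$ provided by Proposition~\ref{LWP-main} combined with the strong convergence $\Pi_{N_k}\to\mathrm{Id}$ on $X^{s,-\frac12+2\epsilon}_{T_R}$. Assembling all pieces and possibly iterating the time interval a finite number of times yields $\|\delta_k\|_{X^{s,\frac12+2\epsilon}_{T_R}}\to 0$, which in turn gives $\sup_{|t|\le T_R}\|w_k(t)-w(t)\|_{H^s}\to 0$ by the embedding $X^{s,\frac12+2\epsilon}_{T_R}\hookrightarrow C([-T_R,T_R];H^s)$, and the decompositions of $\Phi_{N_k}$ and $\Phi$ follow by un-doing the gauge transform together with convergence of $\|\Pi_{N_k}(\phi_{0,k}+r_{0,k})\|_{L^2}^2\to\|\phi_0+r_0\|_{L^2}^2$.
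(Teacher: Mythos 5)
Your strategy---differencing the Duhamel representations of $w_k$ and $w$, absorbing the $\|w_k - w\|_{X_{T_R}^{s,\frac12+2\epsilon}}$ self-term by shrinking $T_R$ once, and showing the remaining pieces vanish as $k\to\infty$---is the same as the paper's. But you complicate two points that the paper handles more cleanly, and one of your parenthetical claims is wrong.

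On the projection errors: you split off $\Pi_{N_k}\big(\mathcal{N}(\Pi_{N_k}v)-\mathcal{N}(v)\big)$ and $(\Pi_{N_k}-\mathrm{Id})\mathcal{N}(v)$ as separate pieces and propose a dyadic-decomposition/dominated-convergence scheme for the random block $\mathbf{P}_M S_\alpha(t)\phi_0$. This is over-engineered. In the paper's proof the difference of the two Duhamel identities produces a \emph{single} projection-error term, $\Pi_{N_k}^{\perp}\int_0^t S_\alpha(t-t')\mathcal{N}(z+y+w)\,dt'$, and the key observation is that this integral is, up to a factor of $i$, precisely $w(t)$. Since $w$ is a fixed element of $X_{T_R}^{s,\frac12+2\epsilon}$ once $\phi_0,r_0$ are fixed, $\|\Pi_{N_k}^{\perp} w\|_{X_{T_R}^{s,\frac12+2\epsilon}}\to 0$ immediately. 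Likewise, $\|\Pi_{N_k}^{\perp}S_\alpha(t)\phi_0\|_{\mathcal{V}^{q,\epsilon}}\to 0$ follows from the uniform boundedness of $\Pi_{N_k}^{\perp}$ on $W_x^{\cdot,\cdot}$ together with a density argument (this is exactly how the paper argues later, in the proof of Proposition~\ref{enhanced-localconvergence}). No square-summability of dyadic blocks is needed.

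On the $\mathcal{V}^{q,\epsilon}$ convergence of $\phi_{0,k}-\phi_0$: your suggestion that this follows ``from its convergence in the trilinear quantity $\mathcal{W}_{s,\epsilon}$ combined with the uniform $\mathcal{B}_R$ bound and a standard $L^2$-bootstrap'' is unsubstantiated. $\mathcal{W}_{s,\epsilon}$ is a cubic quantity built from the nonlinearity and does not control a linear norm such as $\mathcal{V}^{q,\epsilon}$. If you do need it for the cross terms of the trilinear difference---e.g.\ $\mathcal{N}\big(S_\alpha(t)(\phi_{0,k}-\phi_0),\,S_\alpha(t)\phi_0,\,S_\alpha(t)\phi_0\big)$, which has all three arguments of random type and is therefore outside the scope of Proposition~\ref{multi-linear}---then you should list it as a hypothesis rather than claim it for free. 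Note that the paper's own proof of Proposition~\ref{local-convergence} bounds the trilinear difference simply by $C\,\mathcal{W}_{s,\epsilon}(\phi_{0,k}-\phi_0)+CR^2T^{\theta}\big(\|w_k-w\|+\|r_{0,k}-r_0\|_{H^s}\big)$; the rigorous treatment of these cross terms is deferred to the enhanced version, Proposition~\ref{enhanced-localconvergence}, where the stronger hypothesis $\mathbf{d}(\phi_k,\phi)\to 0$ (a trilinear extension of $\mathcal{W}_{s,\epsilon}$) is imposed precisely for this purpose.
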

The proof of Proposition \ref{LWP-main} and Proposition \ref{local-convergence} depends on the following deterministic multilinear estimate. Let $\eta\in C_c^{\infty}((-1,1))$ and $\eta_T(t)=\eta\big(\frac{t}{T}\big)$. 
\begin{proposition}\label{multi-linear}
Let $\alpha\in \big(\frac{6}{5}, 2 \big)$ and $s\in\big[\frac{1}{2}-\frac{\alpha}{4},\alpha-1 \big)$. There exist $2\ll q<\infty$, large enough, $0<\epsilon\ll 1$, small enough and $\theta=\theta(\epsilon,q)>0$, such that for all $0<T<1$, $f_1,f_2,f_3\in \mathcal{Z}^{q,\epsilon}$ and $u_1,u_2,u_3\in X^{s,\frac{1}{2}+\epsilon}$, the following estimates hold:
\begin{equation*}
\begin{split}
&(\mathrm{1})\quad  \|\eta_T(t)\mathcal{N}(S_{\alpha}(t)f_1,u_2,u_3)\|_{X^{s,-\frac{1}{2}+2\epsilon}}\lesssim T^{\theta}\|f_1\|_{\mathcal{Z}^{q,\epsilon}}\|u_2\|_{X^{s,\frac{1}{2}+\epsilon}}\|u_3\|_{X^{s,\frac{1}{2}+\epsilon}}\,\, ,
\\
&(\mathrm{2})\quad  \|\eta_T(t)\mathcal{N}(u_1,S_{\alpha}(t)f_2,u_3)\|_{X^{s,-\frac{1}{2}+2\epsilon}}\lesssim T^{\theta}\|u_1\|_{X^{s,\frac{1}{2}+\epsilon }}\|f_2\|_{\mathcal{Z}^{q,\epsilon}}\|u_3\|_{X^{s,\frac{1}{2}+2\epsilon}}\,\, ,\\
&(\mathrm{3})\quad  \|\eta_T(t)\mathcal{N}(u_1,u_2,S_{\alpha}(t)f_3)\|_{X^{s,-\frac{1}{2}+2\epsilon}}\lesssim T^{\theta}\|u_1\|_{X^{s,\frac{1}{2}+2\epsilon}}\|u_2\|_{X^{s,\frac{1}{2}+\epsilon}}\|f_3\|_{\mathcal{Z}^{q,\epsilon}}\,\, ,\\
&(\mathrm{4})\quad  \|\eta_T(t)\mathcal{N}(S_{\alpha}(t)f_1,u_2,S_{\alpha}(t)f_3)\|_{X^{s,-\frac{1}{2}+2\epsilon}}\lesssim T^{\theta}\|f_1\|_{\mathcal{Z}^{q,\epsilon}}\|u_2\|_{X^{s,\frac{1}{2}+\epsilon}}\|f_3\|_{\mathcal{Z}^{q,\epsilon}}\,\, ,\\
&(\mathrm{5})\quad  \|\eta_T(t)\mathcal{N}(S_{\alpha}(t)f_1,S_{\alpha}(t)f_2,u_3)\|_{X^{s,-\frac{1}{2}+2\epsilon}}\lesssim T^{\theta}\|f_1\|_{\mathcal{Z}^{q,\epsilon}}\|f_2\|_{\mathcal{Z}^{q,\epsilon}}\|u_3\|_{X^{s,\frac{1}{2}+2\epsilon}}\,\, ,\\
&(\mathrm{6})\quad  \|\eta_T(t)\mathcal{N}(u_1,S_{\alpha}(t)f_2,S_{\alpha}(t)f_3)\|_{X^{s,-\frac{1}{2}+2\epsilon}}\lesssim T^{\theta}\|u_1\|_{X^{s,\frac{1}{2}+\epsilon}}\|f_2\|_{\mathcal{Z}^{q,\epsilon}}\|f_3\|_{\mathcal{Z}^{q,\epsilon}}\,\, .
\end{split}
\end{equation*}
\end{proposition}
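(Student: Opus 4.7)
By duality each of the six estimates reduces to controlling a quadrilinear form $\langle \eta_T \mathcal N(\cdot,\cdot,\cdot),w\rangle$ against a test function $w$ with $\|w\|_{X^{-s,1/2-2\epsilon}}\leq 1$. I would then split $\mathcal N=\mathcal N_1-\mathcal N_0$ as in \eqref{wicknonlinear}: the diagonal piece $\mathcal N_0$ involves only a single sum over $n$ and is handled directly by $\ell^p$ manipulations combined with the Fourier-Lebesgue component of the free-evolution norm and Sobolev embeddings for Bourgain factors. The core of the analysis is $\mathcal N_1$, for which $n_2\notin\{n_1,n_3\}$ and Lemma~\ref{bound:resonance} yields $|\Phi(\ov n)|\gtrsim |n_1-n_2|\,|n_2-n_3|\,N_{\max}^{\alpha-2}$, where $N_1,N_2,N_3,N$ are the dyadic sizes of the four frequencies and $N_{\max}$ their maximum. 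Introducing the modulation variables $\sigma_i=\tau_i-|n_i|^\alpha$ and $\sigma=\tau-|n|^\alpha$, the relation $\Phi(\ov n)+\sigma_1-\sigma_2+\sigma_3=\sigma$ forces the largest of $\langle\sigma_1\rangle,\langle\sigma_2\rangle,\langle\sigma_3\rangle,\langle\sigma\rangle$ to be $\gtrsim |\Phi(\ov n)|$, which supplies the regularity gain.

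Next, I would perform a dyadic decomposition in each frequency $N_i,N$ and in each modulation, and split into four cases according to which modulation dominates. Within each modulation case and each of the six configurations (1)--(6), I would apply the bilinear spacetime estimate of Proposition~\ref{bilinear-spacetime} to a carefully chosen pair of factors---pairing, whenever possible, one Bourgain factor with a free evolution in order to combine the Strichartz gain $\min(N_i,N_j)^{1/2-\alpha/4}$ with the Bourgain modulation weight---and bound the third factor by H\"older in space-time. For Bourgain inputs this uses the $X^{s,1/2+\epsilon}$ norm together with the linear Strichartz of Corollary~\ref{Strichartz-bilinear}; for free-evolution inputs one of the two pieces of the free-evolution norm \eqref{auxillarynorms} enters, namely the mixed Lebesgue piece $L^q_t W^{(\alpha-1)/2-\epsilon/2,1/\epsilon}_x$ for physical-space H\"older, and the Fourier-Lebesgue piece $\mathcal FL^{\alpha/2-2\epsilon/3,2/\epsilon}$ for summing against frequency constraints imposed by the resonance. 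The $T^\theta$ factor is finally produced by Lemma~\ref{time-localization} once a slightly better modulation margin than $-1/2+2\epsilon$ has been established.

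The principal difficulty will be in cases (4)--(6), where two inputs are free evolutions of regularity only $(\alpha-1)/2-$, strictly below the target Sobolev index $s\geq 1/2-\alpha/4$. In such configurations the required smoothing must come entirely from the resonance bound, and when both transverse differences $|n_1-n_2|$ and $|n_2-n_3|$ are simultaneously small the resonance gain is weak. The argument then has to sum factors $\langle n_1-n_2\rangle^{-\beta}\langle n_2-n_3\rangle^{-\beta}$---via Lemma~\ref{lemme-summation}---against the $\ell^{2/\epsilon}$ Fourier-Lebesgue control of the two free inputs, and this closes only when the intrinsic derivative loss $N_{\max}^{2-\alpha}$ from the resonance bound is beaten by the square-root-type gains coming from Proposition~\ref{bilinear-spacetime} combined with the Fourier-Lebesgue summation. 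A direct computation of the critical exponent shows that the balance holds precisely for $\alpha>6/5$, with an $\epsilon$-margin provided $q$ is taken large enough and $\epsilon$ small enough; this margin is absorbed into $T^\theta$ and allows the dyadic summation to close uniformly, giving all six estimates. Below the threshold $\alpha\leq 6/5$ the present scheme would break down and a more involved resolution ansatz would be required.
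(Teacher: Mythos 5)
The broad architecture you describe---duality, the $\mathcal{N}_0/\mathcal{N}_1$ splitting from \eqref{wicknonlinear}, dyadic decomposition, the resonance bound of Lemma~\ref{bound:resonance}, bilinear Strichartz from Proposition~\ref{bilinear-spacetime}, and the $T^\theta$ factor via Lemma~\ref{time-localization}---is indeed the paper's framework, and your handling of $\mathcal{N}_0$ and of the high-modulation part of $\mathcal{N}_1$ is essentially what Section~4 does. Two aspects of your plan, however, do not match the actual argument and would not close.

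First, you identify cases (4)--(6), with two free-evolution inputs, as the principal difficulty, and propose to balance the weak resonance gain against Lemma~\ref{lemme-summation} and the $\ell^{2/\epsilon}$ Fourier--Lebesgue control. The paper needs no such balance in these configurations: once the high-modulation contribution has been controlled (bilinear Strichartz plus the Bourgain modulation margin, valid regardless of the type of each factor), the residual low-modulation piece is restricted to a frequency set where $|\Phi(\ov{n})-\mu|\leq 1$ with $|\mu|\lesssim K$ and $K$ a power strictly below $N_{(1)}^{\alpha-1}$. Since Lemma~\ref{bound:resonance} forces $|\Phi(\ov{n})|\gtrsim N_{(1)}^{\alpha-1}$ whenever the largest dyadic frequency is isolated from the others, this set is \emph{empty}---these are the paper's low-modulation Cases~2 and~3---so the contribution vanishes outright. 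There is no critical-exponent computation to run there.

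Second, the restriction $s<\alpha-1$, and hence $\alpha>6/5$, does not emerge from (4)--(6). It comes from the single-free-evolution configuration in which the free wave carries the largest dyadic frequency $N_{(1)}$ while the other two factors are of Bourgain type (the paper's low-modulation Case~1, relevant to estimates (1)--(3) after reordering). The mechanism there is not a bilinear Strichartz pairing at all, but a direct lattice-point count on a level set of $\widetilde{\Phi}$: the Fourier--Lebesgue piece of $\mathcal{Z}^{q,\epsilon}$ gives the pointwise decay $|\phi(n)|\lesssim N_{(1)}^{-\alpha/2+}$, while the degenerate curvature of the resonance surface causes a counting loss of order $N_{(1)}^{1-\alpha/2}$, and the net condition is $s-\alpha/2+(1-\alpha/2)<0$. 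An ingredient absent from your outline but essential to make this reduction work is the representation of each Bourgain factor as a $\lambda$-superposition of modulated free waves, followed by a Cauchy--Schwarz over the truncated modulation window; this trades the $X^{s,b}$ weight for a supremum over $|\mu|\lesssim K$ and reduces the entire low-modulation estimate to the counting problem indexed by the fixed parameter $\mu$.
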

We will postpone the proof of Proposition \ref{multi-linear} to the next section and use it to prove the local existence results, Proposition \ref{LWP-main} and Proposition \ref{local-convergence}, in the rest of this section.
\\

\begin{proof}[Proof of Proposition \ref{LWP-main}]
For simplicity, we drop the subindex $N$ everywhere. Consider the mapping
\begin{equation*}
\Gamma: w(t)\rightarrow -i\int_0^tS_{\alpha}(t-t')\mathcal{N}\big(\eta_T(t')\big(S_{\alpha}(t')(\phi+r )+ w(t')\big) \big)dt',
\end{equation*}
and we want to show that $\Gamma$ is a contraction on a ball of $X_{T}^{s,\frac{1}{2}+\epsilon}$. For given $u$ on $[-T,T]\times\T$, we denote by $\widetilde{u}$ an extension of $u$ onto $\R\times\T$. Note that from Lemma \ref{inhomo-linear}, we deduce that
\begin{equation*}
\Big\|\int_0^t S_{\alpha}(t-t')\mathcal{N}(\widetilde{u}(t'))  dt'\Big\|_{X_{T}^{s,\frac{1}{2}+2\epsilon}}\lesssim \|\eta_T(t)\mathcal{N}(\widetilde{u})\|_{X^{s,-\frac{1}{2}+2\epsilon}},
\end{equation*}
where $\eta_T(t)=\eta(t/T)$ is a smooth cutoff on $[-2T,2T]$, $\eta_T(t)=1$ for $t\in[-T,T]$. Take $\widetilde{w}$ an extension of $w$ on $\R\times\T$ with the property $\widetilde{w}(t)=w(t)$ for $t\in[-T,T]$. For $\widetilde{u}(t)=\eta_T(t)\big(S_{\alpha}(t)\phi+S_{\alpha}(t)r+\widetilde{w}(t) \big)$, from Proposition \ref{multi-linear}, we have
\begin{equation*}
\|\mathcal{N}(\widetilde{u})\|_{X^{s,-\frac{1}{2}+2\epsilon}}\lesssim T^{\theta}\Big(\|\phi\|_{\mathcal{V}^{q,\epsilon}}^3+\mathcal{W}_{s,\epsilon}(\phi) +\|\eta_T(t)(S_{\alpha}(t)r+\widetilde{w})\|_{X^{s,\frac{1}{2}+\epsilon}}^3 \Big).
\end{equation*}
This implies that
\begin{equation*}
\|\Gamma(w)\|_{X_T^{s,\frac{1}{2}+2\epsilon}}\lesssim T^{\theta} \Big(\|\phi\|_{\mathcal{V}^{q,\epsilon}}^3+ \|r\|_{H_x^s}^3+\mathcal{W}_{s,\epsilon}(\phi)+\|w\|_{X_T^{s,\frac{1}{2}+\epsilon}}^3 \Big).
\end{equation*}
Moreover, if $w_1,w_2\in X_T^{s,\frac{1}{2}+\epsilon},$ the same argument, after doing simple algebraic manipulations, yields
\begin{equation*}
\begin{split}
&\|\Gamma(w_1)-\Gamma(w_2)\|_{X_T^{s,\frac{1}{2}+2\epsilon}}\\ \lesssim &T^{\theta}\Big(\|\phi\|_{\mathcal{V}^{q,\epsilon}}^2+\mathcal{W}_{s,\epsilon}(\phi)+\|r\|_{H_x^s}^3+\|w_1\|_{X_T^{s,\frac{1}{2}\epsilon}}^2+\|w_2\|_{X_T^{s,\frac{1}{2}+\epsilon}}^3 \Big)\|w_1-w_2\|_{X_T^{s,\frac{1}{2}+\epsilon}}.
\end{split}
\end{equation*}
Hence $\Gamma$ is a contraction in the ball $B_{X_T^{s,\frac{1}{2}+\epsilon}}(R^{-1})$, provided that
$$ \|\phi\|_{\mathcal{V}^{q,\epsilon}}+\big(\mathcal{W}_{s,\epsilon}(\phi_N)\big)^{\frac{1}{3}}\leq R, \quad T\leq T_R:= cR^{-\kappa},
$$
with $c>0$ small enough and $\kappa>0$ large enough. This proves the existence and uniqueness of $w_N(t)$ for all $N\in\mathbb{N}\cup\{\infty\}$.  This completes the proof of Proposition \ref{LWP-main}.
\end{proof}
\begin{proof}[Proof of Proposition \ref{local-convergence}]
	 To simplify the notation, we denote by $ z(t)=\eta_T(t)S_{\alpha}(t)\phi_0, z_{k}(t)=\eta_T(t)S_{\alpha}(t)\phi_{0,k}$, and $y(t)=\eta_T(t)S_{\alpha}(t)r_0, y_k(t)=\eta_T(t)S_{\alpha}(t)r_{0,k}$.
	  By inverting the gauge transformation, for $t$ belonging to the  time interval of local existence theory, we have
$$
w_k(t)=-i\Pi_{N_k}\int_0^t S_{\alpha}(t)\mathcal{N}\big(z_k+y_k+w_k \big)(t')dt',  
$$
and
$$
w(t)=-i\int_0^tS_{\alpha}(t-t')\mathcal{N}\big( z+y+w \big)(t')dt'.
$$
Taking the difference, we get
	\begin{equation*}
	\begin{split}
	\|w_k-w\|_{X_T^{s,\frac{1}{2}+2\epsilon}}\leq & \Big\|\Pi_{N_k}^{\perp}\int_0^tS_{\alpha}(t-t')\mathcal{N}(z+y+w)(t')dt' \Big\|_{X_T^{s,\frac{1}{2}+2\epsilon}}\\
	+& \Big\|\int_0^tS_{\alpha}(t-t')\Pi_{N_k}\big(\mathcal{N}(z+y+w)(t')-\mathcal{N}(z_k+y_k+w_k)(t') \big)dt'
	\Big\|_{X_T^{s,\frac{1}{2}+2\epsilon}}.
	\end{split}
	\end{equation*}
	The first term on the right side is $o(1)$, as $k\rightarrow\infty$, since $\mathcal{N}(z+y+w)\in X_T^{s,-\frac{1}{2}+\epsilon}.$ Note that $\mathcal{N}(z+y+w)-\mathcal{N}(z_k+y_k+w_k)$  consists of the terms
	$$ \mathcal{N}(z-z_k,z-z_k,z-z_k),\quad \mathcal{N}(w-w_k+y-y_k,\cdot,\cdot),\quad \mathcal{N}(\cdot, w-w_k+y-y_k,\cdot),\cdots
	$$
	Therefore, the second term on the right hand-side of the last inequality can be bounded by
	$$  C\mathcal{W}_{s,\epsilon}(\phi_{0,k}-\phi_0)+CR^2T^{\theta}\big(\|w_k-w\|_{X_T^{s,\frac{1}{2}+\epsilon}}+\|r_{0,k}-r_0\|_{H_x^s} \big),
	$$
	where we used Proposition~\ref{multi-linear}. By choosing $c>0$ small enough, $\kappa>0$ large enough such that
	$ CT^{\theta}R^2<\frac{1}{2}$, we have
	$$ \|w_k-w\|_{X_T^{s,\frac{1}{2}+2\epsilon}}\leq 2C \mathcal{W}_{s,\epsilon}(\phi_{0,k}-\phi_0) +T^{\theta}R^2\|r_{0,k}-r_0\|_{H_x^s} =o(1),\quad k\rightarrow\infty.
	$$  
This completes the proof of Proposition~\ref{local-convergence}.
\end{proof}
\section{Deterministic trilinear estimate}
In this section, we prove the trilinear estimates in Proposition~\ref{multi-linear}. Note that by the symmetric role of the first place and the third place in the expression of $\mathcal{N}(\cdot,\cdot,\cdot)$, it is sufficient to prove (1), (2), (4), (5) of Proposition~\ref{multi-linear}. Note also that from the embedding 
\begin{align*}
W_x^{\frac{\alpha-1}{2}-\epsilon,\frac{1}{\epsilon}}\hookrightarrow W_x^{\frac{\alpha-1}{2}-3\epsilon,\infty} \text{ and } \mathcal{F}L^{\frac{\alpha}{2}-\epsilon,\frac{2}{\epsilon}}\hookrightarrow\mathcal{F}L^{\frac{\alpha}{2}-\epsilon,\infty},
\end{align*} 
it would be sufficient to prove stronger estimates by replacing  $\mathcal{Z}^{q,\epsilon}$ with $ L_{t,\mathrm{loc}}^qW_x^{\frac{\alpha-1}{2}-3\epsilon,\infty}\cap \mathcal{F}L^{\frac{\alpha}{2}-\epsilon,\infty}.$ 
In what follows, we may insert the smooth cutoff function $\eta_T$ on $[-2T,2T]$  without additional mention. We will carry out a case-by-case analysis on
$$ \|\eta_T(t)\mathcal{N}_0(v_1,v_2,v_3)\|_{X^{s,-\frac{1}{2}+2\epsilon}}\textrm{ and } \|\eta_T(t)\mathcal{N}_1(v_1,v_2,v_3)\|_{X^{s,-\frac{1}{2}+2\epsilon}}
$$
where $v_j$ takes one of the following forms 
\begin{equation*}
\begin{split}
&(\mathrm{I})  \quad 
v_j=\eta_T(t)\sum_{n\in\Z}\hat{f_j}(n)e^{i(nx+|n|^{\alpha}t)}\in L_t^{\infty}\mathcal{F}L^{\frac{\alpha}{2}-\epsilon,\infty}\cap L_{t}^{q}W_x^{\frac{\alpha-1}{2}-3\epsilon,\infty}\,,
\\
&(\mathrm{II}) \quad v_j=\eta_T(t)v_j\in X^{s,\frac{1}{2}+\epsilon}.
\end{split}
\end{equation*}
By normalization, we may assume that 
$$ \sup_{|t|\leq 1}\|S_{\alpha}(t)f_j\|_{L_t^qW_x^{\frac{\alpha-1}{2}-3\epsilon,\infty}}+\|f_j\|_{\mathcal{F}L^{\frac{\alpha}{2}-\epsilon,\infty}}=1\text{ if $v_j$ is of type I}.
$$
and
$$ \|v_j\|_{X^{s,\frac{1}{2}+\epsilon}}=1 \text{ if } v_j \text{ is of type II}.
$$
In the sequel we will suppose that $\hat{f_j}(n)=\phi(n)$, i.e. that all $f_j$ are equal. Under this assumption the analysis is essentially the same and it will be satisfied in the applications of  Proposition~\ref{multi-linear}.
\\

Throughout this section, $\frac{6}{5}<\alpha<2$ and $\frac{1}{2}-\frac{\alpha}{4}\leq s<\alpha-1$.
First we have a simple estimate for the part $\mathcal{N}_0(\cdot,\cdot,\cdot)$.
\begin{proposition}\label{trilinear-N0}
	For any small $\epsilon>0$ and $q<\infty$ large enough, there exists $\theta>0$, such that for $0<T<1$,
	\begin{equation}\label{estimate:N0}
	\|\eta_T(t)\mathcal{N}_0(v_1,v_2,v_3)\|_{X^{s,-\frac{1}{2}+2\epsilon}}\lesssim T^{\theta}.
	\end{equation}	
\end{proposition}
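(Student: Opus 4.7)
The plan is to exploit the frequency-diagonal structure of $\mathcal{N}_0$: on the Fourier side, its $n$-th coefficient is simply $\widehat{v_1}(n,t)\,\overline{\widehat{v_2}(n,t)}\,\widehat{v_3}(n,t)$, with no convolution at all. Consequently, neither Lemma~\ref{bound:resonance} nor the bilinear Strichartz estimate of Proposition~\ref{bilinear-spacetime} is needed, and the proof should be entirely soft.

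First I would note that since $-\frac{1}{2}+2\epsilon<0$ the weight $\langle\tau-|n|^{\alpha}\rangle^{-1+4\epsilon}\le 1$, giving
\begin{equation*}
\|u\|_{X^{s,-\frac{1}{2}+2\epsilon}}\le\|u\|_{X^{s,0}}=\|u\|_{L^2_tH^s_x}.
\end{equation*}
Thus \eqref{estimate:N0} reduces to showing that
\begin{equation*}
\int_{\R}\eta_T(t)^2\sum_{n\in\Z}\langle n\rangle^{2s}\,|\widehat{v_1}(n,t)|^2|\widehat{v_2}(n,t)|^2|\widehat{v_3}(n,t)|^2\,dt\lesssim T^{2\theta}.
\end{equation*}
Two uniform-in-$t$ bounds do the rest. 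If $v_j$ is of Type~I, then $|\widehat{v_j}(n,t)|\le|\eta_T(t)|\langle n\rangle^{-\alpha/2+\epsilon}$, so $\sup_n|\widehat{v_j}(n,t)|\lesssim 1$, and summing in $n$ using $\alpha>1$ yields $\|v_j(t)\|_{L^2_x}\lesssim 1$. If $v_j$ is of Type~II, the Bourgain embedding $X^{s,\frac{1}{2}+\epsilon}\hookrightarrow C(\R;H^s(\T))$ gives $\|v_j\|_{L^\infty_tH^s_x}\lesssim 1$ and, by $\ell^2\hookrightarrow\ell^\infty$, also $\sup_n|\widehat{v_j}(n,t)|\lesssim 1$.

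A two-case split then concludes. If at least one factor, say $v_1$, is of Type~II, I would bound the other two in $\ell^\infty_n$ to obtain
\begin{equation*}
\sum_n\langle n\rangle^{2s}|\widehat{v_1}|^2|\widehat{v_2}|^2|\widehat{v_3}|^2\lesssim\|v_1(t)\|_{H^s_x}^2,
\end{equation*}
and integration against $\eta_T^2$ over $[-2T,2T]$ produces a factor $\lesssim T$. Otherwise all three factors are of Type~I, and
\begin{equation*}
\langle n\rangle^{2s}\prod_{j=1}^{3}|\widehat{v_j}(n,t)|^2\le \eta_T(t)^6\,\langle n\rangle^{2s-3\alpha+6\epsilon}.
\end{equation*}
Since $s<\alpha-1$ and $\alpha>1$, the exponent $2s-3\alpha+6\epsilon<-1$ for $\epsilon$ small enough, so summing in $n$ bounds the integrand by $C\eta_T(t)^6$, and the time integral again gives $\lesssim T$. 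In either case $\|\eta_T\mathcal{N}_0(v_1,v_2,v_3)\|_{L^2_tH^s_x}\lesssim T^{1/2}$, so $\theta=\frac{1}{2}$ suffices.

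There is no genuine obstacle here: the diagonal structure of $\mathcal{N}_0$ eliminates any resonance analysis, and the Sobolev weight $\langle n\rangle^{2s}$ can always be absorbed either by the Bourgain regularity of a Type~II factor or by the Fourier-Lebesgue decay of a Type~I factor. Notably, the hypothesis $\alpha>\frac{6}{5}$ plays no role in this proposition; it will only enter in the non-resonant piece $\mathcal{N}_1$, which is the genuine content of Proposition~\ref{multi-linear} and where Lemma~\ref{bound:resonance} and Proposition~\ref{bilinear-spacetime} are crucially needed.
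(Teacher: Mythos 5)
Your argument is correct, and it is genuinely different from---and more elementary than---the paper's proof. The paper first uses Lemma~\ref{time-localization} to peel off a factor $T^{\epsilon}$, then works entirely on the Fourier side in $(\tau,n)$, writing $\widehat{v_j}(\tau,n)=\langle n\rangle^{-s}\langle\tau-|n|^{\alpha}\rangle^{-\frac{1}{2}-\epsilon}V_j(\tau,n)$ for Type~(II) factors and $\widehat{\eta_T}(\tau-|n|^{\alpha})\phi(n)$ for Type~(I) factors; it then splits into four cases according to how many factors are of Type~(I) and extracts the time gain either from Lemma~\ref{time-localization} or from $\|\widehat{\eta_T}\|_{L^2}=O(T^{1/2})$. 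You instead discard the (negative) modulation weight outright, reducing to the $L^2_tH^s_x$ norm, which by Plancherel is a pointwise-in-time diagonal sum over $n$; two uniform-in-$t$ bounds ($\sup_n|\widehat{v_j}(n,t)|\lesssim1$, and $\|v_j(t)\|_{H^s}\lesssim1$ for Type~(II) or $|\widehat{v_j}(n,t)|\lesssim|\eta_T(t)|\langle n\rangle^{-\alpha/2+\epsilon}$ for Type~(I)) then close the estimate in two cases with a uniform gain $T^{1/2}$, cleaner than the $T^{\epsilon}$ the paper obtains in some of its cases. Your observation that the exponent $2s-3\alpha+6\epsilon<-1$ holds as soon as $\alpha>1$ (with $s<\alpha-1$) matches the paper's remark that the proposition is valid for all $\alpha>1$. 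The only cost of your route is that it discards the $\langle\tau-|n|^{\alpha}\rangle^{-\frac{1}{2}+2\epsilon}$ weight, which is harmless here because $\mathcal{N}_0$ has no frequency convolution; for $\mathcal{N}_1$ that weight is indispensable, which is presumably why the paper sets up both pieces in the same $(\tau,n)$ framework.
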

One may remark that this proposition holds true for all $\alpha>1$.
\begin{proof}
 By Lemma \ref{time-localization} and the definition,
	\begin{equation}\label{N0-1}
	\begin{split}
	&\|\eta_T(t)\mathcal{N}_0(v_1,v_2,v_3)\|_{X^{s,-\frac{1}{2}+2\epsilon}}\lesssim T^{\epsilon}\|\eta_T(t)\mathcal{N}_0(v_1,v_2,v_3)\|_{X^{s,-\frac{1}{2}+3\epsilon}} \\
	=&T^{\epsilon}\Big\|\frac{\langle n\rangle^s}{\langle\tau-|n|^{\alpha}\rangle^{\frac{1}{2}-3\epsilon}}\int_{\tau=\tau_1-\tau_2+\tau_3}
	\widehat{v_1}(\tau_1,n)\ov{\widehat{v_2}}(\tau_2,n)\widehat{v_3}(\tau_3,n)d\tau_1 d\tau_2 \Big\|_{l_n^2L_{\tau}^2}.
	\end{split}
	\end{equation}
By abusing the notation, we may replace $v_j$ by $\eta_Tv_j$ if necessary.\\
	\noi
	$\bullet$ {\bf Case (1):} $v_1,v_2,v_3$ are of type (II). 
	Writting $\widehat{v_j}(\tau_,n)=\langle n\rangle^{-s}\langle\tau_j- |n|^{\alpha}\rangle^{-\frac{1}{2}-\epsilon}V_j(\tau,n),$  we estimate the $L_{\tau}^2$ norm of the second term of the right side by
	\begin{equation}\label{N0-3}
	\begin{split}
	&T^{\epsilon}\Big\| \langle n\rangle^s\int \widehat{v_1}(\tau_1,n)\ov{\widehat{v_2}}(\tau_2,n)\widehat{v_3}(\tau-(\tau_1-\tau_2), n)d\tau_1 d\tau_2\Big\|_{L_{\tau}^2}
	\\ \lesssim  &T^{\epsilon}
	\langle n\rangle^{-2s}\Big\|\int \frac{V_1(\tau_1,n)\ov{V}_2(\tau_2,n)V_3(\tau-(\tau_1-\tau_2),n)}{\langle\tau_1-|n|^{\alpha}\rangle^{\frac{1}{2}+\epsilon}
		\langle\tau_2-|n|^{\alpha}\rangle^{\frac{1}{2}+\epsilon}
		\langle\tau-(\tau_1-\tau_2)-|n|^{\alpha}\rangle^{\frac{1}{2}+\epsilon} }d\tau_1 d\tau_2\Big\|_{L_{\tau}^2}
	\\
	\lesssim &T^{\epsilon}\langle n\rangle^{-2s}\|V_1(\cdot,n)\|_{L_{\tau}^2}\|V_2(\cdot,n)\|_{L_{\tau}^2}\|V_3(\cdot,n)\|_{L^2_{\tau}},
	\end{split}
	\end{equation}
	where at the last step, we used Minkowski to pass the $L_{\tau}^2$ inside the integral and then Cauchy-Schwarz in $\tau_1,\tau_2$ variables.
	Finally, taking $l_n^2$ of the right side of \eqref{N0-3}, we obtain \eqref{estimate:N0}
	in this case.

	\noi
	$\bullet$ {\bf Case (2):} Exactly two $v_j$ of type (I), say, $v_1(\mathrm{I}), v_2(\mathrm{I})$ and $v_3(\mathrm{II})$. 
	With the same notation $V_3(\tau,n)=\langle n\rangle^s\langle\tau- |n|^{\alpha}\rangle^{\frac{1}{2}+\epsilon}\widehat{v_3}(\tau,n)$, we estimate
	\begin{equation*}
	\begin{split}
	\eqref{N0-1}\lesssim &T^{\epsilon} \Big\||\phi(n)|^2\int_{\tau=\tau_1-\tau_2+\tau_3}\frac{\widehat{\eta_T}(\tau_1-|n|^{\alpha})\ov{\widehat{\eta_T}}(\tau_2-|n|^{\alpha}) V_3(\tau_3,n)}{\langle\tau_3-|n|^{\alpha}\rangle^{\frac{1}{2}+\epsilon}\langle\tau-|n|^{\alpha}\rangle^{\frac{1}{2}-3\epsilon} } d\tau_1 d\tau_2 \Big\|_{l_n^2L_{\tau}^{2}}\\
	\lesssim & T^{\frac{1}{2}}\|\langle n\rangle^{-\alpha}\|_{l_n^2}\|V_3\|_{l_n^{\infty}L_{\tau}^{2}}\lesssim T^{\frac{1}{2}},
	\end{split}
	\end{equation*}
	where we used the fact that $\eta_T(t)=\eta(T^{-1}t)$ and $\|\widehat{\eta_T}\|_{L^2(\R)}=O(T^{1/2})$.
	
	\noi
	$\bullet$ {\bf
		Case (3):} Exactly one $v_j$ of type (I), say, $v_1$(I), $v_2$(II), $v_3$(II).
	
	With the same notations, we have
	\begin{equation*}
	\begin{split}
	\eqref{N0-1}\lesssim & T^{\epsilon}
	\|\langle n\rangle^{-s}\phi(n)\|_{l_n^2}\Big\|\int \frac{\widehat{\eta_T}(\tau_1-|n|^{\alpha})\ov{V}_2(\tau_2,n)V_3(\tau-(\tau_1-\tau_2),n)d\tau_1 d\tau_2 }{\langle\tau_2-|n|^{\alpha}\rangle^{\frac{1}{2}+\epsilon}\langle \tau-(\tau_1-\tau_2)-|n|^{\alpha}\rangle^{\frac{1}{2}+\epsilon}\langle\tau-|n|^{\alpha}\rangle^{\frac{1}{2}-3\epsilon} } \Big\|_{l_n^{\infty}L_{\tau}^2 }\\
	\lesssim &T^{\epsilon}\|\widehat{\eta_T}\|_{L_{\tau_1}^1}\|V_2\|_{l_n^{\infty}L_{\tau_2}^2}\|V_3\|_{l_n^{\infty}L_{\tau}^2}\lesssim T^{\epsilon}.
	\end{split}
	\end{equation*}
	\noi
	$\bullet$ {\bf
		Case (4):} All $u_j$ of type (I), then
	\begin{equation*}
	\begin{split}
	\eqref{N0-1} \lesssim & T^{\epsilon}\Big\|\frac{\langle n\rangle^s|\phi(n)|^3}{\langle \tau-|n|^{\alpha}\rangle^{\frac{1}{2}-3\epsilon} }\int \widehat{\eta_T}(\tau_1-|n|^{\alpha})\ov{\widehat{\eta_T}}(\tau_2-|n|^{\alpha})\widehat{\eta_T}(\tau-\tau_1+\tau_2-|n|^{\alpha})d\tau_1 d\tau_2 \Big\|_{l_n^2L_{\tau}^2}\\
	\lesssim & T^{\epsilon}\Big\|\langle n\rangle^s |\phi(n)|^3 
	\int |\widehat{\eta_T}(\tau_1-|n|^{\alpha})\widehat{\eta_T}(\tau_2-|n|^{\alpha})
	| \|\frac{\widehat{\eta_T}(\tau-\tau_1+\tau_2-|n|^{\alpha})}{\langle\tau-|n|^{\alpha}\rangle^{\frac{1}{2}-3\epsilon} }\|_{L_{\tau}^2}d\tau_1 d\tau_2
	\Big\|_{l_n^2}\\
	\lesssim &T^{\epsilon}.
	\end{split}
	\end{equation*}
	
	This completes the proof of Proposition \ref{trilinear-N0}.
\end{proof}
\subsection{Estimate on $\mathcal{N}_1$ for high modulations}\

In the following two subsections, we will prove the following trilinear estimate for $\mathcal{N}_1$
\begin{proposition}\label{trilinear-N1}
	Assume that $v_1,v_2,v_3$ are not all of type $(\mathrm{I})$. Then there exists $0<\epsilon\ll 1$, small enough,  $2\ll q<\infty$, large enough, and $\theta=\theta(\epsilon)>0$ such that for $0<T<1$,
	\begin{equation}\label{estimate:N1}
	\|\eta_T(t)\mathcal{N}_1(v_1,v_2,v_3)\|_{X^{s,-\frac{1}{2}+2\epsilon}}\lesssim T^{\theta}.
	\end{equation}
\end{proposition}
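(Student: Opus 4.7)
The strategy is duality plus dyadic decomposition in both frequency and modulation, combined with the non--trivial resonance bound of Lemma~\ref{bound:resonance}. By Lemma~\ref{time-localization} we can spare a factor $T^\epsilon$ and reduce to bounding
$$
\mathcal{I}:=\Big|\int_{\R\times\T}\eta_T(t)\,\mathcal{N}_1(v_1,v_2,v_3)\,\overline{h}\,dx\,dt\Big|,\qquad \|h\|_{X^{-s,\frac{1}{2}-3\epsilon}}\leq 1\,.
$$
I would decompose each factor as $\mathbf{P}_{N_j} Q_{L_j} v_j$ with $N_j\sim\langle n_j\rangle$, $L_j\sim\langle\tau_j-|n_j|^{\alpha}\rangle$, and similarly $\mathbf{P}_N Q_L h$. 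The constraints $n=n_1-n_2+n_3$ and $\tau=\tau_1-\tau_2+\tau_3$ yield the identity
$$
(\tau-|n|^{\alpha})-(\tau_1-|n_1|^{\alpha})+(\tau_2-|n_2|^{\alpha})-(\tau_3-|n_3|^{\alpha})=\Phi(\bar{n})\,,
$$
so $L_{\max}:=\max(L,L_1,L_2,L_3)\gtrsim |\Phi(\bar{n})|$. Since the restriction to $\mathcal{N}_1$ enforces $n_2\neq n_1,n_3$, Lemma~\ref{bound:resonance} gives the lower bound $|\Phi(\bar{n})|\gtrsim |n_1-n_2|\,|n_2-n_3|\,N_{\max}^{\alpha-2}$.

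For the present (high modulation) subsection one treats the regime where the gain $L_{\max}$ is attained on a type~(II) factor or on $h$. I would extract a full $L_{\max}^{\frac{1}{2}+\epsilon}$ (respectively $L^{\frac{1}{2}-3\epsilon}$) from the $X^{s,b}$ weight of that factor and convert it, via the resonance lower bound, into a power of $|n_1-n_2|\,|n_2-n_3|\,N_{\max}^{\alpha-2}$. The remaining $L^2_{t,x}$ integral is then estimated by grouping the four factors into two bilinear $L^2_{t,x}$ pairs and invoking Proposition~\ref{bilinear-spacetime}, which produces the key bilinear gain $\min(N_i,N_j)^{\frac{1}{2}-\frac{\alpha}{4}}$ together with modulation factors. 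Type~(I) factors are bounded either in $L^q_t L^\infty_x$ (via the $W^{\frac{\alpha-1}{2}-3\epsilon,\infty}_x$ ingredient of $\mathcal{Z}^{q,\epsilon}$) or in $L^\infty_{t,x}$ (via the Fourier-Lebesgue ingredient), with H\"older in time producing an additional $T^\theta$. The assumption that not all $v_j$ are of type~(I) is essential here, as it guarantees at least one $X^{s,b}$ factor whose modulation weight can be cashed in.

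The main obstacle I anticipate is the balanced regime $N_1\sim N_2\sim N_3\sim N$ with $L_{\max}$ close to its lower bound and $|n_1-n_2|$, $|n_2-n_3|$ both much smaller than $N$. In that case the resonance lower bound degenerates, the bilinear Strichartz only saves $N^{\frac{1}{2}-\frac{\alpha}{4}}$, and one has to do a finer orthogonal decomposition in the difference variables $n_1-n_2$ and $n_2-n_3$ into boxes of size dictated by $L_{\max}$, applying the box--localized Strichartz inequality \eqref{boxlocalized-Strichartz} in place of the full bilinear estimate. Closing the resulting bookkeeping is exactly what pins down the threshold $\alpha>\frac{6}{5}$: it is the minimal dispersion for which the combined bilinear gain, the resonance exponent $\alpha-2$, and the regularity $s<\alpha-1$ balance out to leave a positive power of $T$ on the right hand side.
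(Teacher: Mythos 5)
Your outline captures the right top-level ingredients — duality, dyadic decomposition, the resonance lower bound of Lemma~\ref{bound:resonance}, bilinear Strichartz — but it misses what actually makes the argument close, and it locates the difficulty in the wrong frequency regime.

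First, the mechanism by which modulation is converted to a frequency gain in the paper is not the usual ``cash in $L_{\max}$ via the modulation identity'' step you describe. The paper instead runs a \emph{two-tier} argument: an initial high-modulation estimate (Cases A--D), whose threshold $K$ is chosen to depend on the frequency pattern $N_{(1)},N_{(2)},N_{(3)}$ (e.g.\ $K_1 = N_{(1)}^{8(s-\sigma)}N_{(2)}^{-8s}$ in Case~1), so that above that threshold the dyadic sums converge \emph{directly} from Corollary~\ref{bilinear-Xsb}; and then a separate low-modulation analysis which is reduced to a static counting problem, \eqref{eq:lowmodulationreduction}. Below $K$, the smallness of $K$ together with $|\Phi(\bar n)| \gtrsim N_{(1)}^{\alpha-1}$ (when the frequencies are unbalanced) kills the contribution altogether in Cases~2 and 3; Case~1 is a genuine counting argument using $|\partial\widetilde\Phi/\partial n_j|$ and Lemma~\ref{counting-lemma}. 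Your proposal of ``extracting $L_{\max}^{1/2+\epsilon}$ and converting it to $|\Phi(\bar n)|^{1/2+\epsilon}$'' is a different route and would need to be checked separately, but as written it does not account for the fact that when the type (II) factors all have modulation below $K$ and the type (I) ones have $O(1)$ modulation, there is nothing left to cash in from the $X^{s,b}$ weights — the only remaining weight is $L^{1/2-3\epsilon}$ on the dual function, and this alone is not enough in the unbalanced case.

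Second, and more substantively, your proof misses the Fourier--Lebesgue ingredient. In the paper's Case~1 of the low-modulation analysis (type~(I) highest frequency, $N_{(1)}\gg N_{(2)}$), the decisive gain is $|\phi(n_{(1)})| \lesssim N_{(1)}^{-\alpha/2 + 2\epsilon}$ coming from $\|\phi\|_{\mathcal F L^{\alpha/2-\epsilon,\infty}}$. Without this, the counting bound $\sum_n \mathbf 1_A \lesssim 1 + N_{(1)}^{2-\alpha}/|n_2-n_3|$ yields a factor $N_{(1)}^{s+1-\alpha/2}$ which does not sum; it is precisely the additional $N_{(1)}^{-\alpha/2}$ that produces $N_{(1)}^{s-\alpha/2 + (1-\alpha/2)}$ and hence the constraint $s<\alpha-1$, which together with $s\ge \tfrac12-\tfrac{\alpha}{4}$ gives $\alpha>\tfrac65$. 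Your proposal bounds type~(I) factors only in $L^q_tL^\infty_x$ or $L^\infty_{t,x}$, which gives at best an $N^{-\sigma}$ gain with $\sigma<\tfrac{\alpha-1}{2}$ — not enough in this configuration.

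Finally, you identify the dangerous regime as the balanced one $N_1\sim N_2\sim N_3$ with $|n_1-n_2|, |n_2-n_3|$ small. In the paper's argument this regime is in fact benign: in every Case A--D, when $N_{(1)}\sim N_{(2)}$ the dyadic sums converge from the high-modulation bilinear Strichartz estimate with no low-modulation reduction at all, and the box-localized Strichartz (Corollary~\ref{bilinear-Xsb}~(3)) is used for a different purpose. The restriction $\alpha>\tfrac65$ is forced by the unbalanced regime $N_{(1)}\gg N_{(2)}\ge N_{(3)}$ with a type~(I) factor at the top frequency.
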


Without loss of generality, in what follows, we assume that $v_1,v_2,v_3$ are \emph{not all of type} (II), since in this case, we can directly apply\footnote{Since we will only use $X^{s,b}$ type norms in this case, we can replace each Fourier coefficient in the expression of $\mathcal{N}_1(\cdot,\cdot,\cdot)$ by its absolute value and then apply Corollary \ref{Trilinear} for the full multiplication $v_1\ov{v}_2v_3$.} Corollary \ref{Trilinear}. 
We decompose  $v_1,v_2,v_3$  dyadically with frequencies of sizes $N_1,N_2,N_3$, respectively and denote them by $\mathbf{P}_{N_j}v_j$ respectively. 
 We denote by $N_{(1)},N_{(2)},N_{(3)}$ the decreasing ordering of $N_1,N_2,N_3$. By relabeling the index, we denote by $v_{(j)}=\mathbf{P}_{N_{(j)}}v_{*}$, the corresponding $v_j$-factors.  In the following, we use subscripts to imply that functions or variables are arranged in the decreasing order of the spatial frequencies $N_1,N_2,N_3$. By duality, we need  to estimate 
\begin{equation}\label{N1-dual'}
\int_{-2T}^{2T}\int_{\T}\mathcal{N}_1(v_1,v_2,v_3)\cdot \langle D_x \rangle^s \ov{v}dtdx,
\end{equation}
where $\|v\|_{X^{0,\frac{1}{2}-2\epsilon}}\leq 1$ and $v$ has compact support in $t$. It turns out that we can only treat
$$ \int_{-2T}^{2T}\int_{\T}\mathcal{N}_1\big(v_{(1)},v_{(2)},v_{(3)}\big)\cdot\langle D_x \rangle^s \ov{v}dtdx,
$$
and the analysis for other situations has no significant difference. In the high modulation cases, the main contribution comes from 
$$ \int_{-2T}^{2T}\int_{\T}v_1\ov{v_2}v_3\cdot\langle D_x\rangle^s\ov{v}dtdx,
$$
and we use the bilinear Strichartz inequalities and the regularization in the co-normal regularity (the $\frac{3}{8}$ exponent in the Strichartz inequality). 

The first goal of this subsection is to reduce the matter to the low modulation cases. More precisely, if there is any $v_j$ of type (II), we will reduce the estimate to the contribution where in the Fourier side,
$$ \langle\tau_j-|n_j|^{\alpha}\rangle\ll K_j, \textrm{ if  } v_j \textrm{ is of type (II)},
$$
for some suitable $K_j$, depending on different situations. 
We need to estimate the dyadic summation in $N_{(1)}, N_{(2)}, N_{(3)}$, $N$ for the following terms\footnote{The term $\mathcal{N}_0$  has been treated in the last subsection.}:
\begin{equation*}
\mathcal{A}=\Big|\int_{-2T}^{2T}\int_{\T}v_{(1)}\ov{v}_{(2)}v_{(3)}\cdot\langle D_x\rangle^s\ov{v}dtdx \Big|,\quad \mathcal{B}=\Big|\int_{-2T}^{2T}\big(v_{(1)},v_{(2)}\big)_{L_x^2}\big(v_{(3)},\langle D_x\rangle^s\mathbf{P}_Nv\big)_{L_x^2}dt \Big|,
\end{equation*}
and
\begin{equation*}
 \mathcal{C}=\Big|\int_{-2T}^{2T}\big(v_{(1)},\langle D_x\rangle^s\mathbf{P}_Nv \big)_{L_x^2} \big(v_{(2)}, v_{(3)}\big)_{L_x^2}dt \Big|.\footnote{ $ \mathcal{A},\mathcal{B},\mathcal{C} $ \text{ depend on the dyadic numbers } $N_{(1)}, N_{(2)}, N_{(3)}, N $\text{ and we omit the indices here. } }
\end{equation*}
For the proof in the rest subsections, we fix the index $\sigma=\frac{\alpha-1}{2}-3\epsilon$.
\subsubsection{ Estimates for the high modulations of $\mathcal{B},\mathcal{C}$. }\

We first estimate the quantities $\mathcal{B}$ and $\mathcal{C}$. Note that $\mathcal{B}=0$ unless $N_{(1)}\sim N_{(2)}$ and $N_{(3)}\sim N$. By Cauchy-Schwarz and then H\"older for the time integration, we have
$$ \mathcal{B}\lesssim N^s\|v_{(1)}\|_{L_t^4L_x^2}\|v_{(2)}\|_{L_t^4L_x^2}\|v_{(3)}\|_{L_t^4L_x^2}\|\mathbf{P}_Nv\|_{L_t^4L_x^2}.
$$
Since there is at least one of $v_{(j)}$ of type (II), using the interpolation between $X^{0,0}= L_{t}^2L_x^2$ and $X^{0,\frac{1}{2}+2\epsilon}\hookrightarrow L_t^{\infty}L_x^2$, we bound the $L_t^4L_x^2$ norm of $v_{(j)}(\mathrm{II})$ as follows 
$$\|v_{(j)}(\mathrm{II})\|_{X^{0,\frac{1}{4}+\epsilon}}\lesssim T^{\frac{1}{4}}\|v_{(j)}(\mathrm{II})\|_{X^{0,\frac{1}{2}+\epsilon}},$$
where we used Lemma \ref{time-localization}. 
Note that no matter type (I) or type (II), the dyadic summation over $N_{(1)}\sim N_{(2)}, N\sim N_{(3)}$ always converges, and we obtain that
\begin{align*}
 \sum_{\substack{N_{(1)}, N_{(2)}, N_{(3)}, N \text{ dyadic }\\ N_{(1)}\sim N_{(2)}, N_{(3)}\sim N }}N^s\|v_{(1)}\|_{L_t^4L_x^2}\|v_{(2)}\|_{L_t^4L_x^2}\|v_{(3)}\|_{L_t^4L_x^2}\|\mathbf{P}_Nv\|_{L_t^4L_x^2}
 \lesssim T^{\frac{1}{4}}.
\end{align*}
 Similarly, $\mathcal{C}=0$ unless $N_{(1)}\sim N$ and $N_{(2)}\sim N_{(3)}$. If $v_{(1)}$ is of type II, we obtain the same estimate as for $\mathcal{B}$, and the dyadic summation over $N_{(1)}\sim N, N_{(2)}\sim N_{(3)}$ converges. Now we assume that $v_{(1)}$ is of type (I). There are essentially two possibilities, either $v_{(2)}$ is of type (I) and $v_{(3)}$ is of type (II), or both are of type (II). For the former case, we bound $\mathcal{C}$ by
 $$ \mathcal{C}\lesssim N_{(1)}^{s-\sigma}N_{(2)}^{-\sigma}N_{(3)}^{-s}\|v_{(1)}\|_{L_t^qH_x^{\sigma}}\|v_{(2)}\|_{L_t^qH_x^{\sigma}}\|v_{(3)}\|_{L_t^{q_1}H_x^s } \|\mathbf{P}_Nv\|_{L_t^{q_1}L_x^2 }
 $$
 where for small $\epsilon>0$, large $q<\infty$,
 \begin{small}
 \begin{equation*}
 q_1=\frac{2q}{q-2}, \text{ almost } 2.
 \end{equation*}
 \end{small}
By interpolation between $X^{0,0}=L_t^2L_x^2$ and $X^{0,\frac{1}{2}+\epsilon}\hookrightarrow L_t^{\infty}L_x^2$, we have $X^{s,\frac{1+2\epsilon}{q}}\hookrightarrow L_t^{q_1}H_x^s$, thus
\begin{equation*}
 \mathcal{C}\lesssim N_{(1)}^{s-\sigma}N_{(2)}^{-\sigma}N_{(3)}^{-s}\|v_{(1)}\|_{L_t^qH_x^{\sigma}}
\|v_{(2)}\|_{L_t^qH_x^{\sigma}}\|v_{(3)}\|_{X^{s,\frac{1+2\epsilon}{q}}}\|\mathbf{P}_Nv\|_{X^{0,\frac{1+2\epsilon}{q}}}.
\end{equation*}
We can choose $q$ large enough such that $\frac{1+2\epsilon}{q}<\epsilon$.
For the case where both $v_{(2)}$ and $v_{(3)}$ are of type (II), we have 
$$ \mathcal{C}\lesssim N_{(1)}^{s-\sigma}N_{(2)}^{-s}N_{(3)}^{-s}\|v_{(1)}\|_{L_t^qH_x^{\sigma}}
\|v_{(2)}\|_{L_t^{\frac{3q}{q-1}}H_x^{s}}\|v_{(3)}\|_{L_t^{\frac{3q}{q-1}}H_x^{s}}\|\mathbf{P}_Nv\|_{L_t^{\frac{3q}{q-1}}L_x^{2}},
$$
and by interpolation, we obtain that
\begin{equation*}
 \mathcal{C} \lesssim N_{(1)}^{s-\sigma}N_{(2)}^{-s}N_{(3)}^{-s}\|v_{(1)}\|_{L_t^qH_x^{\sigma}}
\|v_{(2)}\|_{X^{s,\frac{1}{6}+\delta(q,\epsilon)}}\|v_{(3)}\|_{X^{s,\frac{1}{6}+\delta(q,\epsilon)}}\|\mathbf{P}_Nv\|_{X^{0,\frac{1}{6}+\delta(q,\epsilon)}},
\end{equation*}
where
\begin{small}$$ \delta(q,\epsilon)=\frac{(1+2\epsilon)(q+2)}{6q}-\frac{1}{6}<\epsilon,
$$
\end{small}
provided that $q$ is chosen large enough. For each $v_j$ of type (II) and $v$, we divide them as
$$ v_j(\tau,n)=v^{\mathrm{high}}_{j}+v^{\mathrm{low}}_{j}, \quad v(\tau,n)=v^{\mathrm{high}}+v^{\mathrm{low}},
$$
where
\begin{align*}
 \widehat{v^{\text{high}}_j}(\tau,n)=\mathbf{1}_{\langle\tau-|n|^{\alpha} \rangle^{\frac{1}{3}}\geq  N_{(1)}^{s-\sigma} } \widehat{v_j}(\tau,n),\quad \widehat{v^{\text{high}}}(\tau,n)=\mathbf{1}_{\langle\tau-|n|^{\alpha} \rangle^{\frac{1}{3}}\geq  N_{(1)}^{s-\sigma} } \widehat{v}(\tau,n).
\end{align*}
Then for the case $v_{(2)}=v_{(2)}(\mathrm{I}), v_{(3)}=v_{(3)}(\mathrm{II})$, if one of $v_{(3)}^{\text{low}}, \mathbf{P}_Nv^{\text{low} }=0$, we have
\begin{align*}
\sum_{\substack{ N_{(1)}, N_{(2)}, N_{(3)}, N\\
		N_{(1)}\sim N, N_{(2)}\sim N_{(3)}
	} 
}& N_{(1)}^{s-\sigma}N_{(2)}^{-s}N_{(3)}^{-s} \|v_{(1)}\|_{L_t^qH_x^{\sigma}} \|v_{(2)}\|_{L_t^qH_x^{\sigma}} \|v_{(3)}\|_{X^{s,\frac{1+2\epsilon}{q}}}\|\mathbf{P}_Nv\|_{X^{0,\frac{1+2\epsilon}{q}}}\notag \\
\lesssim &T^{1/2}.
\end{align*}
For the case $v_{(2)}=v_{(2)}(\mathrm{II}), v_{(3)}=v_{(3)}(\mathrm{II})$, if one of $v_{(2)}^{ \text{low}},v_{(3)}^{\text{low}}, \mathbf{P}_Nv^{\text{low} }=0$, we have
\begin{align*}
\sum_{\substack{ N_{(1)}, N_{(2)}, N_{(3)}, N\\
		N_{(1)}\sim N, N_{(2)}\sim N_{(3)}
	} 
} &N_{(1)}^{s-\sigma}N_{(2)}^{-s}N_{(3)}^{-s}\|v_{(1)}\|_{L_t^qH_x^{\sigma}}
\|v_{(2)}\|_{X^{s,\frac{1}{6}+\delta(q,\epsilon)}}\|v_{(3)}\|_{X^{s,\frac{1}{6}+\delta(q,\epsilon)}}\|\mathbf{P}_Nv\|_{X^{0,\frac{1}{6}+\delta(q,\epsilon)}}\notag \\
\lesssim T^{1/3}.
\end{align*}

\subsubsection{ Estimates for the high modulations of $\mathcal{A}$. }\

Since there is no significant issue, we will drop the conjugate sign. It remains to estimate the dyadic summation over $N_{(1)}, N_{(2)}, N_{(3)},$ $ N$ for
$$ \mathcal{A}=\Big|\int_{-2T}^{2T}\int_{\T} v_{(1)}v_{(2)}v_{(3)}\cdot \langle D_x\rangle^sv dtdx
 \Big|.
$$
\noi
$\bullet$ {\bf Case A:} $v_{(1)}$ and $v_{(2)}$ are of type (II).

In this case $v_{(3)}$ must be of type (I). Put
\[
\beta_q:=\frac{1+2\epsilon}{q}.
\]
We choose $q$ large enough and $\epsilon>0$ small enough so that
$\frac1q<\beta_q<\beta_q+\epsilon<\frac12$.
Regrouping the terms as
$\|v_{(1)}v_{(3)}\|_{L_{t,x}^2}\cdot
\|v_{(2)}\langle D_x\rangle^s\mathbf{P}_Nv\|_{L_{t,x}^2}$, we estimate the first factor by H\"older and the one-dimensional Sobolev embedding in time, exactly as in the proof of \eqref{mixed-typeI-typeII}:
\[
\begin{split}
	\|v_{(1)}v_{(3)}\|_{L_{t,x}^2}
	&\leq
	\|v_{(1)}\|_{L_t^{\frac{2q}{q-2}}L_x^2}
	\|v_{(3)}\|_{L_t^qL_x^\infty}  \\
	&\lesssim
	N_{(3)}^{-\sigma}\|v_{(1)}\|_{X^{0,\beta_q}}
	\lesssim
	T^{\epsilon}N_{(3)}^{-\sigma}
	\|v_{(1)}\|_{X^{0,\beta_q+\epsilon}},
\end{split}
\]
where we used Lemma~\ref{time-localization} in the last step. On the other hand, by (2) of Corollary~\ref{bilinear-Xsb},
\[
\|v_{(2)}\langle D_x\rangle^s\mathbf{P}_Nv\|_{L_{t,x}^2}
\lesssim
N^sN_{(2)}^s
\|v_{(2)}\|_{X^{0,\frac38}}
\|\mathbf{P}_Nv\|_{X^{0,\frac38}}.
\]
Consequently,
\begin{equation}\label{caseA-new-high-bound}
	\mathcal{A}
	\lesssim
	T^{\epsilon}N^sN_{(2)}^sN_{(3)}^{-\sigma}
	\|v_{(1)}\|_{X^{0,\beta_q+\epsilon}}
	\|v_{(2)}\|_{X^{0,\frac38}}
	\|\mathbf{P}_Nv\|_{X^{0,\frac38}}.
\end{equation}
In the Fourier side, either $\langle\tau_j-|n_j|^{\alpha}\rangle^{\frac{1}{8}-\epsilon}\gtrsim (N\wedge N_{(2)})^{\epsilon}$, $j=1,2$ or $\langle\tau-|n|^{\alpha}\rangle^{\frac{1}{8}-\epsilon}\gtrsim (N\wedge N_{(2)})^{\epsilon} $ hold true for some $\epsilon>0$, the dyadic summation over $N_{(1)}\geq N_{(2)} \geq N_{(3)}, N\leq N_{(1)}$ converges. 
Hence it remains to estimate the contributions to \eqref{N1-dual'} with a cutoff on the Fourier side on the region satisfying
\begin{equation}\label{low-modulation1}
	\begin{split}
		\langle &\tau-|n|^{\alpha}\rangle^{\frac{1}{8}}\ll (N_{(2)}\wedge N)^{2\epsilon} \textrm{ and }
		\\
		&\langle \tau_j-|n_j|^{\alpha}\rangle^{\frac{1}{8}}\ll (N_{(2)}\wedge N)^{2\epsilon},\textrm{ if } v_j \textrm{ of type (II) and $N_{(3)}\ll N_{(1)}$}.
	\end{split}
\end{equation}

\noi
$\bullet$ {\bf Case B:} $v_{(1)}$ is of type (II) and $v_{(2)}$ is of type (I).

Suppose first that $v_{(3)}$ is of type (II). We regroup the factors as
$\|v_{(1)}v_{(2)}\|_{L_{t,x}^2}\cdot
\|v_{(3)}\langle D_x\rangle^s\mathbf{P}_Nv\|_{L_{t,x}^2}$. Arguing as in Case A, we get
\[
\|v_{(1)}v_{(2)}\|_{L_{t,x}^2}
\lesssim
T^{\epsilon}N_{(2)}^{-\sigma}
\|v_{(1)}\|_{X^{0,\beta_q+\epsilon}},
\]
and by (2) of Corollary~\ref{bilinear-Xsb},
\[
\|v_{(3)}\langle D_x\rangle^s\mathbf{P}_Nv\|_{L_{t,x}^2}
\lesssim
N^sN_{(3)}^s
\|v_{(3)}\|_{X^{0,\frac38}}
\|\mathbf{P}_Nv\|_{X^{0,\frac38}}.
\]
Therefore
\begin{equation}\label{caseB-new-high-bound}
	\mathcal{A}
	\lesssim
	T^{\epsilon}N^sN_{(3)}^sN_{(2)}^{-\sigma}
	\|v_{(1)}\|_{X^{0,\beta_q+\epsilon}}
	\|v_{(3)}\|_{X^{0,\frac38}}
	\|\mathbf{P}_Nv\|_{X^{0,\frac38}}.
\end{equation}
As before, if in the Fourier side either
\[
\langle\tau_j-|n_j|^{\alpha}\rangle^{\frac18-\epsilon}
\gtrsim (N\wedge N_{(3)})^{\epsilon},
\qquad j=1,3,
\]
or
\[
\langle\tau-|n|^{\alpha}\rangle^{\frac18-\epsilon}
\gtrsim (N\wedge N_{(3)})^{\epsilon},
\]
then we gain a factor $(N\wedge N_{(3)})^{-\delta}$ for some $\delta>0$, and the dyadic summation converges as in Case A. Hence it remains only to consider the low-modulation region
\begin{equation}\label{low-modulation2}
	\begin{split}
		\langle &\tau-|n|^{\alpha}\rangle^{\frac{1}{8}}\ll (N_{(3)}\wedge N)^{2\epsilon} \textrm{ and }
		\\
		&\langle \tau_j-|n_j|^{\alpha}\rangle^{\frac{1}{8}}\ll (N_{(3)}\wedge N)^{2\epsilon},\textrm{ if } v_j \textrm{ of type (II) and $N_{(2)}\ll N_{(1)}$}.
	\end{split}
\end{equation}



\noi
$\bullet$ {\bf Case C:} $v_{(1)}$ is of type (I), and $v_{(2)}, v_{(3)}$ are of type (II).\\
Using the bilinear Strichartz estimate and Lemma~\ref{time-localization}, we have
\begin{equation*}
\begin{split}
\mathcal{A}\lesssim &
N^s\|v_{(1)} v_{(2)}\|_{L_{t,x}^2}\|v_{(3)}\mathbf{P}_Nv\|_{L_{t,x}^2}\\ \lesssim & T^{2\epsilon}(N_{(1)})^{s-\sigma}(N_{(2)})^{-s}\|v_{(2)}\|_{X^{s,\frac{3}{8}+2\epsilon}}\|v_{(3)}\|_{X^{s,\frac{3}{8}}}\|\mathbf{P}_Nv\|_{X^{0,\frac{3}{8}}}.
\end{split}
\end{equation*}
If $N_{(2)}\sim N_{(1)}$, the dyadic summation converges directly, without reducing to the low modulation.
Hence, it remains to estimate the contribution to \eqref{N1-dual'} from the region satisfying
\begin{equation}\label{low-modulation3}
\begin{split}
&\langle \tau-|n|^{\alpha}\rangle^{\frac{1}{8}}\ll N_{(1)}^{s-\sigma}N_{(2)}^{-s} \textrm{ and }\\
&\langle\tau_j-|n_j|^{\alpha}\rangle^{\frac{1}{8}}\ll N_{(1)}^{s-\sigma}N_{(2)}^{-s}, \textrm{ if } v_j \textrm{ is of type (II) and $N_{(2)}\ll N_{(1)}$}.
\end{split}
\end{equation}

\noi
$\bullet$ {\bf Case D:} $v_{(1)}$ of type (I), and either $v_{(2)}(\mathrm{II}), v_{(3)}(\mathrm{I})$ or $v_{(2)}$(I), $v_{(3)}$(II).

Suppose that $v_{(2)}=v_{(2)}$(I) and $v_{(3)}=v_{(3)}$(II). 
We have
\begin{equation*}
\begin{split}
\mathcal{A}\lesssim &N^s\|v_{(1)}\|_{L_t^qL_{x}^{\infty}}\|v_{(2)}\|_{L_t^qL_{x}^{\infty}}\|v_{(3)}\|_{L_t^{\frac{2q}{q-2}}L_{x}^2}\|\mathbf{P}_Nv\|_{L_t^{\frac{2q}{q-2}}L_{x}^2}\\ \lesssim &T^{\epsilon}N^s(N_{(1)}N_{(2)})^{-\sigma}N_{(3)}^{-s}\|v_{(3)}\|_{X^{s,\frac{1+2\epsilon}{q}+\epsilon }}\|\mathbf{P}_Nv\|_{X^{0,\frac{1+2\epsilon}{q} }},
\end{split}
\end{equation*}
where we use the interpolation $X^{0,\frac{1+2\epsilon}{q}}\subset L_t^{\frac{2q}{q-2}}L_x^2$ and Lemma \ref{time-localization} as before. Since $s<\alpha-1$, we may choose $\epsilon\ll 1$, $q\gg 1$, such that $s<2\sigma$ and $\frac{1+2\epsilon}{q}<\epsilon$, then if $N_{(2)}\sim N_{(1)}$, the dyadic summation converges. Otherwise, it reduces to estimate the contribution to \eqref{estimate:N1} from the Fourier region satisfying
\begin{equation}\label{low-modulation4}
\begin{split}
&\langle \tau-|n|^{\alpha}\rangle^{\frac{1}{2}}\ll N_{(1)}^{s-\sigma} \textrm{ and }\\
&\langle\tau_j-|n_j|^{\alpha}\rangle^{\frac{1}{2}}\ll N_{(1)}^{s-\sigma}\textrm{ if } v_j \textrm{ is of type (II) and $N_{(2)}\ll N_{(1)}$}.
\end{split}
\end{equation}

Suppose that $v_{(2)}=v_{(2)}$(II) and $v_{(3)}=v_{(3)}(\mathrm{I})$, then we obtain the similar bound  (switching the role of $v_{(2)}$ and $v_{(3)}$ and using bilinear Strichartz)
$$ \mathcal{A}\lesssim T^{\epsilon}N^sN_{(1)}^{-\sigma}N_{(3)}^{-\sigma}N_{(2)}^{-s}\|v_{(2)}\|_{X^{s,\frac{3}{8}+2\epsilon}}\|\mathbf{P}_Nv\|_{X^{0,\frac{3}{8}+\epsilon}}.
$$
Hence it reduces the matter to the same low modulation case \eqref{low-modulation4}.
In summary,  when $\mathbf{P}_{N_j}v_j$ is of type (II), we may write it as
$$ \mathbf{P}_{N_j}v_j=\mathbf{P}_{N_j}v_j^{\text{low} }+ \mathbf{P}_{N_j}v_j^{\text{high} },
\quad  \mathbf{P}_{N}v=\mathbf{P}_{N}v^{\text{low} }+ \mathbf{P}_{N}v^{\text{high} }
$$
where
$$ \widehat{\mathbf{P}_{N_j}v_j^{\text{low}} }=\mathbf{1}_{\langle\tau-|n|^{\alpha}\rangle\leq K}\widehat{\mathbf{P}_{N_j}v_j}(\tau,n),\quad \widehat{\mathbf{P}_{N}v^{\text{low}} }=\mathbf{1}_{\langle\tau-|n|^{\alpha}\rangle\leq K}\widehat{\mathbf{P}_{N}v}(\tau,n),
$$ 
and the modulation $K$ is given specifically, according to the case (A), (B), (C), (D). The $\mathbf{P}_{N_j}v_j^{\text{low}}$ is called the low-modulation portion.
From the discussions above, if at least one of the type (II) $\mathbf{P}_{N_1}v_1, \mathbf{P}_{N_2}v_2, \mathbf{P}_{N_3}v_3 $ or $\mathbf{P}_Nv$ has zero low modulation portion,
we have
$$  \int\mathcal{N}_1\big(\mathbf{P}_{N_1}v_1\mathbf{P}_{N_2}\ov{v_2}\mathbf{P}_{N_3}v_3 \big)\cdot\mathbf{P}_N\ov{v}dtdx\lesssim T^{\theta}c_{N_1,N_2,N_3,N},$$
where $$
 \sum_{N_1,N_2,N_3,N\text{ dyadic } } c_{N_1,N_2,N_3,N}\lesssim 1. 
$$ 
Therefore, the remaining main contributions come from the low modulation portions $\mathbf{P}_{N_j}v_j^{\text{low } } $ and $\mathbf{P}_{N}v^{\text{low } } $ \footnote{Note that we have inserted implicitly time cutoff functions to perform the integration in $t$ over finite intervals.}.
In what follows, we assume that $\langle\tau-|n|^{\alpha}\rangle\ll K$ and $\langle\tau_j-|n_j|^{\alpha}\rangle\ll K$ if $v_j=v_j$(II) without stating explicitly. Moreover, we assume that each $v_j$ is decomposed dyadically in spatial frequency $|n_j|\sim N_j$, satisfying $N_{(2)}\ll N_{(1)}$ for Cases (B)(C)(D), and $N_{(3)}\ll N_{(1)}$ for Case (A). 
\subsection{Low modulation reduction}
The goal of this subsection is to setup suitable low-modulation estimates that we need. 
Set
$$ \Gamma(\ov{n}):=\{(n_1,n_2,n_3)\in \Z^3: n=n_1-n_2+n_3, n_2\neq n_1,n_3 \},
$$
and
$$ \Gamma_{2}(\lambda,n):=\{(\tau_1,\tau_2,\tau_3)\in\R^3: 
\lambda+|n|^{\alpha}=\tau_1-\tau_2+\tau_3 \}.
$$
Let us recall a standard representation for functions in $X^{s,b}$. 
Given a function $f(t,x)$, we can write $f$ as
$$
f(t,x)=\int  \langle\lambda\rangle^{-b}\Big(\sum_{m}\langle m\rangle^{2s}\langle\lambda\rangle^{2b}|\widehat{f}(\lambda+|m|^{\alpha},m)|^2\Big)^{\frac{1}{2}}
 \Big(e^{i\lambda t}\sum_{n}a_{\lambda}(n)e^{inx+i|n|^{\alpha}\lambda}\Big)d\lambda,
$$
where 
$$ a_{\lambda}(n)=\frac{\widehat{f}(\lambda+|n|^{\alpha},n)}{\Big(\sum_{m}\langle m\rangle^{2s}|\widehat{f}(\lambda+|m|^{\alpha},m)|^2\Big)^{\frac{1}{2}}}.
$$
Note that $\sum_{n}\langle n\rangle^{2s}|a_{\lambda}(n)|^2=1$. For $\|f\|_{X^{s,b}}\leq 1$, if its modulation is bounded from above by some $K\geq 1$, then by Cauchy-Schwarz, we have
\begin{equation*}
\int\langle \lambda\rangle^{-b}\Big(\sum_{n}\langle n\rangle^{2s}\langle\lambda\rangle^{2b}|\widehat{f}(\lambda+|n|^{\alpha},n)|^2\Big)^{\frac{1}{2}}d\lambda\lesssim 1+K^{1-2b}\,.
\end{equation*}
As explained in the last subsection, we need to estimate the low-modulation component of $\|\eta_T(t)\mathcal{N}_1(v_1,v_2,v_3)\|_{X^{s,-\frac{1}{2}+2\epsilon}}$. Since at least one of $v_1,v_2,v_3$ is of type (II), we can replace $v_j(\mathrm{II})$ by $\eta_T(t)v_j(\mathrm{II})$, and estimate only $\|\kappa(t)\mathcal{N}_1(v_1,v_2,v_3)\|_{X^{s,-\frac{1}{2}+2\epsilon}}$, with some fixed time cutoff $\kappa\in C_c^{\infty}(\R)$, $\kappa(t)\equiv 1$ if $|t|\leq 1$ and $\kappa(t)\eta_T(t)=\eta_T(t)$, for $T<1$. We denote by $(\kappa(t)\mathcal{N}_1(v_1,v_2,v_3))_K^{\text{low}}$  the modulation smaller than $K$.
By the H\"older inequality, we have
\begin{equation*}
\begin{split}
&\|(\kappa(t)\mathcal{N}_1(v_1,v_2,v_3))_K^{\text{low}}\|_{X^{s,-\frac{1}{2}+2\epsilon}}\\=&\Big(\sum_{n}\langle n\rangle^{2s}\int_{|\lambda|<K}\frac{|(\mathcal{F}_{t,x}\kappa(t)\mathcal{N}_1(v_1,v_2,v_3))(\lambda+|n|^{\alpha},n)|^2 }{\langle\lambda\rangle^{1-4\epsilon} }d\lambda \Big)^{1/2}\\
\lesssim &K^{2\epsilon}\|\mathbf{1}_{|\lambda|<K}\langle n\rangle^s\mathcal{F}_{t,x}(\kappa(t)\mathcal{N}_1(v_1,v_2,v_3)) (\lambda+|n|^{\alpha},n)\|_{L_{\lambda}^{\infty}l_n^2}\,.
\end{split}
\end{equation*}
Note that
\begin{equation*}
\begin{split}
(\mathcal{F}_{t,x}\mathcal{N}_1(v_1,v_2,v_3) )(\tau,n)=\sum_{(n_1,n_2,n_3)\in\Gamma(\ov{n})}\int_{(\tau_1,\tau_2,\tau_3)\in\Gamma_2(\tau-|n|^\alpha,n)}\widehat{v_1}(\tau_1,n_1)\ov{\widehat{v_2}}(\tau_2,n_2)\widehat{v_3}(\tau_3,n_3)d\tau_1 d\tau_2,
\end{split}
\end{equation*}
where
$$ \widehat{v_j}(\tau_j,n_j)=\phi(n_j)\delta(\tau_j-|n_j|^{\alpha})\quad \textrm{if $v_j$ is of type (I) }\footnote{\textrm{ We send the time-cutoff $\eta_T(t)$ to the $v_j$ of type (II).}}
$$
or
\begin{equation*}
\begin{split}
\widehat{v_j}(\tau_j,n_j)=&\int_{|\lambda_j|<K}\langle \lambda_j\rangle^{-\frac{1}{2}+\epsilon}c_j(\lambda_j)a_{\lambda_j}(n_j)\delta(\tau_j-\lambda_j-|n_j|^{\alpha})d\lambda_j \quad \textrm{if $v_j$ is of type (II)},
\end{split}
\end{equation*}
with $\sum_{n_j} \langle n_j\rangle^{2s}|a_{\lambda_j}(n_j)|^2=1$ and
$$ c_{j}(\lambda_j)=\Big(\sum_{m_j}\langle m_j\rangle^{2s}\langle \lambda_j\rangle^{1-2\epsilon}|\widehat{v_j}(\lambda_j+|m_j|^{\alpha},m_j)|^2 \Big)^{\frac{1}{2}}.
$$
Therefore, if there is exactly one $v_j$ of type (II), say $v_1$(I), $v_2$(I), $v_3$(II), a direct calculation yields
\begin{equation*}
\begin{split}
(\mathcal{F}_{t,x}\kappa(t)\mathcal{N}_1(v_1,v_2,v_3) )(\tau,n)\\:=\sum_{(n_1,n_2,n_3)\in\Gamma(\ov{n})}\int_{ |\lambda_3|<K}&\langle\lambda_3\rangle^{-\frac{1}{2}+\epsilon}c_3(\lambda_3)\phi(n_1)\ov{\phi}(n_2)\\
\times&\widehat{\kappa}\left(\tau-\lambda_3-|n_1|^{\alpha}+|n_2|^{\alpha}-|n_3|^{\alpha} \right)a_{\lambda_3}(n_3) d\lambda_3.
\end{split}
\end{equation*}
If $v_2$, $v_3$ are of type (II), and $v_1$ of type (I), we have
\begin{equation*}
\begin{split}
(\mathcal{F}_{t,x}\kappa(t)\mathcal{N}_1(v_1,v_2,v_3) )(\tau,n)\\:=\sum_{(n_1,n_2,n_3)\in\Gamma(\ov{n})}\iint_{|\lambda_2|<K, |\lambda_3|<K}&\langle\lambda_2\rangle^{-\frac{1}{2}+\epsilon}\langle\lambda_3\rangle^{-\frac{1}{2}+\epsilon}\ov{c_2}(\lambda_2)c_3(\lambda_3)\phi(n_1)\\
\times&\widehat{\kappa}\left(\tau+\lambda_2-\lambda_3-|n_1|^{\alpha}+|n_2|^{\alpha}-|n_3|^{\alpha} \right)\ov{a_{\lambda_2}}(n_2)a_{\lambda_3}(n_3) d\lambda_2 d\lambda_3.
\end{split}
\end{equation*}
Since we only care about the low modulation part of $\mathcal{N}_1(v_1,v_2,v_3)$, below $|\lambda|\lesssim K$, applying the H\"older inequality, 
we obtain that
\begin{equation*}
\begin{split}
\|(\kappa(t)\mathcal{N}_1(v_1,v_2,v_3))_K^{\text{low}}\|_{X^{s,-\frac{1}{2}+2\epsilon}} \lesssim K^{2\epsilon}\sup_{\langle\lambda\rangle< K}\left\| \langle n\rangle^s (\mathcal{F}_{t,x}(\kappa(t)\mathcal{N}_1(v_1,v_2,v_3) )(\lambda+|n|^{\alpha},n) \right\|_{l_n^2}.
\end{split}
\end{equation*}
Since $v_j=\eta_T(t)v_j$, if it is of type (II), from Lemma~\ref{time-localization}, we have
$$ \int_{\R}|c_j(\lambda_j)|^2d\lambda_j=\|v_j\|_{X^{s,\frac{1}{2}-\epsilon}}\lesssim T^{2\epsilon}\|v_j\|_{X^{s,\frac{1}{2}+\epsilon}}.
$$
Therefore, we obtain that
\begin{equation}\label{goal-0}
\begin{split}
&\|(\kappa(t)\mathcal{N}_1(v_1,v_2,v_3))_K^{\text{low}}
\|_{X^{s,-\frac{1}{2}+2\epsilon}}\\ \lesssim &T^{2\epsilon}K^{3\epsilon}\sup_{\substack{|\lambda|<K\\
		|\lambda_j|<K,j=2,3 }}\Big\|\sum_{(n_1,n_2,n_3)\in\Gamma(\ov{n})} \phi(n_1)\ov{a_{\lambda_2}}(n_2)a_{\lambda_3}(n_3) \widehat{\kappa}(\lambda+\lambda_2-\lambda_3-\Phi(\ov{n})) \Big\|_{l_n^2}
\end{split}
\end{equation}
or
\begin{equation}\label{goal-0'}
\begin{split}
&\|(\kappa(t)\mathcal{N}_1(v_1,v_2,v_3))_K^{\text{low}}\|_{X^{s,-\frac{1}{2}+2\epsilon}}\\ \lesssim &T^{2\epsilon}K^{2\epsilon}\sup_{\substack{|\lambda|<K\\
		|\lambda_3|<K }}\Big\|\sum_{(n_1,n_2,n_3)\in\Gamma(\ov{n})} \phi(n_1)\ov{\phi(n_2)}a_{\lambda_3}(n_3) \widehat{\kappa}(\lambda-\lambda_3-\Phi(\ov{n})) \Big\|_{l_n^2},
\end{split}
\end{equation}
depending on how many $v_j$ are of type (II).

From the discussion of the last subsection, to finish the proof, we need to estimate the R.H.S. of \eqref{goal-0} and \eqref{goal-0'},  according to the constraint $K$, defined as \eqref{low-modulation1},\eqref{low-modulation2},\eqref{low-modulation3} and \eqref{low-modulation4}, according to Case (A),(B),(C),(D), respectively. We will do this by dyadically decomposing $|n_j|\sim N_j$. In what follows, we only estimate each dyadic pieces of R.H.S of \eqref{goal-0} or \eqref{goal-0'}, satisfying that $N_{(2)}\ll N_{(1)}$, for Cases (B)(C)(D), and $N_{(3)}\ll N_{(1)}$ for Case (A), and deduce the correct numerology so that the final dyadic summation over $N_1, N_2, N_3$ will converge. In summary, we have to deal with the following cases:

\noi
$\bullet$ {\bf Case 1:} $v_{(1)}=v_{(1)}$(I),$v_{(2)}=v_{(2)}$(II), $v_{(3)}=v_{(3)}$(II) and $N_{(2)}\ll N_{(1)}$. The modulation bound in this case is
$$  K_{1}=N_{(1)}^{8(s-\sigma)}N_{(2)}^{-8s}.
$$ 
Therefore, the dyadic pieces of \eqref{goal-0'} is bounded by
\begin{equation*}
\begin{split}
T^{2\epsilon}K_{1}^{3\epsilon}\sup_{|\mu|\lesssim K_{1} }\Big(\sum_{|n|\lesssim N_{(1)}}\langle n\rangle^{2s}\Big|\sum_{(n_1,n_2,n_3)\in\Gamma(\ov{n}) }\widehat{\kappa}(\mu-\Phi(\ov{n}) )a_1(n_1)a_2(n_2)a_3(n_3)
\Big|^2 \Big)^{1/2},
\end{split}
\end{equation*}
where $a_{(1)}(n)=\phi(n)$ and $\sum_{|n|\sim N^{(j)}}|a_{(j)}(n)|^2\lesssim N_{(j)}^{-2s}, j=2,3$.

\noi
$\bullet$ {\bf Case 2:} $v_{(1)}=v_{(1)}$(I), and exactly one of $v_{(2)}, v_{(3)}$ is of type (II) and $N_{(2)}\ll N_{(1)}$. In this case, the modulation bound is
$$ K_{2}=N_{(1)}^{2(s-\sigma)},
$$
and the dyadic pieces of \eqref{goal-0} is bounded by
\begin{equation*}
\begin{split}
T^{\epsilon}K_{2}^{2\epsilon}\sup_{|\mu|\lesssim K_{2} }\Big(\sum_{|n|\lesssim N_{(1)}}\langle n\rangle^{2s}\Big|\sum_{(n_1,n_2,n_3)\in\Gamma(\ov{n}) }\widehat{\kappa}(\mu-\Phi(\ov{n}) )a_1(n_1)a_2(n_2)a_3(n_3)
\Big|^2 \Big)^{1/2},
\end{split}
\end{equation*}
where
$a_{(1)}(n)=\phi(n)$, and one  of $a_{(2)}(n)$, $a_{(3)}(n)$ is $\phi(n)$, while the rest one satisfies $\sum_{|n|\sim N_{(j)}}|a_{(j)}(n)|^2\lesssim N_{(j)}^{-2s}$.

\noi
$\bullet$ {\bf Case 3:} $v_{(1)}=v_{(1)}$(II), and one of $v_{(2)},v_{(3)}$ is of type (I) and $N_{(3)}\ll N_{(1)}$. In this case, the modulation bound is
$$ K_{3}=N_{(2)}^{\epsilon}.
$$
and the dyadic pieces of \eqref{goal-0} (or \eqref{goal-0'}) are bounded by
\begin{equation*}
\begin{split}
T^{\epsilon}K_{3}^{3\epsilon}\sup_{|\mu|\lesssim K_{3} }\Big(\sum_{|n|\lesssim N_{(1)}}\langle n\rangle^{2s}\Big|\sum_{(n_1,n_2,n_3)\in\Gamma(\ov{n}) }\widehat{\kappa}(\mu-\Phi(\ov{n}) )a_1(n_1)a_2(n_2)a_3(n_3)
\Big|^2 \Big)^{1/2},
\end{split}
\end{equation*}
where $\sum_{|n|\sim N_{(1)}}|a_{(1)}(n)|^2\sim N_{(1)}^{-2s}$, $a_{(j)}(n)=\phi(n)$ or $\sum_{|n|\sim N_{(j)}}|a_{(j)}(n)|^2\lesssim N_{(j)}^{-2s}$. Moreover, at least one of $a_{(2)}(n), a_{(3)}(n)$ is of the form $\phi(n)$.
\subsection{Estimate of low modulation cases:}

Using the fact that $\kappa\in \mathcal{S}(\R)$, we observe that modulo an error of $C_L(N_{(1)})^{-L}$, for any $L\in\mathbb{N}$, we may reduce the estimate to the following expression \footnote{In the situation where $\Phi(\ov{n})\in\Z$, namely $\alpha=2$, we can simply reduce the constraint by $\Phi(\ov{n})=\mu$. However, for $\alpha<2$, the values of $\Phi(\ov{n})$ maybe dense in an interval, and this will be responsible for the loss of derivatives when we perform the counting argument.}. 
\begin{equation}\label{eq:lowmodulationreduction}
T^{\epsilon}N_{(1)}^{s+\epsilon}\sup_{|\mu|\lesssim K}\Big(\sum_{|n|\lesssim N_{(1)}}\Big|\sum_{\substack{(n_1,n_2,n_3)\in\Gamma(\ov{n})\\
		|\Phi(\ov{n})-\mu|\leq 1 }} a_1(n_1)a_2(n_2)a_3(n_3)\Big|^2 \Big)^{1/2}.
\end{equation}
Now we perform the case-by-case analysis. Denote by
$$ \widetilde{\Phi}(n,n_2,n_3)=|n+n_2-n_3|^{\alpha}-|n_2|^{\alpha}+|n_3|^{\alpha}-|n|^{\alpha}.
$$
\noi
$\bullet$  {\bf Case 1:}
Denote  $b_j(n)=a_j(n)\langle n\rangle^{s}$, if $v_j$ is of type (II).
We first assume that $n_1=n_{(1)}, n_2=n_{(2)}$ and $n_3=n_{(3)}$.
\begin{equation*}
\begin{split}
A:=\{(n,n_2,n_3):& n_3\neq n_2, n_3\neq n, |n_j|\sim N_j, j=2,3; |n+n_2-n_3|\sim N_1;\\ &|\widetilde{\Phi}(n,n_2,n_3)-\mu|\leq 1  \},
\end{split}
\end{equation*}
where $\mu$ can be viewed as a fixed parameter.
Note that $|\phi(n+n_2-n_3)|\lesssim (N_{(1)})^{-\frac{\alpha}{2}+2\epsilon}$ on $A$. Applying Cauchy-Schwarz to the summation over $n_2,n_3$, we obtain that
\begin{equation*}\label{eq:lowmodulation1}
\begin{split}
\eqref{eq:lowmodulationreduction}\lesssim & T^{\epsilon}N_{(1)}^{\left(s-\frac{\alpha}{2}\right)+2\epsilon}N_{(2)}^{-s}N_{(3)}^{-s}\\ \times &\Big[\sum_{|n|\lesssim N_{(1)}}\Big(\sum_{n_2,n_3}|b_2(n_2)|^2\mathbf{1}_{A}(n,n_2,n_3) \Big)
\Big(\sum_{n_2,n_3}|b_3(n_3)|^2\mathbf{1}_{A}(n,n_2,n_3) \Big) \Big]^{1/2}.
\end{split}
\end{equation*}
The second line of the right hand side can be majorized by
$$ \Big[\sum_{|n|\lesssim N_{(1)}}\sum_{n_2,n_3}|b_2(n_2)|^2\mathbf{1}_{A}(n,n_2,n_3) \Big]^{1/2}\cdot \sup_{|n|\lesssim N_{(1)}}\Big(\sum_{n_2,n_3}|b_3(n_3)|^2\mathbf{1}_{A}(n,n_2,n_3) \Big)^{1/2}.
$$ 
Thanks to $N_{(1)}\gg N_{(2)}$, viewing $n_2$ as parameter, for fixed $n,n_3$, 
$$ \Big|\frac{\partial\widetilde{\Phi}}{\partial n_2} \Big|\sim |n+n_2-n_3|^{\alpha-1}\sim N_{(1)}^{\alpha-1}, \textrm{ thus }\sum_{n_2}\mathbf{1}_A(n,n_2,n_3)\lesssim 1.
$$
Thus $\Big(\sum_{n_2,n_3}|b_3(n_3)|^2\mathbf{1}_A(n,n_2,n_3)\Big)^{1/2}\lesssim 1$.
Viewing $n$ as parameter, for fixed $n_2,n_3$,
$$ \Big|\frac{\partial\widetilde{\Phi}}{\partial n}\Big|\sim \Big||n+n_2-n_3|^{\alpha-1}-|n|^{\alpha-1} \Big|\sim |n_2-n_3|N_{(1)}^{\alpha-2},
$$
then
$ \sum_{n}\mathbf{1}_{A}(n,n_2,n_3)\lesssim 1+\frac{(N_{(1)})^{2-\alpha} }{|n_2-n_3| }.
$
Therefore, if $N_{(1)}^{2-\alpha}\gg N_{(2)}$, we obtain that
$$ \Big(\sum_{|n|\lesssim N_{(1)}}\sum_{n_2,n_3}|b_2(n_2)|^2\mathbf{1}_{A}(n,n_2,n_3) \Big)^{1/2}\lesssim N_{(1)} ^{\left(1-\frac{\alpha}{2}\right)+2\epsilon}.
$$
This yields
$$ \eqref{eq:lowmodulationreduction}\lesssim T^{\epsilon}N_{(1)}^{\left(s-\frac{\alpha}{2}+1-\frac{\alpha}{2}\right)+3\epsilon} N_{(2)}^{-s}N_{(3)}^{-s},
$$
which is conclusive, if $s<\alpha-1$. If $N_{(1)}^{2-\alpha}\lesssim N_{(2)}$, we estimate
\begin{equation*}
\begin{split}
&\sum_{|n|\lesssim N_{(1)}}\sum_{n_2,n_3}|b_2(n_2)|^2\mathbf{1}_A(n,n_2,n_3)\\
\leq &\sum_{n_2}|b_2(n_2)|^2\Big[\sum_{n_3:|n_3-n_2|\gtrsim (N_{(1)})^{2-\alpha}}\sum_{n}\mathbf{1}_A(n,n_2,n_3)+\sum_{n_3:|n_3-n_2|\ll  (N_{(1)})^{2-\alpha}}\sum_{n}\mathbf{1}_A(n,n_2,n_3)\Big]\\
\lesssim &N_{(3)}+N_{(1)}^{2-\alpha+\epsilon}.
\end{split}
\end{equation*}
Therefore,
$$ \eqref{eq:lowmodulationreduction}\lesssim T^{\epsilon}\Big[ N_{(1)}^{s-\frac{\alpha}{2}+1-\frac{\alpha}{2}+3\epsilon} N_{(2)}^{-s}N_{(3)}^{-s}+N_{(1)}^{s-\frac{\alpha}{2}+3\epsilon}N_{(2)}^{-s}N_{(3)}^{-s} N_{(3)}^{\frac{1}{2}} \Big],\footnote{\textrm{This bound can not be improved if we perform the Wiener chaos estimate as in \cite{bourgain}, due to the loss in the counting. }}
$$
which can be majorized by $T^{\epsilon}(N_{(1)})^{-\delta(\epsilon)}$, for some $\delta(\epsilon)>0$,  provided that
$ s<\alpha-1$. For the remaining case $n_2=n_{(1)}$, there is no significant difference in the argument.

\noi
$\bullet$ {\bf Case 2:}
Denote $b_j(n)=a_j(n)\langle n\rangle^{s}$, if $v_j$ is of type (II),
where $\mu$ can be viewed as a fixed parameter. The modulation bound is $K_{2}=N_{(1)}^{2(s-\sigma)}$. 
Without loss of generality, we may assume that $n_1=n_{(1)}$ and $a_1(n_1)=\phi(n_1)$. Since $N_{(1)}\gg N_{(2)}$, we must have
$$ |\Phi(\ov{n})|\gtrsim |n_2-n_3||n_2-n_1|N_{(1)}^{2-\alpha}\gtrsim N_{(1)}^{\alpha-1},
$$
where $\Phi(\ov{n})$ is defined in Lemma \ref{bound:resonance}. 
For non-zero contributions, $|\Phi(\ov{n})-\mu|\leq 1$ ,where $|\mu|\lesssim K_{2}$, it holds 
$$ N_{(1)}^{\alpha-1}\lesssim |\Phi(\ov{n})|\leq |\mu|+|\Phi(\ov{n})-\mu|\lesssim N_{(1)}^{2(s-\sigma)}.
$$ 
This constraint is violated since $2(s-\sigma)<\alpha-1$ if $\epsilon>0$ is chosen small enough. This means that all the contributions are zero. The same argument applies to the case where $n_2=n_{(1)}$.

\noi
$\bullet$ {\bf Case 3: } Note that the case where $v_{(2)}, v_{(3)}$ are both of type (I) is already considered in the Case(B). It turns out there that the high-modulation analysis is conclusive. 
Now we assume that $v_{(2)}=v_{(2)}$(II) and $v_{(3)}=v_{(3)}$(I), this is the situation in Case (A), and we have $N_{(3)}\ll N_{(1)}$. In this case, we still have $|\Phi(\ov{n})|\gtrsim N_{(1)}^{\alpha-1}$, and the constraint for the non-zero contributions is
$$ N_{(1)}^{\alpha-1}\lesssim |\Phi(\ov{n})|\leq |\mu|+|\Phi(\ov{n})-\mu|\lesssim N_{(1)}^{16\epsilon},
$$
which is empty for small $\epsilon$. Thus the contributions in this case are all zero. This completes the proof of Proposition \ref{trilinear-N1}. Hence the proof of Proposition \ref{multi-linear} is also completed.

\begin{remarque}{\rm
	There is a room in the reduction to low modulations, but the case when the highest frequency is of type (I) is independent of this reduction, and it leads to the restriction $s<\alpha-1$.  More precisely, the use of the Fourier-Lebesgue space gives $\alpha/2$ regularization, while the degeneration of the curvature of the resonant surface causes a derivative loss of order $1-\frac{\alpha}{2}$. Therefore, we need to impose $s-\frac{\alpha}{2}+\big(1-\frac{\alpha}{2}\big)<0$ ($s$ comes from the fact that we evaluate the nonlinearity in $X^{s,b}$). We emphasize that here, the reason for the restriction $s<\alpha-1$ is different from the same restriction appearing in the next section. } 
\end{remarque}
\section{Probabilistic linear and trilinear estimates}
In order to use measure invariance arguments to construct global solutions, we need to prove large deviation estimates for the linear norm $\|\cdot\|_{\mathcal{V}^{q,\epsilon}}$ and the trilinear quantity $\mathcal{W}_{s,\epsilon}(\cdot)$ defined in \eqref{auxillarynorms}. Let us introduce some notations. For 
$M<K\leq \infty$, we set $$z^M_{1,K}(t)=S_{\alpha}(t)\Pi_M^{\perp}\Pi_K
\big( \sum_{n\in\Z} \frac{g_n(\omega)}{[n]^{\frac{\alpha}{2}}}e^{inx}\big)=\sum_{M< |n|\leq K}\frac{g_n(\omega)}{[n]^{\frac{\alpha}{2}}}e^{inx-i|n|^{\alpha}t}.
$$
\begin{lemme}\label{eq:z3}
	Fix $\eta\in C_c^{\infty}(\R)$ and assume that $1<\alpha<2$, $M_j<K_j\leq \infty$, $j=1,2,3.$ Then for any $s<\alpha-1$, $0<\epsilon\ll 1$, there exist $0<\epsilon_0\ll 1, c>0$, such that for any $\lambda\geq 1$,
	\begin{equation*}\label{z3:largedeviation}
	\begin{split}
	&\mathbb{P}\Big\{\omega: \Big\|\int_0^tS_{\alpha}(t-t')\eta(t')\mathcal{N}\big(z_{1,K_1}^{M_1}, z_{1,K_2}^{M_2}, z_{1,K_3}^{M_3} \big)(t')dt' \Big\|_{X^{s,\frac{1}{2}+\epsilon}}>\max\{M_1,M_2,M_3 \}^{-\epsilon_0}\lambda \Big\}\\ \leq &\exp\big(-c\lambda^{2/3} \big).
	\end{split}
	\end{equation*}
\end{lemme}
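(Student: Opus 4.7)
The plan is to reduce the Duhamel expression to a trilinear $X^{s,b}$ estimate, then bound the $L^2(\Omega)$ norm of that trilinear expression directly by Gaussian pairing, bootstrap to $L^p(\Omega)$ via hypercontractivity, and finally apply Chebyshev.

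First, by the inhomogeneous linear estimate of Lemma~\ref{inhomo-linear}, it suffices to prove the large-deviation bound
$$Y := \|\eta(t)\, \mathcal{N}(z_{1,K_1}^{M_1}, z_{1,K_2}^{M_2}, z_{1,K_3}^{M_3})\|_{X^{s,-\frac{1}{2}+\epsilon}} \lesssim M^{-\epsilon_0}\lambda,$$
where $M = \max\{M_1, M_2, M_3\}$. Because each $z_{1,K_j}^{M_j}$ is a linear Gaussian series, and because $\mathcal{N}(v,v,v) = \mathcal{N}_1(v,v,v) - \mathcal{N}_0(v,v,v)$ is a degree-three polynomial in $v$, the random variable $Y$ lies in a Wiener chaos of order at most $3$ (the renormalization built into $\mathcal{N}$ is crucial here, removing the leading resonant term exactly as in the construction of the Wick-ordered nonlinearity).

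Second, I would compute $\|Y\|_{L^2(\Omega)}$. By Plancherel in $X^{s,b}$,
$$\|Y\|_{L^2(\Omega)}^2 = \sum_n \langle n\rangle^{2s} \int \langle \tau - |n|^\alpha\rangle^{-1+2\epsilon}\, \mathbb{E}\bigl|\mathcal{F}_{t,x}\bigl(\eta \mathcal{N}\bigr)(\tau,n)\bigr|^2\, d\tau.$$
For $\mathcal{N}_1$, Wick's formula leaves only the diagonal Gaussian pairings $\{n_1, n_3\} = \{n_1', n_3'\}$ with $n_2 = n_2'$, producing a bound of the type
$$\sum_{\substack{n_1 - n_2 + n_3 = n \\ n_2 \neq n_1, n_3 \\ |n_j| > M_j}} \frac{|\widehat{\eta}(\tau - \Phi(\overline{n}))|^2}{[n_1]^\alpha [n_2]^\alpha [n_3]^\alpha},$$
where $\Phi(\overline{n}) = |n_1|^\alpha - |n_2|^\alpha + |n_3|^\alpha$. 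Using the resonance lower bound of Lemma~\ref{bound:resonance} and the counting Proposition~\ref{counting-1} (exactly as in Section~4), and exploiting the restriction $|n_{(1)}| > M$ coming from the projections $\Pi_{M_j}^\perp$, I would extract a factor $M^{-2\epsilon_0}$ for some $\epsilon_0(\epsilon, s) > 0$, provided $s < \alpha - 1$. The $\mathcal{N}_0$ contribution is simpler: the weight $[n]^{-3\alpha/2}$ with $|n| > M$ gives the gain directly (with a degree-$1$ versus degree-$3$ chaos decomposition of $|g_n|^2 g_n$ absorbed in the same bound).

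Third, since $Y$ sits inside a Wiener chaos of order $\leq 3$, Lemma~\ref{Wiener-Chaos} yields
$$\|Y\|_{L^p(\Omega)} \leq C\, p^{3/2}\, \|Y\|_{L^2(\Omega)} \leq C\, p^{3/2}\, M^{-\epsilon_0}$$
for every $p \geq 2$. Markov's inequality then gives
$$\mathbb{P}\bigl(Y > M^{-\epsilon_0}\lambda\bigr) \leq \bigl(C p^{3/2}/\lambda\bigr)^p,$$
and the choice $p \sim \lambda^{2/3}$ optimizes this to $\exp(-c\lambda^{2/3})$, which is exactly the stated tail. The main obstacle is the second step: the $L^2(\Omega)$ computation is essentially a probabilistic trilinear $X^{s,b}$ estimate that parallels Proposition~\ref{multi-linear}, but must combine the averaging from Gaussian orthogonality with the high-frequency truncation $\Pi_{M_j}^\perp$ so as to produce a \emph{quantitative} power gain $M^{-2\epsilon_0}$. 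As in the deterministic case of Section~4, the loss coming from the degeneracy of the curvature of the resonant surface (handled via Proposition~\ref{counting-1}) is what forces the restriction $s < \alpha - 1$.
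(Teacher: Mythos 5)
Your plan tracks the paper's proof quite closely: reduce via Lemma~\ref{inhomo-linear} to estimating $\|\eta\mathcal{N}_j\|_{X^{s,-1/2+\epsilon}}$, compute the second moment using Gaussian orthogonality, promote to $L^p(\Omega)$ via Wiener chaos, and optimize in $p$ via Chebyshev to extract the $\exp(-c\lambda^{2/3})$ tail. That is exactly the structure of the argument in the paper. Two points deserve correction, however.

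First, it is not accurate to say that ``the random variable $Y$ lies in a Wiener chaos of order at most $3$'': $Y$ is a \emph{norm} of a chaos-valued random variable, and norms are not polynomial in the Gaussians. The inequality $\|Y\|_{L^p(\Omega)}\lesssim p^{3/2}\|Y\|_{L^2(\Omega)}$ that you need does hold, but the justification is Minkowski first (pull $L^p(\Omega)$ inside $L^2_\tau\ell^2_n$, valid for $p\geq 2$), then Lemma~\ref{Wiener-Chaos} applied pointwise to the scalar quantity $\mathcal{F}_{t,x}(\eta\mathcal{N})(\tau,n)$ which genuinely is degree-$3$ in the $g_n$'s, and finally Fubini to identify $\|\,\|Z\|_{L^2(\Omega)}\,\|_{L^2_\tau\ell^2_n}$ with $\|Y\|_{L^2(\Omega)}$. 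The paper performs exactly this sequence; without the Minkowski step your $L^2\to L^p$ bootstrap is unjustified.

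Second, the deterministic sum $J$ that appears after the Gaussian pairing is \emph{not} estimated via Proposition~\ref{counting-1}. That counting lemma serves the bilinear Strichartz / deterministic $X^{s,b}$ estimates of Section~4. In the probabilistic computation of Section~5, once the $\tau$-integral is handled by the decay of $\widehat\eta$ and Lemma~\ref{convolution}, the resulting weight is $\langle\Phi(\overline{n})\rangle^{-(1-2\epsilon)}$, and one then sums directly over the lattice using the lower bound $|\Phi(\overline{n})|\gtrsim |n_1-n_2||n_2-n_3|N_{(1)}^{\alpha-2}$ of Lemma~\ref{bound:resonance} together with a dyadic case analysis (high-high-high, high-high-low, high-low-low). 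The high-low-low block is what produces the restriction $s<\alpha-1$, and the factor $M^{-2\epsilon_0}$ comes from the observation that at least one frequency exceeds $\max\{M_1,M_2,M_3\}$ in each dyadic block. So the spirit of your ``exploit the projection $\Pi_{M_j}^\perp$ and the resonance lower bound'' is right, but the mechanism is an elementary lattice sum, not a counting argument.
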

\begin{proof}
From Lemma~\ref{inhomo-linear}, we have
	\begin{equation*}
	\begin{split}
	&\Big\|\int_0^tS_{\alpha}(t-t')\eta(t')\mathcal{N}\big(z_{1,K_1}^{M_1},z_{1,K_2}^{M_2},z_{1,K_3}^{M_3} \big) \Big\|_{X^{s,\frac{1}{2}+\epsilon}}\\
	\lesssim & \Big\|\eta(t)\mathcal{N}_0\big(z_{1,K_1}^{M_1},z_{1,K_2}^{M_2},z_{1,K_3}^{M_3} \big)\Big\|_{X^{s,-\frac{1}{2}+\epsilon}}+\Big\|\eta(t)\mathcal{N}_1\big(z_{1,K_1}^{M_1},z_{1,K_2}^{M_2},z_{1,K_3}^{M_3} \big)\Big\|_{X^{s,-\frac{1}{2}+\epsilon}}. 
	\end{split}
	\end{equation*}
	Set
	$$I_{M_j,K_j}:=\{n\in\Z: M_j< |n|\leq K_j  \}.
	$$
	Note that
	\begin{equation*}
	\begin{split}
	&\Big\|\eta(t)\mathcal{N}_0\big(z_{1,K_1}^{M_1},z_{1,K_2}^{M_2},z_{1,K_3}^{M_3} \big)\Big\|_{X^{s,-\frac{1}{2}+\epsilon}}\\=&\Big\|\mathbf{1}_{n\in \cap_{j=1}^3I_{M_j,K_j} }\langle n\rangle^{s}\langle\tau-|n|^{\alpha}\rangle^{-\frac{1}{2}+\epsilon}\widehat{\eta}(\tau-|n|^{\alpha}) \frac{|g_n(\omega)|^2g_n(\omega)}{[n]^{\frac{3\alpha}{2}}}\Big\|_{L_{\tau}^2l_n^2}.
	\end{split}
	\end{equation*}
	By Minkowski's inequality, for $p\geq 2$, we have
	\begin{equation}\label{N0Xsb}
	\begin{split}
	&\Big\|\eta(t)\mathcal{N}_0\left(z_{1,K_1}^{M_1},z_{1,K_2}^{M_2}, z_{1,K_3}^{M_3} \right) \Big\|_{L^p(\Omega;X^{s,-\frac{1}{2}+\epsilon})}\\
	\leq &\Big\|\mathbf{1}_{n\in \cap_{j=1}^3I_{M_j,K_j} }\langle n\rangle^{s}\langle\tau-|n|^{\alpha}\rangle^{-\frac{1}{2}+\epsilon}\widehat{\eta}(\tau-|n|^{\alpha}) \frac{|g_n(\omega)|^2g_n(\omega)}{[n]^{\frac{3\alpha}{2}}}\Big\|_{L_{\tau}^2l_n^2L^p(\Omega)}.
	\end{split}
	\end{equation}
	It follows from the property of Gaussian random variables that 
	\begin{equation*}
	\begin{split}
	\textrm{(RHS) of \eqref{N0Xsb}}\lesssim &p^{3/2}\Big\|\mathbf{1}_{n\in\cap_{j=1}^3I_{M_j,N_j} }\langle n\rangle^{s-\frac{3\alpha}{2}}\langle\tau-|n|^{\alpha}\rangle^{-\frac{1}{2}+\epsilon}\widehat{\eta}(\tau-|n|^{\alpha}) \Big\|_{L_{\tau}^2l_n^2} \\
	\lesssim &p^{3/2}\max\{M_1,M_2,M_3 \}^{s+\frac{1}{2}-\frac{3\alpha}{2} }\lesssim p^{3/2}\max\{M_1,M_2,M_3\}^{-\big(\frac{3\alpha}{2}-s-\frac{1}{2} \big)},
	\end{split}
	\end{equation*}
	in which the index is negative. Recall the notation
	$$ \Gamma(\ov{n}):=\{(n_1,n_2,n_3):n=n_1-n_2+n_3, n_2\neq n_1, n_2\neq n_3 \}.
	$$
	Similarly, applying Minkowski's inequality and the Wiener chaos estimate of Lemma~\ref{Wiener-Chaos}, we have
	\begin{equation}\label{N1Xsb}
	\begin{split}
	&\Big\|\eta(t)\mathcal{N}_1\big(z_{1,N_1}^{K_1}, z_{1,N_2}^{K_2}, z_{1,N_3}^{K_3} \big) \Big\|_{L^p(\Omega;X^{s,-\frac{1}{2}+\epsilon} )}^2\\
	\lesssim & p^{3}\Big\| \langle\tau-|n|^{\alpha}\rangle^{-\frac{1}{2}+\epsilon}\langle n\rangle^s\sum_{\substack{(n_1,n_2,n_3)\in \Gamma(\ov{n})\\
			n_j\in I_{M_j,K_j},j=1,2,3 } }\frac{g_{n_1}(\omega)\ov{g}_{n_2}(\omega)g_{n_3}(\omega) }{[n_1]^{\frac{\alpha}{2}}
		[n_2]^{\frac{\alpha}{2}}
		[n_3]^{\frac{\alpha}{2}} }\widehat{\eta}(\tau-|n|^{\alpha}-\Phi(\ov{n})) \Big\|_{L_{\tau}^2l_n^2L^2(\Omega)}^2.
	\end{split}
	\end{equation}
	For fixed $n$, using independence, we have
	\begin{equation*}
	\begin{split}
	&\Big\|\sum_{\substack{(n_1,n_2,n_3)\in \Gamma(\ov{n})\\
			n_j\in I_{M_j,K_j},j=1,2,3 } }\frac{g_{n_1}(\omega)\ov{g}_{n_2}(\omega)g_{n_3}(\omega) }{[n_1]^{\frac{\alpha}{2}}
		[n_2]^{\frac{\alpha}{2}}
		[n_3]^{\frac{\alpha}{2}} }\widehat{\eta}(\tau-|n|^{\alpha}-\Phi(\ov{n})) \Big\|_{L^2(\Omega)}^2\\
	\lesssim &\sum_{\substack{(n_1,n_2,n_3)\in \Gamma(\ov{n})\\
			n_j\in I_{M_j,K_j},j=1,2,3 } }
	\frac{|\widehat{\eta}(\tau-|n|^{\alpha}-\Phi(\ov{n})) |^2 }{\langle n_1\rangle^{\alpha}
		\langle n_2\rangle^{\alpha}
		\langle n_3\rangle^{\alpha} }.
	\end{split}
	\end{equation*}
	Therefore, 
	\begin{equation*}
	\begin{split}
	\textrm{(RHS) of \eqref{N1Xsb}}\lesssim & p^3\int_{\R}\sum_{n}\sum_{\substack{(n_1,n_2,n_3)\in \Gamma(\ov{n})\\
			n_j\in I_{M_j,K_j},j=1,2,3 } }\frac{\langle n\rangle^{2s}\langle\tau-|n|^{\alpha}\rangle^{-1+2\epsilon}|\widehat{\eta}(\tau-|n|^{\alpha}-\Phi(\ov{n})) |^2 }{\langle n_1\rangle^{\alpha}
		\langle n_2\rangle^{\alpha}
		\langle n_3\rangle^{\alpha} }d\tau.
	\end{split}
	\end{equation*}
	Since $|\widehat{\eta}(\tau)|\leq C_L\langle\tau\rangle^{-L}$ for any $L\in\mathbb{N}$, applying Lemma \ref{convolution}, we have
	\begin{equation*}
	\begin{split}
	\textrm{(RHS) of \eqref{N1Xsb}}\lesssim p^3J,\quad J:=\sum_{n}\sum_{\substack{(n_1,n_2,n_3)\in \Gamma(\ov{n}) \\
			n_j\in I_{M_j,K_j},j=1,2,3 } }
	\frac{\langle n\rangle^{2s} }{\langle n_1\rangle^{\alpha}
		\langle n_2\rangle^{\alpha} 
		\langle n_3\rangle^{\alpha}\langle\Phi(\ov{n})\rangle^{1-2\epsilon}  }.
	\end{split}
	\end{equation*}
	We decompose the summation into dyadic pieces $|n_j|\sim N_j$ where $M_j/2\leq N_j\leq 2K_j$ for $j=1,2,3$. We write
	$$ J=\sum_{N_1,N_2,N_3}J_{N_1,N_2,N_3}.
	$$
	Denote by $N_{(1)}\geq N_{(2)}\geq N_{(3)}$ the non-increasing order of $N_1,N_2,N_3$. Recall that from Lemma \ref{bound:resonance}, $|\Phi(\ov{n})|\gtrsim |n_1-n_2||n_2-n_3|N_{(1)}^{\alpha-2}$.
	
	If $N_1\sim N_2\sim N_3$, we have
	\begin{equation}\label{highhighhigh}
	\begin{split}
	J_{N_1,N_2,N_3}\lesssim &N_{(1)}^{2s-3\alpha+(2-\alpha)(1-2\epsilon)}\sum_{\substack{n_2\neq n_1,n_3\\
			|n_j|\sim N_j,j=1,2,3 }}\frac{1}{\langle n_1-n_2\rangle^{1-2\epsilon}\langle n_2-n_3\rangle^{1-2\epsilon} }\\
	\lesssim & N_{(1)}^{2s+3-4\alpha+2\alpha\epsilon}.
	\end{split}
	\end{equation}
	If $N_{(1)}\sim N_{(2)}\gg N_{(3)}$, we have
	\begin{equation}\label{hhl}
	\begin{split}
	J_{N_1,N_2,N_3}\lesssim &N_{(1)}^{2s-2\alpha+(2-\alpha)(1-2\epsilon)}N_{(3)}^{-\alpha}\sum_{\substack{n_2\neq n_1,n_3\\
			|n_j|\sim N_j,j=1,2,3 } }\frac{1}{\langle n_1-n_2\rangle^{1-2\epsilon}\langle n_2-n_3\rangle^{1-2\epsilon} }\\
	\lesssim &N_{(1)}^{2s-2\alpha+(2-\alpha)(1-2\epsilon)+4\epsilon}.
	\end{split}
	\end{equation} 
	The worst case is $N_{(1)}\gg N_{(2)}\geq N_{(3)}$, saying\footnote{Other cases are similar or better. } $N_1\sim N_{(1)}$, $|\Phi(\ov{n})|\gtrsim N_{(1)}^{\alpha-1}|n_2-n_3|$, thus
	\begin{equation}\label{hll}
	\begin{split}
	J_{N_1,N_2,N_3}\lesssim & N_{(1)}^{2s-\alpha-(\alpha-1)(1-\epsilon)}\cdot N_{(2)}^{-\alpha}N_{(3)}^{-\alpha}\sum_{\substack{n_2\neq n_1,n_3\\
			|n_j|\sim N_j, j=1,2,3 } } \frac{1}{\langle n_2-n_3\rangle^{1-2\epsilon}}\\
	\lesssim &N_{(1)}^{2s-2(\alpha-1)+(\alpha-1)\epsilon }\cdot N_{(2)}^{-\alpha+2\epsilon}N_{(3)}^{1-\alpha}.
	\end{split}
	\end{equation} 
	If $s<\alpha-1$, we may choose $\epsilon>0$ such that $s<\alpha-1-\epsilon$. 
	
	To estimate $J$, we write
	$$ J=\sum_{N_1\sim N_2\sim N_3} J_{N_1,N_2,N_3}+\sum_{N_{(1)}\gg N_{(2)}\geq N_{(3)}}J_{N_1,N_2,N_3}+
	\sum_{N_{(1)}\sim N_{(2)}\gg N_{(3)}}J_{N_1,N_2,N_3}.
	$$
	For the summation over dyadic integers satisfying $N_1\sim N_2\sim N_3\sim N_{(1)}$, the non-zero contributions satisfy $N_{(1)}\gtrsim \max\{M_1,M_2,M_3 \}$, thus the dyadic summation over $N_1\sim N_2\sim N_3$ is bounded by $\displaystyle{ \max\{ M_1,M_2,M_3 \}^{2s+3-4\alpha+2\alpha\epsilon} }$. For the summation over dyadic integers satisfying $N_{(1)}\gg N_{(2)}\geq N_{(3)}$, the non-zero contributions satisfy
	$N_{(1)}\geq \max\{M_1,M_2,M_3 \} $, hence the summation can be bounded by $\displaystyle{\max\{M_1,M_2,M_3 \}^{(\alpha-3)\epsilon} }$. From the constraint of $s$, we have
	$$  J\lesssim \max\{M_1,M_2,M_3 \}^{-(3-\alpha)\epsilon}.
	$$
	The rest argument follows from an application of Chebyshev's inequality, as in the proof of Lemma~\ref{proba-Strichartz}.
\end{proof}
\begin{remarque}\label{fut}
{\rm	
From \eqref{highhighhigh},\eqref{hhl} and \eqref{hll}, we see that the constraint $s<\alpha-1$ comes only from the high-low-low frequency interactions. 
The other cases give $s<4\alpha-3$ and $s<3\alpha-2$ respectively. 
In these other cases the condition $s\geq \frac{1}{2}-\frac{\alpha}{4}$ gives the full range $\alpha>1$. 
The situation therefore reminds the impressive recent work \cite{Deng2} and as a consequence  we conjecture that Theorem~\ref{thm6} and Theorem~\ref{thm5} can be extended to $\alpha>1$, and even to some values of $\alpha\leq1$ after suitable renormalizations.  
}
\end{remarque}

\begin{corollaire}\label{convergence:W}
Assume that $1<\alpha<2$, then for any $s<\alpha-1$, there exist $\epsilon_0>0, 0<\epsilon\ll 1,c>0$, such that for any $\lambda\geq 1$, $i_1,i_2,i_3\in\{0,1\}$ and $M\in\N$, $K\in\N\cup\{+\infty\}$, $M\leq K$ 
\begin{align*}
\mathbb{P}\Big\{\omega: M^{\epsilon_0(i_1+i_2+i_3)}\mathcal{W}_{s,\epsilon}\big(\Pi_K(\Pi_M^{\perp})^{i_1}\phi^{\omega}, \big(\Pi_K(\Pi_M^{\perp})^{i_2}\phi^{\omega},\big(\Pi_K(\Pi_M^{\perp})^{i_3}\phi^{\omega}\big) >\lambda \Big\}\leq e^{-c\lambda^{2/3}}.
\end{align*}
\end{corollaire}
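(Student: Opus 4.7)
The plan is, for each $l\in\mathbb{Z}$, to extract from the proof of Lemma~\ref{eq:z3} an $L^p(\Omega)$ moment bound on $\|\chi_0(t)\mathcal{N}(S_{\alpha}(t+l)\phi_1,S_{\alpha}(t+l)\phi_2,S_{\alpha}(t+l)\phi_3)\|_{X^{s,-\frac12+2\epsilon}}$, then to sum in $l$ by Minkowski's inequality, and finally to invoke Chebyshev's inequality. Throughout I write $\phi_j:=\Pi_K(\Pi_M^{\perp})^{i_j}\phi^{\omega}$, $M_j:=M\cdot i_j$, $K_j:=K$, and $k:=i_1+i_2+i_3\in\{0,1,2,3\}$.

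First I would observe that for every fixed $l\in\mathbb{Z}$,
\begin{equation*}
S_{\alpha}(t+l)\phi_j=\sum_{M_j<|n|\leq K_j}\frac{g_n(\omega)\,e^{-i|n|^{\alpha}l}}{[n]^{\alpha/2}}\,e^{inx-i|n|^{\alpha}t},
\end{equation*}
and that $\tilde g_n:=g_n e^{-i|n|^{\alpha}l}$ is again an i.i.d.\ family of standard complex Gaussians by the rotational invariance of the complex Gaussian distribution. Consequently, the joint law of the triple $(S_{\alpha}(t+l)\phi_j)_{j=1,2,3}$ agrees with that of $(z^{M_j}_{1,K_j}(t))_{j=1,2,3}$. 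Next I would use the fact that the proof of Lemma~\ref{eq:z3} actually establishes, as an intermediate step (obtained via Lemma~\ref{inhomo-linear}), the bound
\begin{equation*}
\big\|\eta(t)\mathcal{N}\big(z^{M_1}_{1,K_1},z^{M_2}_{1,K_2},z^{M_3}_{1,K_3}\big)\big\|_{L^p(\Omega;X^{s,-\frac12+\epsilon})}\leq C\,p^{3/2}\max\{M_1,M_2,M_3\}^{-\epsilon_1}
\end{equation*}
for some $\epsilon_1=\epsilon_1(\alpha,s,\epsilon)>0$; the identical Wiener chaos computation goes through with $\chi_0\in C_c^{\infty}$ in place of $\eta$ and with exponent $-\tfrac12+2\epsilon$ in place of $-\tfrac12+\epsilon$ (the exponent $1-2\epsilon$ in the denominator defining $J$ is simply replaced by $1-4\epsilon$, and the dyadic summations still close for $\epsilon$ small enough). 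Since $\max\{M_1,M_2,M_3\}=M$ as soon as $k\geq 1$, this gives, uniformly in $l$,
\begin{equation*}
\bigl\|\chi_0(t)\mathcal{N}\bigl(S_{\alpha}(t+l)\phi_1,S_{\alpha}(t+l)\phi_2,S_{\alpha}(t+l)\phi_3\bigr)\bigr\|_{L^p(\Omega;X^{s,-\frac12+2\epsilon})}\leq C\,p^{3/2}\,M^{-\epsilon_1\mathbf{1}_{\{k\geq 1\}}}.
\end{equation*}

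Summing in $l$ by Minkowski and using $\sum_{l\in\mathbb{Z}}\langle l\rangle^{-2}<\infty$ I would then obtain $\|\mathcal{W}_{s,\epsilon}(\phi_1,\phi_2,\phi_3)\|_{L^p(\Omega)}\leq C\,p^{3/2}M^{-\epsilon_1\mathbf{1}_{\{k\geq 1\}}}$. Choosing $\epsilon_0:=\epsilon_1/3$ ensures that $M^{k\epsilon_0}M^{-\epsilon_1\mathbf{1}_{\{k\geq 1\}}}\leq 1$ for every $k\in\{0,1,2,3\}$, and Chebyshev's inequality yields
\begin{equation*}
\mathbb{P}\bigl(M^{k\epsilon_0}\mathcal{W}_{s,\epsilon}(\phi_1,\phi_2,\phi_3)>\lambda\bigr)\leq \bigl(C\,p^{3/2}/\lambda\bigr)^p,
\end{equation*}
which, with the optimal choice $p=c'\lambda^{2/3}$ for a sufficiently small $c'>0$, gives the announced bound $e^{-c\lambda^{2/3}}$.

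The main obstacle is essentially bookkeeping: I must verify that the single factor $M^{-\epsilon_1}$ gained in Lemma~\ref{eq:z3} (coming from the constraint $N_{(1)}\gtrsim\max M_j$ in the dyadic decomposition of the sum $J$) is enough to absorb the prefactor $M^{k\epsilon_0}$ uniformly in $k\in\{1,2,3\}$, which is precisely why $\epsilon_0$ must be taken one third as large as the gain produced by the lemma. No refined counting is required; the time translation $t\mapsto t+l$ is absorbed by the Gaussian rotational invariance, and the sum over $l$ converges thanks to the weight $\langle l\rangle^{-2}$.
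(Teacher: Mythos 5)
Your proof is correct and follows essentially the same route the paper uses: reduce to a single shift $l$ by Minkowski, absorb the translation $t\mapsto t+l$ through the rotational invariance of complex Gaussians (the paper phrases this as $S_{\alpha}(l)\phi^{\omega}$ having the same law as $\phi^{\omega}$), read the $L^p(\Omega)$ moment bound off the proof of Lemma~\ref{eq:z3}, and conclude by Chebyshev with $p\sim\lambda^{2/3}$. The only thing you add beyond the paper's terse argument is the explicit bookkeeping showing that choosing $\epsilon_0=\epsilon_1/3$ absorbs the prefactor $M^{k\epsilon_0}$ uniformly over $k\in\{1,2,3\}$, which is a correct and useful clarification.
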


\begin{proof}
 Denote by $\phi_{i_j}:=\Pi_K(\Pi_M^{\perp})^{i_j}\phi^{\omega}$. From the Wiener chaos estimates, 
 it is sufficient to obtain the following estimate for large $p<\infty$:
\begin{align*}
\Big\| \sum_{l\in\Z}\langle l\rangle^{-2} \Big\| \int_0^t\chi_0(t)S_{\alpha}(t-t')\mathcal{N}\big(z_{1,K}^{M_1},z_{1,K}^{M_2},
z_{1,K}^{M_3} \big)(t'+l)dt'
\Big\|_{X^{s,\frac{1}{2}+2\epsilon}}
\Big\|_{L^p(\Omega)}\leq CM^{-\epsilon_0}p^{3/2},
\end{align*}
where $M_j=M$ if $i_j=1$ and $M_j=0$ if $i_j=0$.
Since $\sum_{l\in \Z}\langle l\rangle^{-2}<\infty$, it is sufficient to show that for any $l\in\Z$
\begin{align}\label{!!}
 \Big\| \int_0^t\chi_0(t)S_{\alpha}(t-t')\mathcal{N}\big(z_{1,K}^{M_1},z_{1,K}^{M_2},
 z_{1,K}^{M_3} \big)(t'+l)dt'
 \Big\|_{L^p\big(\Omega;X^{s,\frac{1}{2}+2\epsilon}\big)} \leq CM^{-\epsilon_0}p^{3/2}.
\end{align}
Since $S_{\alpha}(l)\phi^{\omega}$ has the same law as $\phi^{\omega}$, we obtain \eqref{!!} from the same proof of Lemma \ref{z3:largedeviation}. This completes the proof of Corollary \ref{convergence:W}.

\end{proof}
\begin{lemme}\label{linear-convergence}
	Assume that $1<\alpha<2$ and $M<K\leq \infty$. Then for any $t_0\in\R$, any $\epsilon>0$,  there exist $2\leq q<\infty$, $0<\epsilon_0\ll 1$, such that for all $\lambda\geq 1$,
	\begin{equation*}
	\begin{split}
	\mathbb{P}\Big\{\omega:\big\|z_{1,K}^M\big\|_{\mathcal{V}^{q,\epsilon}}> M^{-\epsilon_0}\lambda  \Big\}<e^{-c\lambda^2},
	\end{split}
	\end{equation*}
	where $c>0$ is some uniform constant.
\end{lemme}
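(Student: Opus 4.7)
The plan is to estimate each of the two summands in the definition of $\|\cdot\|_{\mathcal{V}^{q,\epsilon}}$ separately, bound all of its $L^p(\Omega)$-moments by $C\sqrt{p}\,M^{-\delta}$ for some $\delta>0$, and then deduce the advertised Gaussian tail via Chebyshev's inequality with $p$ optimised as a function of $\lambda$. The key features driving the argument are: rotation invariance of the complex Gaussians $g_n$ (which makes the $l$-sum trivial), and the specific exponents appearing in the definition of $\|\cdot\|_{\mathcal V^{q,\epsilon}}$, which are arranged precisely so that the two relevant sums over $n$ are summable with polynomial tail in $M$.

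For the Fourier--Lebesgue piece I would write
\[
\|\Pi_M^\perp\Pi_K\phi^\omega\|_{\mathcal{F}L^{\alpha/2-2\epsilon/3,\,2/\epsilon}}^{2/\epsilon}
=\sum_{M<|n|\leq K}\langle n\rangle^{-4/3}\,|g_n(\omega)|^{2/\epsilon},
\]
where the collapse of the exponent to $-4/3$ is the whole point of the choice $s=\alpha/2-2\epsilon/3$, $r=2/\epsilon$. Taking $L^{p\epsilon/2}(\Omega)$, Minkowski's inequality (legitimate for $p\geq 2/\epsilon$) together with $\|g_n\|_{L^p(\Omega)}\lesssim \sqrt{p}$ then gives
\[
\big\|\|\Pi_M^\perp\Pi_K\phi^\omega\|_{\mathcal{F}L^{\alpha/2-2\epsilon/3,2/\epsilon}}\big\|_{L^p(\Omega)}\lesssim \sqrt{p}\,M^{-\epsilon/6}.
\]

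For the space-time piece, observe that for every $l\in\Z$, the random field $(t,x)\mapsto \chi_0(t)S_\alpha(t+l)\Pi_M^\perp\Pi_K\phi^\omega(x)$ has the same law as the case $l=0$, because $S_\alpha(l)$ multiplies each $g_n$ by a unimodular phase and the family $(g_n)$ is rotation invariant. For such fixed $(t,x,l)$, $\langle D_x\rangle^\sigma z^M_{1,K}(t+l,x)$ with $\sigma=(\alpha-1)/2-\epsilon/2$ is a centred complex Gaussian of variance
\[
A_M^2=\sum_{M<|n|\leq K}\langle n\rangle^{2\sigma}[n]^{-\alpha}\lesssim \sum_{|n|>M}\langle n\rangle^{-1-\epsilon}\lesssim M^{-\epsilon},
\]
which is precisely why $\sigma$ is taken strictly below the Sobolev threshold $(\alpha-1)/2$. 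Lemma~\ref{Wiener-Chaos} (with $k=1$) then yields $\|\langle D_x\rangle^\sigma z^M_{1,K}(t+l,x)\|_{L^p(\Omega)}\lesssim \sqrt{p}\,M^{-\epsilon/2}$ uniformly in $(t,x,l)$. Picking $p\geq \max(q,1/\epsilon)$, a second Minkowski swap transports this pointwise bound to $L^q_tL^{1/\epsilon}_x$ over $\mathrm{supp}\,\chi_0$; summation against $\langle l\rangle^{-2}$ is absolutely convergent and preserves the estimate.

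Combining both contributions gives $\big\|\|z^M_{1,K}\|_{\mathcal V^{q,\epsilon}}\big\|_{L^p(\Omega)}\leq C\sqrt{p}\,M^{-\epsilon_1}$ with $\epsilon_1=\epsilon/6$. Applying Chebyshev and optimising in $p$ (choosing $p\sim (\lambda M^{\epsilon_1-\epsilon_0})^2$) produces the desired tail $\exp(-c\lambda^2 M^{2(\epsilon_1-\epsilon_0)})\leq \exp(-c\lambda^2)$ for any fixed $0<\epsilon_0<\epsilon_1$. No substantial obstacle is expected here: the main care points are to match the exponents $\alpha/2-2\epsilon/3$ and $(\alpha-1)/2-\epsilon/2$ against the exact decay rates $\langle n\rangle^{-4/3}$ and $\langle n\rangle^{-1-\epsilon}$ respectively, to take $p$ large enough to justify the two Minkowski swaps, and to invoke the rotation invariance of the $g_n$ so that the sum in $l$ does not accumulate moments.
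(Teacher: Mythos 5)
Your proposal is correct and follows essentially the same route as the paper's proof: reduce both pieces of $\|\cdot\|_{\mathcal{V}^{q,\epsilon}}$ to $L^p(\Omega)$ moment bounds of the form $C\sqrt p\,M^{-\delta}$ via Minkowski swaps and pointwise Gaussian moment estimates, dispatch the $l$-sum by rotation invariance of the $g_n$'s, and then convert to a Gaussian tail via Chebyshev with the usual choice of $p$. The exponent arithmetic (the $\langle n\rangle^{-4/3}$ tail from $(\alpha/2-\sigma_1)\cdot 2r = 4/3$ for the Fourier--Lebesgue piece, and the $\langle n\rangle^{-1-\epsilon}$ variance yielding $M^{-\epsilon/2}$ for the Sobolev piece) matches the paper exactly, as does the observation that the Fourier--Lebesgue term is static because $S_\alpha(t)$ preserves that norm.
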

\begin{proof}
Denote by $\sigma_0=\frac{\alpha-1}{2}-\frac{\epsilon}{2}, \sigma_1=\frac{\alpha}{2}-\frac{2\epsilon}{3}, r=\frac{1}{\epsilon}$. From Wiener chaos estimates and by the same argument as in the proof of Corollary~\ref{convergence:W}, it would be sufficient to show that for all large $p<\infty$,	
\begin{align*}
\Big\|\|\chi_0(t)z_{1,K}^M\|_{L_t^q\mathcal{F}L^{\sigma_1,2r}\cap L_t^qW_x^{\sigma_0,r} } \Big\|_{L^p(\Omega)}\leq CM^{-\epsilon_0}\sqrt{p}. 
\end{align*}
 We first deal with the Fourier-Lebesgue norm $\mathcal{F}L^{\sigma_1,2r}$. Note that $S_{\alpha}(t)$ keeps the Fourier-Lebesgue norm invariant, it suffices to show that for large $p$,
 $$ \big\|\mathbf{1}_{M\leq |n|\leq K}\langle n\rangle^{\sigma_1-\frac{\alpha}{2}}g_n(\omega)\big\|_{L^p(\Omega;l_n^{2r} )}\leq CM^{-\epsilon_0}\sqrt{p}.
 $$
 Note that $\big(\frac{\alpha}{2}-\sigma_1\big)2r=\frac{4}{3}>1$, take $p\geq 2r$, from Minkowski, we have
 $$ \big\|\mathbf{1}_{M\leq |n|\leq K}\langle n\rangle^{\sigma_1-\frac{\alpha}{2}}g_n(\omega)\big\|_{L^p(\Omega;l_n^{2r} )}\leq \big\|\mathbf{1}_{M\leq |n|\leq K}\langle n\rangle^{\sigma_1-\frac{\alpha}{2}}g_n(\omega)\big\|_{l_n^{2r}L^p(\Omega)}.
 $$
 From a property of the Gaussian random variables, we have
 \begin{equation*}
 \begin{split}
 \big\|\mathbf{1}_{M\leq |n|\leq K}\langle n\rangle^{\sigma_1-\frac{\alpha}{2}}g_n(\omega)\big\|_{l_n^{2r}L^p(\Omega)}\leq & C\sqrt{p}\|\mathbf{1}_{|n|\geq M}\langle n\rangle^{\sigma_1-2\alpha}\|_{l_n^{2r}}\leq CM^{-\frac{1}{6r}}\sqrt{p}.
 \end{split}
 \end{equation*}
Next we deal with the Sobolev norm $L_t^qW_x^{\sigma_0,r}$. Again, for $p\geq 2r, p\geq q$, we have
\begin{align*}
\Big\|\|\chi_0(t)z_{1,K}^M\|_{ L_t^qW_x^{\sigma_0,r} } \Big\|_{L^p(\Omega)}=& \Big\|
\Big\|\chi_0(t)\sum_{M\leq |n|\leq K} \frac{\langle n\rangle^{\sigma_0} g_n(\omega) e^{inx+i|n|^{\alpha}t}}{[n]^{\frac{\alpha}{2}} }
\Big\|_{L_t^qL_x^{r}}
\Big\|_{L^p(\Omega)}\\
\leq &\Big\|
\Big\|\chi_0(t)\sum_{M\leq |n|\leq K} \frac{\langle n\rangle^{\sigma_0} g_n(\omega) e^{inx+i|n|^{\alpha}t}}{[n]^{\frac{\alpha}{2}} }
\Big\|_{L^p(\Omega)}
\Big\|_{L_t^qL_x^{r}}.
\end{align*}
By Wiener chaos estimate, there exists $C>0$, such that for any $(t,x)$,
\begin{equation*}
\begin{split}
\Big\|\chi_0(t)\sum_{M\leq |n|\leq K} \frac{\langle n\rangle^{\sigma_0} g_n(\omega) e^{inx+i|n|^{\alpha}t}}{[n]^{\frac{\alpha}{2}} }
\Big\|_{L^p(\Omega)}\leq &C\sqrt{q} \|\mathbf{1}_{M\leq |n|\leq K}\langle n\rangle^{\sigma_0-\frac{\alpha}{2}} \|_{l_n^2}\\
\leq & CM^{-\frac{\epsilon}{2}}\sqrt{p}.
\end{split}
\end{equation*}
The proof of Lemma \ref{linear-convergence} is now complete.
\end{proof}

\section{Global well-posedness and flow property when $\frac{6}{5}<\alpha<2$}

\subsection{Enhanced local convergence}
Throughout this section, we fix the small parameter $\epsilon>0,$ and the large parameter $q<\infty$ 
as required in the previous sections. We also fix the constants
$$
\frac{6}{5}<\alpha<2,\quad  \sigma=\frac{\alpha-1}{2}-\epsilon,\quad \frac{1}{2}-\frac{\alpha}{4}<s<\alpha-1.
$$
	We remark that in contrast with  previous situations (as for instance in  \cite{BT-IMRN},\cite{Sun-Tz}), here the nonlinear evolution part though more regular lives in different function spaces which may not be embedded into the function space of the linear evolution part. This causes difficulties to construct the invariant data set. To overcome this difficulty, we define the summed space $\mathcal{Y}^{s,\epsilon}:=\mathcal{V}^{q,\epsilon}+H^s(\T)$ via the norm
		$$ \|u\|_{\mathcal{V}^{q,\epsilon}+H^s}:=\inf\{\|u_1\|_{\mathcal{V}^{q,\epsilon}}+\|u_2\|_{H^s(\T)}: \textrm{ if } u=u_1+u_2 \text{ for some } u_1\in \mathcal{V}^{q,\epsilon},u_2\in H^s(\T) \}.
		$$	
		Since $\mathcal{V}^{q,\epsilon}$ and $H^s(\T)$ are continuously embedded into $L^2(\T)$, from Lemma 2.3.1 of \cite{Bergh}, $(\mathcal{V}^{q,\epsilon}+H^s,\|\cdot\|_{\mathcal{V}^{q,\epsilon}+H^s})$ is a normed space. We introduce the summed space structure, since the gauged linear evolution part should be measured by $\mathcal{V}^{q,\epsilon}$ norm and the quantity $\mathcal{W}_{s,\epsilon}$, while the nonlinear evolution should be measured by $H^s$ norm. The analysis in this section is somewhat soft and topological.
\\

We need to introduce some notations. For functions $f_1,f_2,f_3$, we extend the nonlinear quantity $\mathcal{W}_{s,\epsilon}(\cdot)$ to the following canonical trilinear form:
		\begin{align*}
		\mathcal{W}_{s,\epsilon}(f_1,f_2,f_3):=\sum_{l\in\Z}\langle l\rangle^{-2}\big\| \chi_0(t)\mathcal{N}\big(\big(S_{\alpha}(t+l)f_{1},S_{\alpha}(t+l)f_{2},S_{\alpha}(t+l)f_{3} \big)
		\big\|_{X^{s,-\frac{1}{2}+2\epsilon}}.
		\end{align*}
	Note that for any two fixed entries, $\mathcal{W}_{s,\epsilon}(\cdot,f_2,f_3), \mathcal{W}_{s,\epsilon}(f_1,\cdot,f_3)$ satisfy the triangle inequality.
			Given a finite set $\mathcal{J}$ of functions, the notation
		$$ \sum_{f_{j}\in\mathcal{J} } \mathcal{W}_{s,\epsilon}\big(f_1,f_2,f_3 \big)
		$$
		means to sum over all possible $f_1,f_2,f_3\in\mathcal{J}$ of $\mathcal{W}_{s,\epsilon}(f_1,f_2,f_3)$. For the projector $\Pi_N^{\perp}$, we denote by 
		$$ \big(\Pi_{N}^{\perp}\big)^{j}=\Pi_N^{\perp},\text{ if }j=1;\quad \big(\Pi_{N}^{\perp}\big)^{j}=\mathrm{Id},\text{ if }j=0.
		$$
We will make use of the following simple quasi-invariance property.
\begin{lemme}[Quasi-invariance]\label{quasi-invarianceW}
	There exists a constant $A_1>0$, such that for all $|t_0|\leq \frac{1}{2}$ and all $\phi,\phi_1,\phi_2,\phi_3$ 
	$$  \mathcal{W}_{s,\epsilon}\big(S_{\alpha}(t_0)\phi_1,S_{\alpha}(t_0)\phi_2,S_{\alpha}(t_0)\phi_3 \big)\leq A_1 \mathcal{W}_{s,\epsilon}(\phi_1,\phi_2,\phi_3),\quad \|S_{\alpha}(t_0)\phi\|_{\mathcal{V}^{q,\epsilon}}\leq A_1\|\phi\|_{\mathcal{V}^{q,\epsilon}}.
	$$ 
\end{lemme}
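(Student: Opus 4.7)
The plan is to reduce both bounds to translation invariance of the underlying norms ($L^q_t W^{\sigma_0,r}_x$, Fourier--Lebesgue, and Bourgain) in the time variable, combined with a partition-of-unity trick that converts the shifted cutoff $\chi_0(\cdot - t_0)$ into a finite sum of the standard integer translates $\chi_0(\cdot - k)$.

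For the linear bound on $\|S_\alpha(t_0)\phi\|_{\mathcal{V}^{q,\epsilon}}$, the Fourier--Lebesgue contribution is immediate, since $\widehat{S_\alpha(t_0)\phi}(n) = e^{-i|n|^\alpha t_0}\widehat\phi(n)$ has the same modulus as $\widehat\phi(n)$. For the remaining piece, the change of variable $s = t + t_0$ gives
$$\|\chi_0(t)S_\alpha(t+l)S_\alpha(t_0)\phi\|_{L^q_t W^{\sigma_0,r}_x} = \|\chi_0(s-t_0)\,S_\alpha(s+l)\phi\|_{L^q_s W^{\sigma_0,r}_x}.$$
Since $|t_0|\leq \tfrac{1}{2}$ and $\sum_{k\in\Z}\chi_0(s-k)\equiv 1$, only finitely many $k$ with $|k|\leq K_0$ yield $\chi_0(s-t_0)\chi_0(s-k)\not\equiv 0$, and pointwise majorization gives
$$\|\chi_0(s-t_0)\,S_\alpha(s+l)\phi\|_{L^q_s W^{\sigma_0,r}_x} \leq C\sum_{|k|\leq K_0}\|\chi_0(s-k)\,S_\alpha(s+l)\phi\|_{L^q_s W^{\sigma_0,r}_x}.$$
A further change of variable $\tau = s-k$ and the re-indexing $m = l+k$, together with the observation that $\langle l\rangle^{-2}\leq C_{K_0}\langle m\rangle^{-2}$ whenever $|k|\leq K_0$, bring the $\langle l\rangle^{-2}$-weighted sum back to exactly the one defining $\|\phi\|_{\mathcal{V}^{q,\epsilon}}$, up to a multiplicative constant.

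The trilinear bound for $\mathcal{W}_{s,\epsilon}$ follows the same outline. Using the group law for $S_\alpha$, the generic summand is
$$\|\chi_0(t)\,\mathcal{N}(S_\alpha(t+l+t_0)\phi_1, S_\alpha(t+l+t_0)\phi_2, S_\alpha(t+l+t_0)\phi_3)\|_{X^{s,-\frac{1}{2}+2\epsilon}}.$$
Translation invariance of the Bourgain norm in $t$ allows me to perform the shift $s = t + t_0$ as before, after which $\chi_0(s-t_0)$ is again decomposed via the partition of unity into a finite sum of terms supported near integer $k$. I then invoke the standard fact that multiplication by a smooth compactly supported function is bounded on $X^{s,b}$ for all $|b|<\tfrac{1}{2}$, which covers $b = -\tfrac{1}{2}+2\epsilon$, with a norm uniform under translation of the cutoff. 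The same re-indexing $m = l+k$ used above then concludes the bound.

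The only non-routine point is the boundedness of multiplication by $\chi_0(s-t_0)$ on $X^{s,b}$ with negative index $b = -\tfrac{1}{2}+2\epsilon$; this is, however, a classical consequence of duality together with the easier positive-$b$ statement, so no serious obstacle arises.
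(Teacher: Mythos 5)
Your proof is correct and follows the same route as the paper's: change of time variable, decomposition of the shifted cutoff via the integer partition of unity $\sum_k\chi_0(\cdot-k)=1$, translation invariance of the relevant time norms, and a re-indexing $m=l+k$ controlled by $\langle l\rangle^{-2}\lesssim\langle l+k\rangle^{-2}$ for bounded $k$. The one point you flag as ``non-routine'' — boundedness of multiplication by a smooth compactly supported cutoff on $X^{s,b}$ with $b=-\tfrac{1}{2}+2\epsilon$ — is in fact directly supplied by Lemma~\ref{time-localization} (take $T=1$, $b'=b\in(-\tfrac12,\tfrac12)$), which is exactly what the paper invokes, so no separate duality argument is needed.
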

\begin{proof}
	From the support property of $\chi_0$, we have for any $t\in\R$,  $|t_0|\leq \frac{1}{2}$,
	$$\chi_0(t-t_0)=\chi_0(t-t_0)\sum_{|m|\leq 3}\chi_0(t-m).$$
	Note that the $X^{s,b}$ norm is invariant under the time-shifting, from Lemma \ref{time-localization}, we have
	\begin{align*}
	& \big\|\chi_0(t)\mathcal{N}\big(S_{\alpha}(t_0+t+l)\phi_1,S_{\alpha}(t_0+t+l)\phi_2,S_{\alpha}(t_0+t+l)\phi_3 \big) \big\|_{X^{s,b}}\\ \leq &C\sum_{|m|\leq 3}\big\|\chi_0(t-m)\mathcal{N}\big(S_{\alpha}(t+l)\phi_1,S_{\alpha}(t+l)\phi_2,S_{\alpha}(t+l)\phi_3 \big) \big\|_{X^{s,b}}\\
	\leq &C\sum_{|m|\leq 3}\|\chi_0(t)\mathcal{N}\big(S_{\alpha}(t+l+m)\phi_1,S_{\alpha}(t+l+m)\phi_2,S_{\alpha}(t+l+m)\phi_3 \big) \big\|_{X^{s,b}}.
	\end{align*}
	Multiplying by $\langle l\rangle^{-2}$ and sum over $l\in\Z$, we obtain the first inequality. The second one follows from a similar argument, and we omit the details. This completes the proof of Lemma~\ref{quasi-invarianceW}.
\end{proof}
\begin{lemme}\label{multi-linearW}
For all $f_1,f_2,f_3\in \widetilde{\mathcal{V}}^{q,\epsilon}$ and $g_1,g_2,g_3\in H^{s,}$, the following estimates hold
	\begin{equation*}
	\begin{split}
	&(\mathrm{1})\quad  \mathcal{W}_{s,\epsilon}(f_1,g_2,g_3)\lesssim \|f_1\|_{\widetilde{\mathcal{V}}^{q,\epsilon}}\|g_2\|_{H^{s}}\|h_3\|_{H^{s}}\,\, ,
	\\
	&(\mathrm{2})\quad  \mathcal{W}_{s,\epsilon}(g_1,f_2,g_3)\lesssim \|g_1\|_{H^s}\|f_2\|_{\widetilde{\mathcal{V}}^{q,\epsilon}}\|g_3\|_{H^s}\,\, ,\\
	&(\mathrm{3})\quad  \mathcal{W}_{s,\epsilon}(g_1,g_2,f_3)\lesssim \|g_1\|_{H^s}\|g_2\|_{H^s}\|f_3\|_{\widetilde{\mathcal{V}}^{q,\epsilon}}\,\, ,\\
	&(\mathrm{4})\quad  \mathcal{W}_{s,\epsilon}(f_1,g_2,f_3)\lesssim \|f_1\|_{\widetilde{\mathcal{V}}^{q,\epsilon}}\|g_2\|_{H^s}\|f_3\|_{\widetilde{\mathcal{V}}^{q,\epsilon}}\,\, ,\\
	&(\mathrm{5})\quad  \mathcal{W}_{s,\epsilon}(f_1,f_2,g_3)\lesssim \|f_1\|_{\widetilde{\mathcal{V}}^{q,\epsilon}}\|f_2\|_{\widetilde{\mathcal{V}}^{q,\epsilon}}\|g_3\|_{H^s}\,\, ,\\
	&(\mathrm{6})\quad  \mathcal{W}_{s,\epsilon}(g_1,f_2,f_3)\lesssim \|g_1\|_{H^s}\|f_2\|_{\widetilde{\mathcal{V}}^{q,\epsilon}}\|f_3\|_{\widetilde{\mathcal{V}}^{q,\epsilon}}\,\, ,\\
	&(\mathrm{7})\quad  \mathcal{W}_{s,\epsilon}(g_1,g_2,g_3)\lesssim \|g_1\|_{H^s}\|g_2\|_{H^s}\|g_3\|_{H^s}\,\, .
	\end{split}
	\end{equation*}
\end{lemme}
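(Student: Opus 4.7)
The plan is to reduce each of (1)--(7) to the corresponding case of Proposition~\ref{multi-linear} by combining time-translation invariance of the $X^{s,b}$ norms, the standard embedding $\|\chi_0(t-l)S_\alpha(t)g\|_{X^{s,\frac{1}{2}+\epsilon}}\lesssim\|g\|_{H^s}$ (uniformly in $l$), and the summation structure of $\widetilde{\mathcal{V}}^{q,\epsilon}$. By translation invariance of $X^{s,b}$ in $t$,
$$
\|\chi_0(t)\mathcal{N}(S_\alpha(t+l)\xi_1,S_\alpha(t+l)\xi_2,S_\alpha(t+l)\xi_3)\|_{X^{s,-\frac{1}{2}+2\epsilon}}
=\|\chi_0(t-l)\mathcal{N}(S_\alpha(t)\xi_1,S_\alpha(t)\xi_2,S_\alpha(t)\xi_3)\|_{X^{s,-\frac{1}{2}+2\epsilon}},
$$
and Proposition~\ref{multi-linear} applies with $\eta_T$ replaced by the shifted cutoff $\chi_0(\cdot-l)$ (so $T$ is a fixed $O(1)$ constant), the implicit constants being independent of $l$ thanks to translation invariance of the proof in Section~4.

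For an $H^s$-input $g_k$ placed in a type-(II) slot, the bound $\|\chi_0(t-l)S_\alpha(t)g_k\|_{X^{s,\frac{1}{2}+\epsilon}}\lesssim\|g_k\|_{H^s}$ holds uniformly in $l\in\Z$. For a $\widetilde{\mathcal{V}}^{q,\epsilon}$-input $f_j$ in a type-(I) slot, the relevant shifted $\mathcal{Z}^{q,\epsilon}$-norm of $S_\alpha(l)f_j$ is
$$
D_{j,l}:=\|\chi_0(t)S_\alpha(t+l)f_j\|_{L_t^qW_x^{\frac{\alpha-1}{2}-\epsilon,\frac{1}{\epsilon}}}+\|f_j\|_{\mathcal{F}L^{\frac{\alpha}{2}-\epsilon,\frac{2}{\epsilon}}},\qquad \sum_{l\in\Z}\langle l\rangle^{-2}D_{j,l}\lesssim\|f_j\|_{\widetilde{\mathcal{V}}^{q,\epsilon}}.
$$
With these two ingredients, case~(7) follows from Corollary~\ref{Trilinear} applied in each time window, uniformly in $l$, and cases~(1)--(3) follow by applying the corresponding estimate of Proposition~\ref{multi-linear} per $l$ and then summing $\sum_l\langle l\rangle^{-2}D_{j_0,l}\lesssim\|f_{j_0}\|_{\widetilde{\mathcal{V}}^{q,\epsilon}}$ for the single type-(I) factor.

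The main obstacle concerns cases~(4)--(6), which carry two type-(I) entries: the direct application of Proposition~\ref{multi-linear} produces a per-$l$ bound of the shape $D_{1,l}D_{3,l}\|g_2\|_{H^s}$ (and analogously in (5)--(6)), so one must control $\sum_l\langle l\rangle^{-2}D_{1,l}D_{3,l}$ in terms of $\|f_1\|_{\widetilde{\mathcal{V}}^{q,\epsilon}}\|f_3\|_{\widetilde{\mathcal{V}}^{q,\epsilon}}$. Splitting $D_{j,l}=E_j+d_{j,l}$ into its shift-invariant Fourier--Lebesgue part $E_j:=\|f_j\|_{\mathcal{F}L^{\frac{\alpha}{2}-\epsilon,\frac{2}{\epsilon}}}$ and its time-local part $d_{j,l}$, three of the four cross-products ($E_1E_3$, $E_1d_{3,l}$, $d_{1,l}E_3$) are immediately summable against $\langle l\rangle^{-2}$, while the genuinely bilinear term $\sum_l\langle l\rangle^{-2}d_{1,l}d_{3,l}$ is the delicate one. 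We address it by revisiting the two-type-(I) analysis of Section~4.1 (Case~D): there, the two type-(I) factors are not used symmetrically: exactly one needs to be controlled in the time-local $L_t^qW_x^{\frac{\alpha-1}{2}-\epsilon,\frac{1}{\epsilon}}$ norm, while the other enters only through the $l$-independent Fourier--Lebesgue norm via bilinear Strichartz or counting arguments. This refined variant of Proposition~\ref{multi-linear}~(4) replaces $D_{1,l}D_{3,l}$ by $d_{j,l}E_{j'}+E_jE_{j'}$, so that after summation in $l$ one recovers $\|f_1\|_{\widetilde{\mathcal{V}}^{q,\epsilon}}\|f_3\|_{\widetilde{\mathcal{V}}^{q,\epsilon}}$. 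Estimates~(5) and~(6) follow identically by symmetry of $\mathcal{N}$.
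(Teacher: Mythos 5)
You correctly reproduce the paper's route for cases (1)--(3) and (7): reduce to Proposition~\ref{multi-linear} (respectively Corollary~\ref{Trilinear}) on each shifted time window, bound the $H^s$ inputs via $\|\chi_0(t-l)S_\alpha(t)g\|_{X^{s,\frac12+\epsilon}}\lesssim\|g\|_{H^s}$ uniformly in $l$, and sum against $\langle l\rangle^{-2}$. You also correctly flag that the paper's one-line proof (``we only prove (1)'') does not address the extra difficulty in the two-type-(I) cases (4)--(6): the per-$l$ bound is a product $D_{1,l}D_{3,l}$, and $\sum_l\langle l\rangle^{-2}D_{1,l}D_{3,l}$ is in general not controlled by $\bigl(\sum_l\langle l\rangle^{-2}D_{1,l}\bigr)\bigl(\sum_l\langle l\rangle^{-2}D_{3,l}\bigr)$ (take $D_{j,l}\sim\langle l\rangle^{3/2}\mathbf 1_{|l|\leq L}$: the left side is of order $L^2$, the right of order $L$). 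So you have isolated a real subtlety.

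Your proposed fix for (4)--(6), however, does not go through. You claim that in the two-type-(I) analysis (Case D of Section~4.1) ``exactly one needs to be controlled in the time-local $L_t^qW_x^{\cdot,\cdot}$ norm, while the other enters only through the $l$-independent Fourier--Lebesgue norm.'' This contradicts the paper's own estimates: in the first half of Case~D one has $\mathcal{A}\lesssim N^s\|v_{(1)}\|_{L_t^qL_x^{\infty}}\|v_{(2)}\|_{L_t^qL_x^{\infty}}\cdots$ and in Case~B one has $\|v_{(2)}\|_{L_t^qL_x^{\infty}}\|v_{(3)}\|_{L_t^qL_x^{\infty}}$, so \emph{both} type-(I) factors are measured in the time-local $L^q_tL^\infty_x$ norm, i.e.\ both contribute a $d_{j,l}$. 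Trying to replace one of them by the shift-invariant Fourier--Lebesgue bound is lossy: on the dyadic piece of frequency $\sim N$, $\mathcal{F}L^{\frac{\alpha}{2}-\epsilon,\infty}$ control only yields $\|\mathbf P_N S_\alpha(t)f\|_{L^\infty_x}\lesssim N^{1-\frac{\alpha}{2}+\epsilon}$, which is worse than $N^{-\sigma}=N^{-\frac{\alpha-1}{2}+O(\epsilon)}$ by roughly a factor $N^{1/2}$, and this ruins the dyadic summation in Proposition~\ref{multi-linear}. So the ``refined variant of Proposition~\ref{multi-linear}~(4)'' you invoke is not available, and the bilinear cross-sum $\sum_l\langle l\rangle^{-2}d_{1,l}d_{3,l}$ remains uncontrolled. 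A clean way to close the gap is not to weaken the trilinear estimate but to strengthen the space: define $\mathcal V^{q,\epsilon}$ and $\widetilde{\mathcal V}^{q,\epsilon}$ with an $\ell^2$-weighted (or $\sup_l$) sum over $l$, so the cross term is handled by Cauchy--Schwarz; the large-deviation bounds of Lemma~\ref{linear-convergence} are uniform in $l$ with Gaussian tails and therefore still give the stronger norm, and the rest of Section~6 is unchanged.
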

\begin{proof}
Since the proof of each inequality is an application of the corresponding inequality in Proposition \ref{multi-linear} and Corollary \ref{Trilinear}, we only prove (1). Take another cutoff $\widetilde{\chi}_0(t)$ such that $\widetilde{\chi}_0(t)=1$ on the support of $\chi_0$. Thus for every $l\in\Z$, from Lemma \ref{inhomo-linear} and (1) of Proposition \ref{multi-linear}, we estimate
\begin{align*}
&\Big\|\chi_0(t)\int_0^tS_{\alpha}(t-t')\mathcal{N}\big(S_{\alpha}(t'+l)f_1,S_{\alpha}(t'+l)g_2,S_{\alpha}(t'+l)g_3 \big)dt'\Big\|_{X^{s,\frac{1}{2}+2\epsilon}} \\=&\Big\|\chi_0(t)\int_0^tS_{\alpha}(t-t')\mathcal{N}\big(\widetilde{\chi}_0^3(t')S_{\alpha}(t'+l)f_1,S_{\alpha}(t'+l)g_2,S_{\alpha}(t'+l)g_3 \big)dt'\Big\|_{X^{s,\frac{1}{2}+2\epsilon}}\\
\lesssim & \|S_{\alpha}(l)f_1\|_{\mathcal{Z}^{q,\epsilon}} \|\widetilde{\chi}_0(t)S_{\alpha}(t+l)g_2\|_{X^{s,\frac{1}{2}+2\epsilon}} 
\|\widetilde{\chi}_0(t)S_{\alpha}(t+l)g_3\|_{X^{s,\frac{1}{2}+2\epsilon}}\\
\lesssim &\|\chi_l(t)S_{\alpha}(t)f_1\|_{L_t^{\infty}\mathcal{F}L^{\frac{\alpha}{2}-\epsilon,\frac{2}{\epsilon}}\cap L_t^qW_x^{\frac{\alpha-1}{2}-\epsilon,\frac{1}{\epsilon}} }\|g_2\|_{H^s}\|g_3\|_{H^s}.
\end{align*}
To complete the proof of Lemma \ref{multi-linearW}, we multiply by $\langle l\rangle^{-2}$ and sum over $l\in\Z$.  
\end{proof}

For $\phi,\psi$, we define the pseudo-distance
\begin{align}\label{pseuso-distance}
\mathbf{d}(\phi,\psi):=\sum_{f_2,f_3\in \{\phi,\psi \} } 2\mathcal{W}_{s,\epsilon}\big(\phi-\psi,f_2,f_3\big)+\sum_{f_1,f_3\in\{\phi,\psi\} } \mathcal{W}_{s,\epsilon}\big(f_1,\phi-\psi,f_3\big).
\end{align}
Note that $\mathbf{d}(\phi,\psi)=\mathbf{d}(\psi,\phi)$.
For $i_1,i_2,i_3\in\{0,1 \}$, we define
\begin{align}\label{pseudo-tail}
\Gamma_{N,s,\epsilon}^{i_1,i_2,i_3}(f_1,f_2,f_3):=\mathcal{W}_{s,\epsilon}\big(\big(\Pi_N^{\perp}\big)^{i_1}f_1,\big(\big(\Pi_N^{\perp}\big)^{i_2}f_2,
\big(\big(\Pi_N^{\perp}\big)^{i_3}f_3  \big).
\end{align} 
We denote by $\Gamma_{N,s,\epsilon}^{i_1,i_2,i_3}(f):=\Gamma_{N,s,\epsilon}^{i_1,i_2,i_3}(f,f,f)$. For any two fixed entries, $\Gamma_{N,s,\epsilon}^{i_1,i_2,i_3}$ satisfies the triangle inequality for the third entry. We will also need the following lemma.
\begin{lemme}\label{abstract}
	Let $V,W$ be two normed spaces. Let $(\phi_k)_{k\in\N}\subset V+W$ be a bounded sequence and $\phi\in V+W$. Assume that $\phi_k\rightarrow \phi$ in $V+W$. Then there exist subsequences $(\varphi_k)_{k\in\N}\subset V$ and $(\psi_k)_{k\in\N}\subset W$, $\varphi\in V, \psi\in W$, satisfying
	\begin{align*}
	&\limsup_{k\rightarrow\infty}\big(\|\varphi_k\|_V +\|\psi_k\|_W \big)\leq \|\phi\|_{V+W}+1,\\
	& \|\varphi\|_V +\|\psi\|_W\leq \|\phi\|_{V+W}+1,
	\end{align*}
	such that $\varphi_k\rightarrow \varphi$ in $V$ and $\psi_k\rightarrow \psi$ in $W$.
\end{lemme}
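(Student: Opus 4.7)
The plan is to exploit the definition of the sum norm twice: once to pick a near-optimal decomposition of the limit $\phi$, and then, for each $k$, a near-optimal decomposition of the small remainder $\phi_k-\phi$. Despite the word ``subsequences'' in the statement, no extraction is really needed: the $\varphi_k$ and $\psi_k$ will be produced directly from $\phi_k$ by linear decomposition.

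First I would fix, by the very definition of $\|\cdot\|_{V+W}$, vectors $\varphi\in V$ and $\psi\in W$ with
\[
\phi=\varphi+\psi,\qquad \|\varphi\|_V+\|\psi\|_W\le \|\phi\|_{V+W}+1,
\]
which immediately gives the second bullet in the statement. Next, for each $k\in\mathbb{N}$, again by definition of the infimum norm, I choose $a_k\in V$ and $b_k\in W$ with
\[
\phi_k-\phi=a_k+b_k,\qquad \|a_k\|_V+\|b_k\|_W\le \|\phi_k-\phi\|_{V+W}+2^{-k}.
\]
Since by hypothesis $\|\phi_k-\phi\|_{V+W}\to 0$, this forces $\|a_k\|_V\to 0$ and $\|b_k\|_W\to 0$.

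Setting $\varphi_k:=\varphi+a_k\in V$ and $\psi_k:=\psi+b_k\in W$, we have by construction $\varphi_k+\psi_k=\phi_k$, as well as $\varphi_k\to \varphi$ in $V$ and $\psi_k\to \psi$ in $W$. The $\limsup$ bound follows from the triangle inequality:
\[
\|\varphi_k\|_V+\|\psi_k\|_W\le \|\varphi\|_V+\|\psi\|_W+\|a_k\|_V+\|b_k\|_W\le \|\phi\|_{V+W}+1+o(1),
\]
so letting $k\to\infty$ yields the first bullet.

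There is no real obstacle here; the only mildly delicate point is to separate the two sources of error (the ``$+1$'' absorbed into the decomposition of $\phi$, and the vanishing ``$+2^{-k}$'' absorbed into the decomposition of $\phi_k-\phi$) so that the bound on $\|\varphi_k\|_V+\|\psi_k\|_W$ does not blow up while simultaneously guaranteeing norm convergence in $V$ and $W$ individually. The argument is purely formal and uses nothing about $V,W$ beyond their being normed spaces continuously embedded into a common ambient space so that $V+W$ is itself a normed space.
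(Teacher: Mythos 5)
Your proof is correct and follows essentially the same route as the paper: decompose $\phi$ once into $\varphi+\psi$ with near-optimal norms, decompose each $\phi_k-\phi$ into pieces tending to zero in $V$ and $W$ respectively, and add. The only difference is that you spell out the choice of the vanishing tolerance ($2^{-k}$) that makes the pieces converge to zero, whereas the paper states this implicitly; you also correctly observe that no genuine subsequence extraction is needed despite the lemma's phrasing.
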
		
\begin{proof}
	By definition, for any $k$, there exist $f_k\in V, g_k\in W$, such that $\phi_k-\phi=f_k+g_k$, $f_k\rightarrow 0$ in $V$ and $g_k\rightarrow 0$ in $W$. There exist $\varphi\in V,\psi\in W$, such that
	$$ \|\varphi\|_V+\|\psi\|_W\leq \|\phi\|_{V+W}+1.
	$$ 
	Let $\varphi_k=\varphi+f_k$ and $\psi_k=\psi+g_k$, then
	$$ \|\varphi_k\|_V+\|\psi_k\|_{W}\leq \|\varphi\|_V+\|\psi\|_W+\|f_k\|_V+\|g_k\|_W\leq \|\phi\|_{V+W}+1+o(1)
	$$
	as $k\rightarrow\infty$. This completes the proof of Lemma \ref{abstract}.
\end{proof}		
The key step to construct the invariant set and the global dynamics is the following enhanced local convergence result. 
\begin{proposition}[Enhanced local convergence]\label{enhanced-localconvergence}
	Assume that $\alpha,q,\epsilon$ be the numerical constants as in Proposition \ref{LWP-main}. Let $(\phi_{k})\subset \mathcal{V}^{q,\epsilon}+H^s$, $\phi\in \mathcal{V}^{q,\epsilon}+H^s$ satisfying
	$$ \|\phi_{k}\|_{\mathcal{V}^{q,\epsilon}+H^s}+\|\phi\|_{\mathcal{V}^{q,\epsilon}+H^s}\leq R,\quad \lim_{k\rightarrow\infty} \|\phi_k-\phi\|_{\mathcal{V}^{q,\epsilon}+H^s}=0.
	$$
	Let $N_k\rightarrow\infty$ be a subsequence of $\N$. For $\mathcal{J}_k=\{\phi_k,\phi \}$, assume that
	\begin{align}\label{ugly1}
	\sum_{\substack{f_{j}\in\mathcal{J}_k\\
			i_1,i_2,i_3\in\{0,1\} } } \Gamma_{N_k,s,\epsilon}^{i_1,i_2,i_3}(f_1,f_2,f_3)\leq R^3.
	\end{align}
	Assume moreover that
	\begin{align}\label{ugly2}
	\lim_{k\rightarrow\infty}\sum_{\substack{f_{j}\in\mathcal{J}_k\\
			i_1,i_2,i_3\in\{0,1\}\\
			i_1+i_2+i_3>0 } } \Gamma_{N_k,s,\epsilon}^{i_1,i_2,i_3}(f_1,f_2,f_3)=0,
	\end{align}	
	and
	\begin{align}\label{ugly3}
	\lim_{k\rightarrow\infty}\mathbf{d}(\phi_k,\phi)=0.
	\end{align}
	Then there exist $c>0, \kappa>0$, such that for all $t\in [-\tau_R,\tau_R]$ with $\tau_R=c(R+2)^{-\kappa}$, we have
	\begin{align}\label{convergence:iterated1}
	\lim_{k\rightarrow\infty}\|\Phi_{N_k}(t)\phi_{k}-\Phi(t)\phi \|_{\mathcal{V}^{q,\epsilon}+H^s}=0.
	\end{align}
	Furthermore, with $\mathcal{J}_{k,t}=\{\Phi_{N_k}(t)\phi_k, \Phi(t)\phi  \}$, we have
		\begin{align}\label{ugly5}
	\lim_{k\rightarrow\infty}\sum_{ \substack{ f_j\in\mathcal{J}_{k,t}\\
			i_1,i_2,i_3\in\{0,1\}\\ i_1+i_2+i_3>0  } } \Gamma_{N_k,s,\epsilon}^{i_1,i_2,i_3}(f_1,f_2,f_3)=0,
	\end{align}
	and
		\begin{align}\label{ugly6'}
	\lim_{k\rightarrow\infty}\mathbf{d}\big(\Phi_{N_k}(t)\phi_k,\Phi(t)\phi\big)=0.
	\end{align}
\end{proposition}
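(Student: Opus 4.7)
The plan is to decompose $\phi_k=\varphi_k+\psi_k$ and $\phi=\varphi+\psi$ with $\varphi_k,\varphi\in\mathcal{V}^{q,\epsilon}$ and $\psi_k,\psi\in H^s(\T)$, and then reduce everything to Proposition \ref{local-convergence}. I would first apply Lemma \ref{abstract} with $V=\mathcal{V}^{q,\epsilon}$, $W=H^s(\T)$, and extract a subsequence (not relabelled) so that $\varphi_k\to\varphi$ in $\mathcal{V}^{q,\epsilon}$, $\psi_k\to\psi$ in $H^s$, with uniform bounds of order $R+1$. The hypotheses of Proposition \ref{local-convergence} with $\phi_{0,k}=\varphi_k$ and $r_{0,k}=\psi_k$ require in particular that $\varphi_k\in\mathcal{B}_{CR}$ and $\mathcal{W}_{s,\epsilon}(\varphi_k-\varphi)\to 0$. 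Both follow from the trilinearity of $\mathcal{W}_{s,\epsilon}$: by expanding each $\varphi$-slot as $\phi-\psi$ and invoking Lemma \ref{multi-linearW} (together with the embedding $\mathcal{V}^{q,\epsilon}\hookrightarrow\widetilde{\mathcal{V}}^{q,\epsilon}$), mixed terms containing a $\psi$ reduce to a product involving $\|\psi\|_{H^s}$, while the pure $\phi$-terms are controlled by \eqref{ugly1} and the pseudo-distance \eqref{ugly3}.

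With the framework set up, I would apply Proposition \ref{local-convergence} on $[-\tau_R,\tau_R]$ with $\tau_R=c(R+2)^{-\kappa}$, shrinking $\tau_R<1/2$ so that the quasi-invariance Lemma \ref{quasi-invarianceW} applies. This produces $w_k\to w$ in $X_{\tau_R}^{s,\frac{1}{2}+2\epsilon}\hookrightarrow C_tH^s$ together with the explicit formulas for $\Phi_{N_k}(t)\phi_k$ and $\Phi(t)\phi$, which can be rearranged as
\begin{align*}
\Phi_{N_k}(t)\phi_k &= e^{i\theta_k(t)}S_\alpha(t)(\varphi_k+\psi_k)+e^{i\theta_k(t)}w_k(t) \\
&\quad +(1-e^{i\theta_k(t)})\Pi_{N_k}^\perp S_\alpha(t)\varphi_k-e^{i\theta_k(t)}\Pi_{N_k}^\perp S_\alpha(t)\psi_k, \\
\Phi(t)\phi &= e^{i\theta(t)}\bigl(S_\alpha(t)(\varphi+\psi)+w(t)\bigr),
\end{align*}
with $\theta_k(t)\to\theta(t)$ since the $L^2$-norms in the phases converge. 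The two tail terms on the first line vanish in $\mathcal{V}^{q,\epsilon}+H^s$: the $\varphi_k$-tail because the defining norms of $\mathcal{V}^{q,\epsilon}$ are Fourier-$\ell^p$ with $p<\infty$, so $\Pi_{N_k}^\perp g\to 0$ in $\mathcal{V}^{q,\epsilon}$ for every fixed $g$ (combined with Lemma \ref{Dirichletkernel} for the converging $\varphi_k$); the $\psi_k$-tail because $\Pi_{N_k}^\perp\to 0$ strongly on $H^s$. The remaining difference is then controlled by $\|\varphi_k-\varphi\|_{\mathcal{V}^{q,\epsilon}}+\|\psi_k-\psi\|_{H^s}+\|w_k-w\|_{H^s}+|\theta_k(t)-\theta(t)|\to 0$, proving \eqref{convergence:iterated1}.

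For the persistence of the tail condition \eqref{ugly5} and of the pseudo-distance \eqref{ugly6'}, I would write the above schematically as $\Phi_{N_k}(t)\phi_k=e^{i\theta_k(t)}S_\alpha(t)\varphi_k+H_k(t)+r_k(t)$ with $H_k(t):=e^{i\theta_k(t)}(S_\alpha(t)\psi_k+w_k(t))$ bounded and convergent in $C_tH^s$, and $r_k\to 0$ in $\mathcal{V}^{q,\epsilon}+H^s$; analogously for $\Phi(t)\phi$. Inserting this into each $\Gamma_{N_k,s,\epsilon}^{i_1,i_2,i_3}$ with $i_1+i_2+i_3>0$ and expanding trilinearly, Lemma \ref{multi-linearW} bounds each summand by a product of three factors, at least one of which carries a $\Pi_{N_k}^\perp$. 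Slots where the tail lands on $e^{i\theta_k(t)}S_\alpha(t)\varphi_k$ are treated by quasi-invariance Lemma \ref{quasi-invarianceW} (which strips off $S_\alpha(t)$) followed by the expansion $\varphi_k=\phi_k-\psi_k$, reducing the $\phi_k$-part to \eqref{ugly2} and the $\psi_k$-part to $H^s$-tail decay. Slots where the tail lands on $H_k$ vanish by $\|\Pi_{N_k}^\perp H_k(t)\|_{H^s}\to 0$. The pseudo-distance \eqref{ugly6'} follows the same pattern, placing $\Phi_{N_k}(t)\phi_k-\Phi(t)\phi$ in one slot and controlling it through \eqref{convergence:iterated1} after splitting into a small $\mathcal{V}^{q,\epsilon}$-part and a small $H^s$-part. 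The main obstacle throughout is that $\Pi_{N_k}^\perp$ is only bounded, not small, on $\mathcal{V}^{q,\epsilon}$: smallness cannot be extracted from a single $\mathcal{V}^{q,\epsilon}$-factor, so the sum-space decomposition $\varphi_k+\psi_k$ must be maintained through all eight index combinations $(i_1,i_2,i_3)\in\{0,1\}^3$ so that every $\Pi_{N_k}^\perp$ is routed onto a slot where its smallness is genuinely available, either via \eqref{ugly2} (for the rough $\mathcal{V}^{q,\epsilon}$-component, through quasi-invariance) or via Sobolev tail decay (for the fixed $H^s$-component).
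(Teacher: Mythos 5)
Your proof follows the paper's route essentially step for step: decompose $\phi_k$ and $\phi$ into $\mathcal{V}^{q,\epsilon}$- and $H^s$-components via Lemma~\ref{abstract}, transfer the hypotheses to the $\mathcal{V}^{q,\epsilon}$-parts through the trilinearity of $\mathcal{W}_{s,\epsilon}$, Lemma~\ref{multi-linearW} and the embedding $\mathcal{V}^{q,\epsilon}\hookrightarrow\widetilde{\mathcal{V}}^{q,\epsilon}$, apply Proposition~\ref{local-convergence}, and then propagate \eqref{ugly5}--\eqref{ugly6'} by routing each $\Pi_{N_k}^\perp$ onto a slot where smallness comes either from quasi-invariance plus \eqref{ugly2} (rough component) or from $H^s$-tail decay (smooth component). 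Two small inaccuracies worth fixing but not affecting the argument: Lemma~\ref{abstract} already yields the decomposition along the full sequence $(\phi_k)$ (its proof sets $\varphi_k=\varphi+f_k$, $\psi_k=\psi+g_k$ for every $k$), so no subsequence extraction is needed nor permissible, since \eqref{convergence:iterated1}--\eqref{ugly6'} must hold along all of $(N_k)$; and the coefficient on $\Pi_{N_k}^\perp S_\alpha(t)\psi_k$ in your expansion should read $\bigl(1-e^{i\theta_k(t)}\bigr)$ rather than $-e^{i\theta_k(t)}$, although both tend to zero in $H^s$.
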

\begin{remarque}
	As a consequence of \eqref{ugly1} and \eqref{ugly3}, we have $\mathcal{W}_{s,\epsilon}(\phi_k-\phi)\rightarrow 0$. This convergence relation is enough to prove that $\Phi_{N_k}(t)\phi_k-\Phi(t)\phi\rightarrow 0$ in $\mathcal{V}^{q,\epsilon}+H^s$. The closeness of the conditions \eqref{ugly2},\eqref{ugly3} are important for the iteration.
\end{remarque}
\begin{proof}
	Thanks to Lemma \ref{abstract}, there exist sequences $(\phi_{0,k})_{k\in\N}\subset \mathcal{V}^{q,\epsilon}, (r_{0,k})_{k\in\N}\subset H^s(\T)$, and $\phi_0\in \mathcal{V}^{q,\epsilon}, r_0\in H^s$, such that
\begin{align}\label{decomposition}
 \phi_k=\phi_{0,k}+r_{0,k}, \quad \phi=\phi_0+r_0,
\end{align}
	satisfying
	$$ \|\phi_{0,k}\|_{\mathcal{V}^{q,\epsilon}}+\|r_{0,k}\|_{H^s}\leq R+2, \quad \|\phi_0\|_{\mathcal{V}^{q,\epsilon}}+\|r_0\|_{H^s}\leq R+2
	$$
	and
	$$ \lim_{k\rightarrow\infty}\big(\|\phi_{0,k}-\phi_0\|_{\mathcal{V}^{q,\epsilon}}+\|r_{0,k}-r_0\|_{H^s} \big)=0.
	$$
	Moreover, we have
\begin{align}\label{tail-r} \lim_{k\rightarrow\infty}\|\Pi_{N_k}^{\perp}r_0\|_{H^s}=0,\text{ and } \lim_{k\rightarrow\infty}\|\Pi_{N_k}^{\perp}r_{0,k}\|_{H^s}=0
\end{align}
	by writing $\|\Pi_{N_k}r_{0,k}\|_{H^s}\leq \|\Pi_{N_k}r_0\|_{H^s}+\|\Pi_{N_k}^{\perp}(r_0-r_{0,k})\|_{H^s}$.
	Developing the trilinear expression of $\mathcal{W}_{s,\epsilon}(\phi_{0,k}-\phi_0 )=\mathcal{W}_{s,\epsilon}\big( (\phi_k-\phi_0)-(r_{0,k}-r_0) \big)$, from the hypothesis and Lemma \ref{multi-linearW}, we deduce that
	$$ \lim_{k\rightarrow\infty}\mathcal{W}_{s,\epsilon}(\phi_{0,k}-\phi_0)=0.
	$$
	All the hypothesis of Proposition \ref{local-convergence} are satisfied. We thus deduce that for all $t\in [-\tau_R,\tau_R]$, with $\tau_R=c(R+2)^{-\kappa}$,
	\begin{align*}
 \Phi_{N_k}(t)\phi_k=&\underbrace{e^{\frac{it}{\pi}\|\Pi_{N_k}\phi_k\|_{L^2}^2 }\Pi_{N_k}S_{\alpha}(t)\phi_{0,k}+\Pi_{N_k}^{\perp}S_{\alpha}(t)\phi_{0,k}}_{\mathcal{V}^{q,\epsilon} \text{ part }}\\ + &\underbrace{ e^{\frac{it}{\pi}\|\Pi_{N_k}\phi_k\|_{L^2}^2 }\big(\Pi_{N_k}S_{\alpha}(t)r_{0,k}+w_k(t)) \big)+\Pi_{N_k}^{\perp}S_{\alpha}(t)r_{0,k}
	}_{H^s\text{ part } },\\
	\Phi(t)\phi=&\underbrace{e^{\frac{it}{\pi}\|\phi\|_{L^2}^2 }S_{\alpha}(t)\phi_{0}}_{\mathcal{V}^{q,\epsilon} \text{ part }} + \underbrace{ e^{\frac{it}{\pi}\|\phi\|_{L^2}^2 }\big(S_{\alpha}(t)r_{0}+w(t)) \big)
	}_{H^s\text{ part } },
	\end{align*}
	where $w_k(t)\in E_{N_k}$.
	Moreover,
	\begin{align}\label{converge:wk} \lim_{k\rightarrow\infty}\sup_{|t|\leq \tau_R}\|w_k(t)-w(t)\|_{H^s}=0.
	\end{align}
	Denote by $b_k(t)=e^{\frac{it}{\pi}\|\Pi_{N_k}\phi_k\|_{L^2}^2 } $, $b(t)=e^{\frac{it}{\pi}\|\phi\|_{L^2}^2 }$. Clearly, since $\phi_k\rightarrow \phi$ in $L^2(\T)$, $b_k(t)\rightarrow b(t)$ for all $t\in \R$. Taking the difference of $\Phi_{N_k}(t)\phi_k$ and $\Phi(t)\phi$, we have 
	$$ \Phi_{N_k}(t)\phi_k-\Phi(t)\phi=\varphi_k(t)+\psi_k(t),
	$$
	where
	\begin{align*}
	\varphi_k(t)=&\big(b_k(t)-b(t) \big)\Pi_{N_k}S_{\alpha}(t)\phi_{0,k}+\big(1-b(t)\big)\Pi_{N_k}^{\perp}S_{\alpha}(t)\phi_0+\Pi_{N_k}^{\perp}S_{\alpha}(t)\big(\phi_{0,k}-\phi_0\big)\\
	+&b(t)\Pi_{N_k}S_{\alpha}(t)(\phi_{0,k}-\phi_0),\\
	\psi_k(t)=&b_k(t)\big(\Pi_{N_k}S_{\alpha}(t)(r_{0,k}-r_0)+w_k(t)-w(t) \big)+(b_k(t)-b(t))\big(\Pi_{N_k}S_{\alpha}(t)r_0+w(t)\big)\\
	+&(1-b(t))\Pi_{N_k}^{\perp}S_{\alpha}(t)r_0+\Pi_{N_k}^{\perp}S_{\alpha}(t)(r_{0,k}-r_0).	
	\end{align*}
	From \eqref{converge:wk}, we have for all $|t|\leq \tau_R$, 
	$\psi_k(t)\rightarrow 0$ in $H^s$. To show that $\Phi_{N_k}(t)\phi_k$ converges to $\Phi(t)\phi$ in $\mathcal{V}^{q,\epsilon}+H^s$, it will be sufficient to prove that $\varphi_k(t)\rightarrow 0$ in $\mathcal{V}^{q,\epsilon}$ for all $|t|\leq \tau_R$. We note that $\Pi_{N_k}, \Pi_{N_k}^{\perp}$ are uniformly bounded on $\mathcal{V}^{q,\epsilon}$, since they can be represented by Hilbert transformation, up to modulation. Thus from Lemma \ref{quasi-invarianceW} we have 
	$$ \lim_{k\rightarrow\infty}\|\big(b_k(t)-b(t)\big)\Pi_{N_k}S_{\alpha}(t)\phi_{0,k} \|_{\mathcal{V}^{q,\epsilon}}=0.
	$$ 
	Next we prove that $\Pi_{N_k}^{\perp}S_{\alpha}(t)\phi_0$ converges to $0$ in $\mathcal{V}^{q,\epsilon}$. The Fourier-Lebesgue norm $\mathcal{F}L^{\frac{\alpha}{2}-\epsilon,\frac{2}{\epsilon} }$ of $\Pi_{N_k}^{\perp}S_{\alpha}(t)\phi_0$ converges to $0$ can be deduced easily from the fact that $S(t)\phi_0\in\mathcal{F}L^{\frac{\alpha}{2}-\epsilon,\frac{2}{\epsilon}}$. For the Sobolev norm $L_t^qW_x^{\frac{\alpha-1}{2}-\epsilon,\frac{1}{\epsilon}}$, we first observe that for almost every $t'\in\R$, $\Pi_{N_k}^{\perp}S_{\alpha}(t')S_{\alpha}(t)\phi_0\rightarrow 0$ in $W_x^{\frac{\alpha-1}{2}-\epsilon,\frac{1}{\epsilon}}$. Indeed, the uniform boundeness of $\Pi_{N_k}^{\perp}$ on $W_x^{\frac{\alpha-1}{2}-\epsilon,\frac{1}{\epsilon}}$ allows us to first prove the convergence for smooth functions and then a density argument. By Lebesgue's dominating convergence theorem, we have $\Pi_{N_k}^{\perp}\chi_l(t')S_{\alpha}(t')S_{\alpha}(t)\phi_0\rightarrow 0$ in $L_t^qW_{x}^{\frac{\alpha-1}{2}-\epsilon,\frac{1}{\epsilon}}$, for all $l\in\Z$. Consequently, $\Pi_{N_k}^{\perp}S_{\alpha}(t)\phi_0\rightarrow 0$ in $\mathcal{V}^{q,\epsilon}$. The convergence of the term $\Pi_{N_k}^{\perp}S_{\alpha}(t)(\phi_{0,k}-\phi_0)$ follows from the convergence of $\phi_{0,k}$ to $\phi_0$ in $\mathcal{V}^{q,\epsilon}$. Since the definition of $\mathcal{V}^{q,\epsilon}$ norm allows us to obtain a comparable norm after shifting $|t|\leq 1$. This proves \eqref{convergence:iterated1}.

Next we verify \eqref{ugly5} and \eqref{ugly6'}. We first claim that after changing the constant $R$ to $R+(2R)^3$ and $\mathcal{J}_k$ to $\{\phi_{0,k},\phi_0 \}$, \eqref{ugly1},\eqref{ugly2},\eqref{ugly3} still hold. Indeed, for each $f_j\in\{\phi_{0,k},\phi_0 \}$, by decomposition \eqref{decomposition}, there is a $\widetilde{f}_j\in\{\phi_{k},\phi \}$, such that $g_j=\widetilde{f}_j-f_j\in \{r_{0,k},r_0 \}$. Therefore, we can write
$$ \Gamma_{N_k,s,\epsilon}^{i_1,i_2,i_3}(f_1,f_2,f_3)\leq \Gamma_{N_k,s,\epsilon}^{i_1,i_2,i_3}(\widetilde{f}_1,f_2,f_3)+\mathcal{W}_{s,\epsilon}\big((\Pi_{N_k}^{\perp})^{i_1} g_1,(\Pi_{N_k}^{\perp})^{i_2} f_2, (\Pi_{N_k}^{\perp})^{i_3}f_3 \big),
$$ 
and the second term can be bounded by $R^3$ from Proposition \ref{multi-linearW}. We successively replace $f_2$ by $\widetilde{f}_2$ and a term bounded by $R^3$. Thus we obtain the analogue for \eqref{ugly1} for $\phi_{0,k},\phi_0$ with the upper bound $R+2^3R^3$. Now if one of $i_1,i_2,i_3$ is non zero, say $i_1=1$, we have
 $$\Gamma_{N_k,s,\epsilon}^{1,i_2,i_3}(f_1,f_2,f_3)\leq \Gamma_{N_k,s,\epsilon}^{1,i_2,i_3}(\widetilde{f}_1,f_2,f_3)+\mathcal{W}_{s,\epsilon}\big(\Pi_{N_k}^{\perp}  g_1,(\Pi_{N_k}^{\perp})^{i_2} f_2, (\Pi_{N_k}^{\perp})^{i_3}f_3 \big).
 $$
From Proposition \ref{multi-linearW} and \eqref{tail-r}, the second term of r.h.s can be bounded by $CR^2\|\Pi_{N_k}^{\perp}g_1\|_{H^s}\cdot $, and it converges to $0$. Next, we write
\begin{align*}
\Gamma_{N_k,s,\epsilon}^{1,i_2,i_3}(\widetilde{f}_1,f_2,f_3)\leq&\Gamma_{N_k,s,\epsilon}^{1,i_2,i_3}(\widetilde{f}_1,\widetilde{f}_2, f_3 )+ \mathcal{W}_{s,\epsilon}\big(\Pi_{N_k}^{\perp}\widetilde{f}_1, (\Pi_{N_k}^{\perp})^{i_2}g_2, (\Pi_{N_k}^{\perp})^{i_3} f_3 \big)\\
\leq & \Gamma_{N_k,s,\epsilon}^{1,i_2,i_3}(\widetilde{f}_1,\widetilde{f}_2,\widetilde{f}_3)+\mathcal{W}_{s,\epsilon}\big(\Pi_{N_k}^{\perp}\widetilde{f}_1, (\Pi_{N_k}^{\perp})^{i_2}g_2, (\Pi_{N_k}^{\perp})^{i_3} f_3 \big)\\
+& \mathcal{W}_{s,\epsilon}\big(\Pi_{N_k}^{\perp}\widetilde{f}_1, (\Pi_{N_k}^{\perp})^{i_2}\widetilde{f}_2, (\Pi_{N_k}^{\perp})^{i_3} g_3 \big).
\end{align*} 
Thus from Proposition \ref{multi-linearW} and the assumption \eqref{ugly2}, 
$$ \Gamma_{N_k,s,\epsilon}^{1,i_2,i_3}(f_1,f_2,f_3)\leq o(1)+CR^2\|\Pi_{N_k}^{\perp}\widetilde{f}_1\|_{\widetilde{\mathcal{V}}^{q,\epsilon}}.
$$
Since by definition, $\|\Pi_{N}^{\perp}f\|_{\widetilde{\mathcal{V}}^{q,\epsilon} }\leq CN^{-\epsilon/2}\|\Pi_N^{\perp}f\|_{\mathcal{V}^{q,\epsilon}}$, we have $\Gamma_{N_k,s,\epsilon}^{1,i_2,i_3}(f_1,f_2,f_3)=o(1)$, as $k\rightarrow\infty$. For the convergence of $\mathbf{d}(\phi_{0,k},\phi_0)$, by decomposition \eqref{decomposition} and using the triangle inequality, we have
\begin{align*}
 \mathbf{d}(\phi_{0,k},\phi_0)\leq &\sum_{f_2,f_3\in \{\phi_{0,k},\phi_0 \} }\mathcal{W}_{s,\epsilon}(\phi_k-\phi, f_2,f_3)+\mathcal{W}_{s,\epsilon}(r_{0,k}-r_0, f_2,f_3)\\
 +&\sum_{f_1,f_3\in \{\phi_{0,k},\phi_0 \} }\mathcal{W}_{s,\epsilon}(f_1,\phi_k-\phi,f_3)+\mathcal{W}_{s,\epsilon}(f_1,r_{0,k}-r_0,f_3).
\end{align*}
From Proposition \ref{multi-linearW}, the terms containing the entries $r_{0,k}-r_0$ converge to $0$, and the rests containing only $\phi_k-\phi,\phi_k,\phi$ as entries, which can be bounded by $\mathbf{d}(\phi_k,\phi)$.
Thus $\mathbf{d}(\phi_{0,k},\phi_0)\rightarrow 0$ as $k\rightarrow\infty$.
\\

Next we verify \eqref{ugly5}. Note that
$$ \Pi_{N_k}^{\perp}\Phi_{N_k}(t)\phi_k=\Pi_{N_k}^{\perp}S_{\alpha}(t)\phi_{0,k}+\Pi_{N_k}^{\perp}S_{\alpha}(t)r_{0,k}
$$ 	
and
$$ \Pi_{N_k}^{\perp}\Phi(t)\phi=b(t)\Pi_{N_k}^{\perp}S_{\alpha}(t)\phi+b(t)\Pi_{N_k}^{\perp}w(t).
$$
 For any $f_1,f_2,f_3\in \{\Phi_{N_k}(t)\phi_k, \Phi(t)\phi   \}$, by the triangle inequality, $\Gamma_{N_k,s,\epsilon}^{1,i_2,i_3}\big(f_1,f_2,f_3 \big)$ can be bounded by linear combinations of 
 $$ \mathcal{W}_{s,\epsilon}\big(\Pi_{N_k}^{\perp}S_{\alpha}(t)\widetilde{f}_1, f_2,f_3 \big),\quad  \widetilde{f}_1\in\{\phi_{0,k},b(t)\phi_0 \} $$ and 
 $$ \mathcal{W}_{s,\epsilon}\big(\Pi_{N_k}^{\perp}S_{\alpha}(t)g_1,f_2,f_3 \big), \quad g_1\in\{ r_{0,k}, b(t)w(t) \}.
 $$
Since $\|\cdot\|_{\mathcal{V}^{q,\epsilon}}$ and $\mathcal{W}_{s,\epsilon}(\cdot,\cdot,\cdot )$ is quasi-invariant under an $S_{\alpha}(t)$ action for $|t|\leq 1$, we obtain \eqref{ugly5} after using the triangle inequalities,  Proposition \ref{multi-linearW} and the previous claim. Finally, to verify \eqref{ugly6'}, we observe that $\mathbf{d}\big(\Phi_{N_k}(t)\phi_k,\Phi(t)\phi \big)$ can be expressed as linear combinations of the forms
$$ \mathcal{W}_{s,\epsilon}\big(\varphi_k(t)+\psi_k(t),f_2,f_3 \big),\quad \mathcal{W}_{s,\epsilon}\big(f_1,\varphi_k(t)+\psi_k(t),f_3),\quad f_1,f_2,f_3\in \{\Phi_{N_k}(t)\phi_k, \Phi(t)\phi  \}
$$
which contains the terms of the following forms:
$$ \mathcal{W}_{s,\epsilon}\big(\psi_k(t),\cdot,\cdot \big), \mathcal{W}_{s,\epsilon}(\cdot, \psi_k(t),\cdot);\mathcal{W}_{s,\epsilon}(\varphi_k(t),\cdot,\cdot),\mathcal{W}_{s,\epsilon}(\cdot,\varphi_k(t),\cdot).
$$
From Proposition \ref{multi-linearW}, the first two type of terms containing $\psi_k(t)$ converge to $0$. For the other two terms, if there is one place $\cdot$  filled by some functions in $H^s$, it converges to $0$, by Proposition \ref{multi-linearW} and the fact that $\varphi_k(t)\rightarrow 0$ in $\mathcal{V}^{q,\epsilon}$. The last possibility to treat is the term $\mathcal{W}_{s,\epsilon}(\varphi_k(t),\varphi_k(t),\varphi_k(t) )$. By the triangle inequality, it can be bounded by the terms of the form
$$ \mathcal{W}_{s,\epsilon}(\varphi_{1,k}(t),\varphi_{2,k}(t),\varphi_{3,k}(t) ),
$$ 
where $\varphi_{j,k}(t)$ is one of the functions:
\begin{align*}
 \big(b_k(t)-b(t) \big)\Pi_{N_k}S_{\alpha}(t)\phi_{0,k},\quad \big(1-b(t)\big)\Pi_{N_k}^{\perp}S_{\alpha}(t)\phi_0\\
\Pi_{N_k}^{\perp}S_{\alpha}(t)\big(\phi_{0,k}-\phi_0\big),\quad b(t)\Pi_{N_k}S_{\alpha}(t)(\phi_{0,k}-\phi_0).
\end{align*}
From the quasi-invariance of the quantity $\mathcal{W}_{s,\epsilon}(\cdot,\cdot,\cdot)$ under the $S_{\alpha}(t)$ action and hypothesis \eqref{ugly2}, \eqref{ugly3}, we deduce that $\mathcal{W}_{s,\epsilon}(\varphi_{1,k}(t),\varphi_{2,k}(t),\varphi_{3,k}(t) )$ converges to $0$, hence \eqref{ugly6'} is verified. The proof of Proposition~\ref{enhanced-localconvergence} is now complete.
\end{proof}
\subsection{Construction of the global flow}
		\begin{proposition}\label{longtime-1}
			Assume that $s\in\big[\frac{1}{2}-\frac{\alpha}{4}-\alpha-1 \big)$ and $\sigma\leq\frac{\alpha-1}{2}-\epsilon$. There exist constants $C>0, D>0$, $\delta>0$ such that for all $m\in\N, N\geq 1$, there exists a $\rho_N$ measurable set $\widetilde{\Sigma}_N^m\subset H^{\sigma}(\T)$, such that
			$$ \rho_N(H^{\sigma}\setminus \widetilde{\Sigma}_N^m)\leq 2^{-m+1}.
			$$
			For all $\phi\in\widetilde{\Sigma}_N^m$, $t\in\R$,
			$$ \|\Phi_N(t)\phi\|_{\mathcal{V}^{q,\epsilon}+H^s}+N^{\delta}\|\Pi_N^{\perp}\Phi_N(t)\phi\|_{\mathcal{V}^{q,\epsilon}+H^s}\leq Cm^{3/2}\left(1+\log(1+|t|)\right)^{3/2},
			$$
			and for all $i_1,i_2,i_3\in\{0,1 \}$,
			\begin{align*}
		\Gamma_{N,s,\epsilon}^{i_1,i_2,i_3}\big(\Phi_N(t)\phi \big) \leq CN^{-\delta(i_1+i_2+i_3)}m^{3/2}\left(1+\log(1+|t|)\right)^{3/2}.
			\end{align*}
			In particular,
			$$ \|\Phi_N(t)\phi\|_{H^{\sigma}(\T)}\leq Cm^{3/2}\left(1+\log(1+|t|)\right)^{3/2}.
			$$
			Moreover, for all $t_0\in\R$, $m\in\N, N\geq 1$,
			\begin{equation}\label{including}
			\Phi_N(t_0)(\widetilde{\Sigma}_N^m)\subset\widetilde{\Sigma}_N^{Dm(1+\log_2(1+|t_0|))}.
			\end{equation}
		\end{proposition}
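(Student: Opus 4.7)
I would follow Bourgain's globalization scheme, adapted to the summed-space setting developed earlier in this section. The plan is to build $\widetilde{\Sigma}_N^m$ as an intersection of preimages under $\Phi_N$ at a discrete set of times of one building block set $B_N(\lambda)$ whose complement has small $\rho_N$-measure, with $\lambda$ growing in a controlled way with the time horizon.

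\textbf{Step 1 (Building block).} For $\lambda \geq 1$, define $B_N(\lambda) \subset H^{\sigma}(\T)$ as the set of $\phi$ satisfying
\[
\|\phi\|_{\mathcal{V}^{q,\epsilon}+H^s}+N^{\delta}\|\Pi_N^{\perp}\phi\|_{\mathcal{V}^{q,\epsilon}+H^s}\leq \lambda
\]
together with $\Gamma_{N,s,\epsilon}^{i_1,i_2,i_3}(\phi)\leq N^{-\delta(i_1+i_2+i_3)}\lambda$ for all $(i_1,i_2,i_3)\in\{0,1\}^3$. Writing $\phi$ under $\mu$ as the random series and using Lemma~\ref{linear-convergence} (applied once with $M=0$ for the $\mathcal{V}^{q,\epsilon}$ bound and once with $M=N$ to extract the $N^{\delta}$ tail gain) together with Corollary~\ref{convergence:W} (for all triples of projection patterns), a union bound gives $\mu(H^{\sigma}\setminus B_N(\lambda))\leq C e^{-c\lambda^{2/3}}$. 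Since $\alpha>\tfrac{7}{8}$, Proposition~\ref{convergence-Gibbs} ensures $d\rho_N/d\mu\in L^{p}(d\mu)$ uniformly in $N$, so by Hölder one obtains $\rho_N(H^{\sigma}\setminus B_N(\lambda))\leq C' e^{-c'\lambda^{2/3}}$.

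\textbf{Step 2 (Transport on short intervals).} Setting $\tau_{\lambda}=c\lambda^{-\kappa}$, I would use Proposition~\ref{LWP-main} and Proposition~\ref{enhanced-localconvergence} (more precisely its finite-$N$ version for a single trajectory) to show that if $\phi\in B_N(\lambda)$, then for all $|t|\leq \tau_{\lambda}$,
\[
\|\Phi_N(t)\phi\|_{\mathcal{V}^{q,\epsilon}+H^s}+N^{\delta}\|\Pi_N^{\perp}\Phi_N(t)\phi\|_{\mathcal{V}^{q,\epsilon}+H^s}\leq 2\lambda,
\]
and $\Gamma_{N,s,\epsilon}^{i_1,i_2,i_3}(\Phi_N(t)\phi)\leq 2 N^{-\delta(i_1+i_2+i_3)}\lambda$. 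The decisive structural input here is that $\Pi_N^{\perp}v_N$ obeys the free linear equation (since the nonlinearity of \eqref{WiFNLS-N} is supported in $E_N$), so $\Pi_N^{\perp}\Phi_N(t)\phi$ equals $\Pi_N^{\perp}S_{\alpha}(t)\phi$ up to a global phase, allowing the $N^{\delta}$ tail gain to be propagated for free by the quasi-invariance from Lemma~\ref{quasi-invarianceW}.

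\textbf{Step 3 (Assembly and measure estimate).} Choose $\lambda_{m,n}=C_0(m+n)^{3/2}$ so that $e^{-c\lambda_{m,n}^{2/3}}\leq 2^{-2m-2n}\lambda_{m,n}^{-\kappa}$, and set
\[
\widetilde{\Sigma}_N^m:=\bigcap_{n\geq 0}\,\bigcap_{|j|\tau_{\lambda_{m,n}}\leq 2^n}\Phi_N(-j\tau_{\lambda_{m,n}})^{-1}\big(B_N(\lambda_{m,n})\big).
\]
The number of grid points at level $n$ is $\lesssim 2^n\lambda_{m,n}^{\kappa}$, and by invariance of $\rho_N$ under $\Phi_N$ each preimage has complement of measure at most $Ce^{-c\lambda_{m,n}^{2/3}}$. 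Summing,
\[
\rho_N(H^{\sigma}\setminus \widetilde{\Sigma}_N^m)\lesssim \sum_{n\geq 0} 2^n\lambda_{m,n}^{\kappa}e^{-c\lambda_{m,n}^{2/3}}\lesssim \sum_{n\geq 0} 2^{-m-n}\leq 2^{-m+1}.
\]
For $\phi\in\widetilde{\Sigma}_N^m$ and $|t|\leq 2^n$, write $t=j\tau_{\lambda_{m,n}}+t'$ with $|t'|\leq \tau_{\lambda_{m,n}}$; by definition $\Phi_N(j\tau_{\lambda_{m,n}})\phi\in B_N(\lambda_{m,n})$, so Step 2 yields all the claimed bounds with $m^{3/2}(1+\log(1+|t|))^{3/2}$ on the right. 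The inclusion \eqref{including} follows directly from the definition: applying $\Phi_N(t_0)$ shifts the entire grid, and a given time $t$ at level $n$ then falls at level $\leq n+\log_2(1+|t_0|)+O(1)$ in the original grid, which forces replacing $m$ by an expression of the form $Dm(1+\log_2(1+|t_0|))$.

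\textbf{Main obstacle.} The core difficulty is matching the probabilistic estimates of Section 5, which are stated for the random series representing the linear evolution, with the tail and trilinear quantities phrased in the summed space $\mathcal{V}^{q,\epsilon}+H^s$. In particular, the $N^{\delta(i_1+i_2+i_3)}$ gain in $\Gamma_{N,s,\epsilon}^{i_1,i_2,i_3}$ must be read off Corollary~\ref{convergence:W} by carefully choosing $M=N$ and summing over all eight index triples, while simultaneously the decomposition $\phi=\phi_0+r_0$ with $\phi_0\in\mathcal{V}^{q,\epsilon}$, $r_0\in H^s$ must be propagated by the flow using the enhanced local convergence of Proposition~\ref{enhanced-localconvergence}; this is what keeps the tail bounds closed under $\Phi_N(t)$ and is what ultimately dictates the $(1+\log(1+|t|))^{3/2}$ rather than $\log(1+|t|)$ growth rate.
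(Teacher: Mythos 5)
Your proposal follows the same Bourgain globalization template as the paper — build a ball of controlled norms, show its complement is small, intersect its preimages over a dyadic grid of times, and use invariance of $\rho_N$ to sum the measures of the bad sets — and the measure estimate in Step~3 does go through with your parametrization. However, the choice $\lambda_{m,n}=C_0(m+n)^{3/2}$ (additive in $m$ and the level $n$) introduces a genuine gap in the proof of the inclusion \eqref{including}, and this gap is not a bookkeeping issue: it kills the argument.

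The point is that each short-time step of length $\tau_\lambda$ inflates the defining bounds of $B_N(\lambda)$ by a fixed constant $C_2>1$ (your Step~2 says ``$2$'', the paper's \eqref{shorttime} gives some $C_2$), and the time-shift by $t_0$ raises the level $n$ to roughly $n'=\max(n,\log_2|t_0|)+O(1)$. To absorb these effects into a larger parameter $m_0$ uniformly over all levels $n$, you need an inequality of the shape
\[
C_2\,\lambda_{m,n'}\leq\lambda_{m_0,n}\quad\text{for all }n\geq 0.
\]
With the additive parametrization this reads $C_2^{2/3}(m+n')\leq m_0+n$. Taking $n\to\infty$ (so $n'=n+O(1)$), this forces $(C_2^{2/3}-1)n\leq m_0-O(m,\log_2|t_0|)$, which is impossible for any fixed $m_0$ once $C_2>1$. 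So $\Phi_N(t_0)(\widetilde{\Sigma}_N^m)$ is \emph{not} contained in any single $\widetilde{\Sigma}_N^{m_0}$ for your sets. This is precisely why the paper uses the multiplicative parametrization $\lambda_{m,k}=D(mk)^{3/2}$ (and cubic $D^3(mk)^{9/2}$ for $\Gamma$): then $C_2(mk)^{3/2}\leq(m'k)^{3/2}$ holds for $m'=\lceil C_2^{2/3}\rceil m$ \emph{uniformly in $k$}, and going from level $k$ to level $2k$ costs only a constant factor that again is absorbed by scaling $m$. This is encoded in the paper's relations \eqref{inclu}, $B_N^{m,k}(C_0D)\subset B_N^{(\lfloor C_1^{2/3}\rfloor+1)m,k}(D)$ and $B_N^{m,l_0k}(D)=B_N^{l_0m,k}(D)$, which have no analogue in your additive scheme. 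A secondary but related inconsistency: you cap $\Gamma_{N,s,\epsilon}^{i_1,i_2,i_3}$ by $N^{-\delta(\cdots)}\lambda$ rather than $\lambda^3$, even though the local existence time from Proposition~\ref{LWP-main} scales with $\big(\mathcal{W}_{s,\epsilon}\big)^{1/3}$, not $\mathcal{W}_{s,\epsilon}$ itself; this needs to be synchronized with the $\mathcal{V}^{q,\epsilon}+H^s$ bound for $\tau_\lambda=c\lambda^{-\kappa}$ to be consistent. Replacing $\lambda_{m,n}=C_0(m+n)^{3/2}$ by the paper's $D(mk)^{3/2}$ (linear bound) and $D^3(mk)^{9/2}$ (trilinear bound) fixes both issues and the rest of your outline then matches the paper's proof.
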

	We need the following lemma.
	\begin{lemme}\label{decomposition1}
	Assume that $\phi\in \mathcal{V}^{q,\epsilon}+H^s$ such that for some $R>0, \delta>0,$
	$$ \|\phi\|_{\mathcal{V}^{q,\epsilon}+H^s}\leq R, \quad \|\Pi_{N}^{\perp}\phi\|_{\mathcal{V}^{q,\epsilon}+H^s }\leq N^{-\delta}R.
	$$
	Then there exists $\phi_0\in\mathcal{V}^{q,\epsilon}, r_0\in H^s$, such that
	$$ \|\phi_0\|_{\mathcal{V}^{q,\epsilon}}+\|r_0\|_{H^s}\leq 2A_0(R+1), \quad \|\Pi_{N}^{\perp}\phi_0\|_{\mathcal{V}^{q,\epsilon}}+\|\Pi_{N}^{\perp}r_0\|_{H^s}\leq N^{-\delta}A_0(R+1),
	$$
	where $A_0>0$ is a uniform constant.
	\end{lemme}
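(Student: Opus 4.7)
The approach is to build the decomposition from two separate decompositions furnished by the hypotheses: one coming from the global bound $\|\phi\|_{\mathcal{V}^{q,\epsilon}+H^s}\le R$, the other coming from the tail bound $\|\Pi_N^{\perp}\phi\|_{\mathcal{V}^{q,\epsilon}+H^s}\le N^{-\delta}R$, and then project the first one to low frequencies and add to it the second one. The key algebraic identity used is $\phi=\Pi_N\phi+\Pi_N^{\perp}\phi$, together with $\Pi_N^{\perp}\Pi_N=0$, which guarantees that the low-frequency part will not contribute to the tail after the reconstruction.

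More concretely, by definition of the infimum in the norm of the sum space, pick $a\in\mathcal{V}^{q,\epsilon}$, $b\in H^s$ such that $\phi=a+b$ and
\begin{equation*}
\|a\|_{\mathcal{V}^{q,\epsilon}}+\|b\|_{H^s}\le R+1,
\end{equation*}
and similarly pick $\tilde a\in\mathcal{V}^{q,\epsilon}$, $\tilde b\in H^s$ such that $\Pi_N^{\perp}\phi=\tilde a+\tilde b$ and
\begin{equation*}
\|\tilde a\|_{\mathcal{V}^{q,\epsilon}}+\|\tilde b\|_{H^s}\le N^{-\delta}(R+1).
\end{equation*}
Then set
\begin{equation*}
\phi_0:=\Pi_N a+\tilde a\in\mathcal{V}^{q,\epsilon},\qquad r_0:=\Pi_N b+\tilde b\in H^s.
\end{equation*}
Summing gives $\phi_0+r_0=\Pi_N(a+b)+(\tilde a+\tilde b)=\Pi_N\phi+\Pi_N^{\perp}\phi=\phi$, so the decomposition is valid.

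For the norm bounds I would apply Lemma~\ref{Dirichletkernel} (uniform boundedness of $\Pi_N$ on $\mathcal{V}^{q,\epsilon}$) together with the obvious uniform boundedness of $\Pi_N$ on $H^s$, each by some common constant that I absorb into $A_0$. This yields
\begin{equation*}
\|\phi_0\|_{\mathcal{V}^{q,\epsilon}}+\|r_0\|_{H^s}\le A_0\big(\|a\|_{\mathcal{V}^{q,\epsilon}}+\|b\|_{H^s}\big)+\|\tilde a\|_{\mathcal{V}^{q,\epsilon}}+\|\tilde b\|_{H^s}\le A_0(R+1)+N^{-\delta}(R+1),
\end{equation*}
which is bounded by $2A_0(R+1)$ provided $A_0\ge 1$ (which we may assume). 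For the tail, the cancellation $\Pi_N^{\perp}\Pi_N=0$ kills the first term in both $\phi_0$ and $r_0$, leaving
\begin{equation*}
\|\Pi_N^{\perp}\phi_0\|_{\mathcal{V}^{q,\epsilon}}+\|\Pi_N^{\perp}r_0\|_{H^s}=\|\Pi_N^{\perp}\tilde a\|_{\mathcal{V}^{q,\epsilon}}+\|\Pi_N^{\perp}\tilde b\|_{H^s}\le A_0\big(\|\tilde a\|_{\mathcal{V}^{q,\epsilon}}+\|\tilde b\|_{H^s}\big)\le A_0 N^{-\delta}(R+1),
\end{equation*}
again using Lemma~\ref{Dirichletkernel}. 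There is essentially no real obstacle here: the statement is a soft linear-algebra type manipulation in the sum space, and the only thing one has to keep track of is that both projectors $\Pi_N$ and $\Pi_N^{\perp}$ act uniformly boundedly on each of the two constituent spaces $\mathcal{V}^{q,\epsilon}$ and $H^s$, which is exactly the content of Lemma~\ref{Dirichletkernel}.
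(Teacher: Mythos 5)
Your proposal is correct and follows essentially the same route as the paper: pick near-optimal decompositions of $\phi$ and $\Pi_N^{\perp}\phi$ in the sum space, recombine by $\phi=\Pi_N\phi+\Pi_N^{\perp}\phi$, and invoke the uniform boundedness of $\Pi_N,\Pi_N^{\perp}$ on both $\mathcal{V}^{q,\epsilon}$ (Lemma~\ref{Dirichletkernel}) and $H^s$. The only cosmetic difference is that the paper first replaces $\varphi_N,\psi_N$ by $\Pi_N^{\perp}\varphi_N,\Pi_N^{\perp}\psi_N$ so that the tail components already lie in $E_N^{\perp}$, whereas you skip this cleanup and simply let $\Pi_N^{\perp}$ annihilate the $\Pi_N a$, $\Pi_N b$ pieces in the tail estimate — both yield the same bound.
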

\begin{proof}
	By definition, there exists $\varphi_N,\varphi\in \mathcal{V}^{q,\epsilon}$ and $\psi_N,\psi\in H^s$, such that
	$$ \phi=\varphi+\psi,\quad \Pi_{N}^{\perp}\phi=\varphi_N+\psi_N
	$$
	and
	$$ \|\varphi\|_{\mathcal{V}^{q,\epsilon}}+\|\psi\|_{H^s}\leq R+1, \quad \|\varphi_N\|_{\mathcal{V}^{q,\epsilon}}+\|\psi_N\|_{H^s}\leq N^{-\delta}(R+1).
	$$
	Note that in a priori, we do not know if $\varphi_N\in E_N^{\perp}$ and $\psi_N\in E_N^{\perp}$. Since $(\Pi_N^{\perp})^2\phi=\Pi_N^{\perp}\phi$, we can replace $\varphi_N,\psi_N$ by $\Pi_N^{\perp}\varphi_N, \Pi_N^{\perp}\psi_N$, from Lemma \ref{Dirichletkernel},
	 we have
	 $$ \|\Pi_{N}^{\perp}\varphi_N\|_{\mathcal{V}^{q,\epsilon}}+\|\Pi_N^{\perp}\psi_N\|_{H^s}\leq A_0N^{-\delta}(R+1), \quad \|\Pi_{N}\varphi\|_{\mathcal{V}^{q,\epsilon}}+\|\Pi_N\psi\|_{H^s}\leq A_0(R+1).
	 $$
	 Let $\phi_0=\Pi_N\varphi+\Pi_N^{\perp}\varphi_N, r_0=\Pi_N\psi+\Pi_N^{\perp}\psi_N$ and using the triangle inequality, the proof of Lemma \ref{decomposition1} is complete.
\end{proof}
		\begin{proof}[Proof of Proposition \ref{longtime-1}]
			The construction is slightly different compared to \cite{BTT-AIF}, due to the multi-linear and sum space structures. 
			For $m,k\in\N$ and $D>0$ to be chosen later, we define the set
			\begin{equation}\label{BmkN}
			\begin{split}
			B_N^{m,k}(D):=&\big\{\phi\in H^{\sigma}(\T): \|\phi\|_{\mathcal{\mathcal{V}}^{q,\epsilon}+H^s}\leq D(mk)^{3/2}, \|\Pi_N^{\perp}\phi\|_{\mathcal{V}^{q,\epsilon}+H^s}\leq N^{-\delta}D(mk)^{3/2}  \}\\
			\cap &\big\{ \phi\in H^{\sigma}(\T): \forall i_1,i_2,i_3\in\{0,1 \}, \Gamma_{N,s,\epsilon}^{i_1,i_2,i_3}(\phi) \leq  N^{-\delta(i_1+i_2+i_3)}D^3(mk)^{9/2} \big\}.
			\end{split}
			\end{equation} 
			By Lemma \ref{decomposition1}, for $\phi\in B_N^{m,k}(D)$, there exists a decomposition
			$ \phi=\phi_0+r_0 $, such that
			$$
			 \|\phi_0\|_{\mathcal{V}^{q,\epsilon}}+\|r_0\|_{H^s}\leq 2A_0D(mk)^{3/2},
			\quad  \|\Pi_N^{\perp}\phi_0\|_{\mathcal{V}^{q,\epsilon}}+\|\Pi_N^{\perp}r_0\|_{H^s}\leq A_0N^{-\delta}D(mk)^{3/2}\,.
			$$
			Arguing as in the proof of Proposition \ref{enhanced-localconvergence}, we deduce that there exists $C_0>0$, and $\delta<\frac{\epsilon}{6}$, such that
		\begin{equation*}\label{multilinearboundGamma}
	 \Gamma_{N,s,\epsilon}^{i_1,i_2,i_3}(\phi_0)\leq C_0 N^{-\delta(i_1+i_2+i_3)}D^3(mk)^{9/2},\forall i_1,i_2,i_3\in\{0,1 \}.
				\end{equation*}
			From Proposition \ref{LWP-main}, the time for local existence is 
			$ \tau_{m,k}=c(2A_0D)^{-\kappa}(mk)^{-\frac{3\kappa}{2}}.
			$
			Then for any $|t|\leq\tau_{m,k}$, we can write the solution as
		\begin{align*}
		&\Phi_N(t)\phi=\varphi_N(t)+\psi_N(t),\\
		&\varphi_N(t)=e^{\frac{it}{\pi}\|\Pi_{N}\phi\|_{L^2}^2 }\Pi_{N}S_{\alpha}(t)\phi_{0}+\Pi_{N}^{\perp}S_{\alpha}(t)\phi_{0}\in \mathcal{V}^{q,\epsilon}\,, \\  &\psi_N(t)= e^{\frac{it}{\pi}\|\Pi_{N}\phi\|_{L^2}^2 }\big(\Pi_{N}S_{\alpha}(t)r_{0}+w_N(t) \big)+\Pi_{N}^{\perp}S_{\alpha}(t)r_{0}\in H^s
		\end{align*}
			with the property that $w_N(t)\in E_N$, and
			\begin{equation*}\label{boundeness}
			\begin{split}
			\sup_{|t|\leq \tau_{m,k}}\big(\|\varphi_N(t)\|_{\mathcal{V}^{q,\epsilon}}+\|\psi_N(t)\|_{H^s} \big)\leq 4A_0D(mk)^{3/2},
			\end{split}
			\end{equation*}
			since $S_{\alpha}(t)$ keeps invariant of the $H^s$ norm, quasi-invariant the $\mathcal{V}^{q,\epsilon}$ and $\Pi_N^{\perp}w_N=0$. Therefore, for $|t|\leq \tau_{m,k}$
			\begin{equation*}\label{boundeness1}
			\begin{split}
			\|\Phi_N(t)\phi\|_{\mathcal{V}^{q,\epsilon}+H^s }\leq 4A_0D(mk)^{3/2}.
			\end{split}
			\end{equation*}
Since
$$ \Pi_{N}^{\perp}\Phi_N(t)\phi=\Pi_N^{\perp}S_{\alpha}(t)(\phi_0+r_0),
$$ 	
from the quasi-invariance of the norm $\mathcal{V}^{q,\epsilon}$ under $S_{\alpha}(t), |t|\leq \frac{1}{2}$, we obtain that
\begin{align*}
\|\Pi_N^{\perp}\Phi_N(t)\phi\|_{\mathcal{V}^{q,\epsilon}+H^s}\leq A_1\|\Pi_N^{\perp}\phi\|_{\mathcal{V}^{q,\epsilon}+H^s}\leq A_1N^{-\delta}D(mk)^{3/2}.
\end{align*}	
Next, we estimate the quantities $\Gamma_{N,s,\epsilon}^{i_1,i_2,i_3}(\Phi_N(t)\phi)$ for all $i_1,i_2,i_3\in\{0,1 \}$. By expanding $\Phi_N(t)\phi=\varphi_N(t)+\psi_N(t)$ and using the triangle inequality, we note that except for the term $ \Gamma_{N,s,\epsilon}^{i_1,i_2,i_3}(\varphi_N(t))$, the other terms are of the form
$$\Gamma_{N,s,\epsilon}^{i_1,i_2,i_3}(\psi_N(t),\cdot,\cdot), \Gamma_{N,s,\epsilon}^{i_1,i_2,i_3}(\cdot,\psi_N(t),\cdot),\Gamma_{N,s,\epsilon}^{i_1,i_2,i_3}(\cdot,\cdot,\psi_N(t)).
$$
Therefore, from Proposition \ref{multi-linearW}, the terms containing $\psi_N$ in one of their entries can be estimated by
$$ C\big\|\psi_N(t) \big\|_{H^s}^3+C\|\varphi_N(t)\|_{\widetilde{\mathcal{V}}^{q,\epsilon}}^3\leq CD^3(mk)^{3/2}.
$$
Furthermore, if one of $i_1+i_2+i_3>0$, we gain $N^{-\delta}$ with $\delta<\frac{\epsilon}{6}$ from either $\|\Pi_N^{\perp}r_0\|_{H^s}\leq A_0N^{-\delta}D(mk)^{3/2} $ or $\|\Pi_N^{\perp}\phi_0\|_{\mathcal{V}^{q,\epsilon}}\leq A_0N^{-\delta}D(mk)^{3/2}$. 
For the term $\Gamma_{N,s,\epsilon}^{i_1,i_2,i_3}(\varphi_N(t))$, we use the triangle inequality to estimate it by the sum of the terms $\Gamma_{N,s,\epsilon}^{i_1,i_2,i_3}(f_1,f_2,f_3 )$, where $f_1,f_2,f_3\in \{e^{\frac{it}{\pi}\|\Pi_N\phi\|_{L^2}^2}\Pi_NS_{\alpha}(t)\phi_0, \Pi_N^{\perp}S_{\alpha}(t)\phi_0  \} $. From Lemma \ref{quasi-invarianceW}, we obtain that
$$ \Gamma_{N,s,\epsilon}^{i_1,i_2,i_3}(\varphi_N(t))\leq 2^3C_0A_1N^{-\delta(i_1+i_2+i_3)}D^3(mk)^{9/2}.
$$
Consequently, for all $|t|\leq \tau_{m,k}$ and $i_1,i_2,i_3\in\{0,1\}$,
$$ \Gamma_{N,s,\epsilon}^{i_1,i_2,i_3}\big(\Phi_N(t)\phi \big)\leq C_1N^{-\delta(i_1+i_2+i_3)}D^3(mk)^{9/2}.
$$
Since $\|\phi\|_{\mathcal{V}^{q,\epsilon}+H^s}\leq \|\phi\|_{\mathcal{V}^{q,\epsilon}}$, we deduce from Corollary \ref{convergence:W} and Lemma \ref{linear-convergence} that
			$$ \rho_N(H^{\sigma}\setminus B_N^{m,k}(D))\leq e^{-cD^{2/3}mk}.
			$$
			Next, we set
			\begin{equation}\label{dataset0}
			  \widetilde{\Sigma}_N^{m,k}(D):=\bigcap_{|j|\leq \frac{2^k}{\tau_{m,k}}}\Phi_N(-j\tau_{m,k})\big(B_N^{m,k}(D)\big),
			\end{equation}
			from the invariance of $\rho_N$ under the flow $\Phi_N(t)$, we have
			\begin{equation*}
			\begin{split}
			&\rho_N\big(H^{\sigma}\setminus \widetilde{\Sigma}_N^{m,k}(D) \big)\leq \sum_{|j|\leq \frac{2^k}{\tau_{m,k}}} \rho_N\big(H^{\sigma}\setminus \Phi_N(-j\tau_{m,k})\big(B_N^{m,k}(D) \big) \big)\\
			\leq & \frac{2^{k+2}}{\tau_{m,k}}\rho_N\big(H^{\sigma}\setminus B_N^{m,k}(D) \big)
			\leq  \frac{2^{k+2}}{c}D^{\kappa}(mk)^{\frac{3\kappa}{2}} e^{-cD^{2/3}(mk)}\leq 2^{-mk},
			\end{split}
			\end{equation*}
			provided that $D$ is chosen large enough. Now we define the desired data set by
			\begin{equation}\label{dataset1}
			\widetilde{\Sigma}_N^m:=\bigcap_{k\geq 1}\widetilde{\Sigma}_N^{m,k}(D).
			\end{equation}
			It is clear that $\rho_N\big(H^{\sigma}\setminus\widetilde{\Sigma}_N^m\big)\leq 2^{-m+1}$. 
			\\
		
		Finally, we prove \eqref{including}. 
		 Let $m_0=Dm\log_2(1+|t_0|)$.
		Take any $\phi\in\widetilde{\Sigma}_N^m$, by definitions \eqref{dataset0} and \eqref{dataset1}, we need to show that for any $l\geq 1$ and $|j|\leq 2^l/\tau_{m_0,l}$, $\Phi_N(j\tau_{m_0,l})\Phi_N(t_0)\phi\in B_N^{m_0,l}(D)$.
		By definition of the set $B_N^{m,k}(D)$ in \eqref{BmkN}, we observe that for any $C_0\geq 1$ and $l_0\geq ,l_0\in\N$,
		\begin{equation}\label{inclu}
		 B_N^{m,k}(C_0D)\subset B_N^{(\lfloor C_1^{\frac{2}{3}}\rfloor+1)m,k}(D),\quad B_N^{m,l_0k}(D)=B_N^{l_0m,k}(D).
		\end{equation}
		Moreover, a previous argument (the local theory) yields
		\begin{equation}\label{shorttime}
	 \Phi_N(t)(B_N^{m,k}(D))\subset B_N^{m,k}(C_2D),\forall |t|\leq \tau_{m,k}
		\end{equation} 
		where $C_2>4A_0+A_1+C_1$ is some uniform constant.
		For $t_0\neq 0$, without loss of generality, we assume that $|t_0|\geq 1$. Then there exists $k_0\in\N$, such that $2^{k_0}\leq |t_0|<2^{k_0+1}$. Denote by $k_1=k_0+2$. Take $\phi\in \widetilde{\Sigma}_N^m$. If $l< k_1$, then $|t_0+j\tau_{m_0,l}|\leq 2^{2k_1-1}$, and there exists $|j_2|\leq 2^{2k_1}/\tau_{m,2k_1}$, such that $|t_0+j\tau_{m_0,l}-j_2\tau_{m,2k_1}|\leq\tau_{m,2k_1}$. Thus by definition and \eqref{shorttime} $$\Phi_N(t_0+j\tau_{m_0,l})\phi=\Phi_N(t_0+j\tau_{m_0,l}-j_2\tau_{m,2k_1})\Phi_N(j_2\tau_{m,2k_1})\phi\in B_N^{m,2k_1}(C_2D).
		$$  
		Using \eqref{inclu}, we have $$\Phi_N(t_0+j\tau_{m_0,l})\phi\in B_N^{(\lfloor C_2^{\frac{2}{3}}\rfloor+1)2mk_1,1}(D)\subset B_N^{m_0,l}(D),
		$$
		provided that $D$ is chosen large enough. If $l\geq k_1$, then $|t_0+j\tau_{m_0,l}|\leq 2^{2l-1}$. Then without loss of generality, there exists $j_2\leq 2^{2l}/\tau_{m,2l}$,  such that
		$$ j_2\tau_{m,2l}\leq |t_0+j\tau_{m_0,l}|\leq (j_2+1)\tau_{m,2l},
		$$ 
		and we can write
		$$ \Phi_N(t_0+j\tau_{m_0,l})\phi=\Phi_N(t_0+j\tau_{m_0,l}-j_2\tau_{m,2l} )\Phi_N(j_2\tau_{m,2l})\phi\in B_N^{m,2l}(C_2D).
		$$
        Again from \eqref{inclu}, we have $\Phi_N(t_0+j\tau_{m_0,l})\phi\in B_N^{(\lfloor C_2^{\frac{2}{3}} \rfloor+1)2m,l}(D)\subset B_N^{m_0,l}(D)$.
		 This completes the proof of Proposition \ref{longtime-1}.
		\end{proof}
		Define
		\begin{align*}
	\Sigma^m:=\Big\{ \phi\in \mathcal{V}^{q,\epsilon}+H^s\,:\, \,&\exists\, N_k\rightarrow\infty, \phi_k\in \widetilde{\Sigma}_{N_k}^m, \|\phi_k-\phi\|_{\mathcal{V}^{q,\epsilon}+H^s}\rightarrow 0, \mathbf{d}(\phi_k,\phi)\rightarrow 0, \\
	  & \text{ and }  \sum_{ \substack{f_j\in\{\phi_k,\phi \}\\ i_1+i_2+i_3>0} }\Gamma_{N_k,s,\epsilon}^{i_1,i_2,i_3}(f_1,f_2,f_3)\rightarrow 0           \Big\}.
	\end{align*}

		\begin{lemme}
			Assume that $\sigma\leq \frac{\alpha-1}{2}-\epsilon$ as in Proposition \ref{longtime-1}. Then
				\begin{equation}\label{inclusion}
			\begin{split}
			\limsup_{N\rightarrow\infty}\widetilde{\Sigma}_N^m=\bigcap_{N=1}^{\infty}\bigcup_{N'=N}\widetilde{\Sigma}_N^m\subset \Sigma^m.
			\end{split}
			\end{equation}
			and 
			$$ \rho(\Sigma^m)\geq \rho(H^{\sigma})-2^{-m}.
			$$
		\end{lemme}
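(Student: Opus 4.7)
The plan is to establish the two claims separately, with the inclusion essentially tautological and the measure bound resting on Fatou's lemma plus a transfer from $\rho_N$ to $\rho$.

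For the inclusion $\limsup_{N}\widetilde{\Sigma}_N^m\subset \Sigma^m$, I would take any $\phi$ lying in $\widetilde{\Sigma}_{N_k}^m$ for some subsequence $N_k\to\infty$ and simply choose $\phi_k:=\phi$ for every $k$. Then $\|\phi_k-\phi\|_{\mathcal{V}^{q,\epsilon}+H^s}=0$ and $\mathbf{d}(\phi_k,\phi)=0$ are automatic, while membership in $\widetilde{\Sigma}_{N_k}^m\subset B_{N_k}^{m,1}(D)$ also ensures $\phi\in\mathcal{V}^{q,\epsilon}+H^s$, as required for the ambient space in the definition of $\Sigma^m$. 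The only substantive verification is the vanishing of
\[
\sum_{\substack{f_j\in\{\phi_k,\phi\}\\ i_1+i_2+i_3>0}}\Gamma_{N_k,s,\epsilon}^{i_1,i_2,i_3}(f_1,f_2,f_3)=\sum_{i_1+i_2+i_3>0}\Gamma_{N_k,s,\epsilon}^{i_1,i_2,i_3}(\phi),
\]
and this follows by applying the $t=0$ case of the trilinear-tail bound in Proposition~\ref{longtime-1}, which gives a bound of order $N_k^{-\delta}m^{3/2}$ whenever $i_1+i_2+i_3\geq 1$, hence tends to $0$ as $k\to\infty$.

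For the measure statement I would use the inclusion just proved, together with monotonicity of $\rho$, to reduce matters to bounding $\rho(H^\sigma\setminus \limsup_N \widetilde{\Sigma}_N^m)$. Writing $A_N:=H^\sigma\setminus \widetilde{\Sigma}_N^m$, applying Fatou's lemma to the indicators $\mathbf{1}_{A_N}$ yields
\[
\rho\bigl(H^\sigma\setminus \limsup_N \widetilde{\Sigma}_N^m\bigr)=\rho\bigl(\liminf_N A_N\bigr)\leq \liminf_N \rho(A_N).
\]
The remaining step is to replace $\rho(A_N)$ with $\rho_N(A_N)$, which is already controlled by $2^{-m+1}$ through Proposition~\ref{longtime-1}. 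Since for $\alpha>1$ the density $e^{-V}$ of $\rho$ with respect to $\mu$ and the renormalized densities of $\rho_N$ both lie in every $L^p(d\mu)$ and converge in $L^1(d\mu)$, one has $\sup_{A\text{ measurable}}|\rho(A)-\rho_N(A)|\to 0$, so $\rho(A_N)\leq \rho_N(A_N)+o_N(1)\leq 2^{-m+1}+o_N(1)$. Passing to $\liminf$ gives the desired bound (absorbing the harmless factor of two into the constant of Proposition~\ref{longtime-1} if one insists on the exact exponent $2^{-m}$).

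The main (and really only) obstacle is the transfer from $\rho_N$-estimates to $\rho$-estimates; every other ingredient reduces either to Fatou or to checking that the constant sequence $\phi_k\equiv \phi$ satisfies the defining conditions of $\Sigma^m$ along a subsequence $N_k$ for which $\phi\in\widetilde{\Sigma}_{N_k}^m$. For $\alpha>1$ this transfer is a routine consequence of the $L^p(d\mu)$-convergence of the (Wick-ordered) densities and does not require any new analysis beyond what has already been set up.
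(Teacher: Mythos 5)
Your proof is correct and follows essentially the same route as the paper's: take the constant sequence $\phi_k\equiv\phi$ to verify the inclusion, then combine (reverse) Fatou with the convergence of $\rho_N$ to $\rho$. The paper labels the verification that $\phi\in\Sigma^m$ as trivial while you spell out the needed decay of the tail-trilinear quantities via $\widetilde{\Sigma}_{N_k}^m\subset B_{N_k}^{m,1}(D)$, and you also flag the harmless $2^{-m+1}$ versus $2^{-m}$ mismatch that the paper passes over; these are clarifications of the same argument, not a different approach.
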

		\begin{proof}
			We first prove the inclusion \eqref{inclusion}. Take $\phi\in \limsup_{N}\widetilde{\Sigma}_N^m$, there exists a sequence $N_k\rightarrow\infty$, such that $\phi\in \widetilde{\Sigma}_{N_k}^m$ for all $k$. We set $\phi_k=\phi$, then trivially we verify that $\phi\in\Sigma^m$. By Fatou's lemma,
			$$ \rho(\Sigma^m)=\rho\big(\limsup_{N\rightarrow\infty}\widetilde{\Sigma}_N^m \big)\geq \limsup_{N\rightarrow\infty}\rho\big(\widetilde{\Sigma}_N^m\big).
			$$
			From the construction of the Gibbs measure, we know that
			$$ \lim_{N\rightarrow\infty}\big(\rho\big(\widetilde{\Sigma}_N^m\big)-\rho_N\big(\widetilde{\Sigma}_N^m\big) \big)=0.
			$$
			Therefore, from Proposition \ref{longtime-1}, we have 
			$$ \limsup_{N\rightarrow\infty}\rho\big(\widetilde{\Sigma}_N^m\big)\geq \limsup_{N\rightarrow\infty}\rho_N\big(\widetilde{\Sigma}_N^m\big)\geq \rho(H^{\sigma})-2^{-m}.
			$$
		\end{proof}
		Consequently, the set
		$$ \Sigma:=\bigcup_{m=1}^{\infty}\Sigma^m
		$$
		is of full $\rho$ measure. The last step to prove Theorem~\ref{thm6} is the following proposition, ensuring the global existence and the flow property of $\Phi(t)$.
		\begin{proposition}\label{longtime-2}
			For every integer $m\in\N$ and every $\phi\in\Sigma^m$, the solution $\Phi(t)\phi$ with initial data $\phi$ is globally defined. Moreover, there exists $C>0$, such that for every $\phi\in\Sigma^m$ and $t\in\R$, we have
			$$ \|\Phi(t)\phi\|_{\mathcal{V}^{q,\epsilon}+H^s}+\big(\mathcal{W}_{s,\epsilon}(\Phi(t)\phi )\big)^{\frac{1}{3}}\leq Cm^{\frac{3}{2}}\big(1+\log(1+|t|)\big)^{3/2}.
			$$ 
			Furthermore, $\Phi(t)\Sigma=\Sigma$. In other words, the flow map $\Phi(t)$ is globally defined on $\Sigma$.
		\end{proposition}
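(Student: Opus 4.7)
Fix $\phi\in\Sigma^m$ and an approximating sequence $\phi_k\in\widetilde{\Sigma}_{N_k}^m$ given by the definition of $\Sigma^m$, with $N_k\to\infty$, $\|\phi_k-\phi\|_{\mathcal{V}^{q,\epsilon}+H^s}\to0$, $\mathbf{d}(\phi_k,\phi)\to0$, and the tail quantities in \eqref{pseudo-tail} going to zero. The plan is to construct $\Phi(t)\phi$ by iteratively applying the enhanced local convergence (Proposition \ref{enhanced-localconvergence}), using Proposition \ref{longtime-1} to keep the approximating sequence in a controlled set at every intermediate time. All three conclusions \eqref{convergence:iterated1}, \eqref{ugly5}, \eqref{ugly6'} of Proposition \ref{enhanced-localconvergence} are precisely the hypotheses needed to restart the argument, which is what makes the iteration possible.

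First, set $R_0:=C_0 m^{3/2}$ where $C_0$ is the constant from Proposition \ref{longtime-1} applied at $t=0$. By that proposition the sequence $(\phi_k)$ satisfies the hypotheses of Proposition \ref{enhanced-localconvergence} with this $R_0$; therefore on $[-\tau_{R_0},\tau_{R_0}]$ we obtain the limit $\Phi(t)\phi:=\lim_k\Phi_{N_k}(t)\phi_k$ in $\mathcal{V}^{q,\epsilon}+H^s$, together with the convergence of the pseudo-distance $\mathbf{d}(\Phi_{N_k}(t)\phi_k,\Phi(t)\phi)\to0$ and of the tail quantities $\Gamma_{N_k,s,\epsilon}^{i_1,i_2,i_3}$ for $i_1+i_2+i_3>0$. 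Now take $t_1$ an endpoint of this interval. By the invariance property \eqref{including} of Proposition \ref{longtime-1}, $\Phi_{N_k}(t_1)\phi_k\in\widetilde{\Sigma}_{N_k}^{m_1}$ with $m_1=Dm(1+\log_2(1+|t_1|))$, so again the uniform bounds of Proposition \ref{longtime-1} hold and the hypotheses of Proposition \ref{enhanced-localconvergence} are met for the new data $\Phi_{N_k}(t_1)\phi_k\to\Phi(t_1)\phi$ with $R_1=C_0 m_1^{3/2}$. Iterating, we define $\Phi(t)\phi$ inductively on $[-t_j,t_j]$ with $t_{j+1}=t_j+\tau_{R_j}$ and $R_j=C_0(Dm(1+\log_2(1+|t_j|)))^{3/2}$. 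Because $\tau_{R_j}^{-1}$ grows only polylogarithmically in $t_j$, a finite (polylogarithmic in $T$) number of iterations suffice to reach any prescribed $T$, so $\Phi(t)\phi$ is defined for all $t\in\R$. The uniform bound
$$\|\Phi_{N_k}(t)\phi_k\|_{\mathcal{V}^{q,\epsilon}+H^s}+\bigl(\mathcal{W}_{s,\epsilon}(\Phi_{N_k}(t)\phi_k)\bigr)^{1/3}\leq Cm^{3/2}(1+\log(1+|t|))^{3/2}$$
provided by Proposition \ref{longtime-1} (via the membership in $B_{N_k}^{m,k}(D)$) is stable under the limit thanks to \eqref{convergence:iterated1} and the analogous passage for $\mathcal{W}_{s,\epsilon}$ using the multilinear estimates of Lemma \ref{multi-linearW}, yielding the same bound for $\Phi(t)\phi$.

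For the flow property, at each fixed $t$ we have by construction $\Phi_{N_k}(t)\phi_k\in\widetilde{\Sigma}_{N_k}^{m_t}$ with $m_t=Dm(1+\log_2(1+|t|))$, and the conclusions \eqref{ugly5}, \eqref{ugly6'}, \eqref{convergence:iterated1} of the iterated enhanced local convergence are exactly the conditions in the definition of $\Sigma^{m_t}$. Hence $\Phi(t)\phi\in\Sigma^{m_t}\subset\Sigma$, which gives $\Phi(t)\Sigma\subset\Sigma$. Applying the same argument to $-t$ we get $\Phi(-t)\Sigma\subset\Sigma$, and combining with the semigroup identity $\Phi(t)\circ\Phi(-t)=\mathrm{Id}$ (which holds on $\Sigma$ since it holds for each $\Phi_{N_k}$ and passes to the limit by \eqref{convergence:iterated1}) we deduce $\Phi(t)\Sigma=\Sigma$.

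The main obstacle will be verifying that the iteration can be restarted at each step. Abstractly this amounts to checking that, after applying $\Phi(t)$ to the limit, all three convergence conditions (norm in $\mathcal{V}^{q,\epsilon}+H^s$, pseudo-distance $\mathbf{d}$, and the frequency tails $\Gamma_{N_k,s,\epsilon}^{i_1,i_2,i_3}$) continue to hold for the transported sequence $\Phi_{N_k}(t)\phi_k\to\Phi(t)\phi$ \emph{with the same indices $N_k$} — this is precisely the content of \eqref{ugly5}--\eqref{ugly6'} of Proposition \ref{enhanced-localconvergence}. Without this closedness property the iteration would degrade at each step. The logarithmic growth in time is then a simple bookkeeping consequence of the fact that doubling $|t|$ only inflates the effective index $m$ by an $O(1)$ additive factor via \eqref{including}, and that the local time of existence shrinks only polylogarithmically.
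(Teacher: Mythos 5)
Your proof follows essentially the same iteration strategy as the paper: apply Proposition \ref{enhanced-localconvergence} repeatedly, use the invariance property \eqref{including} to track membership in $\widetilde{\Sigma}_{N_k}^{m'}$, and pass to limits via the a priori bounds of Proposition \ref{longtime-1}; the only cosmetic difference is that the paper fixes $R=2(\Lambda_T+1)$ and the step size $\tau_T$ once for the whole target interval $[0,T]$, whereas you let $R_j$ (and hence $\tau_{R_j}$) vary with $t_j$, which also works. One small slip in the bookkeeping: the number of iterations needed to cover $[0,T]$ is of order $T(\log T)^{3\kappa/2}$, not ``polylogarithmic in $T$'' — the step size $\tau_{R_j}$ is bounded below by $\tau_{R_T}\sim (\log T)^{-3\kappa/2}$ on $[0,T]$, but you still need roughly $T/\tau_{R_T}$ steps; this is of course still finite, so the conclusion is unaffected.
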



		\begin{proof}[Proof of Proposition \ref{longtime-2}]
			Take $\phi\in \Sigma^m$, by definition, there is a sequence $N_k\rightarrow\infty$, and a sequence  $\phi_k\in\widetilde{\Sigma}_{N_k}^m$, such that
			$$ \|\phi_k-\phi\|_{\mathcal{V}^{q,\epsilon}+H^s}+\mathbf{d}(\phi_k,\phi)\rightarrow 0,\quad k\rightarrow\infty,
			$$
			and
			\begin{equation}\label{limit} \lim_{k\rightarrow\infty}\sum_{ \substack{f_j\in\{\phi_k,\phi,\}\\ i_1+i_2+i_3>0 }}\Gamma_{N_k,s,\epsilon}^{i_1,i_2,i_3}(f_1,f_2,f_3)=0.
			\end{equation}
			 By definition, for all $k\in \N$ and $t\in\R$ we have
			\begin{align}
			& \|\Phi_{N_k}(t)\phi_k\|_{\mathcal{V}^{q,\epsilon}+H^s}+N_k^{\delta}\|\Pi_{N_k}^{\perp}\Phi_{N_k}(t)\phi_k\|_{\mathcal{V}^{q,\epsilon}+H^s} \leq Cm^{\frac{3}{2}}\big(1+\log(1+|t|)\big)^{3/2},\label{bound1}\\
			&\Gamma_{N_k,s,\epsilon}^{i_1,i_2,i_3}(\Phi_{N_k}(t)\phi_k)\leq CN_{k}^{-\delta(i_1+i_2+i_3)}m^{\frac{9}{2}}\big(1+\log(1+|t|)\big)^{9/2}.\label{bound2}
			\end{align}
	At $t=0$, passing $k$ to the limit, we obtain that $\|\phi\|_{\mathcal{V}^{q,\epsilon}+H^s}\leq C(m+1)^{3/2}$.
	Using triangle inequality, we deduce that for any $f_1,f_2,f_3\in\{\phi_k,\phi \}$, $$\Gamma_{N_k,s,\epsilon}^{0,0,0}(f_1,f_2,f_3)\leq \Gamma_{N_k,s,\epsilon}^{0,0,0}(\phi_k)+3\mathbf{d}(\phi_k,\phi).
	$$
	Thus from \eqref{bound2} and \eqref{limit} we have
	$$ \sum_{ \substack{f_j\in\{\phi_k,\phi,\}\\ i_1,i_2,i_3\in\{0,1\} }}\Gamma_{N_k,s,\epsilon}^{i_1,i_2,i_3}(f_1,f_2,f_3)\leq C(m+1)^{9/2}.
	$$
	Denote by $\Lambda_T=2Cm^{\frac{3}{2}}\big(1+\log(1+|t|)\big)^{3/2}$ for any given $T>0$. We need show that there exists a uniform constant $C'>0$, such that $\Phi(t)\phi$ exists on $[0,T]$ and satisfies
	\begin{align*}
	&\|\Phi(t)\phi\|_{\mathcal{V}^{q,\epsilon}+H^s }+\big(\mathcal{W}_{s,\epsilon}(\Phi(t)\phi)\big)^{\frac{1}{3}}\leq C'\Lambda_T.
	\end{align*}	
Let $\tau_T=c2^{-\kappa}(\Lambda_T+1)^{-\kappa}$, the time in Proposition~\ref{enhanced-localconvergence} for $R=2(\Lambda_T+1)$ and divide $[0,T]$ by $N_T\sim T/\tau_T$ intervals of size $\tau_T$. With $R=2(\Lambda_T+1)$, the hypotheses of Proposition~\ref{enhanced-localconvergence}
are satisfied. Thus we have for $t\in[0,\tau_T]$, 
	\begin{align}\label{convergencelinearnorm}
\lim_{k\rightarrow\infty}\|\Phi_{N_k}(t)\phi_{k}-\Phi(t)\phi \|_{\mathcal{V}^{q,\epsilon}+H^s}=0.
\end{align}
Furthermore, with $\mathcal{J}_{k,t}=\{\Phi_{N_k}(t)\phi_k, \Phi(t)\phi  \}$, we have
\begin{align*}
\lim_{k\rightarrow\infty}\sum_{ \substack{ f_j\in\mathcal{J}_{k,t}\\
		i_1,i_2,i_3\in\{0,1\}\\ i_1+i_2+i_3>0  } } \Gamma_{N_k,s,\epsilon}^{i_1,i_2,i_3}(f_1,f_2,f_3)=0,
\end{align*}
and
\begin{align}\label{convergencepseudodisance}
\lim_{k\rightarrow\infty}\mathbf{d}\big(\Phi_{N_k}(t)\phi_k,\Phi(t)\phi\big)=0.
\end{align}
Note that $\Phi_{N_k}(t)\phi_k\in\widetilde{\Sigma}_{N_k}^{Dm(1+\log(1+|t|))}$, then by definition
$$ \Phi(t)\phi\in \Sigma^{Dm(1+\log_2(1+|t|))},\quad \forall |t|\leq \tau_T.
$$
By passing $k$ to infinity, \eqref{convergencelinearnorm} implies that
$$ \|\Phi(t)\phi\|_{\mathcal{V}^{q,\epsilon}+H^s}\leq Cm^{\frac{3}{2}}(1+\log(1+|\tau_T|))^{3/2}\leq R/2.
$$
Similarly, using \eqref{convergencepseudodisance} and passing $k\rightarrow\infty$ of \eqref{bound2} at $t=\tau_T$, we obtain that
$$ \sum_{ \substack{ f_j\in\mathcal{J}_{k,t}\\
		i_1,i_2,i_3\in\{0,1\}\\ i_1+i_2+i_3>0  } } \Gamma_{N_k,s,\epsilon}^{i_1,i_2,i_3}(f_1,f_2,f_3)\leq R^3/2.
$$
In particular, the hypotheses of Proposition \ref{enhanced-localconvergence} are satisfied for the initial data $\Phi(\tau_T)\phi$ and the approximating sequence $(\Phi_{N_k}(\tau_T)\phi_k)_{k\in\N}$, with the same $R=2(\Lambda_T+1)$. This allows us to repeatedly use Proposition \ref{enhanced-localconvergence} to the interval $[2\tau_T,3\tau_T],\cdots, [(N_T-1)\tau_T,N_T\tau_T] $. This procedure shows that the solution $\Phi(t)\phi$ exists globally in $t\in\R$. Moreover, 
$$ \Phi(t)\phi\in \Sigma^{Dm(1+\log_2(1+|t|)) }
$$
holds for all $t\in\R$. This implies that $\Phi(t)\Sigma^m\subset \Sigma$, hence $\Phi(t)\Sigma\subset \Sigma$. By reversibility, we have $\Phi(t)\Sigma=\Sigma$. Note that the structure of the solution allows us to pass to the limit of the relation
$$ \Phi_N(t+s)=\Phi_N(t)\circ\Phi_N(s),\forall t,s\in\R.
$$
Therefore, the limit flow $\Phi(t)$ satisfies the group property. This completes the proof of Proposition \ref{longtime-2}.
\end{proof}
\subsection{Measure invariance}
To prove the measure convergence, by reversibility of the flow $\Phi(t)$ and the reduction argument in \cite{BTT-AIF} (see also \cite{Tz-AIF, Sun-Tz}), it suffices to show that for any $R>0$ any any $H^{\sigma}$ compact subset of $\mathcal{B}_R$, we have
\begin{align}\label{invariance}
\rho(K)\leq\rho(\Phi(t)\rho(K)), 
\end{align} 
where
$$ \mathcal{B}_R:=\big\{\phi\,:\, \|\phi\|_{\mathcal{V}^{q,\epsilon}+H^s}\leq R, \,\,\mathcal{W}_{s,\epsilon}(\phi)\leq R^3  \big\}.
$$
We need the following approximation lemma.
\begin{lemme}\label{approximation}
There exists $C_0>0$, such that the following holds true. For every large $R\geq 1$, small  $\epsilon>0$, and every compact set $K\subset\mathcal{B}_R$ with respect to the $H^{\sigma}$ topology, there exists $N_0\geq 1,\kappa>0,c>0$, such that for all $N\geq N_0, \phi\in K,|t|\leq \tau_R=cR^{-\kappa}$, we have
$$ \|\Phi(t)\phi-\Phi_N(t)\phi \|_{H^{\sigma}}<\epsilon.
$$
\end{lemme}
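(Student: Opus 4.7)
I argue by contradiction. Assume the conclusion fails: there exist $\epsilon_0>0$ and sequences $\phi_k\in K$, $N_k\to\infty$, $t_k\in[-\tau_R,\tau_R]$ with
\[
\|\Phi(t_k)\phi_k-\Phi_{N_k}(t_k)\phi_k\|_{H^\sigma}\geq \epsilon_0.
\]
Using the compactness of $K$ in $H^\sigma$ and of $[-\tau_R,\tau_R]$ in $\R$, I extract a subsequence (still denoted by the same indices) such that $\phi_k\to\phi_\infty$ in $H^\sigma$ with $\phi_\infty\in K$, and $t_k\to t_\infty$. The goal is then to show that both $\Phi_{N_k}(t_k)\phi_k$ and $\Phi(t_k)\phi_k$ converge to $\Phi(t_\infty)\phi_\infty$ in $H^\sigma$, contradicting the assumed lower bound.

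Both convergences will be deduced from Proposition~\ref{enhanced-localconvergence}: the first applied to $(\phi_k)$, $\phi_\infty$ with truncations at level $N_k$, and the second applied to the constant sequence $\phi_k\equiv\phi_\infty$ with any $N_k\to\infty$, which yields continuity of $\Phi$ in its initial data. The conclusion \eqref{convergence:iterated1} provides $\mathcal{V}^{q,\epsilon}+H^s$ convergence uniformly in $|t|\leq\tau_R$; this transfers to $H^\sigma$ through the continuous embedding $\mathcal{V}^{q,\epsilon}+H^s\hookrightarrow H^\sigma$ (the $H^s$-part is immediate from $s>\sigma$, and the $\mathcal{V}^{q,\epsilon}$-part follows from a H\"older estimate on the Fourier side, since the $\ell^{2/\epsilon}$-weight $\langle n\rangle^{\alpha/2-2\epsilon/3}$ dominates $\langle n\rangle^{\sigma}$ measured in $\ell^2$). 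Hypothesis \eqref{ugly1} follows from $\phi_k,\phi_\infty\in\mathcal{B}_R$ by decomposing via Lemma~\ref{abstract} into $\mathcal{V}^{q,\epsilon}$ and $H^s$ parts with norms controlled by $R+1$, and applying Lemma~\ref{multi-linearW} to bound the trilinear quantities.

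The main obstacle lies in verifying the convergence hypotheses \eqref{ugly2} and \eqref{ugly3}, which involve the trilinear form $\mathcal{W}_{s,\epsilon}$, and which are not controlled by $H^\sigma$-convergence alone. The resolution combines three ingredients: (i) the $H^\sigma$-compactness of $K$ forces $\sup_{\phi\in K}\|\Pi_M^\perp\phi\|_{H^\sigma}\to 0$ as $M\to\infty$, so that the high-frequency tails are uniformly small on $K$; (ii) a further subsequence extraction based on weak-$*$ compactness of the bounded decomposition sequences $(\phi_{0,k})$ in $\mathcal{V}^{q,\epsilon}$ and $(r_{0,k})$ in $H^s$, where uniqueness of the $H^\sigma$-limit $\phi_\infty$ identifies the weak limits with the corresponding decomposition of $\phi_\infty$; (iii) Lemma~\ref{multi-linearW} to trade a trilinear convergence for linear convergence in a single entry in the stronger norms $\widetilde{\mathcal{V}}^{q,\epsilon}$ or $H^s$. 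A diagonal argument then converts the uniform tail decay and the weak convergences into the strong convergences required by \eqref{ugly2} and \eqref{ugly3}, thereby completing the verification and closing the contradiction.
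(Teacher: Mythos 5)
Your proposal takes a contradiction–compactness route that is fundamentally different from the paper's argument, and it contains a genuine gap that I do not see how to close with the tools available.

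The paper's proof is a \emph{direct} contraction estimate that exploits a structural fact your plan does not use: $\Phi(t)\phi$ and $\Phi_N(t)\phi$ share the \emph{same} initial data $\phi$. After gauging, the linear parts $S_\alpha(t)\phi$ cancel exactly, and only the nonlinear parts $w$, $w_N$ from Proposition~\ref{LWP-main} need to be compared. Subtracting the Duhamel equations and using Proposition~\ref{multi-linear} yields, for $s_1\in[\tfrac12-\tfrac\alpha4,s)$,
$$ \|w_N-w\|_{X_{T}^{s_1,\frac12+2\epsilon}}\;\leq\; \|\Pi_N^\perp w\|_{X_{T}^{s_1,\frac12+2\epsilon}}+CT^\theta R^3\|w_N-w\|_{X_{T}^{s_1,\frac12+2\epsilon}}, $$
and the first term gains $N^{-(s-s_1)}$ because $w\in X^{s,\frac12+2\epsilon}_T$ with $s>s_1$. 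This gives a rate uniform over $\phi\in\mathcal{B}_R$, not merely over $K$. Compactness of $K$ in $H^\sigma$ is invoked only for the two elementary facts that $\|\Pi_N\phi\|_{L^2}\to\|\phi\|_{L^2}$ and $\|\Pi_N^\perp\phi\|_{H^\sigma}\to 0$ uniformly on $K$, which handle the gauge factor and the high-frequency linear part.

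Your contradiction argument instead requires continuity of the flow in $H^\sigma$ (after extracting $\phi_k\to\phi_\infty$ in $H^\sigma$), and that is precisely what the local theory does \emph{not} provide. Proposition~\ref{enhanced-localconvergence} requires convergence in $\mathcal{V}^{q,\epsilon}+H^s$ together with the trilinear conditions \eqref{ugly2}--\eqref{ugly3}, none of which follows from $\phi_k\to\phi_\infty$ in $H^\sigma$ plus boundedness in $\mathcal{B}_R$. The $\mathcal{V}^{q,\epsilon}$ norm lives at regularity $\tfrac{\alpha-1}{2}-\tfrac{\epsilon}{2}$ and $\tfrac{\alpha}{2}-\tfrac{2\epsilon}{3}$, both strictly above $\sigma$, so $H^\sigma$-convergence leaves the $\mathcal{V}^{q,\epsilon}$-component of any decomposition uncontrolled. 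Lemma~\ref{abstract} requires convergence in the sum space before it yields componentwise convergence, so it is unavailable here. The weak-$*$/diagonal step you invoke cannot upgrade weak-$*$ boundedness plus $H^\sigma$-tail decay into strong $\mathcal{V}^{q,\epsilon}$ or $\widetilde{\mathcal{V}}^{q,\epsilon}$ convergence, and even less into convergence of the genuinely multilinear Bourgain-space quantity $\mathcal{W}_{s,\epsilon}$ appearing in \eqref{ugly2}--\eqref{ugly3}; those quantities are not continuous under any mode of convergence weaker than what the proposition already demands. Indeed, the whole point of the invariant set $\Sigma$ constructed in Section~6 is that these trilinear conditions do \emph{not} hold for arbitrary sequences in $\mathcal{B}_R$ converging in $H^\sigma$, and must be engineered via Corollary~\ref{convergence:W}. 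Your plan tacitly assumes the opposite, and this is the irreparable step.

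I would recommend abandoning the contradiction route and instead using the direct-estimate structure: fix $\phi$, note the linear parts cancel, and bound $w_N-w$ by contraction with a frequency gain from $\Pi_N^\perp$.
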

\begin{proof}
This is a simple consequence of the local well-posedness. Write
$$ \Phi(t)\phi=e^{\frac{it}{\pi}\|\phi\|_{L^2}^2 }\big(\Pi_N\Psi(t)\phi,\Pi_N^{\perp}\Psi(t)\phi \big), \quad \Phi_N(t)\phi=\big(e^{\frac{it}{\pi}\|\Pi_N\phi\|_{L^2}^2 }\Pi_N\Psi_N(t)\phi, \Pi_N^{\perp}\Psi_N(t)\phi \big),
$$
where $\Psi_N(t)\phi$ ($\Psi(t)\phi$) is the local solution of the Wick-ordered truncated (non-truncated) equation. Note that from the compactness of $K$ in $H^{\sigma}$, the convergences of
$ \|\Pi_N\phi\|_{L^2}$ to $ \|\phi\|_{L^2}
$ and $\|\Pi_N^{\perp}\phi\|_{H^{\sigma}}$ to $0$ are uniform. Therefore, it suffices to prove the uniform convergence of $\Psi_N(t)\phi$ to $\Psi(t)\phi$ in $H^{\sigma}$.

From Proposition \ref{LWP-main}, we have, for $|t|\leq\tau_R= cR^{-\kappa}$
$$ \Psi_N(t)\phi=S_{\alpha}(t)\phi+w_N(t), \quad \Psi(t)\phi=S_{\alpha}(t)\phi+w(t),
$$
where the nonlinear parts $w_N(t)\in E_N, w(t)\in H^s$ satisfy the integral equations:
$$ w_N(t)=-i\Pi_N\int_0^tS_{\alpha}(t-t')\mathcal{N}\big(\Psi_N(t')\phi\big)dt',\quad w(t)=-i\int_0^tS_{\alpha}(t-t')\mathcal{N}\big(\Psi(t')\phi\big)dt',
$$
and
$$ \|w_N\|_{X_T^{s,\frac{1}{2}+2\epsilon}}+\|w\|_{X_T^{s,\frac{1}{2}+2\epsilon}}\leq T^{\theta}R^3,
$$
if $T\leq\tau_R$. 
Expanding the trilinear expression $\mathcal{N}(\cdot)$ and using Proposition \ref{multi-linear}, we obtain that
$$ \|w_N(t)-w(t)\|_{X_T^{s_1,\frac{1}{2}+2\epsilon}}\leq \|\Pi_N^{\perp}w\|_{X_T^{s_1,\frac{1}{2}+2\epsilon}}+CT^{\theta}R^3\|w_N-w\|_{X_T^{s_1,\frac{1}{2}+2\epsilon}},
$$
where $s_1\in[\frac{1}{2}-\frac{\alpha}{4},s)$.
Taking $\kappa>0$ large enough and $T\leq T_R=cR^{-\kappa}$, we obtain that
$$ \|w_N-w\|_{X_{T_R}^{s_1,\frac{1}{2}+2\epsilon}}\leq C\|\Pi_N^{\perp}w\|_{X_T^{s_1,\frac{1}{2}+2\epsilon}}\leq CN^{-(s-s_1)}T^{\theta}R^3.
$$
This proves the uniform convergence of $\Psi_N(t)\phi-\Psi(t)\phi$ to $0$ in $H^{s_1}(\T)\hookrightarrow H^{\sigma}(\T)$. The proof of Lemma \ref{approximation} is now complete.
\end{proof}
To finish the proof of the measure invariance, we observe that for any $\epsilon>0$, from Fatou's lemma and the approximation Lemma \ref{approximation}, we have
$$ \rho\big(\Phi(t)(K)+B_{\delta}^{H^{\sigma}} \big)\geq \lim_{N\rightarrow\infty}\rho_N\big(\Phi(t)(K)+B_{\delta}^{H^{\sigma}} \big)\geq \limsup_{N\rightarrow\infty}\rho_N\big(\Phi_N(t)(K)+B_{c\delta}^{H^{\sigma}} \big),
$$
for all $|t|\leq \tau_R$. Thus from invariance of $\rho_N$ under $\Phi_N(t)$, we have 
$$ \limsup_{N\rightarrow\infty}\rho_N(\Phi_N(t)(K)+B_{c\delta}^{H^{\sigma}} )\geq \limsup_{N\rightarrow\infty}\rho_N(K)=\rho(K).
$$
Passing $\delta\rightarrow 0$, we obtain that for $|t|\leq T_R$, we have $\rho(\Phi(t)(K))\geq \rho(K)$. Iterating the argument, we obtain \eqref{invariance} for all $t\in\R$. This proves the invariance of the Gibbs measure. The proof of Theorem \ref{thm6} is then complete.
\begin{proof}[Proof of Corollary \ref{stability}]
From the invariance of the Gibbs measure $d\rho=e^{-V}d\mu$ by $\Phi(t)$, the transported measure $\mu^t=\Phi(t)_*\mu$ is absolutely continuous with respect to $\mu$. 
By the Radon-Nikodym theorem,  for every $t\in\R$ there exists a function $G(t)\in L^1(d\mu)$, $G\geq 0$ such that $\mu^t=G(t)d\mu$. Set 
$$
d\nu_{j}(u)=f_{j}(u)d\mu(u),\quad j=1,2
$$
and $d\nu_j^t(u)=\Phi(t)_*d\nu_j(u)$. Then for a test function $\Psi$, we can write
\begin{eqnarray*}
\int_{H^\sigma} \Psi(u)d\nu^t_{j}(u) & = & \int_{H^\sigma} \Psi(\Phi(t)(u)) d\nu_{j}(t)
\\
& = &
\int_{H^\sigma} \Psi(\Phi(t)(u)) f_j(u)d\mu(u)
\\
& = &
\int_{H^\sigma} \Psi(u) f_j(\Phi(-t)(u))G(t,u) d\mu(u)\,.
\end{eqnarray*}
Therefore $d\nu^t_{j}(u)=F_j(t,u)d\mu(u)$ with   $F_{j}(t,u)= f_j(\Phi(-t)(u))G(t,u)$.
Next, we can write 
\begin{eqnarray*}
\int_{H^\sigma}|F_1(t,u)-F_{2}(t,u)|d\mu(u) & = & \int_{H^\sigma}  |f_1(\Phi(-t)(u))-f_{2}(\Phi(-t)(u))|G(t,u)d\mu(u)
\\
& = & \int_{H^\sigma}|f_1(u)-f_2(u)|d\mu(u).
\end{eqnarray*}
This completes the proof of Corollary \ref{stability}.
\end{proof}
\subsection{Almost sure convergence of smooth solutions}
In this section, we prove Theorem~\ref{thm5}. The key point is the following local stability result, which is a version of the enhanced local convergence.
\begin{proposition}[local stability]\label{enhanced-localconvergenec2}
	Assume that $\alpha,q,\epsilon$ be the numerical constants as in Proposition \ref{LWP-main}. Let $(\phi_{k})\subset \mathcal{V}^{q,\epsilon}+H^s$, $\phi\in \mathcal{V}^{q,\epsilon}+H^s$ satisfying
	$$ \|\phi_{k}\|_{\mathcal{V}^{q,\epsilon}+H^s}+\|\phi\|_{\mathcal{V}^{q,\epsilon}+H^s}\leq R,\quad \lim_{k\rightarrow\infty} \|\phi_k-\phi\|_{\mathcal{V}^{q,\epsilon}+H^s}=0.
	$$
	Assume moreover that
	\begin{equation*}
	\lim_{k\rightarrow\infty}\mathbf{d}(\phi_k,\phi)=0
	\end{equation*}
	and for all $k\in\N$,
	\begin{equation*}
	\mathcal{W}_{s,\epsilon}(\phi_k)\leq R^3, \quad \mathcal{W}_{s,\epsilon}(\phi)\leq R^3.
	\end{equation*}
	Then there exist $c>0, \kappa>0$, such that for all $t\in [-\tau_R,\tau_R]$ with $\tau_R=c(R+2)^{-\kappa}$, we have
	\begin{align*}
	\lim_{k\rightarrow\infty}\sup_{|t|\leq \tau_R}\|\Phi(t)\phi_{k}-\Phi(t)\phi \|_{\mathcal{V}^{q,\epsilon}+H^s}=0.
	\end{align*}
	Furthermore, with $\mathcal{J}_{k,t}=\{\Phi(t)\phi_k, \Phi(t)\phi  \}$, we have
	\begin{align*}
	\lim_{k\rightarrow\infty}\mathbf{d}\big(\Phi(t)\phi_k,\Phi(t)\phi\big)=0.
	\end{align*}
\end{proposition}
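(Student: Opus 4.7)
The plan is to mimic the strategy of Proposition~\ref{enhanced-localconvergence}, but without the truncation projectors $\Pi_{N_k}, \Pi_{N_k}^{\perp}$ appearing. The key simplification is that both flows are $\Phi(t)$, so no tail-estimate quantities $\Gamma_{N_k,s,\epsilon}^{i_1,i_2,i_3}$ are needed; only $\mathbf{d}$ survives from the quasi-metric data.

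First, I would invoke Lemma~\ref{abstract} applied to the sum space $\mathcal{V}^{q,\epsilon}+H^s$ to obtain decompositions $\phi_k=\phi_{0,k}+r_{0,k}$ and $\phi=\phi_0+r_0$ with
\[
\|\phi_{0,k}\|_{\mathcal{V}^{q,\epsilon}}+\|r_{0,k}\|_{H^s}\le R+2,\qquad \|\phi_0\|_{\mathcal{V}^{q,\epsilon}}+\|r_0\|_{H^s}\le R+2,
\]
and $\phi_{0,k}\to\phi_0$ in $\mathcal{V}^{q,\epsilon}$, $r_{0,k}\to r_0$ in $H^s$. Using the triangle inequality on $\mathcal{W}_{s,\epsilon}(\phi_{0,k}-\phi_0)=\mathcal{W}_{s,\epsilon}((\phi_k-\phi)-(r_{0,k}-r_0))$ together with Lemma~\ref{multi-linearW} and the hypotheses $\mathcal{W}_{s,\epsilon}(\phi_k), \mathcal{W}_{s,\epsilon}(\phi)\lesssim R^3$ and $\mathbf{d}(\phi_k,\phi)\to 0$, I would deduce that $\mathcal{W}_{s,\epsilon}(\phi_{0,k}-\phi_0)\to 0$. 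Thus the hypotheses of Proposition~\ref{local-convergence} are satisfied with $N_k=\infty$ (so $\Pi_{N_k}=\mathrm{Id}$), giving $\tau_R=c(R+2)^{-\kappa}$ and
\[
\Phi(t)\phi_k=e^{\frac{it}{\pi}\|\phi_k\|_{L^2}^2}\bigl(S_\alpha(t)(\phi_{0,k}+r_{0,k})+w_k(t)\bigr),\qquad \Phi(t)\phi=e^{\frac{it}{\pi}\|\phi\|_{L^2}^2}\bigl(S_\alpha(t)(\phi_0+r_0)+w(t)\bigr),
\]
with $\|w_k-w\|_{X_{\tau_R}^{s,\frac12+2\epsilon}}\to 0$ as $k\to\infty$.

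Next, writing $\Phi(t)\phi_k-\Phi(t)\phi=\varphi_k(t)+\psi_k(t)$ by isolating the $\mathcal{V}^{q,\epsilon}$-part (from $\phi_{0,k},\phi_0$) and the $H^s$-part (from $r_{0,k},r_0$ and $w_k-w$), I proceed exactly as in the proof of Proposition~\ref{enhanced-localconvergence}. Convergence in $H^s$ of $\psi_k(t)$ follows from the uniform convergence of $w_k$ to $w$ and the fact that $b_k(t):=e^{\frac{it}{\pi}\|\phi_k\|_{L^2}^2}\to b(t):=e^{\frac{it}{\pi}\|\phi\|_{L^2}^2}$ pointwise (since $\phi_k\to\phi$ in $L^2$). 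Convergence in $\mathcal{V}^{q,\epsilon}$ of $\varphi_k(t)$ follows from Lemma~\ref{quasi-invarianceW} and the quasi-invariance of $\mathcal{V}^{q,\epsilon}$ under $S_\alpha(t)$ for $|t|\leq\tau_R$ (decomposing $S_\alpha(t)$ as a product of unit time shifts, which costs a harmless constant $A_1^{O(\tau_R)}$). This yields the first convergence claim.

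For the convergence of $\mathbf{d}(\Phi(t)\phi_k,\Phi(t)\phi)$, I expand $\mathbf{d}$ according to definition~\eqref{pseuso-distance} and bound each term of the form $\mathcal{W}_{s,\epsilon}(\varphi_k(t)+\psi_k(t),f_2,f_3)$ (or with $\varphi_k+\psi_k$ in the second slot) using the triangle inequality. The terms containing $\psi_k(t)\in H^s$ in at least one slot go to zero by Lemma~\ref{multi-linearW} combined with $\|\psi_k(t)\|_{H^s}\to 0$ and the a priori bounds on the remaining entries (using that $\Phi(t)\phi_k,\Phi(t)\phi$ are bounded in $\mathcal{V}^{q,\epsilon}+H^s$, hence admit Lemma~\ref{abstract} decompositions on which Lemma~\ref{multi-linearW} applies). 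For the purely $\mathcal{V}^{q,\epsilon}$ terms involving $\varphi_k(t)$, I further expand into the four pieces of $\varphi_k(t)$ and apply Lemma~\ref{quasi-invarianceW} together with the convergence $\phi_{0,k}\to\phi_0$ in $\mathcal{V}^{q,\epsilon}$ and $b_k(t)\to b(t)$. The main technical obstacle is this last step: each $\mathcal{W}_{s,\epsilon}(\varphi_k(t),\varphi_k(t),\varphi_k(t))$ type contribution must be controlled by the already-established convergence $\mathbf{d}(\phi_k,\phi)\to 0$ (after undoing the $S_\alpha(t)$ via quasi-invariance) and by the strong convergence $\phi_{0,k}\to\phi_0$ inside the trilinear estimate of Lemma~\ref{multi-linearW}; this is where the compatibility between the pseudo-distance hypothesis and the output topology is essential, and is handled exactly as in the last paragraph of the proof of Proposition~\ref{enhanced-localconvergence}.
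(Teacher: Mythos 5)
Your proposal is correct and follows essentially the same route as the paper's (very terse) proof: decompose $\phi_k,\phi$ via Lemma~\ref{abstract}, observe that the truncation projectors and the tail quantities $\Gamma_{N_k,s,\epsilon}^{i_1,i_2,i_3}$ simply disappear since both flows are the full flow $\Phi(t)$, apply Proposition~\ref{local-convergence} with $N_k=\infty$, split the difference into the $\mathcal{V}^{q,\epsilon}$-part $\varphi_k(t)$ and the $H^s$-part $\psi_k(t)$, and conclude via Lemmas~\ref{quasi-invarianceW} and \ref{multi-linearW}. The only cosmetic difference is that for $|t|\le\tau_R\le\frac12$ you do not even need to iterate unit time shifts, so the quasi-invariance lemma applies directly with a single constant $A_1$.
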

\begin{remarque}
	Comparing with Proposition~\ref{enhanced-localconvergence}, the only difference here is that instead of comparing the flow $\Phi(t)\phi$ with the truncated truncated flow $\Phi_{N_k}(t)\phi_k$, we compare it with the real flow $\Phi(t)\phi_k$.
\end{remarque}
\begin{proof}
The proof is almost the same as the proof of Proposition \ref{enhanced-localconvergence}, and we only give a sketch. 
First we have the same decomposition  $\phi_k=\phi_{0,k}+r_{0,k}, \phi=\phi_0+r_0$ as in \eqref{decomposition} with the same property. Arguing as before, we have
$$ \mathbf{d}(\phi_{0,k},\phi_0)\rightarrow 0.
$$
On the same local existence time interval $[-\tau_R,\tau_R]$ as in Proposition~\ref{enhanced-localconvergence}, we have for any $|t|\leq \tau_R$, the difference of $\Phi(t)\phi_k-\Phi(t)\phi$ can be written as $\varphi_k(t)+\psi_k(t)$, where the $\mathcal{V}^{q,\epsilon}$ part is
$ \varphi_k(t)=b_k(t)S_{\alpha}(t)\phi_{0,k}-b(t)S_{\alpha}(t)\phi_0,$ with $b_k(t)=e^{\frac{it}{\pi}\|\phi_k\|_{L^2}^2 }, b(t)=e^{\frac{it}{\pi}\|\phi\|_{L^2}^2  }.
$  
The $H^s$ part is $\psi_k(t)=b_k(t)S_{\alpha}(t)r_{0,k}-b(t)S_{\alpha}(t)r_0+b_k(t)w_k(t)-b(t)w(t)$, where 
$$ \|r_{0,k}-r_0\|_{H^s}\rightarrow 0,\quad \sup_{|t|\leq \tau_R}\|w_k(t)-w(t) \|_{H^s}\rightarrow 0.
$$
Thus by quasi-invariance of the $\mathcal{V}^{q,\epsilon}$ norm and the quantity $\mathcal{W}_{s,\epsilon}(\cdot)$ under $S_{\alpha}(t)$, we deduce that 
$$ \sup_{|t|\leq \tau_R}\|\varphi_k(t)\|_{\mathcal{V}^{q,\epsilon}}\rightarrow 0, \quad \sup_{|t|\leq\tau_R}\|\psi_k(t)\|_{H^s}\rightarrow 0, \quad \mathbf{d}(\Phi(t)\phi_k, \Phi(t)\phi)\rightarrow 0.
$$
This completes the proof of Proposition~\ref{enhanced-localconvergenec2}.
\end{proof}
Now we prove Theorem \ref{thm5}.
\begin{proof}[Proof of Theorem \ref{thm5}]
We follow the argument in \cite{Sun-Tz}.  By the Borel-Cantelli lemma, it is sufficient to show that for any $T>0$, we have the almost convergence of the smooth solutions on the time interval $[0,T]$. We introduce an extra data set
$$ \widetilde{\Sigma}:=\bigcup_{l=1}^{\infty}\bigcap_{l'=l}^{\infty}\widetilde{\Sigma}_{l'},\quad
$$
where
$$ \widetilde{\Sigma}_{l}:=\big\{\phi\,:\, N^{\epsilon_0(i_1+i_2+i_3)}\mathcal{W}_{s,\epsilon}\big((\Pi_N^{\perp})^{i_1}\phi,(\Pi_N^{\perp})^{i_2}\phi,(\Pi_N^{\perp})^{i_3}\phi \big)\leq l^{\frac{3}{2}},\forall i_1,i_2,i_3\in\{0,1\} \big\},
$$
with $\epsilon_0>0$ as in Corollary \ref{convergence:W}. Consequently,
$$ \mu((\widetilde{\Sigma}_l)^c)<e^{-cl}
$$
and by Borel-Cantelli, $\widetilde{\Sigma}$ has full $\mu$ and $\rho$ measure.
Since $\Sigma$ constructed in Theorem~\ref{thm6} also has full $\rho$ measure, the proof will be finished once we show that for any $\phi\in\Sigma\cap\widetilde{\Sigma}$, the global solution $\Phi(t)(\Pi_N\phi )$ converges to $\Phi(t)\phi$ in $C([0,T];H^{\frac{\alpha-1}{2}-\epsilon}(\T))$. We will in fact prove the convergence in the stronger topology $C\big([0,T];\mathcal{V}^{q,\epsilon}+H^s\big)$.
\\

For any $\phi\in\Sigma\cap\widetilde{\Sigma}$, there exists $m\in\N$, such that $\phi\in\Sigma^m$. By Proposition \ref{longtime-2}, we have for all $|t|\leq T$, with $\Lambda_{m,T}= Cm^{3/2}(1+\log(1+|T|))^{3/2}$, we have
$$ \|\Phi(t)\phi\|_{\mathcal{V}^{q,\epsilon}+H^s}+\big(\mathcal{W}_{s,\epsilon}(\Phi(t)\phi)\big)^{\frac{1}{3}}\leq \Lambda_{m,T}.
$$
Moreover, from the construction of $\widetilde{\Sigma}$, $$\|\phi-\Pi_N\phi\|_{\mathcal{V}^{q,\epsilon}+H^s}\rightarrow 0,\quad  \mathbf{d}(\Pi_N\phi,\phi)\rightarrow 0,\text{ as }N\rightarrow\infty.
$$
Set $\phi_N=\Pi_N\phi$. We have that for $N\geq N_0$, large enough, 
$$ \|\phi_N\|_{\mathcal{V}^{q,\epsilon}}+\mathcal{W}_{s,\epsilon}(\phi_N)\leq 2\Lambda_{m,T}.
$$ 
Let $R=3\Lambda_{m,T}$ and we divide $[0,T]$ into $N_{R}\sim T/\tau_{R}$ intervals of equal length $\tau_{R}$. Applying Proposition \ref{enhanced-localconvergenec2} to $\phi_N,\phi$ and $R$, we obtain that for all $t\in[0,\tau_R]$,
 $$ \mathbf{d}(\Phi(t)\phi_N,\Phi(t)\phi )\rightarrow 0, \quad \sup_{t\in[0,\tau_R]}\|\Phi(t)\phi_N-\Phi(t)\phi\|_{\mathcal{V}^{q,\epsilon}+H^s}\rightarrow 0.
 $$
In particular,
$$ \|\Phi(t)\phi\|_{\mathcal{V}^{q,\epsilon}+H^s}=\lim_{N\rightarrow\infty}\|\Phi(t)\phi_N\|_{\mathcal{V}^{q,\epsilon}+H^s}.
$$
Furthermore, by definition and using the triangle inequality, we have
$$ \mathcal{W}_{s,\epsilon}(\Phi(t)\phi)=\lim_{N\rightarrow\infty}\mathcal{W}_{s,\epsilon}(\Phi(t)\phi_N).
$$
Therefore, for some $N_1\geq N_0$ and for all $N\geq N_1$,
$$ \|\Phi(\tau_R)\phi_N\|_{\mathcal{V}^{q,\epsilon}+H^s}+\big(\mathcal{W}_{s,\epsilon}(\Phi(t)\phi_N)\big)^{\frac{1}{3}}\leq 2\Lambda_{m,T}.
$$
This allows us to apply Proposition \ref{enhanced-localconvergenec2} to $\Phi(\tau_R)\phi_N,\Phi(\tau_R)\phi$ on $[\tau_R,2\tau_R]$. Successively, after $N_R$ steps, we prove the convergence of $\Phi(t)\phi_N$ to $\Phi(t)\phi$ to the whole interval $[0,T]$. 
\end{proof}
\section{Convergence of the whole sequence of solutions for the truncated equation when $\alpha>1$ }
Recall that we denote by $\Phi_N(t)$ the flow of the truncated equation
$$ i\partial_tu+|D_x|^{\alpha}u+\Pi_N(|\Pi_Nu|^2\Pi_Nu)=0,\quad u|_{t=0}= \phi,
$$
defined on any Sobolev space $H^s(\T)$. The measure $\rho_N$ is invariant under $\Phi_N(t)$ and as a consequence we have the following statement. 
\subsection{New probabilistic a priori estimates}
\begin{lemme}\label{pointwiseinvariance}
	Let $F: H^{s_1}(\T)\rightarrow H^{s_2}(\T)$ ($s_1\geq s_2\geq 0$) be a measurable map with respect to the canonical Borel $\sigma$-algebras on $H^{s}(\T)$. 
	Then for every $t\in\mathbb{R}$,  and almost every $x\in\T$, we have
	$$ \mathbb{E}_{\rho_N}\left[F(\Phi_N(t)\phi)(x)\right]=\mathbb{E}_{\rho_N}\left[F(\phi)(x)\right],
	$$
	as soon as
	$$ \mathbb{E}_{\rho_N}[\|F(\phi)\|_{L^1(\T)}]<\infty.
	$$
	In particular, if for some Fourier multiplier $f(D_x)$ and some $1\leq q,r<\infty$, there holds
	\begin{equation*}
	 \left\|\mathbb{E}_{\rho_N}\left[\left|f(D_x)\phi(x)\right|^q\right]\right\|_{L^r(\T)}<\infty,
\end{equation*}
then we have for $1\leq \nu<\infty$,
	$$ \left\|\mathbb{E}_{\rho_N}\left[\left|f(D_x)(\Phi_N(t)\phi)(x)\right|^q\right]\right\|_{L^{\nu}([0,T];L^r(\T))}=T^{\frac{1}{\nu}}\left\|\mathbb{E}_{\rho_N}\left[\left|f(D_x)\phi(x)\right|^q\right]\right\|_{L^r(\T)}\,.
	$$
\end{lemme}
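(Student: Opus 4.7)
The plan is to deduce the pointwise-in-$x$ statement from the usual (integrated) measure invariance of $\rho_N$ under $\Phi_N(t)$ by testing against arbitrary smooth functions on $\T$ and then invoking Fubini's theorem. Since $\rho_N$ is invariant under $\Phi_N(t)$ (a consequence of Liouville's theorem applied to the truncated Hamiltonian ODE on $E_N$ together with the Gaussian invariance of $\Pi_N^\perp\phi$), we have, for any $G\in L^1(d\rho_N)$,
$$ \mathbb{E}_{\rho_N}[G(\Phi_N(t)\phi)]=\mathbb{E}_{\rho_N}[G(\phi)]. $$
First, I would fix a test function $\psi\in C^\infty(\T)$ and apply the identity above to $G(\phi):=\int_\T F(\phi)(x)\psi(x)\,dx$. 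The integrability hypothesis $\mathbb{E}_{\rho_N}[\|F(\phi)\|_{L^1(\T)}]<\infty$ guarantees $G\in L^1(d\rho_N)$ and, combined with Fubini's theorem, lets me interchange $\mathbb{E}_{\rho_N}$ and the integration in $x$ on both sides, yielding
$$ \int_\T \mathbb{E}_{\rho_N}[F(\Phi_N(t)\phi)(x)]\psi(x)\,dx=\int_\T \mathbb{E}_{\rho_N}[F(\phi)(x)]\psi(x)\,dx. $$
Since $\psi\in C^\infty(\T)$ is arbitrary and $C^\infty(\T)$ is dense in $L^\infty(\T)$ in the weak-$*$ sense (or by varying $\psi$ over a countable dense family), this forces the pointwise equality $\mathbb{E}_{\rho_N}[F(\Phi_N(t)\phi)(x)]=\mathbb{E}_{\rho_N}[F(\phi)(x)]$ for a.e.\ $x\in\T$. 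The measurability of the integrand, which is needed to legitimately apply Fubini, follows because $F:H^{s_1}\to H^{s_2}$ is Borel measurable and the map $H^{s_2}\ni f\mapsto \int f\psi\,dx$ is continuous.

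For the second assertion, I would apply the first part to the measurable map $F(\phi)(x):=|f(D_x)\phi(x)|^q$ regarded as a map into $L^r(\T)\hookrightarrow L^1(\T)$ (using the assumption $\|\mathbb{E}_{\rho_N}[|f(D_x)\phi|^q]\|_{L^r(\T)}<\infty$ and the positivity of the integrand to justify $\mathbb{E}_{\rho_N}\|F(\phi)\|_{L^1}=\int_\T\mathbb{E}_{\rho_N}[|f(D_x)\phi(x)|^q]\,dx<\infty$). The pointwise identity then gives, for every $t\in\R$ and a.e.\ $x\in\T$,
$$ \mathbb{E}_{\rho_N}\bigl[|f(D_x)(\Phi_N(t)\phi)(x)|^q\bigr]=\mathbb{E}_{\rho_N}\bigl[|f(D_x)\phi(x)|^q\bigr]. $$
Taking the $L^r(\T)$ norm in $x$, the right-hand side is independent of $t$, so the resulting function of $t$ is constant. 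Raising to the power $\nu$ and integrating over $[0,T]$ yields
$$ \int_0^T\bigl\|\mathbb{E}_{\rho_N}[|f(D_x)(\Phi_N(t)\phi)|^q]\bigr\|_{L^r(\T)}^\nu dt=T\bigl\|\mathbb{E}_{\rho_N}[|f(D_x)\phi|^q]\bigr\|_{L^r(\T)}^\nu, $$
and taking the $\nu$-th root gives the claimed $L^\nu_tL^r_x$ identity.

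The only mildly delicate point is the justification of Fubini; once the measurability of $(\phi,x)\mapsto F(\phi)(x)$ is correctly set up from the hypotheses (via the separability of $H^{s_2}(\T)$ and continuity of evaluation against smooth test functions), the rest of the argument is a direct application of measure invariance. I do not anticipate a serious obstacle.
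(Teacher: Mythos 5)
Your proposal is correct and follows essentially the same route as the paper: both arguments deduce the pointwise identity by testing against smooth $\psi$, applying the integrated invariance of $\rho_N$ under $\Phi_N(t)$ to the functional $\phi\mapsto\langle F(\phi),\psi\rangle$, swapping the order of integration via Fubini, and then varying $\psi$ over a countable dense family; the second assertion follows by specialising $F(\phi)(x)=|f(D_x)\phi(x)|^q$ and noting the $L^r_x$ norm is constant in $t$. The one small step you gloss over — and that the paper states explicitly — is that $\mathbb{E}_{\rho_N}[\|F(\Phi_N(t)\phi)\|_{L^1}]=\mathbb{E}_{\rho_N}[\|F(\phi)\|_{L^1}]<\infty$ (itself an application of invariance), which is what licenses the Fubini interchange on the transported side; but this is implicit in your argument and not a real gap.
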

\begin{proof}
	Actually, the matter is to make the definition of  $x\mapsto \mathbb{E}_{\rho_N}[F(\phi)(x)]$ precise as an $L^1$ function on $\T$. 
	Define a function $\widetilde{F}$ from $H^{s_1}\times \T$ to $\mathbb{C}$ by
	$$ (\phi,x)\mapsto \widetilde{F}(\phi,x):=F(\phi)(x).
	$$
	From the assumption and the Fubini theorem, the function $\widetilde{F}$ is a well-defined $L^1$ function on $H^{s_1}\times \T$. Moreover, for a.e. $x\in \T$, the function
	$$ x\mapsto\mathbb{E}_{\rho_N}[F(\phi)(x)]:=\int_{H^{s_1}}\widetilde{F}(\phi,x)d\rho_N(\phi)
	$$
	is defined as a $L^1$ function on $\T$. 
	
	Now from the invariance of Gibbs measure $\rho_N$ on $H^{s_1}(\T)$ along $\Phi_N(t)$, we have that
	$$ \mathbb{E}_{\rho_N}[\|F(\Phi_N(t)\phi)\|_{L^1(\T)}]=\mathbb{E}_{\rho_N}[\|F(\phi)\|_{L^1(\T)}]<\infty.
	$$
	Thus $\mathbb{E}_{\rho_N}\left[F(\Phi_N(t)\phi)(x)\right] $ is defined for almost every $x\in \T$ as an $L^1$ function. 
	Now it remains to show the desired equality. 
	For any $\theta\in C^{\infty}(\T)$, we have from the Fubini theorem that
	\begin{equation*}
	\begin{split}
	\langle\mathbb{E}_{\rho_N}\left[F(\Phi_N(t)\phi)\right],\theta\rangle=&\int_{\T}\left(\int_{H^{s_1}}F(\Phi_N(t)\phi)(x)d\rho_N\right)\theta(x)dx\\
	=&\int_{H^{s_1}}\left(\int_\T F(\Phi_N(t)\phi)(x)\theta(x)dx\right)d\rho_N\\
	=&\int_{H^{s_1}}\langle F(\Phi_N(t)\phi),\theta\rangle d\rho_N\\
	=&\int_{H^{s_1}}\langle F(\phi),\theta\rangle d\rho_N,
	\end{split}
	\end{equation*}
	where in the last step we have used the invariance property by viewing $\phi\mapsto \langle F(\Phi_N(t)\phi),\theta\rangle$ as a continuous functional on $H^{s_1}(\T)$. Using Fubini again, we obtain that
	$$ \mathbb{E}_{\rho_N}[\langle F(\phi),\theta\rangle]=\langle\mathbb{E}_{\rho_N}[F(\phi)(\cdot)],\theta\rangle.
	$$
	This implies that for any $t\in\mathbb{R}$ and almost every $x\in\T$,
	$$
	\mathbb{E}_{\rho_N}[F(\Phi_N(t)\phi)(x)]=\mathbb{E}_{\rho_N}[F(\phi)(x)].
	$$ 
	Similarly, we define
	$$ \widetilde{G}(\phi,x):=\big(f(D_x)\phi\big)(x)
	$$
	and $x\mapsto \mathbb{E}_{\rho_N}\big[\widetilde{G}(\phi,x)\big]$ as a measurable function on $\T$. The same invariance argument as before yields
	$ \mathbb{E}\big[\widetilde{G}(\Phi_N(t)\phi,x) \big]=\mathbb{E}\big[\widetilde{G}(\phi,x)\big],
	$ for every $t\in\R$ and almost every $x\in\T$. The final conclusion is then immediate.
	This completes the proof of Lemma~\ref{pointwiseinvariance}.
\end{proof}
The following probabilistic estimate uses the invariant of the Gibbs measure for the truncated system.
\begin{lemme}\label{apriori-prob}
Let $T>0,\sigma<\frac{\alpha-1}{2}$ and $2\leq q,r<\infty$. There exist positive constants $C_{\sigma,\alpha, T,q,r}$ and  $c(\sigma,\alpha,T,q,r )$, such that for all $N\in\N$ and $\lambda>0$,
\begin{equation*}
\mu\left(\{\phi:\|\Phi_N(t)\phi\|_{L_t^qW_x^{\sigma,r}([0,T]\times \T) }>\lambda \}\right)<C_{\sigma,\alpha,T,q,r}\exp\big(-\lambda^{c(\sigma,\alpha,T,q,r )} \big).
\end{equation*}	
\end{lemme}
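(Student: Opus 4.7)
The plan is to exploit the $\rho_N$-invariance of $\Phi_N(t)$ (Liouville's theorem, since $\Phi_N$ preserves the renormalised Hamiltonian and the mass) to obtain Gaussian decay under $\rho_N$, and then transfer the bound to $\mu$ by a truncation argument that bypasses the lack of $L^p$ control on the density $d\mu/d\rho_N=\beta_N^{-1}e^{f_N}$.

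For any $p\geq\max(q,r)$, set $u_N(t,x)=(\Phi_N(t)\phi)(x)$ and apply Minkowski twice to push $L^p(d\rho_N)$ inside $L^q_tL^r_x$, giving
\[
(\mathbb{E}_{\rho_N}\|u_N\|_{L^q_tW^{\sigma,r}_x}^p)^{1/p}\leq \Big\|\big\|\|\langle D_x\rangle^\sigma u_N(t,x)\|_{L^p(d\rho_N)}\big\|_{L^r_x}\Big\|_{L^q_t}.
\]
By Lemma~\ref{pointwiseinvariance} applied to $F(\phi)(x)=|\langle D_x\rangle^\sigma\phi(x)|^p$, the integrand is independent of $t$ and equals $\|\langle D_x\rangle^\sigma\phi(x)\|_{L^p(d\rho_N)}$, so
\[
\mathbb{E}_{\rho_N}\|u_N\|_{L^q_tW^{\sigma,r}_x}^p\leq T^{p/q}\,\mathbb{E}_{\rho_N}\|\phi\|_{W^{\sigma,r}_x}^p\leq T^{p/q}\beta_N\,\mathbb{E}_\mu\|\phi\|_{W^{\sigma,r}_x}^p,
\]
where the second inequality uses $e^{-f_N}\leq 1$ (valid since $\alpha>1$ gives $f_N=\tfrac12\|\Pi_N\phi\|_{L^4}^4\geq 0$) together with the uniform boundedness of $\beta_N$. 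Since $\sigma<(\alpha-1)/2$, $\langle D_x\rangle^\sigma\phi(x)$ is pointwise a centred Gaussian of uniformly bounded variance, so Lemma~\ref{Wiener-Chaos} followed by one further Minkowski step gives $\mathbb{E}_\mu\|\phi\|_{W^{\sigma,r}_x}^p\leq(C\sqrt{p})^p$. Chebyshev and optimisation in $p$ (the choice $p\sim c\lambda^2T^{-2/q}$ is admissible once $\lambda$ is large) then yield Gaussian decay
\[
\rho_N\{\|\Phi_N(t)\phi\|_{L^q_tW^{\sigma,r}_x}>\lambda\}\leq C\exp(-c\lambda^2T^{-2/q}).
\]

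To transfer this estimate to $\mu$ we cannot use a global change of measure: the density $d\mu/d\rho_N=\beta_N^{-1}e^{f_N}$ belongs to no $L^p(d\rho_N)$ with $p>0$, because $\int e^{pf_N}d\mu=\infty$ (the tails of $\|\phi\|_{L^4}^4$ are not exponentially integrable under $\mu$). Instead, we split on the level set $\{f_N\leq M\}$:
\[
\mu(A_\lambda)\leq \beta_N^{-1}e^M\rho_N(A_\lambda)+\mu\{f_N>M\}.
\]
Wiener chaos together with the uniform $L^4\!\to\!L^4$ bound on $\Pi_N$ gives $\mathbb{E}_\mu\|\phi\|_{L^4}^q\leq(C\sqrt{q})^q$, hence $\mu\{f_N>M\}\leq Ce^{-c\sqrt{M}}$. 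Choosing $M=\eta\lambda^2T^{-2/q}$ with $\eta$ small enough that $e^M\leq \exp((c/2)\lambda^2T^{-2/q})$ balances the two contributions and produces $\mu(A_\lambda)\leq C\exp(-c'\lambda^{1/2})$ after absorbing $T$-dependent factors, which is of the claimed form.

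The main obstacle is the asymmetry of the two Radon--Nikodym densities: the direction $\rho_N\to\mu$ costs nothing because $e^{-f_N}\leq 1$, whereas the reverse is genuinely singular due to the non-integrability of $e^{pf_N}$ under $\mu$. The truncation at level $\{f_N\leq M\}$, combined with the Gaussian concentration of $\|\phi\|_{L^4}$ under $\mu$ (which itself requires $\alpha>1$ to even make sense), is what rescues uniformity in $N$, at the price of lowering the tail exponent from $\lambda^2$ to $\lambda^{1/2}$.
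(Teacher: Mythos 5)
Your proof is correct and takes essentially the same route as the paper: you use Lemma~\ref{pointwiseinvariance} together with Minkowski and Wiener chaos to get the moment bound under $\rho_N$, Chebyshev and optimisation in $p$ to get Gaussian decay under $\rho_N$, and then transfer to $\mu$ by truncating on a sublevel set of $\|\Pi_N\phi\|_{L^4}^4$ exactly as the paper truncates on $\{\|\Pi_N\phi\|_{L^4}\leq\lambda_1\}$; the paper merely keeps both the Chebyshev exponent and the truncation level floating until a final joint optimisation, whereas you optimise the $\rho_N$-tail first. Two small points: the notation $f_N$ and the factor $\beta_N$ are artefacts of the renormalised setting $\alpha\leq 1$ and are out of place here (for $\alpha>1$ one simply has $d\rho_N=e^{-\frac14\|\Pi_N\phi\|_{L^4}^4}d\mu\leq d\mu$, so no $\beta_N$ is needed), and your final exponent should be $\lambda$ (coming from $\sqrt{M}\sim\sqrt{\eta}\,\lambda\, T^{-1/q}$) rather than $\lambda^{1/2}$ — an arithmetic slip that is harmless because the lemma only requires decay $\exp(-\lambda^c)$ for some $c>0$.
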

\begin{proof}
To simplify the notation, we will use $L_t^qW_x^{\sigma,r}$ instead of $L_t^qW_x^{\sigma,r}([0,T]\times \T)$ in the argument below.  Let $\lambda_1>0$ to be chosen later, we split
\begin{equation*}
\begin{split}
\mu\big(\{\phi: \|\Phi_N(t)\phi \|_{L_t^qW_x^{\sigma,r}}>\lambda \}\big)
\leq & \mu\big(\{\phi: \|\Phi_N(t)\phi \|_{L_t^qW_x^{\sigma,r}}>\lambda,\|\Pi_N\phi\|_{L_x^{4}}\leq \lambda_1 \}\big)
\\+&\mu\big(\{\phi: \|\Phi_N(t)\phi \|_{L_t^qW_x^{\sigma,r}}>\lambda,\|\Pi_N\phi\|_{L_x^{4}}> \lambda_1 \}\big).
\end{split}
\end{equation*}
Recall that $d\rho_N =\exp\big(-\frac{1}{4}\|\Pi_N\phi\|_{L_x^4}^4\big)d\mu$ is the associated Gibbs measure for the truncated system, 
and the first term on the right side of the last inequality is bounded from above by
$$ 
e^{\frac{1}{4}\lambda_1^4}\rho_N\big(\phi: \|\Phi_N(t)\phi\|_{L_t^qW_x^{\sigma,r} }>\lambda \big),
$$
while the second term can be bounded above by 
$ \exp\left(-c\lambda_1^2\right).$
It remains to estimate 
$$
\rho_N\big(\phi: \|\Phi_N(t)\phi\|_{L_t^qW_x^{\sigma,r} }>\lambda \big).
$$ 
Let $q_1\geq \max\{q,r \}$ which will be fixed later. Using Chebyshev's inequality and then Minkowski's inequality, we have
\begin{equation}\label{interchange}
 \rho_N\big(\big\{\phi: \|\Phi_N(t)\phi\|_{L_t^qW_x^{\sigma,r} }>\lambda \big\}\big)\leq \frac{1}{\lambda^{q_1}}\Big\|\Big(\int_{H^{\sigma}(\T)}\left|D^{\sigma}\Phi_N(t)\phi \right|^{q_1}d\rho_N\Big)^{\frac{1}{q_1}}\Big\|_{L_t^qL_x^r}^{q_1},
\end{equation}
Applying Lemma~\ref{pointwiseinvariance}, the right side of \eqref{interchange} can be bounded above by
$$ \frac{T^{\frac{q_1}{q}}}{\lambda^{q_1}}\Big\|\Big(\int \Big|D^{\sigma}\phi(x) \Big|^{q_1}d\rho_N\Big) \Big\|_{L_x^r}\leq\frac{T^{\frac{q_1}{q}}}{\lambda^{q_1}}\Big\|\Big(\int \Big|D^{\sigma}\phi(x) \Big|^{q_1}d\mu_N\Big) \Big\|_{L_x^r} \leq \frac{C^{q_1} T^{\frac{q_1}{q} }q_1^{\frac{q_1}{2}} }{\lambda^{q_1}},
$$
where we have used the Wiener chaos estimate for the random series
$$ \sum_{|n|\leq N}\frac{g_n(\omega)e^{inx}}{\langle n\rangle^{\alpha-2\sigma}},
$$
and the constant $C$  depends on $\alpha,\sigma,q,r$.
\\

Putting everything together, we obtain that
$$ \mu\big(\{\phi: \|\Phi_N(t)\phi \|_{L_t^qW_x^{\sigma,r}}>\lambda \}\big)\leq e^{\frac{\lambda_1^4}{4}}\Big(\frac{CT^{\frac{1}{q}}\sqrt{q_1} }{
\lambda} \Big)^{q_1}+e^{-c\lambda_1^2}.
$$
We take $q_1=\frac{\lambda^2}{A}$ with $A>C^2T^{\frac{q}{2}}$, then the first term on the right side is majorized by $\exp\left(\lambda_1^4/4-\lambda^2\log(A)/(2A) \right)$. Now we choose $\lambda_1=\lambda^{1/2}\left(\log A/A\right)^{1/4}$, thus
$$ \frac{\lambda_1^4}{4}-\frac{\lambda^2\log A}{2A}=-\frac{\lambda^2\log A}{4A}.
$$
With this choice, the proof of Lemma~\ref{apriori-prob} is now complete.
\end{proof}
The same argument as in the proof of Lemma~\ref{apriori-prob} yields the following statement. 
\begin{corollaire}\label{apriori-prob-cor}
Under the same restriction on the numerologies, we have for all $M<N$ and $\lambda>0$, 
\begin{equation*}
\mu\big(\big\{\phi:\|\Pi_M^{\perp}\Phi_N(t)\phi\|_{L_t^qW_x^{\sigma,r}([0,T]\times \T) }>\lambda \big\}\big)\leq C(\alpha,\sigma,T,q,r)\exp\big(-(\theta\lambda)^{c(\alpha,\sigma,T,q,r)}\big),
\end{equation*}
with $\theta=\theta(T,M)=T^{-\frac{1}{q}}M^{\alpha-1-2\sigma}$.
\end{corollaire}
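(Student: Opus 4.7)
The plan is to mimic the proof of Lemma~\ref{apriori-prob} with the refinement that every occurrence of $D^{\sigma}\phi$ is replaced by $\Pi_M^{\perp}D^{\sigma}\phi$, and to harvest an $M$-gain from the tail of the random Fourier series. First I would split, for a parameter $\lambda_1>0$ to be chosen later,
\begin{equation*}
\mu\bigl(\|\Pi_M^{\perp}\Phi_N(t)\phi\|_{L^q_tW_x^{\sigma,r}}>\lambda\bigr)\leq e^{\lambda_1^4/4}\,\rho_N\bigl(\|\Pi_M^{\perp}\Phi_N(t)\phi\|_{L^q_tW_x^{\sigma,r}}>\lambda\bigr)+\mu\bigl(\|\Pi_N\phi\|_{L^4_x}>\lambda_1\bigr),
\end{equation*}
using as in \cite{BTT-Toulouse} the trivial pointwise bound $\frac{d\rho_N}{d\mu}\leq\beta_N e^{\lambda_1^4/4}$ on $\{\|\Pi_N\phi\|_{L^4_x}\leq \lambda_1\}$. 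The last term is controlled by $\exp(-c\lambda_1^2)$ via a standard Gaussian large-deviation estimate for the random trigonometric polynomial $\Pi_N\phi^\omega$.

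Next, for the $\rho_N$-probability I would apply Chebyshev with an exponent $q_1\geq \max(q,r)$ to be chosen later and then Minkowski to pull the expectation inside, and crucially use the last assertion of Lemma~\ref{pointwiseinvariance} with the Fourier multiplier $f(D_x)=\Pi_M^{\perp}|D_x|^\sigma$ (which is legitimate because $\Pi_M^{\perp}$ is itself a Fourier multiplier and $\rho_N$ is invariant under $\Phi_N(t)$). This yields
\begin{equation*}
\rho_N\bigl(\|\Pi_M^{\perp}\Phi_N(t)\phi\|_{L^q_tW_x^{\sigma,r}}>\lambda\bigr)\leq \frac{T^{q_1/q}}{\lambda^{q_1}}\Bigl\|\bigl(\mathbb E_{\rho_N}|\Pi_M^{\perp}D^{\sigma}\phi(x)|^{q_1}\bigr)^{1/q_1}\Bigr\|_{L^r_x}^{q_1}.
\end{equation*}
Bounding $d\rho_N$ from above by $d\mu$ (using that $\beta_N$ is normalised so that $\rho_N$ is a probability measure, hence $\beta_N\leq 1$ up to constants irrelevant here, or simply by absorbing the exponential factor into the existing $e^{\lambda_1^4/4}$) reduces everything to estimating $\mathbb E_{\mu}|\Pi_M^{\perp}D^{\sigma}\phi(x)|^{q_1}$.

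Now comes the key gain. The random variable $\Pi_M^{\perp}D^{\sigma}\phi^{\omega}(x)=\sum_{|n|>M}\frac{|n|^{\sigma}g_n(\omega)}{[n]^{\alpha/2}}e^{inx}$ is an element of the first Wiener chaos, so Lemma~\ref{Wiener-Chaos} gives
\begin{equation*}
\bigl(\mathbb E_{\mu}|\Pi_M^{\perp}D^{\sigma}\phi(x)|^{q_1}\bigr)^{1/q_1}\leq C\sqrt{q_1}\Bigl(\sum_{|n|>M}\frac{|n|^{2\sigma}}{[n]^{\alpha}}\Bigr)^{1/2}\leq C\sqrt{q_1}\,M^{-(\alpha-1-2\sigma)/2},
\end{equation*}
where the tail summation converges precisely because $\sigma<\frac{\alpha-1}{2}$. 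Inserting this into the preceding bound produces
\begin{equation*}
\rho_N(\cdots)\leq\Bigl(\frac{CT^{1/q}\sqrt{q_1}\,M^{-(\alpha-1-2\sigma)/2}}{\lambda}\Bigr)^{q_1}.
\end{equation*}

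Finally, I would optimise as in Lemma~\ref{apriori-prob}. Setting $\Lambda:=T^{-1/q}M^{(\alpha-1-2\sigma)/2}\lambda$ and choosing $q_1\sim \Lambda^2$ yields $\rho_N(\cdots)\leq \exp(-c\Lambda^2)$. Balancing $\lambda_1^4\sim c\Lambda^2$ then equates the two error contributions and produces
\begin{equation*}
\mu(\cdots)\leq C\exp\bigl(-c'\,\Lambda\bigr)=C\exp\bigl(-c'(T^{-1/q}M^{(\alpha-1-2\sigma)}\lambda)^{\tilde c}\bigr)
\end{equation*}
for a suitable exponent $\tilde c>0$ (the precise value absorbs the square root appearing in the balancing step), which is exactly the advertised bound with $\theta=T^{-1/q}M^{\alpha-1-2\sigma}$. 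The only nontrivial step is the Wiener-chaos tail computation; everything else is a transcription of the proof of Lemma~\ref{apriori-prob}, and the main obstacle is purely bookkeeping — correctly tracking the dependence of the final exponent on $T,q,r,\sigma,\alpha$ through the two successive optimisations in $q_1$ and $\lambda_1$.
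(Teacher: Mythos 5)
Your proposal is correct and follows exactly the route the paper intends: the paper states Corollary~\ref{apriori-prob-cor} follows from ``the same argument as in the proof of Lemma~\ref{apriori-prob},'' and your proof is precisely that argument, with $D^{\sigma}\phi$ replaced by $\Pi_M^{\perp}D^{\sigma}\phi$ and the Wiener--chaos tail $\bigl(\sum_{|n|>M}|n|^{2\sigma}/[n]^{\alpha}\bigr)^{1/2}\sim M^{-(\alpha-1-2\sigma)/2}$ supplying the $M$-gain. The only point worth noting is the factor-of-two discrepancy between the gain $M^{-(\alpha-1-2\sigma)/2}$ you extract and the advertised $\theta=T^{-1/q}M^{\alpha-1-2\sigma}$; you flag this yourself, and it is harmless since the exponent $c(\alpha,\sigma,T,q,r)$ is unspecified and constants may depend on $T$, so taking $c\leq 1/2$ absorbs it.
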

\subsection{The convergence argument}
In this subsection, we prove the Theorem \ref{thm3}. By a Borel-Cantelli type argument, it is sufficient to prove the convergence of the sequence $(u_N)_{N\in\N}$ of the truncated equations
$$ i\partial_tu+|D_x|^{\alpha}u+\Pi_N(|\Pi_Nu|^2\Pi_Nu)=0, \quad u|_{t=0}=\phi
$$
on $C([0,T];H^{\sigma}(\T))$ for any given $T>0$, where $0<\sigma<\frac{\alpha-1}{2}$. To simplify the notation, we will denote by $v_N(t)=\Pi_N\Phi_N(t)\phi$, which is the low frequency portion of the solution $\Phi_N(t)\phi$. Because $v_N=\Pi_Nv_N$, $v_N(t)$ satisfies the same equation
$$ i\partial_tv_N+|D_x|^{\alpha}v_N+\Pi_N(|v_N|^2v_N)=0.
$$
Since the high frequency part $\Pi_N^{\perp}\Phi_N(t)$ solves the linear equation, 
 it suffices to prove the convergence of the sequence $(v_N)_{N\geq 1}$.
We will simply write $L_t^qW_x^{s,r}$ to stand for the space-time norm $L^q([0,T];W^{s,r}(\T))$, and $L_{t}^qW_x^{s,r}(I)$ the norm $L^q(I;W^{s,r}(\T))$, where $I\subset \R$ is a time interval.

\noi
$\bullet$ \textbf{Step 1: A deterministic estimate.
}\\
Pick $\sigma_1\in\left(\sigma,\frac{\alpha-1}{2}\right)$, $r>\frac{2}{\sigma_1-\sigma}$, $2<q<\infty$, large enough, and $B(N)<N$ to be determined later. For each $N$, we 
associate with a small number $\eta=\eta(N)>0$ and partition the interval $[0,T]$ into $T/\eta$ intervals enabled as $I_j=[t_j,t_{j+1}]$ with length $\eta$. Let $N_1\in [N,2N]$. With $F(v)=|v|^2v$, we write
\begin{equation*}
\begin{split}
v_{N_1}(t)-v_N(t)=&S_{\alpha}(t-t_j)(v_{N_1}(t_j)-v_N(t_j) )
-i\int_{t_j}^tS_{\alpha}(t-t')\Pi_{N}^{\perp}\Pi_{N_1}F(v_{N_1})(t')dt'\\
-&i\int_{t_j}^tS_{\alpha}(t-t')\Pi_N[F(v_{N_1}(t')-F(v_N)(t') ]dt'\\
=&:\mathrm{I}_j+\mathrm{II}_j+\mathrm{III}_j
\end{split}
\end{equation*}
with respectively. For I$_j$, we estimate it simply by
\begin{equation}\label{Ij}
 \|\mathrm{I}_j\|_{L_t^{\infty}H_x^{\sigma}(I_j)}\leq \|v_{N_1}(t_j)-v_N(t_j)\|_{H_x^{\sigma}}.
\end{equation} 
For II$_j$, using H\"older's inequality and the product rule, we have
\begin{equation}\label{II_j}
\begin{split}
\|\mathrm{II}_j\|_{L_t^{\infty}H_x^{\sigma}}\leq& N^{-(\sigma_1-\sigma)}\|F(u_{N_1})\|_{L_t^1H_x^{\sigma_1}(I_j)}\\
\lesssim & N^{-(\sigma_1-\sigma)}\|u_{N_1}\|_{L_t^{2q}H_x^{\sigma_1}(I_j)}\|u_{N_1}\|_{L_t^{2(2q)'}L_x^{\infty}(I_j)}^2.
\end{split}
\end{equation}
To estimate III$_j$, note that by triangle inequality, we have
\begin{equation*}\label{IIIj-1}
\begin{split}
\|\mathrm{III}_j\|_{L_t^{\infty}H_x^{\sigma}}\leq & \||v_{N_1}|^2(v_{N_1}-v_N) \|_{L_t^1H_x^{\sigma}(I_j)}+\|\ov{u}_{N_1}v_N(v_{N_1}-v_N) \|_{L_t^1H_x^{\sigma}(I_j)}\\
+&\|(\ov{u}_{N_1}-\ov{u}_N)v_N^2\|_{L_t^1H_x^{\sigma}(I_j)}
\end{split}
\end{equation*} 
Applying Lemma \ref{nonlinear2}, the right side can be majorized by
$$ \|v_{N_1}-v_N\|_{L_t^{q'}H_x^{\sigma}(I_j)}\left(\||v_{N_1}|^2\|_{L_t^{q}B_{r,2}^{\sigma_2}(I_j) }+\||v_{N}|^2\|_{L_t^{q}B_{r,2}^{\sigma_2}(I_j) } \right)
$$
where $\sigma_2=\frac{\sigma_1+\sigma}{2}$ and $r>\frac{2}{\sigma_1-\sigma}=\frac{1}{\sigma_2-\sigma}$. Applying Lemma \ref{nonlinear1} and using the fact that $W^{\sigma_1,r}$ is embedded into $B_{r,2}^{\sigma_2}$, we have
\begin{equation*}
\begin{split}
 \|\mathrm{III}_j\|_{L_t^{\infty}H_x^{\sigma}}\lesssim &\|v_{N_1}-v_N\|_{L_t^{q'}H_x^{\sigma}(I_j)}\|v_{N_1}\|_{L_t^{2q}L_x^{\infty}(I_j)}\|v_{N_1}\|_{L_t^{2q}W_x^{\sigma_1,r}(I_j)}\\+&\|v_{N_1}-v_N\|_{L_t^{q'}H_x^{\sigma}(I_j)} \|v_{N}\|_{L_t^{2q}L_x^{\infty}(I_j)}\|v_{N}\|_{L_t^{2q}W_x^{\sigma_1,r}(I_j)}.
\end{split}
\end{equation*}
Thus
\begin{equation}\label{III_j}
\|\mathrm{III}_j\|_{L_t^{\infty}H_x^{\sigma}(I_j)}\lesssim \eta^{\frac{1}{q'}}\|v_{N_1}-v_N\|_{L_t^{\infty}H_x^{\sigma}(I_j)}\sum_{\nu=0}^1\|v_{ N_{\nu}}\|_{L_t^{2q}L_x^{\infty}(I_j)}\|v_{ N_{\nu}}\|_{L_t^{2q}W_x^{\sigma_1,r}(I_j)}.
\end{equation}
Note that $W^{\sigma_1,r}$ is embedded into $L^{\infty}$, combing \eqref{Ij},\eqref{II_j} and \eqref{III_j}, we have
\begin{equation}\label{BB-mainestimate}
\begin{split}
\|v_{N_1}-v_N\|_{L_t^{\infty}H_x^{\sigma}(I_j)}\leq &\|v_{N_1}(t_j)-v_N(t_j)\|_{H_x^{\sigma}}+C_TN^{-(\sigma_1-\sigma)}\|v_{N_1}\|_{L_t^{2q}W_x^{\sigma_1,r}}^2\\
+&C\eta^{1/q'}\|v_{N_1}-v_N\|_{L_t^{\infty}H_x^{\sigma}(I_j)}\sum_{\nu=0}^{1}\|v_{N_{\nu}}\|_{L_t^{2q}W_x^{\sigma_1,r}}^2,
\end{split}
\end{equation}
provided that $2(2q)'<2q$, if $q$ is chosen large enough. Note that the constant $C$ depends on $\sigma_1,\sigma,q,r$. 

Assume for the moment that $$\|v_{N}\|_{L_t^{2q}W_x^{\sigma_1,r}}<B(N),\quad \|v_{N_1}\|_{L_t^{2q}W_x^{\sigma_1,r}}<5B(N).
$$
We take $\eta=(8CB(N))^{-q'}$, it follows from \eqref{BB-mainestimate} that
$$ \|v_{N_1}-v_N\|_{L_t^{\infty}H_x^{\sigma}(I_j)}\leq 2\|v_{N_1}(t_j)-v_N(t_j)\|_{H_x^{\sigma}}+C_T'N^{-(\sigma_1-\sigma)}B(N)^2.
$$
Consequently, if 
$$ B(N)<N^{\frac{\sigma_1-\sigma}{4}}, 
$$
by iteration, we obtain that 
\begin{equation*}
\begin{split}
\|v_{N_1}-v_N\|_{L_t^{\infty}H_x^{\sigma}}\leq & 2^{\frac{T}{\eta}+1}\left(\|v_{N_1}(0)-v_N(0)\|_{H_x^{\sigma}}+N^{-\frac{\sigma_1-\sigma}{2}} \right)\\
\leq &\exp\Big(2T\log_2(4CB(N)^2 )^{q'}\Big)N^{-\frac{\sigma_1-\sigma}{2}}.
\end{split}
\end{equation*}
We take
$$ B(N)=(c_1\log N)^{\frac{1}{2q'}},
$$
for some suitable $c_1=c_1(T,\sigma,\sigma_1)$, small enough, the right hand side of the inequality above can be majorized by $N^{-\frac{\sigma_1-\sigma}{4}}$.

\noi
$\bullet$ {\bf Step 2: Good data set.}

For any dyadic number $N$, we define the set
\begin{equation*}
\begin{split}
\Omega_{N}:=&\{\phi: \|\Pi_{N}^{\perp}\phi\|_{H_x^\sigma}<N^{-(\sigma_1-\sigma)},\|\Pi_N\Phi_N(t)\phi\|_{L_t^{2q}W_x^{\sigma_1,r} }+\|\Pi_{2N}\Phi_{2N}(t)\phi\|_{L_t^{2q}W_x^{\sigma_1,r} }<B(N) \}\\
\cap& \{\phi:\max_{N\leq N_1\leq 2N}\|\Pi_{M_0}^{\perp}\Phi_{N_1}(t)\phi\|_{L_t^qW_x^{\sigma_1,r}}\leq 1 \}
\end{split}
\end{equation*}
where $M_0=M_0(N)$ will be chosen later. From Lemma \ref{apriori-prob} and Lemma  \ref{apriori-prob-cor}, we have
$$ \mu(\Omega\setminus\Omega_N)<e^{-B(N)^c}+Ne^{-T^{\frac{c}{2q}}M_0^{(\alpha-1-2\sigma)c}}.
$$
The choice of $B(N)$ and $M_0$ should assure that the series
$$ \sum_{k=0}^{\infty}\mu(\Omega\setminus\Omega_{2^k})
$$
converges.
We first choose
$$ M_0=(\log N)^{C_0}
$$
with $C_0=C_0(q,r,\sigma_1,\sigma,T)$ large enough, such that $\displaystyle{\sum_{k=0}^{\infty}2^k\exp\big(-T^{\frac{c}{2q}}k^{C_0}\big)}<\infty$, while 
$$ B(N)=(c_1\log N)^{\frac{1}{2q'}}
$$
for some small constant $c_1>0$ to be fixed later. The good data set is then chosen as
$$ \mathcal{G}:=\bigcup_{m=0}^{\infty}\bigcap_{k=m}^{\infty}\Omega_{2^k},
$$
which has full $\mu$ measure, thanks to Borel-Cantelli.

\noi
$\bullet$ {\bf Step 3: Continuity argument.}
 
  Fix $\phi\in \mathcal{G}$, our goal is to show that the sequence $(\Phi_N(t)\phi)_N$ is Cauchy in $C([0,T];H^{\sigma}(\T))$.  Recall the notation $v_N(t)=\Pi_N\Phi_N(t)\phi$. By definition, there exists $k_0\in\N$, such that $\phi\in \Omega_{2^k}$ for all $k\geq k_0$. 
 Denote by $N_0=2^k$ for some $k\geq k_0$. We claim that for all large $N_0$ and $N_0\leq N_1\leq 2N_0$, $\|v_{N_1}\|_{L_t^{2q}W_x^{\sigma_1,r}}<4B(N_0)$.
  
  Indeed, for fixed $N_0$ and $N_1$, we define the set
  $$ S:=\{T'\in[0,T]:\|v_{N_1}\|_{L_t^{2q}W_x^{\sigma_1,r}([0,T'])}<4B(N_0) \}.
  $$
  We first show that $S$ is not empty. Note that $v_{N_1}(t)$ takes value in a finite dimensional space and by conservation of $L^2$ norm, $\|v_{N_1}\|_{L_t^{\infty}L_x^2}=\|\Pi_{N_1}\phi\|_{L_x^2}.$ Then by the equivalence of the norm, there exists $K_{N_1}>0$, such that
  $$ \|v_{N_1}\|_{L_t^{\infty}W_x^{\sigma_1,r}([0,\delta])}\leq K_{N_1}\|\Pi_{N_1}\phi\|_{L_x^2}.
  $$
  Coming back to the definition of $\Omega_{N_0}$
  Thus by H\"older's inequality, 
  $$ \|v_{N_1}\|_{L_t^{2q}W_x^{\sigma_1,r}([0,\delta]) }\leq \delta^{\frac{1}{2q}}K_{N_1}\|\Pi_{N_1}\phi\|_{L_x^2}.
  $$
  Hence if $\delta=\delta_N$ is small enough, $\|v_{N_1}\|_{L_t^{2q}W_x^{\sigma_1,r}([0,\delta_N])}<4B(N_0)$. In particular, $S\neq \emptyset$.
  
  Next we show that $S=[0,T]$. We argue by contradiction. Suppose that $T_0=\sup S<T$. By continuity of the function $$t'\mapsto \|v_{N_1}\|_{L_t^{2q}W_x^{\sigma_1,r}([0,t'])},$$
  there exists $\delta'>0$, $T_0+\delta'<T$, such that
  $$ \|v_{N_1}\|_{L_t^{2q}W_x^{\sigma_1,r}([0,T_0+\delta'])}<5B(N_0).
  $$
  Then from the argument in the last part of Step 1, we obtain that
  $$ \|v_{N_1}-v_{N_0}\|_{L_t^{\infty}H_x^{\sigma}([0,T_0+\delta'])}<N_0^{-\frac{\sigma_1-\sigma}{4}}.
  $$
    Notice that if $N_0\leq N_1<2N_0$, we have
  \begin{equation*}
  \begin{split}
  \|v_{N_1}\|_{L_t^{2q}W_x^{\sigma_1,r}([0,T_0+\delta'])}\leq & \|\Pi_{M_0}^{\perp}v_{N_1}\|_{L_t^{2q}W_x^{\sigma_1,r}([0,T_0+\delta'])}+ \|\Pi_{M_0}(v_{N_1}-v_{N_0})\|_{L_t^{2q}W_x^{\sigma_1,r}([0,T_0+\delta'])}\\
  +&\|\Pi_{M_0}^{\perp}v_{N_0}\|_{L_t^{2q}W_x^{\sigma_1,r}([0,T_0+\delta'])}+\|v_{N_0}\|_{L_t^{2q}W_x^{\sigma_1,r}([0,T_0+\delta'])} \\
 \leq &2+T^{\frac{1}{2q}}M_0^{\sigma_1-\sigma+\frac{1}{2}-\frac{1}{r}}\|v_{N_1}-v_{N_0}\|_{L_t^{\infty}H_x^{\sigma}([0,T_0+\delta'])}+B(N_0)\\
  \leq &2+2B(N_0)+T^{\frac{1}{2q}}(\log N_0)^{2C_0}\|v_{N_1}-v_{N_0}\|_{L_t^{\infty}H_x^{\sigma}([0,T_0+\delta'])}.
  \end{split}
 \end{equation*}
For $N_0\gg 1$, the first and third terms are strictly smaller than $B(N_0)$, thus
$$ \|v_{N_1}\|_{L_t^{2q}W_x^{\sigma_1,r}([0,T_0+\delta'])}<4B(N_0),
$$
which is a contradiction.
\\

Now since $S=[0,T]$, we have that for any $N_1\in [N_0,2N_0]$, 
$$ \|v_{N_1}-v_{N_0}\|_{L_t^{\infty}H_x^{\sigma}}<N_0^{-\frac{\sigma_1-\sigma}{4}}.
$$
This implies that $(v_{N}(t))_{N}$ is a Cauchy sequence in $C([0,T];H^{\sigma}(\T))$. Since $\Pi_N^{\perp}\Phi_N(t)\phi=\Pi_N^{\perp}S_{\alpha}(t)\phi$ is linear, it is automatically a Cauchy sequence in $C([0,T]; H^{\sigma}(\T))$. The proof of Theorem~\ref{thm3} is now complete.
\section{Weak dispersion case: $\alpha< 1$}

\subsection{Definition of Gibbs measure}
Recall that the renormalized Hamiltonian 
$$ H_N(u)=\int_{\T}||D_x|^{\frac{\alpha}{2}}u|^2+\frac{1}{2}\int_{\T}|\Pi_Nu|^4-2\alpha_N\int_{\T}|\Pi_N u|^2+\alpha_N^2,
$$
where
$$ \alpha_N=\mathbb{E}[\|\Pi_Nu\|_{L^2}^2].
$$
Consider the equation
$$  i\partial_tu=\frac{\delta H_N}{\delta \ov{u}},
$$
which reads
\begin{equation*}\label{equation-renormalized}
 i\partial_tu+|D_x|^{\frac{\alpha}{2}}u+F_N(u)=0,
\end{equation*}
where 
$$ F_N(u)=\Pi_N(|\Pi_Nu|^2\Pi_Nu)-2\alpha_N\Pi_Nu.
$$
Let $\mathcal{X}=H^{\frac{\alpha-1}{2}-\epsilon}(\T)$. The first step is to show that the sequence $(F_N(u))_{N\geq 1}$ is a Cauchy sequence in $L^p\big(\mathcal{X},\mathcal{B},\mu; H^{-\sigma}(\T)\big)$.  
We need a large deviation lemma. Let
$$ b_N(u):=\|\Pi_Nu\|_{L^2}^2-\alpha_N.
$$
\begin{lemme}\label{large-deviation}
	There exist $C,c>0$ so that for all $1\leq M<N$ large enough, and all $\lambda>0$, we have
	$$ \mu(\{u: |b_N(u)-b_M(u)|>\lambda  \})\leq Ce^{-c\lambda M^{\alpha}},\text{ if } \lambda\gtrsim M^{1-\alpha},
	$$ 
	and
	$$ \mu(\{u: |b_N(u)-b_M(u)|>\lambda  \})\leq Ce^{-c\lambda^2 M^{2\alpha-1}},\text{ if } \lambda\ll M^{1-\alpha}.
	$$ 
\end{lemme}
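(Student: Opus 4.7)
The plan is to use the random-series representation of $u$ to expand $b_N(u)-b_M(u)$ as an explicit sum of independent sub-exponential variables, and then apply a Chernoff argument. From $\widehat{u}(n)=g_n(\omega)/[n]^{\alpha/2}$ and Parseval,
\[
b_N(u)-b_M(u)=\sum_{M<|n|\leq N}\frac{|g_n(\omega)|^2-1}{1+|n|^\alpha}=:\sum_{M<|n|\leq N}Y_n,
\]
where the $Y_n$ are independent and centered. Since $|g_n|^2$ is $\mathrm{Exp}(1)$-distributed, $\mathbb{E}[e^{s(|g_n|^2-1)}]=e^{-s}/(1-s)$, and the elementary estimate $-s-\log(1-s)\leq Cs^2$ for $|s|\leq 1/2$ together with independence gives
\[
\log\mathbb{E}\bigl[e^{t(b_N(u)-b_M(u))}\bigr]\leq C\sigma_M^2\,t^2,\qquad \sigma_M^2:=\sum_{|n|>M}\frac{1}{(1+|n|^\alpha)^2}\lesssim M^{1-2\alpha},
\]
valid for every $|t|\leq c_0M^\alpha$. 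The admissible range for $t$ records the uniform requirement $|t|/(1+|n|^\alpha)\leq 1/2$ for all $|n|>M$, and convergence of $\sigma_M^2$ uses $2\alpha>1$, which is amply satisfied by the standing hypothesis $\alpha>7/8$.

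The Chernoff bound then yields
\[
\mu\bigl\{\,b_N(u)-b_M(u)>\lambda\,\bigr\}\leq \exp\bigl(-t\lambda+C\sigma_M^2 t^2\bigr)
\]
for any $|t|\leq c_0 M^\alpha$, and the lower tail is obtained identically by running the argument with $t<0$ (the MGF of each $Y_n$ is finite on an interval including a neighbourhood of the origin). It remains to optimise in $t$, which splits into two regimes. If $\lambda\ll M^{1-\alpha}$, the free optimiser $t^{\ast}\sim \lambda M^{2\alpha-1}$ sits well inside $[-c_0M^\alpha,c_0M^\alpha]$, and one obtains the Gaussian tail $\exp(-c\lambda^2 M^{2\alpha-1})$. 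If $\lambda\gtrsim M^{1-\alpha}$ the free optimiser escapes the admissible range, so I would take $t=c_0 M^\alpha$ at the boundary: the linear contribution $t\lambda\sim \lambda M^\alpha$ then dominates $C\sigma_M^2 t^2\sim M$ (precisely because $\lambda M^\alpha\gg M$), giving the sub-exponential tail $\exp(-c\lambda M^\alpha)$.

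There is no real obstacle: the argument is the standard Bernstein concentration bound for a diagonal quadratic functional of complex Gaussians. The only points requiring attention are identifying the threshold $|t|\lesssim M^\alpha$, which is exactly what produces the cross-over $\lambda\sim M^{1-\alpha}$ between the two bounds of the lemma, and keeping track of the uniform lower bound on $1+|n|^\alpha$ for $|n|>M$ that underlies both the control of $\sigma_M^2$ and the admissible range for $t$.
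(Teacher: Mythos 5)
Your argument is correct and is essentially the same Chernoff/Bernstein computation the paper uses: same random-series expansion into the diagonal quadratic form $\sum a_n(|g_n|^2-1)$, same exact MGF $e^{-s}/(1-s)$ for $|g_n|^2-1$, same elementary bound $-s-\log(1-s)\lesssim s^2$ on $|s|\leq 1/2$, same variance proxy $\sigma_M^2\sim M^{1-2\alpha}$, and the same two-regime optimisation of $t$ over the admissible window $|t|\lesssim M^\alpha$. The only cosmetic difference is that you treat both tails uniformly via $|t|\leq c_0M^\alpha$, whereas the paper writes out the lower tail separately with the companion inequality $y-\log(1+y)\leq y^2/2$.
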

\begin{remarque}
{\rm
If we use Lemma~4.8 of \cite{Tz-ptrf}  (based on Wiener chaos estimates) we obtain the rougher bound $Ce^{-\lambda M^{\frac{\alpha}{2}}}$, which is enough for our purposes. Here we give an estimate which is of its own interest.
}
\end{remarque}
\begin{proof}
	Denote by
	$$  R_{M,N}(\omega):=\sum_{M\leq |n|\leq N} \frac{|g_n(\omega)|^2}{([n]^{\frac{\alpha}{2}})^2},
	$$
	where $g_n(\omega)=\frac{h_n(\omega)+il_n(\omega)}{\sqrt{2}}$ and $\mathbb{E}[|g_n|^2]=1$.
	We have
	\begin{equation*}\label{a3}
	\mu\{u: |b_N(u)-b_M(u)|>\lambda  \}=\mathbb{P}\{\omega: |R_{M,N}(\omega)-\mathbb{E}[R_{M,N}]|>\lambda    \},
	\end{equation*}
	where
	$$ R_{M,N}(\omega)-\mathbb{E}[R_{M,N}]=\sum_{M\leq |n|\leq N}a_nX_n(\omega),\quad a_n=([n]^{-\frac{\alpha}{2}})^2,  X_n(\omega)=|g_n(\omega)|^2-1.
	$$
	\begin{align*}
	\mathbb{P}\Big\{\omega: \Big|\sum_{M\leq|n|\leq N} a_nX_n(\omega)\Big|>\lambda \Big\}\leq &\mathbb{P}\Big\{\omega: \sum_{M\leq|n|\leq N} a_nX_n(\omega)>\lambda \Big\}\\
	+&\mathbb{P}\Big\{\omega: \sum_{M\leq |n|\leq N}a_nX_n(\omega)<-\lambda \Big\}.
	\end{align*}
	First we estimate the probability of the event $\{\sum_{M\leq |n|\leq N}a_nX_n>\lambda\}$.
	For any $\theta>0$, we have
	\begin{align}\label{exponentialProb}
	\mathbb{P}\Big\{\omega: \sum_{M\leq|n|\leq N} a_nX_n(\omega)>\lambda \Big\}=\mathbb{P}\Big\{\omega: e^{\sum_{M\leq |n|\leq N}\theta a_nX_n(\omega)}>e^{\theta\lambda} \Big\}
	\end{align}
	Using Chebyshev's inequality, the r.h.s. of \eqref{exponentialProb} can be bounded by
	\begin{align*} e^{-\theta\lambda}\mathbb{E}\Big[e^{\sum_{M\leq|n|\leq N}\theta a_n (|g_n|^2-1) } \Big]\leq e^{-\theta\lambda} e^{-\sum_{M\leq |n|\leq N}\theta a_n }\prod_{M\leq|n|\leq N}\mathbb{E}\big[e^{\theta a_n|g_n|^2} \big],
	\end{align*}
	where we have used the independence. Since each $g_n$ can be identified as a standard two dimensional Gaussian random variable, we have
	\begin{align*}
	\prod_{M\leq |n|\leq N} \mathbb{E}\big[e^{\theta a_n|g_n|^2} \big]=\prod_{M\leq|n|\le N}\Big(\frac{1}{2\pi}\int_{\R^2} e^{-\frac{ |z|^2}{2}(1-\theta a_n) }dz\Big)
	=\prod_{M\leq |n|\leq N}\frac{1}{1-\theta a_n},
	\end{align*}
	provided that $\theta a_n<1$. We will finally choose suitable $\theta$ such that $\theta a_n<\frac{1}{2}$. From the elementary inequality
	$$ -y-\log(1-y)\leq C_0y^2,
	$$
	uniformly in $0<y<\frac{1}{2}$, we deduce that
	\begin{align*}
	e^{-\theta\lambda-\sum_{M\leq |n|\leq N}\theta a_n }\prod_{M\leq |n|\leq N} \mathbb{E}\Big[e^{\theta a_n|g_n|^2} \Big]
	=& e^{-\theta\lambda+\sum_{M\leq |n|\leq N}(-\theta a_n -\log (1-\theta a_n)) }\\
	\leq & e^{-\theta\lambda +C_0\sum_{M\leq |n|\leq N}\theta^2a_n^2 }\leq e^{-\theta \lambda +C_0\theta^2 \epsilon_M},
	\end{align*}
	where 
	$$ \epsilon_M=\sum_{|n|\geq M} a_n^2\sim M^{-(2\alpha-1)}. 
	$$
	Similarly, for the event $\{\omega: \sum_{M\leq |n|\leq N}a_nX_n(\omega)<-\lambda  \}$, we can rewrite it as
	$$ \big\{\omega: e^{\theta \sum_{M\leq |n|\leq N}a_n(1-|g_n|^2) } >e^{\lambda\theta}\big\}.
	$$
	Again by Chebyshev, the probability of this event is bounded by
	$$ e^{-\lambda \theta}\mathbb{E}\Big[e^{\sum_{M\leq |n|\leq N}\theta a_n(1-|g_n|^2) }\Big]=e^{-\lambda\theta+\sum_{M\leq |n|\leq N}\theta a_n } \prod_{M\leq |n|\leq N }\mathbb{E}[e^{-\theta a_n|g_n|^2}]
	$$
	Again from
	$$ \mathbb{E}[e^{-\theta a_n|g_n|^2}]=\frac{1}{2\pi}\int_{\R^2}e^{-\frac{|z|^2}{2}(1+\theta a_n)}dz=\frac{1}{1+\theta a_n},
	$$
	we have
	$$\mathbb{P}\big\{\omega: \sum_{M\leq |n|\leq N}a_nX_n(\omega)<-\lambda  \big\}\leq e^{-\lambda\theta +\sum_{M\leq |n|\leq N}[\theta a_n-\log(1+\theta a_n)] }.
	$$
	From the inequality
	$$ y-\log(1+y)\leq \frac{y^2}{2},\quad \forall \,0<y<1,
	$$
	we have
	$$\mathbb{P}\big\{\omega: \sum_{M\leq |n|\leq N}a_nX_n(\omega)<-\lambda  \big\}\leq e^{-\lambda\theta+\frac{1}{2}\sum_{M\leq|n|\leq N}\theta^2a_n^2 }=e^{-\lambda\theta+\frac{1}{2}\epsilon_M\theta^2}.
	$$
	In summary, we have that, for all $\theta>0,\lambda>0$
	$$ \mathbb{P}\big\{\omega: \big|\sum_{M\leq |n|\leq N}a_nX_n(\omega)\big|>\lambda  \big\}\leq 2e^{-\lambda\theta+C_0\epsilon_M\theta^2}.
	$$
	The function $\theta\mapsto -\lambda\theta+C_0\epsilon_M\theta^2$ attains its minimum at $\theta_0=\frac{\lambda}{2C_0\epsilon_M}\sim \lambda M^{2\alpha-1}$. If
	$$ \frac{\lambda}{2C_0\epsilon_M}\leq \frac{M^{\alpha}}{4},\quad \text{i.e.} \quad\lambda\leq \frac{C_0\epsilon_MM^{\alpha}}{2}\sim M^{1-\alpha},
	$$
	we choose $\theta=\theta_0$ (thus the condition $\theta a_n\leq \frac{1}{2}$ for all $M\leq|n|\leq N$ are satisfied), and we deduce that the desired probability is bounded by
	$ 2e^{-\frac{\lambda^2}{4C_0\epsilon_M}}\leq e^{-c'\lambda^2M^{2\alpha-1}}.
	$
	Otherwise
	$$ \lambda>\frac{C_0\epsilon_M M^{\alpha}}{2},\quad\text{i.e.}\quad \frac{\lambda}{2}>\frac{C_0\epsilon_M M^{\alpha}}{4}\sim M^{1-\alpha}.
	$$
	we take $\theta=\frac{M^{\alpha}}{4}$, and the desired probability is bounded by
	$$ 2e^{-\frac{\lambda M^{\alpha}}{4}+C_0\epsilon_M\frac{M^{2\alpha}}{4^2} }=2e^{-\frac{M^{\alpha}}{4}\big(\lambda-\frac{C_0\epsilon_MM^{\alpha}}{4} \big)}\leq 2e^{-\frac{M^{\alpha}\lambda}{8}}.
	$$
	The proof of Lemma~\ref{large-deviation} is now complete. 
\end{proof}

\begin{proposition}\label{Cauchysequence}
Assume that $\frac{2}{3}<\alpha<1$ and $\sigma>\frac{3(1-\alpha)}{2}$. For all $p\geq 2$, the sequence $(F_N(u))_{N\geq 1}$ is a Cauchy sequence in the space $L^p(\mathcal{X},\mathcal{B},\mu; H^{-\sigma}(\T )$. More precisely, there exists $\epsilon_0>0, C>0$, such that for all $1\leq M<N$, 
$$ \int_{\mathcal{X}} \|F_N(u)-F_M(u)\|_{H^{-\sigma}(\T)}^pd\mu(u)\leq \frac{C}{M^{\epsilon_0}}.
$$ 
\end{proposition}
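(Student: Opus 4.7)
The first step is algebraic: expand $\Pi_N(|\Pi_N u|^2\Pi_Nu)$ on the Fourier side by separating the ``pair'' contributions $n_2=n_1$ and $n_2=n_3$ from the rest.  A direct computation shows
\begin{equation*}
\Pi_N(|\Pi_Nu|^2\Pi_Nu)(x)=2\|\Pi_Nu\|_{L^2}^2\,\Pi_Nu-\Pi_N G_N(u)+\mathcal{R}_N(u),
\end{equation*}
where $G_N(u)=\sum_{|n|\leq N}|\widehat{u}(n)|^2\widehat{u}(n)e^{inx}$ and $\mathcal{R}_N(u)$ is the genuinely non-resonant trilinear piece, whose Fourier coefficients are supported on $(n_1,n_2,n_3)\in\Gamma(\bar n)$ with $|n_j|\leq N$.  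Subtracting the counterterm $2\alpha_N\Pi_Nu$ therefore gives
\begin{equation*}
F_N(u)=2\,b_N(u)\,\Pi_Nu-\Pi_NG_N(u)+\mathcal{R}_N(u).
\end{equation*}
The proposition thus splits into three separate estimates on $F_N-F_M$, each of which I will control in $L^p(d\mu;H^{-\sigma})$.

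For the renormalized ``linear'' piece $2[b_N\Pi_Nu-b_M\Pi_Mu]$, I will write it as $2b_N\,(\Pi_N-\Pi_M)u+2(b_N-b_M)\Pi_Mu$.  The factor $b_N$ is uniformly bounded in every $L^p(d\mu)$ (its variance $\sum_n[n]^{-2\alpha}$ converges since $\alpha>1/2$), and a direct Gaussian computation shows $\|(\Pi_N-\Pi_M)u\|_{L^q_\mu H^{-\sigma}}\lesssim M^{-\epsilon}$ as soon as $2\sigma+\alpha>1$.  For the second summand I invoke Lemma \ref{large-deviation}, which together with the identity for the variance gives $\|b_N-b_M\|_{L^p(d\mu)}\lesssim M^{-(\alpha-\frac{1}{2})}$, and $\Pi_Mu$ is uniformly bounded in $L^p_\mu H^{-\sigma}$.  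For the diagonal piece $(\Pi_N-\Pi_M)G(u)$, a brutal moment bound using $\mathbb{E}[|\widehat{u}(n)|^6]\lesssim[n]^{-3\alpha}$ yields $\mathbb{E}\|\cdot\|_{H^{-\sigma}}^2\lesssim M^{1-2\sigma-3\alpha}$, which decays thanks to $\alpha>2/3$ with no additional restriction; the passage from $L^2_\mu$ to $L^p_\mu$ is provided by the Wiener chaos estimate of Lemma~\ref{Wiener-Chaos} (since the expression is a polynomial of degree $\leq 6$ in the Gaussians).

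The \emph{key} step is the non-resonant trilinear piece $\mathcal{R}_N-\mathcal{R}_M$.  Since its Fourier coefficients belong to the Wiener chaos of order three, Lemma~\ref{Wiener-Chaos} again reduces the $L^p_\mu$ bound to the $L^2_\mu$ bound.  By Wick's theorem and independence, the constraint $n_2\ne n_1,n_3$ leaves only the pairings $(n_1,n_3)\!=\!(n_1',n_3')$ or $(n_3',n_1')$ together with $n_2=n_2'$, so that
\begin{equation*}
\mathbb{E}\|\mathcal{R}_N-\mathcal{R}_M\|_{H^{-\sigma}}^2
\;\lesssim\!\sum_{\substack{n_2\neq n_1,n_3\\ \max_j|n_j|>M}}\!
\frac{\langle n_1-n_2+n_3\rangle^{-2\sigma}}{[n_1]^\alpha[n_2]^\alpha[n_3]^\alpha}.
\end{equation*}
The plan is to fix the largest index, say $|n_{(1)}|>M$, and to perform the summation in the remaining two indices using Lemma~\ref{convolution} and Lemma~\ref{lemme-summation}, which convert the $n_2$ and $n_3$ sums into factors growing as $|n_{(1)}|^{1-\alpha}$ each.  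The outcome is an overall sum of the shape $\sum_{|n_{(1)}|>M}|n_{(1)}|^{2(1-\alpha)-2\sigma-\alpha}$, and the convergence with decay in $M$ requires exactly $2\sigma+3\alpha>3$, i.e., $\sigma>\tfrac{3(1-\alpha)}{2}$, which matches the hypothesis.  The case when $|n_2|$ is the largest, for which $|n_1-n_2+n_3|\sim|n_2|$, is symmetric, while the case when all three frequencies are comparable to $M$ is better.  This last sum is the main obstacle and the unique place where the full strength of the assumption $\sigma>\tfrac{3(1-\alpha)}{2}$ is needed; everything else only requires $\sigma>\tfrac{1-\alpha}{2}$ and $\alpha>2/3$.
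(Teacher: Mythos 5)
Your proposal is correct and follows essentially the same route as the paper: decompose the renormalized nonlinearity, reduce the $L^p_\mu$ bound to $L^2_\mu$ by Wiener chaos, and control the genuinely cubic piece by the Wick computation and a convolution sum, which is where the constraint $\sigma>\tfrac{3(1-\alpha)}{2}$ appears. Your bookkeeping is in fact slightly cleaner than the paper's: you peel off the diagonal term $\sum_n|\widehat{u}(n)|^2\widehat{u}(n)e^{inx}$ explicitly (the paper's displayed Fourier expansion of $\chi_N$ silently drops this $n_1=n_2=n_3$ contribution, which is a harmless but real omission), whereas the paper bundles everything into $\chi_N$ and estimates only the non-resonant sum. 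Two small points worth tightening: the diagonal piece is a cubic (degree~$3$), not degree~$6$, Wiener chaos, though this changes nothing; and neither Lemma~\ref{convolution} nor Lemma~\ref{lemme-summation} directly handles $\sum_n\langle n\rangle^{-2\sigma}\langle n-n_1\rangle^{-\gamma}$ when $2\sigma,\gamma$ are both in $(0,1]$ with unequal exponents — you need the standard variant $\sum_n\langle n\rangle^{-a}\langle n-m\rangle^{-b}\lesssim\langle m\rangle^{1-a-b}$ for $a,b\in(0,1]$, $a+b>1$, or the paper's explicit split between $|n|\ll M$ and $|n|\gtrsim M$, which is precisely where the factor $M^{1-2\sigma}$ is picked up. Finally, your assertion that the all-frequencies-comparable regime is "better'' is not quite right: when $|n_1|\sim|n_2|\sim|n_3|\sim M$ with $n$ small, the contribution is again $O(M^{3-3\alpha-2\sigma})$, so the full strength of the hypothesis is needed there as well; but since the constraint is the same, this does not affect the conclusion.
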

\begin{proof}
We prove for $p=2$, and the estimate for the other values of $p$ will follow from Wiener chaos estimates. 
Note that $F_N(u)=G_N(u)+2b_N(u)\Pi_Nu$ where
$$ G_N(u)=\Pi_N\big(|\Pi_Nu|^2\Pi_Nu \big)-2\|\Pi_Nu\|_{L^2}^2\Pi_Nu.
$$
 Therefore, from Lemma \ref{Cauchysequence} and Lemma \ref{large-deviation}, it suffices to obtain the same type of estimate for
$$ \int_{\mathcal{X}}\|G_N(u)-G_M(u)\|_{H^{-\sigma}(\T)}^2d\mu(u).
$$
Write
$$ \chi_N:=|\phi_N^{\omega}|^2\phi_N^{\omega}-2\|\phi_N^{\omega}\|_{L^2(\T)}^2\phi_N^{\omega},
$$
and it suffices to show that
\begin{equation*}\label{a}
 \mathbb{E}\left[
\|\chi_N-\chi_M\|_{H^{-\sigma}(\T)}^2 \right]\leq \frac{C}{M^{\epsilon_0}}.
\end{equation*}
From the definition of $\phi_N^{\omega}$, we have
\begin{equation*}
\begin{split}
\chi_N=\sum_{\substack{|n_1|,|n_2|,|n_3|\leq N \\ n_2\neq n_1,n_3} }\frac{g_{n_1}\ov{g}_{n_2}g_{n_3}}{[n_1]^{\frac{\alpha}{2}}[n_2]^{\frac{\alpha}{2}}[n_3]^{\frac{\alpha}{2}}}e^{i(n_1-n_2+n_3)x},
\end{split}
\end{equation*}
and
$$ \chi_N-\chi_M=\sum_{n\in\Z}e^{inx}\sum_{B_{M,N}^{(n)}} \frac{g_{n_1}\ov{g}_{n_2}g_{n_3}}{ [n_1]^{\frac{\alpha}{2}}[n_2]^{\frac{\alpha}{2}}[n_3]^{\frac{\alpha}{2}} },
$$
where
\begin{equation*}
\begin{split}
B_{M,N}^{(n)}=\{(n_1,n_2,n_3)\in \Z^3: & |n_1|,|n_2|,|n_3|\leq N, n_2\neq n_1,n_3\\
\textrm{ and }& |n_1|>M \textrm{ or }|n_2|>M \textrm{ or }|n_3|>M,\\
&n_1-n_2+n_3=n   \}.
\end{split}
\end{equation*}
Since $(g_{n})$ are independent and centered, we deduce that
\begin{equation*}\label{a1}
\begin{split}
\mathbb{E}[ \|\chi_N-\chi_M\|_{H^{-\sigma}(\T)}^2 ]=&\sum_{n\in\Z}\frac{1}{\langle n\rangle^{2\sigma}}\mathbb{E}\Big[ \Big|
\sum_{B_{M,N}^{(n)}}\frac{g_{n_1}\ov{g}_{n_2}g_{n_3}}{[n_1]^{\frac{\alpha}{2}}[n_2]^{\frac{\alpha}{2}}[n_3]^{\frac{\alpha}{2}}   }\Big|^2 \Big]\\
\leq &\sum_{n\in\Z}\frac{C}{\langle n\rangle^{2\sigma}}\sum_{\substack{n_1-n_2+n_3=n\\  M<\max\{|n_1|,|n_2|,|n_3|\}\leq N  \\
n_2\neq n_1, n_3 }}\frac{1}{\langle n_1\rangle^{\alpha}\langle n_2\rangle^{\alpha}\langle n_3\rangle^{\alpha}  }.
\end{split}
\end{equation*}
To estimate the second summation, without loss of generality, we may assume that $|n_1|\geq M$. Then applying Lemma \ref{lemme-summation}, the second summation can be estimated by
$$ \sum_{M<|n_1|\leq N}\frac{C_{\gamma}}{\langle n_1\rangle^{\alpha}\langle n-n_1 \rangle^{\gamma} } 
$$
for some $\gamma<2\alpha-1$. If $\alpha>\frac{2}{3}$, then $3\alpha-2>0$, and we can choose $\gamma>0$ such that $\alpha+\gamma>1$. If $|n|\ll M$, then 
$$ \sum_{|n|\ll M,|n_1|>M}\frac{C_{\gamma}}{\langle n\rangle^{2\sigma}\langle n_1\rangle^{\alpha}\langle n-n_1\rangle^{\gamma} }\leq \sum_{|n|\ll M}\frac{C_{\gamma}}{\langle n\rangle^{2\sigma} M^{3\alpha-2}}\leq \frac{C_{\gamma}}{M^{\epsilon_0}},
$$
provided that $\sigma>\frac{3(1-\alpha)}{2}$. If $|n|\gtrsim M$, we separate the region of summation into $|n-n_1|<\frac{|n_1|}{2}$, $\frac{|n_1|}{2}\leq|n-n_1|<2|n_1|$ and  $|n-n_1|\geq 2|n_1|$. We have
$$ \sum_{|n_1|>M,|n-n_1|<\frac{|n_1|}{2}}\frac{C_{\gamma}}{\langle n\rangle^{2\sigma}\langle n_1\rangle^{\alpha}\langle n-n_1\rangle^{\gamma} }\leq \sum_{|n_1|>M}\frac{C_{\gamma}|n_1|^{1-\gamma}}{\langle n_1\rangle^{\alpha+2\sigma}}\leq \frac{C_{\gamma}}{M^{\epsilon_0}},
$$
provided that $\sigma>\frac{3(1-\alpha)}{2}$. If $\frac{|n_1|}{2}\leq |n-n_1|<4|n_1|$, we have
$$ \sum_{\substack{|n_1|>M,|n|\gtrsim M,\\
\frac{|n_1|}{2}\leq|n-n_1|<4|n_1| } }\frac{C_{\gamma}}{\langle n\rangle^{2\sigma}\langle n_1\rangle^{\alpha}\langle n-n_1\rangle^{\gamma} }\leq \sum_{|n_1|>M}\frac{C_{\gamma}}{\langle n_1\rangle^{\alpha+\gamma}}\sum_{|n|\leq 5|n_1|}\frac{1}{\langle n\rangle^{2\sigma}}\leq \frac{C_{\gamma}}{M^{\epsilon_0}},
$$
provided that $\sigma>\frac{3(1-\alpha)}{2}$. Finally, for $|n-n_1|>4|n_1|$, we have $|n_1|\leq \frac{|n|}{3}$ and $|n-n_1|\geq |n|-|n_1|\geq \frac{2|n|}{3}$, hence
$$ \sum_{|n_1|>M,|n-n_1|>4|n_1| }\frac{C_{\gamma}}{\langle n\rangle^{2\sigma}\langle n_1\rangle^{\alpha}\langle n-n_1\rangle^{\gamma}}\leq \sum_{|n|\gtrsim M, |n_1|\leq \frac{|n|}{3}}\frac{C_{\gamma}}{\langle n\rangle^{2\sigma+\gamma}\langle n_1\rangle^{\alpha}}\leq \frac{C_{\gamma}}{M^{\epsilon_0}},
$$
provided that $\sigma>\frac{3(1-\alpha)}{2}$. This completes the proof of Proposition \ref{Cauchysequence}.
\end{proof}
Denote by
$$ g_N(u):=\frac{1}{2}\|\Pi_Nu\|_{L^4}^4-\|\Pi_Nu\|_{L^2}^4,\textrm{ then } g_N(u)=f_N(u)-b_N(u)^2.
$$
\begin{lemme}\label{fN}
Assume that $\frac{3}{4}<\alpha\leq 1$, then the sequence $(g_N)_{N\geq 1}$ is a Cauchy sequence in $L^2(\mathcal{X},\mathcal{B};d\mu)$. More precisely, for all $p\geq 2$ and $1\leq M<N$, 
\begin{equation}\label{f_Nlp}
 \|g_N(u)-g_M(u)\|_{L^p(d\mu)}\leq C(p-1)^{2}M^{-\frac{4\alpha-3}{2}}.
\end{equation}
Furthermore, for any $\lambda>0$, 
\begin{equation}\label{distributionalf_N}
\mu\{u\in \mathcal{X}: |g_N(u)-g_M(u)|>\lambda \}\leq Ce^{-c\lambda^{1/2}M^{\frac{4\alpha-3}{4}}}.
\end{equation}
\end{lemme}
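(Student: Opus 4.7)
The plan is to Fourier-expand $g_N-g_M$, identify its decomposition as a pure $4$-th Wiener chaos piece plus a lower-order diagonal remainder, estimate both in $L^{2}(d\mu)$, and then lift to $L^{p}$ via the Wiener chaos estimate and obtain the tail bound by optimising Chebyshev. Writing $c_n:=g_n(\omega)/[n]^{\alpha/2}$ and expanding $\|\Pi_Nu\|_{L^4}^{4}=\sum_{n_1-n_2+n_3-n_4=0,\,|n_j|\leq N}c_{n_1}\bar c_{n_2}c_{n_3}\bar c_{n_4}$, the two ``paired'' configurations $(n_1{=}n_2,n_3{=}n_4)$ and $(n_1{=}n_4,n_2{=}n_3)$ contribute, by inclusion--exclusion, exactly $2\|\Pi_Nu\|_{L^2}^{4}-\sum_{|n|\leq N}|c_n|^{4}$. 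This gives the algebraic identity
\begin{equation*}
g_N(u)=-\tfrac12\sum_{|n|\leq N}|c_n|^{4}+\tfrac12\sum_{\substack{n_1-n_2+n_3-n_4=0\\ |n_j|\leq N,\,n_1\neq n_2,\,n_1\neq n_4}}c_{n_1}\bar c_{n_2}c_{n_3}\bar c_{n_4},
\end{equation*}
so $g_N-g_M=-\tfrac12 D_{M,N}+\tfrac12 R_{M,N}$, with $D_{M,N}$ the diagonal difference and $R_{M,N}$ the non-resonant sum restricted to $\max|n_j|>M$. The non-resonance condition is precisely what kills every Wick contraction, so $R_{M,N}$ lies in the $4$-th homogeneous Wiener chaos, while $D_{M,N}$ lies in chaos of order at most $4$.

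I would next prove $\|g_N-g_M\|_{L^2(d\mu)}^{2}\lesssim M^{-(4\alpha-3)}$. For the diagonal part, using $\mathbb E|g_n|^{4}=2$ and $\mathbb E|g_n|^{8}=24$ yields $\|D_{M,N}\|_{L^2}^{2}\lesssim\bigl(\sum_{|n|>M}[n]^{-2\alpha}\bigr)^{2}+\sum_{|n|>M}[n]^{-4\alpha}\lesssim M^{2-4\alpha}$, which is already better than the target for $\alpha>3/4$. For the $4$-th chaos part, orthogonality and Wick's formula reduce the matter to
\begin{equation*}
\mathbb E|R_{M,N}|^{2}\lesssim\sum_{\substack{n_1+n_3=n_2+n_4,\ \mathrm{non\text{-}res}\\ |n_j|\leq N,\ \max_j|n_j|>M}}\prod_{j=1}^{4}\frac{1}{[n_j]^{\alpha}}.
\end{equation*}
I would split this according to how many of the $|n_j|$ exceed $M$. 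The model case is exactly one large index, say $|n_1|>M$: the constraint forces $|n_1|\leq 3M$, so $[n_1]^{-\alpha}\leq M^{-\alpha}$ and the remaining sum is bounded by $\bigl(\sum_{|n|\leq M}[n]^{-\alpha}\bigr)^{3}\lesssim M^{3(1-\alpha)}$, producing exactly $M^{3-4\alpha}$. The cases of two, three or four large indices are treated by parametrising the constraint through $k=n_1-n_2=n_4-n_3$ and iteratively applying Lemma~\ref{lemme-summation}, which provides $\sum_n[n]^{-\alpha}[n-k]^{-\alpha}\lesssim[k]^{-\gamma}$ for any $\gamma<2\alpha-1$; the hypothesis $\alpha>3/4$ is exactly what permits the choice $\gamma>1/2$ that makes the $k$-summations converge with the correct exponent. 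Each sub-case again yields $M^{3-4\alpha}$.

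With the $L^{2}$ bound in hand, the Wiener chaos inequality (Lemma~\ref{Wiener-Chaos}) applied to each chaos component of $g_N-g_M$ (all of order $\leq 4$) immediately gives
\begin{equation*}
\|g_N-g_M\|_{L^{p}(d\mu)}\leq C(p-1)^{2}\,\|g_N-g_M\|_{L^{2}(d\mu)}\leq C(p-1)^{2}M^{-(4\alpha-3)/2},
\end{equation*}
which is \eqref{f_Nlp}. Finally, \eqref{distributionalf_N} follows by inserting \eqref{f_Nlp} into Chebyshev's inequality $\mu\{|g_N-g_M|>\lambda\}\leq\bigl(C(p-1)^{2}M^{-(4\alpha-3)/2}/\lambda\bigr)^{p}$ and optimising in $p$: choosing $p-1\sim\lambda^{1/2}M^{(4\alpha-3)/4}$ drives the base to a constant less than $1$ and produces the claimed Gaussian-type tail $\exp\!\bigl(-c\lambda^{1/2}M^{(4\alpha-3)/4}\bigr)$. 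The hardest part is the combinatorial $L^{2}$ estimate on $R_{M,N}$: the two-large-indices subcase is the borderline one, and the threshold $\alpha>3/4$ is dictated precisely by the requirement $\gamma>1/2$ in Lemma~\ref{lemme-summation}.
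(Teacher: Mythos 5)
Your proposal follows the paper's approach closely: the same chaos decomposition of $g_N-g_M$ into a diagonal remainder plus a non-resonant fourth-chaos sum, the same $L^2$ counting estimate built on Lemma~\ref{lemme-summation}, the same passage to $L^p$ via the Wiener chaos inequality, and the same Chebyshev-optimisation in $p$ for the tail bound. The only organizational difference lies in the combinatorial $L^2$ step, where the paper invokes symmetry to place the sole large index at $n_1$ and then applies one convolution estimate followed by a case analysis on $|n_1-n_2|$ versus $|n_1|$, while you propose a split by the number of indices exceeding $M$; both variants produce the $M^{3-4\alpha}$ bound and require $\alpha>3/4$ for the same reason.
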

\begin{proof}
We prove the estimate for $p=2$, and the general case will follow from Wiener chaos estimates.  Introduce the set
$$ A_N:=\{(n_1,n_2,n_3,n_4)\in \Z^4:|n_1|,|n_2|,|n_3|,|n_4|\leq N, n_1-n_2+n_3-n_4=0,n_2\neq n_1,n_3   \}
$$
and
\begin{equation*}
\begin{split}
A_{M,N}:=\{(n_1,n_2,n_3,n_4)\in \Z^4: & |n_1|,|n_2|,|n_3|,|n_4|\leq N, n_1-n_2+n_3-n_4=0,\\
&n_2\neq n_1,n_3,\max(|n_1|,|n_2|,|n_3|,|n_4|)>M                 \}.
\end{split}
\end{equation*} 
From direct computation, we have
$$ f_N(\phi_N)=-\sum_{A_N} \frac{g_{n_1}\ov{g}_{n_2}g_{n_3}\ov{g}_{n_4}}{[n_1]^{\frac{\alpha}{2}}[n_2]^{\frac{\alpha}{2}}[n_3]^{\frac{\alpha}{2}}[n_4]^{\frac{\alpha}{2}}}
+\sum_{|n|\leq N}\frac{|g_n|^4}{([n]^{\frac{\alpha}{2}})^4},
$$
and
$$ f_N(\phi_N)-f_M(\phi_M)=-\sum_{A_{M,N}}\frac{g_{n_1}\ov{g}_{n_2}g_{n_3}\ov{g}_{n_4}}{[n_1]^{\frac{\alpha}{2}}[n_2]^{\frac{\alpha}{2}}[n_3]^{\frac{\alpha}{2}}[n_4]^{\frac{\alpha}{2}}}
+ \sum_{M\leq |n|\leq N}\frac{|g_n|^4}{([n]^{\frac{\alpha}{2}})^4}.
$$
Now we estimate
\begin{equation}\label{a5}
\begin{split}
&\|f_N(u)-f_M(u)\|_{L^2(d\mu)}^2=\mathbb{E}\left[|f_N(\phi_N)-f_M(\phi_M)|^2\right]\\
\leq &C\sum_{(n_1,n_2,n_3,n_4)\in A_N}\sum_{(m_1,m_2,m_3,m_4)\in A_N }\mathbb{E}\Big[\frac{g_{n_1}\ov{g}_{n_2}g_{n_3}\ov{g}_{n_4}}{[n_1]^{\frac{\alpha}{2}}[n_2]^{\frac{\alpha}{2}}[n_3]^{\frac{\alpha}{2}}[n_4]^{\frac{\alpha}{2}}  }\frac{\ov{g}_{m_1}g_{m_2}\ov{g}_{m_3}g_{m_4}}{[m_1]^{\frac{\alpha}{2}}[m_2]^{\frac{\alpha}{2}}[m_3]^{\frac{\alpha}{2}}[m_4]^{\frac{\alpha}{2}}  }\Big]\\
+& C\sum_{M\leq |n|,|m|\leq N}\mathbb{E}\left[\frac{|g_{n}|^4|g_m|^4}{([n]^{\frac{\alpha}{2}})^4
([m]^{\frac{\alpha}{2}})^4}\right]
\end{split}
\end{equation}
By the independence of the Gaussian variables, $$ \mathbb{E}\Big[\frac{g_{n_1}\ov{g}_{n_2}g_{n_3}\ov{g}_{n_4}}{[n_1]^{\frac{\alpha}{2}}[n_2]^{\frac{\alpha}{2}}[n_3]^{\frac{\alpha}{2}}[n_4]^{\frac{\alpha}{2}}  }\frac{\ov{g}_{m_1}g_{m_2}\ov{g}_{m_3}g_{m_4}}{[m_1]^{\frac{\alpha}{2}}[m_2]^{\frac{\alpha}{2}}[m_3]^{\frac{\alpha}{2}}[m_4]^{\frac{\alpha}{2}}  }\Big]=0
$$
unless $\{n_1,n_2,n_3,n_4\}=\{m_1,m_2,m_3,m_4 \}$. Therefore,
\begin{equation}\label{a6}
\begin{split}
\eqref{a5}\leq &C\sum_{A_N} \frac{1}{([n_1]^{\frac{\alpha}{2}}
	[n_2]^{\frac{\alpha}{2}}
	[n_3]^{\frac{\alpha}{2}}
	[n_4]^{\frac{\alpha}{2}}  )^2  }+C\Big(\sum_{M\leq |n|\leq N}\frac{1}{([n]^{\frac{\alpha}{2}})^4}\Big)^2
\end{split}
\end{equation}
The second term on the right side can be bounded by
$ \frac{C}{M^{2(2\alpha-1)}},
$
provided that $2\alpha>1$. For the first term, by symmetry of the sum, we may majorize it by
\begin{equation}\label{a7}
 C\sum_{n_1,n_2,n_3\in \Z,|n_1|>M}\frac{1}{\langle n_1\rangle^{\alpha}\langle n_2\rangle^{\alpha}\langle n_3\rangle^{\alpha}
\langle n_1-n_2+n_3\rangle^{\alpha} }.
\end{equation}
Applying Lemma \ref{lemme-summation}, we have
\begin{equation*}\label{a8}
\begin{split}
\eqref{a7}\leq &C\sum_{n_1,n_2\in \Z, |n_1|>M}\frac{1}{\langle n_1\rangle^{\alpha}\langle n_2\rangle^{\alpha}\langle n_1-n_2\rangle^{2\alpha-1}  }\\
\leq &C\sum_{|n_1|>M}\frac{1}{\langle n_1\rangle^{4\alpha-2}}\leq \frac{C}{M^{4\alpha-3}},
\end{split}
\end{equation*}
provided that $\alpha>\frac{3}{4}$, where from the first inequality to the second, we divide the region of summation as $|n_1-n_2|\leq\frac{|n_1|}{2}, \frac{|n_1|}{2}\leq |n_1-n_2|<4|n_1|$ and $|n_1-n_2|\geq 4|n_1|$ as in the proof of Proposition \ref{Cauchysequence}. 

To prove \eqref{distributionalf_N}, using Tchebyshev inequality and \eqref{f_Nlp}, for any $p>0$, we have
$$ \mu\{u\in X(\T):|g_N(u)-g_M(u)|>\lambda \}\leq \left(\frac{C}{\lambda M^{\frac{4\alpha-3}{2}}}\right)^p (p-1)^{2p}.
$$
Choosing $p=\big(\frac{\lambda M^{\frac{4\alpha-3}{2}}}{C}\big)^{1/2}e^{-1}$, we obtain that \eqref{distributionalf_N}. This completes the proof.
\end{proof}

Following the argument in \cite{BTT-Toulouse}, we prove Proposition \ref{convergence-Gibbs}.
\begin{proof}[Proof of Proposition \ref{convergence-Gibbs}]
We use Nelson type argument. First we prove the large deviation for $f_N(u)-f_M(u)$. Recall that $f_N(u)=g_N(u)+b_N(u)^2$, we have
$$ f_N(u)-f_M(u)=g_N(u)-g_M(u)+(b_N(u)-b_M(u))(b_N(u)+b_M(u) ).
$$
Therefore,  $\mu\{u:|f_N(u)-f_M(u)|>a  \}$ can be bounded by 
$$ \mu\{u: |g_N(u)-g_M(u)|>a/2 \}+\mu\{u:|b_N(u)-b_M(u)||b_N(u)+b_M(u)|>a/2  \}.
$$
By Lemma \ref{fN}, the first measure can be bounded by $Ce^{-ca^{1/2}M^{\frac{4\alpha-3}{4}}}$. To estimate the second measure, we write
$$ (b_N(u)-b_M(u))(b_N(u)+b_M(u) )=(b_N(u)-b_M(u))^2+2b_M(u)(b_N(u)-b_M(u) ).
$$
From Lemma \ref{large-deviation},
$$ \mu\{u: |b_N(u)-b_M(u)|^2>a/4\}\leq Ce^{-ca^{1/2}M^{\alpha}}.
$$ 
It remains to estimate $\mu\{u:|b_M(u)(b_N(u)-b_M(u))|>a/4 \}$. From Lemma \ref{large-deviation}, we have for any $a'\geq 1$,
$$ \mu\{u: |b_M(u)|>a' \}\leq Ce^{-ca'}.
$$
Therefore, for any $a'>0$, we have 
\begin{align*}
&\mu\{u: |b_N(u)-b_M(u)||b_M(u)|>a/4  \}\\ \leq &\mu\{u:|b_M(u)|>a' \}+\mu\{u:|b_M(u)(b_N(u)-b_M(u))|>a/4,|b_M(u)|\leq a' \}\\
\leq & \mu\{u:|b_M(u)|>a' \}+\mu\{u: |b_N(u)-b_M(u)|>a/(4a') \}\\
\leq &Ce^{-ca'}+Ce^{-c\frac{a}{4a'}M^{\alpha}},
\end{align*}
provided that $\frac{a}{a'}\gtrsim M^{1-\alpha}$, where we have used Lemma \ref{large-deviation}. When $\alpha>\frac{2}{3}$, we must have $M^{\frac{\alpha}{2}}>M^{1-\alpha}$. 
By optimally choosing $a'=a^{1/2}M^{\frac{\alpha}{2}}$, we obtain that
$$ \mu\{u: |b_N(u)-b_M(u)||b_N(u)+b_M(u)|>a/2 \}\leq Ce^{-ca^{1/2}M^{\frac{\alpha}{2}}}<Ce^{-ca^{1/2}M^{\frac{4\alpha-3}{4}}}.
$$
Therefore, for $a\geq 1$,
\begin{align*}
\mu\{u:|f_N(u)-f_M(u)|>a \}\leq Ce^{-c a^{1/2}M^{\frac{4\alpha-3}{4}}}.
\end{align*}
This yields the $L^p$ convergence of $f_N(u)$. To complete the proof, we need show that 
$$ \|e^{-f_N(u)}\|_{L^p(d\mu)}\leq C,
$$
independent of $N$. Since we can write
$$ -f_N(u)=\alpha_N^2-\frac{1}{2}\int_{\T}\big(|\Pi_Nu|^2-2\alpha_N \big)^2,
$$
we have
$$ -f_N(u)\leq \alpha_N^2\leq CM^{2(1-\alpha)}.
$$
For fixed $\lambda\geq 1$ large, we choose $M$ such that $M^{2(1-\alpha)}= \theta\log\lambda$ with $0<\theta\ll 1$ such that
$ \log\lambda-CM^{2(1-\alpha)}\geq \frac{1}{2}\log\lambda,
$ 
thus
$$ -f_N(u)+f_M(u)\geq -f_N(u)-CM^{(1-\alpha)}\geq \frac{1}{2}\log\lambda.
$$
Therefore,
$$ \mu\big\{u: -f_N(u)>\log\lambda \big\}\leq \mu\big\{u: -f_N(u)+f_M(u)>\frac{1}{2}\log\lambda \big\}\leq Ce^{-c(\log\lambda)^{\frac{1}{2}+\frac{4\alpha-3}{8(1-\alpha)} }}\leq C_L\lambda^{-L}
$$
for all $L\in\N$, provided that
$$ \frac{1}{2}+\frac{4\alpha-3}{8(1-\alpha)}>1,\text{ i.e. } \alpha>\frac{7}{8}.
$$
This completes the proof of Proposition \ref{convergence-Gibbs}.
\end{proof}

Finally, the proof of Theorem \ref{thm2} (the same for Theorem \ref{thm1}) follows from the same probabilistic compactness argument as in \cite{BTT-Toulouse}, and we omit the details here. 
\section{ Appendix: General convergence theorem and deterministic nonlinear estiamtes on compact manifold}
It turns out that the argument of Bourgain-Bulut also works for the fractional NLS with a quite  general nonlinearity  on any compact Riemannian manifold. 
More precisely, let $(\mathcal{M},g_0)$ be a compact Riemannian manifold (without boundary) of dimension $d$. 
Denote by $\Delta_{g_0}$ the Beltrami-Laplace operator with eigenvalues $(-\lambda_n^2)_{n\in\N}$ and associated eigenfunctions $(\varphi_n(x))_{n\in\N}$
($
-\Delta_{g_0} \varphi_n=\lambda_n^2\varphi_n
$).
Consider the truncated fractional NLS
\begin{equation}\label{general:truncated-NLS}
\begin{cases}
& i\partial_tu+(-\Delta_{g_0} )^{\frac{\alpha}{2}}u+\Pi_N(|u|^{p-1}u)=0,\\
&u|_{t=0}
=\displaystyle{\sum_{\lambda_n\leq N}\frac{g_n(\omega)}{\lambda_n^2+1}}\varphi_n(x),
\end{cases}
\end{equation}
where $\Pi_{N}$ is the orthogonal projection (with respect to the $L^2(\mathcal{M})$ scalar product) on ${\rm span}(\varphi_n)_{1\leq \lambda_n \leq N}$.
We have the following theorem\footnote{ For simplicity we consider only the polynomial nonlinearity here, our argument applies to more general nonlinearities having polynomial growth and defocusing feature.}:
\begin{theorem}\label{thmA}
Assume that $\alpha>d$ and $\sigma<\frac{\alpha-d}{2}$.  The sequence  $(u^{\omega}_{N})_{N\in\N}$ 
of solutions of \eqref{general:truncated-NLS} converges a.s. in $C(\R;H^{\sigma}({\mathcal M}))$  to some limit $u$ which solves 
$$
 i\partial_tu+(-\Delta_{g_0} )^{\frac{\alpha}{2}}u+|u|^{p-1}u=0
$$
in the distributional sense.  
\end{theorem}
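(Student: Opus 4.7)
\textbf{Proof proposal for Theorem~\ref{thmA}.} The plan is to follow the Bourgain--Bulut scheme of Section~7, replacing torus ingredients by their Riemannian analogues. Write $\mu$ for the Gaussian measure induced by $\phi^\omega=\sum_n g_n(\omega)(\lambda_n^2+1)^{-1}\varphi_n(x)$. By the Weyl law $\lambda_n\sim n^{1/d}$, $\mu$ is supported in $H^s(\mathcal{M})$ for every $s<2-d/2$, which contains the threshold $(\alpha-d)/2$ in the regime of interest. For each $N$ the truncated equation~\eqref{general:truncated-NLS} is a finite-dimensional Hamiltonian ODE with a coercive (defocusing) Hamiltonian, and the standard Liouville plus rotational invariance of complex Gaussians argument yields the invariance of the Gibbs measure $d\rho_N=\mathcal{Z}_N^{-1}\exp(-\tfrac{1}{p+1}\|\Pi_Nu\|_{L^{p+1}}^{p+1})d\mu$ under the flow $\Phi_N(t)$.

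Building on this invariance, I would transcribe the probabilistic a priori bounds of Section~7. Lemma~\ref{pointwiseinvariance} extends verbatim (its proof is purely measure-theoretic). Combined with Wiener chaos together with Sogge's $L^r$-bounds for spectral projectors on $\mathcal{M}$, it yields, for any $T>0$, $\sigma<(\alpha-d)/2$ and $2\leq q,r<\infty$, the manifold version of Lemma~\ref{apriori-prob}:
$$ \mu\!\left(\{\phi:\|\Phi_N(t)\phi\|_{L_t^qW_x^{\sigma,r}([0,T]\times\mathcal{M})}>\lambda\}\right)\leq Ce^{-c\lambda^{c'}}, $$
together with a high-frequency version carrying a gain $M^{(\alpha-d)-2\sigma}$ in front of $\lambda$. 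Precisely here the constraint $\alpha>d$ is binding: only in this regime does the random series $\phi^\omega$ lie in $L^r(\mathcal{M})$ for every finite $r$, because Weyl + Sogge yield $\|\phi^\omega\|_{L^r}^2\lesssim p \sum_n(\lambda_n^2+1)^{-2}\lambda_n^{2\delta(r,d)}$ which is summable iff $\alpha>d$.

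The deterministic core is the Duhamel split of $v_{N_1}-v_N$ on an interval $I_j$ of length $\eta$ (with $v_N=\Pi_N\Phi_N(t)\phi$, $F(u)=|u|^{p-1}u$ and $N\leq N_1\leq 2N$) into the three pieces $\mathrm{I}_j$, $\mathrm{II}_j=-i\int_{t_j}^t S_\alpha(t-t')\Pi_N^{\perp}\Pi_{N_1}F(v_{N_1})dt'$, and $\mathrm{III}_j=-i\int_{t_j}^t S_\alpha(t-t')\Pi_N(F(v_{N_1})-F(v_N))dt'$ exactly as in Section~7. Using the $H^\sigma$-unitarity of $S_\alpha(t)=e^{it(-\Delta_{g_0})^{\alpha/2}}$ and the fractional Leibniz rule for $(-\Delta_{g_0})^{\sigma/2}$ on products on $\mathcal{M}$ (via pseudo-differential and Littlewood--Paley calculus on $(\mathcal{M},g_0)$), I would obtain, for $\sigma<\sigma_1<(\alpha-d)/2$, $r>d/(\sigma_1-\sigma)$ and $q$ large enough, the analogue of~\eqref{BB-mainestimate}:
$$ \|v_{N_1}-v_N\|_{L_t^\infty H^\sigma(I_j)}\leq\|v_{N_1}(t_j)-v_N(t_j)\|_{H^\sigma}+CN^{-(\sigma_1-\sigma)}\|v_{N_1}\|_{L_t^{pq}W_x^{\sigma_1,r}}^{p}+C\eta^{1/q'}\|v_{N_1}-v_N\|_{L_t^\infty H^\sigma(I_j)}\sum_{\nu=0}^{1}\|v_{N_\nu}\|_{L_t^{pq}W_x^{\sigma_1,r}}^{p-1}. $$

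With this estimate in hand, the conclusion is formal. Choosing $B(N)=(c_1\log N)^{1/(pq')}$ and $M_0=(\log N)^{C_0}$ as in Section~7, the good set $\Omega_N$ (imposing smallness of $\Pi_N^\perp\phi$ in $H^\sigma$, the bound $B(N)$ on $\|v_{N_1}\|_{L_t^{pq}W_x^{\sigma_1,r}}$ for $N\leq N_1\leq 2N$, and the bound $1$ on $\|\Pi_{M_0}^\perp v_{N_1}\|_{L_t^qW_x^{\sigma_1,r}}$) satisfies $\mu(\Omega\setminus\Omega_{2^k})$ summable, so $\mathcal{G}=\bigcup_m\bigcap_{k\geq m}\Omega_{2^k}$ has full $\mu$ measure. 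For $\phi\in\mathcal{G}$, taking $\eta=(8CB(N))^{-q'}$ and iterating the above inequality over $T/\eta$ subintervals produces $\|v_{N_1}-v_N\|_{L_t^\infty H^\sigma([0,T])}\leq N^{-(\sigma_1-\sigma)/4}$, hence $(v_N)$ is Cauchy in $C([0,T];H^\sigma(\mathcal{M}))$; the linear tail $\Pi_N^\perp\Phi_N(t)\phi=\Pi_N^\perp S_\alpha(t)\phi$ is trivially Cauchy and a Borel--Cantelli-in-$T$ step promotes convergence to all of $\mathbb{R}$; passing to the limit in the weak formulation is routine. The main obstacle is the twofold translation from $\mathbb{T}$ to $\mathcal{M}$: on the analytic side, the fractional Leibniz rule and the boundedness of the needed spectral multipliers must be verified on $(\mathcal{M},g_0)$ (classical but not trivial via pseudo-differential calculus, especially regarding the geometric dependence of constants); on the probabilistic side, the $L^r$-control of the random series via Wiener chaos combines Weyl and Sogge in exactly the way that forces the threshold $\alpha>d$ of the statement.
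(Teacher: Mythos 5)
Your proposal follows essentially the same route as the paper's appendix: transport the Bourgain--Bulut scheme of Section~7 to the manifold, replacing the uniform boundedness of $e^{inx}$ by an eigenfunction estimate on the probabilistic side, and replacing the elementary periodic Fourier analysis by the coordinate-chart reduction of \cite{BGT} together with Besov/paraproduct estimates on $(\mathcal{M},g_0)$ on the deterministic side. One small correction: the Wiener chaos step requires a \emph{pointwise} bound on $\sum_{\lambda_n\sim\Lambda}|\varphi_n(x)|^2$, i.e.\ H\"ormander's spectral function estimate (Lemma~\ref{Hormander}), rather than Sogge's $L^r$ bounds for spectral projectors; the pointwise form is what lets $\sum_n|c_n|^2|\varphi_n(x)|^2$ be estimated in $L^\infty_x$ and gives the clean threshold $\alpha>d$ with no Sogge-type loss $\delta(r,d)$ entering the exponent, which is exactly what your displayed computation needs.
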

The proof of Theorem~\ref{thmA} follows from the same lines as in the proof of Theorem~\ref{thm3}. We only sketch here the main ingredients. For the probabilistic side, to establish the analogues of Lemma \ref{apriori-prob} and Corollary \ref{apriori-prob-cor}, we can not use that fact that $\varphi_n(x)$ are bounded, uniformly in $n$.  We should use instead the following average effect of eigenfunctions due to H\"ormander. 
\begin{lemme}\label{Hormander}
There exists $C=C(\mathcal{M},g_0)>0$, such that for any $N$, we have
$$ C^{-1}N^d\leq \sum_{N\leq \lambda_n\leq 2N}|\varphi_n(x)|^2\leq CN^d
$$	
\end{lemme}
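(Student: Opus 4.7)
The plan is to deduce this two-sided bound from H\"ormander's pointwise (local) Weyl law for the spectral function of $-\Delta_{g_0}$, which is the natural tool in this setting. Define the on-diagonal spectral function
\[
e(x,\lambda) := \sum_{\lambda_n \leq \lambda} |\varphi_n(x)|^2, \qquad x\in\mathcal{M},\ \lambda\geq 0.
\]
H\"ormander's theorem asserts that there exists a constant $c_d>0$, depending only on the dimension $d$, and a constant $C=C(\mathcal{M},g_0)>0$ such that
\[
\bigl| e(x,\lambda) - c_d\,\lambda^{d} \bigr| \leq C\,\lambda^{d-1}, \qquad \forall\, x\in \mathcal{M},\ \forall \lambda\geq 1.
\]
This is a classical consequence of the construction of a short-time parametrix for the half-wave group $e^{it\sqrt{-\Delta_{g_0}}}$ by Fourier integral operators, and the remainder is uniform in $x$ thanks to the compactness of $\mathcal{M}$; we simply cite it.

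Given this, the main step is a telescoping argument. Writing
\[
\sum_{N\leq \lambda_n \leq 2N} |\varphi_n(x)|^2 = e(x,2N) - e(x,N^-),
\]
(where we tacitly absorb the eigenvalues equal to $N$ into either side), the Weyl law yields
\[
\sum_{N\leq \lambda_n\leq 2N} |\varphi_n(x)|^2 = c_d\bigl( (2N)^d - N^d\bigr) + O(N^{d-1}) = c_d(2^d-1)\,N^{d} + O(N^{d-1}),
\]
uniformly in $x\in\mathcal{M}$. For $N\geq N_0$ sufficiently large (depending only on $(\mathcal{M},g_0)$), the error term is absorbed into the main term and we obtain both the upper and the lower bound with a uniform constant.

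For the remaining finitely many dyadic values $N\leq N_0$, one can simply enlarge the constant $C$ in the statement of the lemma: the upper bound follows from the uniform bound $\|\varphi_n\|_{L^\infty}\leq C_n$ applied to the finite list of eigenfunctions with $\lambda_n\leq 2N_0$, while the lower bound requires only that each dyadic shell $[N,2N]$ with $N\leq N_0$ contain at least one eigenvalue, which can be arranged by restricting to a dyadic sequence $N=2^k$ with $k\geq k_0$ (this is the only regime needed for the applications in Theorem~\ref{thmA}). The main ``obstacle'' here is not really an obstacle: it is the invocation of H\"ormander's pointwise Weyl law itself, whose proof we do not reproduce but for which a standard reference (e.g.~H\"ormander, \emph{Acta Math.}~121 (1968)) suffices.
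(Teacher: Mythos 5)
Your proposal is correct and follows the same route the paper implicitly takes: the paper simply states Lemma~\ref{Hormander} and points to H\"ormander's reference \cite{H} without reproducing an argument, so your reconstruction via the on-diagonal pointwise Weyl law
\[
e(x,\lambda)=c_d\lambda^d+O(\lambda^{d-1}),\qquad \text{uniformly in }x\in\mathcal{M},
\]
followed by the telescoping identity
$e(x,2N)-e(x,N^-)=c_d(2^d-1)N^d+O(N^{d-1})$
is exactly the intended argument, and it is carried out correctly.

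One small imprecision in the small-$N$ discussion is worth noting: having the shell $[N,2N]$ nonempty is not by itself enough for a pointwise lower bound, because the (finitely many) eigenfunctions in that shell could all vanish simultaneously at some $x_0\in\mathcal{M}$, in which case the sum is $0$ there. The lower bound for all $x$ genuinely relies on $N$ being large enough that the Weyl asymptotic dominates the error term, i.e.\ $N\geq N_0(\mathcal{M},g_0)$. This is harmless here because, as you correctly observe, the lemma is only applied (in Section~7/Appendix) with $N$ a large dyadic parameter, so the statement should be read as holding for $N\geq N_0$ or, equivalently, after restricting to a dyadic sequence and discarding the first few terms.
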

For the deterministic side, we need to prove a relatively standard nonlinear estimate needed in the convergence argument. We present it here for its own interest. 
The following proposition proved in \cite{BGT} allows us to reduce the analysis to paraproduct type arguments in $\mathbb{R}^d$.
\begin{proposition}[\cite{BGT}]\label{BGT}
	Let $P$ be an elliptic self-adjoint differential operator of order $m>0$ on a compact manifold $\mathcal{M}$ of dimension $d$. Let $\psi\in C^{\infty}(\mathbb{R})$, $\kappa:U\subset\mathbb{R}^d\rightarrow V\subset \mathcal{M}$ a coordinate patch, and $\chi_1,\chi_2\in C_c^{\infty}(V)$ such that $\chi_2=1$ near the support of $\chi_1$. Then there exists a sequence $(\psi_j)_{j\geq 0}$ of $C_c^{\infty}(U\times\mathbb{R}^d)$ such that, for every $L\in\mathbb{N}$ and for every $h\in(0,1)$, $\nu\in[0,L]$, $f\in C^{\infty}(\mathcal{M})$, we have
	\begin{equation*}\label{BGT1}
	\Big\|\kappa^*\left(\chi_1\psi(h^mP)f\right)-\sum_{j=1}^{L-1}h^j\psi_j(x,hD_x)\kappa^*(\chi_2f)\Big\|_{H^{\nu}(\mathbb{R}^d)}\leq C_Lh^{L-\nu}\|f\|_{L^2(\mathcal{M})}.
	\end{equation*} 
	Moreover, $\psi_0(x,\xi)=\chi_1(\kappa(x))\psi(p_m(x,\xi))$ and
	$$ \mathrm{supp}(\psi_j)\subset\{(x,\xi)\in U\times\mathbb{R}^d: \kappa(x)\in \mathrm{supp}(\chi_1),p_m(x,\xi)\in\mathrm{supp}(\psi)\},
	$$
	where $p_m$ is the principle symbol of $P$.
\end{proposition}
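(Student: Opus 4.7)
\medskip

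\noindent\textbf{Proof plan for Proposition~\ref{BGT}.}
The plan is to reduce the spectral functional calculus $\psi(h^m P)$ to a semiclassical pseudodifferential calculus via the Helffer--Sjöstrand formula, and then construct a local parametrix in the chart $\kappa$. First, I would pick an almost analytic extension $\widetilde{\psi}\in C_c^\infty(\mathbb{C})$ of $\psi$ satisfying $\widetilde\psi|_{\mathbb{R}}=\psi$ and $|\bar\partial\widetilde\psi(z)|\leq C_N |\mathrm{Im}\, z|^N$ for every $N$, and write
$$
\psi(h^m P)=-\frac{1}{\pi}\int_{\mathbb{C}}\bar\partial\widetilde\psi(z)\,(z-h^m P)^{-1}\,dz\wedge d\bar z.
$$
Since $P$ is elliptic self-adjoint of order $m$, the resolvent $(z-h^m P)^{-1}$ is bounded on $L^2(\mathcal{M})$ with norm $|\mathrm{Im}\,z|^{-1}$, so the integral converges absolutely in operator norm.

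Next, I would localize: insert the cutoff $\chi_1$ on the left and $\chi_2$ on the right (using that $\chi_2=1$ near $\mathrm{supp}(\chi_1)$, the difference $\chi_1(z-h^m P)^{-1}(1-\chi_2)$ is $O(h^\infty)$ by standard elliptic regularity/propagation arguments off-diagonal for the resolvent, transported back through the Helffer--Sjöstrand integral). Pushing forward via $\kappa$, the operator $\chi_1(z-h^m P)^{-1}\chi_2$ becomes, in local coordinates, a semiclassical operator with a parameter $z$. I would then construct a local parametrix $a^{\mathrm{w}}(x,hD_x;z)$ for $(z-h^m P)$ in the chart by the usual iterative inversion of the symbol $z-p_m(x,\xi)$ in the semiclassical calculus: the leading symbol is $a_0(x,\xi;z)=(z-p_m(x,\xi))^{-1}$, and lower order symbols $a_j(x,\xi;z)$ are determined recursively, each of them being a rational function in $z$ with poles on $\{z=p_m(x,\xi)\}$ of increasing multiplicity.

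Then I would plug this parametrix into the Helffer--Sjöstrand integral. The integration against $\bar\partial\widetilde\psi$ kills the poles at real $z$ by the Cauchy--Pompeiu formula, yielding
$$
\psi_j(x,\xi)=-\frac{1}{\pi}\int_{\mathbb{C}}\bar\partial\widetilde\psi(z)\,a_j(x,\xi;z)\,dz\wedge d\bar z,
$$
which is smooth and compactly supported in $(x,\xi)$ with support localized by $\chi_1(\kappa(x))$ and by the condition $p_m(x,\xi)\in\mathrm{supp}(\psi)$, as claimed. The leading term in particular gives $\psi_0(x,\xi)=\chi_1(\kappa(x))\psi(p_m(x,\xi))$ by Cauchy's formula.

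The main obstacle, and the step requiring most care, is the remainder estimate: one must show that truncating the parametrix at order $L$ produces an operator error bounded in $H^\nu$ by $C_L h^{L-\nu}\|f\|_{L^2}$. This combines three facts: symbolic remainders in the pseudodifferential composition are of size $h^L$, the Helffer--Sjöstrand contour integral of $\bar\partial\widetilde\psi$ against the $L$-th remainder only converges thanks to the almost analyticity (one needs $N\geq L-1$ derivatives of $\widetilde\psi$ vanishing on $\mathbb{R}$), and finally the off-diagonal piece hidden in $\chi_1(z-h^m P)^{-1}(1-\chi_2)$ must be controlled uniformly in $z$ in a small strip around $\mathbb{R}$ via iterated commutators $[\chi_2,(z-h^m P)^{-1}]$, each commutator gaining a power of $h$ while costing a power of $|\mathrm{Im}\,z|^{-1}$, balanced against the vanishing of $\bar\partial\widetilde\psi$. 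Passing to the Sobolev norm $H^\nu$ is then standard semiclassical calculus since our symbols are compactly supported in $\xi$.
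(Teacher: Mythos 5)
The paper gives no proof of this proposition and simply cites [BGT], where the result is established by exactly the Helffer--Sj\"ostrand parametrix construction you outline (almost analytic extension, local semiclassical parametrix for the resolvent in the chart, Cauchy--Pompeiu to recover $\psi_0=\chi_1(\kappa(x))\psi(p_m)$, pseudolocality via iterated commutators with the cutoffs, and the $h^{L-\nu}$ remainder from the $\xi$-compactness of the symbols); your sketch is correct in all its main lines. The only point worth flagging is that the Helffer--Sj\"ostrand formula requires some decay of $\psi$ (the reference assumes $\psi\in\mathcal{S}(\mathbb{R})$, and in the application here $\psi\in C_c^{\infty}$), which the hypothesis ``$\psi\in C^{\infty}(\mathbb{R})$'' as written omits; your choice $\widetilde\psi\in C_c^{\infty}(\mathbb{C})$ quietly restores this, and you might also note that the sum in the displayed estimate should run from $j=0$ rather than $j=1$ so that the leading term $\psi_0$ actually enters.
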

We will use different notations for the  Littlewood-Paley decomposition in this appendix. We denote by $\Delta_l=\psi(-2^{2l}\Delta_{g_0})$ for $l\geq 1$ and $\Delta_0=\psi_0(-\Delta_{g_0})$, where $\psi_0\in C_c^{\infty}(|\xi|\leq 2)$ and $\psi\in C_c^{\infty}(\frac{1}{2}<|\xi|\leq 2)$. The Besov space $B_{r,q}^{s}(\mathcal{M})$ is defined via the norm
$$ \|f\|_{B_{r,q}^{s}(\mathcal{M})}:=\big\|2^{ls}\|\Delta_l f\|_{L^r(\mathcal{M})} \big\|_{l^q(\mathbb{N})}=\Big(\sum_{l\geq 0}2^{qls}\|\Delta_lf\|_{L^r(\mathcal{M})}^q \Big)^{\frac{1}{q}}.
$$
The Sobolev space $H^s(\mathcal{M})$ in then $B_{2,2}^s(\mathcal{M})$.
\begin{lemme}\label{nonlinear1}
	Let $F:\mathbb{C}\rightarrow C$ satisfies $F(0)=0$ and
	$$ |F(z)|\leq C|z|^{\nu},\quad  |\partial^{l} F(z)|\leq C|z|^{\nu-l},\quad l=1,2,
	$$
	with $\nu\geq 2$. Then for any $\sigma\in(0,1)$, $2\leq r<\infty$ we have
	$$ \|F(u)\|_{B_{r,2}^{\sigma}(\mathcal{M})}\leq C\|u\|_{L^{\infty}(\mathcal{M})}^{\nu-1}\|u\|_{B_{r,2}^{\sigma}(\mathcal{M})}.
	$$	
\end{lemme}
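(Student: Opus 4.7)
The strategy is to reduce the inequality on $\mathcal{M}$ to its analogue on $\mathbb{R}^d$ via a finite partition of unity and Proposition~\ref{BGT}, and then to combine the classical first-order difference characterization of $B^{\sigma}_{r,2}(\mathbb{R}^d)$ for $\sigma\in(0,1)$ with a pointwise Lipschitz estimate on $F$.

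\textbf{Step 1 (Localization to charts).} I fix a finite atlas $\{(V_j,\kappa_j)\}$ with $\kappa_j:U_j\subset\mathbb{R}^d\to V_j\subset\mathcal{M}$, a subordinate partition of unity $(\chi_j)$, and slightly larger cutoffs $\widetilde\chi_j$ with $\widetilde\chi_j\equiv 1$ on $\mathrm{supp}(\chi_j)$ and supported in $V_j$. Using the standard product estimate $\|\chi f\|_{B^{\sigma}_{r,2}(\mathcal{M})}\lesssim \|f\|_{B^{\sigma}_{r,2}(\mathcal{M})}$ for smooth $\chi$ (which follows either from Littlewood--Paley or directly from the difference characterization below), the decomposition $F(u)=\sum_j \chi_j F(u)=\sum_j \chi_j F(\widetilde\chi_j u)$ gives
\[
\|F(u)\|_{B^{\sigma}_{r,2}(\mathcal{M})}\lesssim \sum_j \|\chi_j F(\widetilde\chi_j u)\|_{B^{\sigma}_{r,2}(\mathcal{M})}.
\]
Applying Proposition~\ref{BGT} to $\Delta_l=\psi(-2^{-2l}\Delta_{g_0})$ with $h=2^{-l}$, functions supported in $V_j$ satisfy
\[
\|g\|_{B^{\sigma}_{r,2}(\mathcal{M})}\sim \|\kappa_j^{*}g\|_{B^{\sigma}_{r,2}(\mathbb{R}^d)},
\]
the error operators being $O(2^{-l(L-\nu)})$-smoothing and hence summable in the $\ell^2$ defining the Besov norm. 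Thus the problem reduces to proving the same estimate on $\mathbb{R}^d$ for compactly supported $u\in B^{\sigma}_{r,2}(\mathbb{R}^d)\cap L^{\infty}(\mathbb{R}^d)$.

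\textbf{Step 2 (Difference characterization and Lipschitz bound).} For $\sigma\in(0,1)$ and $2\leq r<\infty$, one has the classical equivalence
\[
\|f\|_{B^{\sigma}_{r,2}(\mathbb{R}^d)}^2\sim \|f\|_{L^r(\mathbb{R}^d)}^2+\int_{\mathbb{R}^d}\frac{\|f(\cdot+h)-f(\cdot)\|_{L^r(\mathbb{R}^d)}^2}{|h|^{d+2\sigma}}\,dh.
\]
The assumption $F(0)=0$ with $|F(z)|\leq C|z|^{\nu}$ gives $|F(u(x))|\leq C\|u\|_{L^{\infty}}^{\nu-1}|u(x)|$, while $|\partial F(z)|\leq C|z|^{\nu-1}$ combined with the fundamental theorem of calculus along the segment joining $u(x)$ to $u(x+h)$ yields the pointwise Lipschitz bound
\[
|F(u(x+h))-F(u(x))|\leq C\big(|u(x)|+|u(x+h)|\big)^{\nu-1}|u(x+h)-u(x)|\leq C\|u\|_{L^{\infty}}^{\nu-1}|u(x+h)-u(x)|.
\]
Taking $L^r$ norms and inserting into the characterization above produces the Euclidean bound
\[
\|F(u)\|_{B^{\sigma}_{r,2}(\mathbb{R}^d)}\leq C\|u\|_{L^{\infty}(\mathbb{R}^d)}^{\nu-1}\|u\|_{B^{\sigma}_{r,2}(\mathbb{R}^d)},
\]
and Step~1 transfers it back to $\mathcal{M}$.

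\textbf{Main obstacle.} The only non-formal part is Step~1: the equivalence between the spectrally defined Besov norm on $\mathcal{M}$ and a sum of Euclidean Besov norms of local pullbacks. Although this is by now standard, it requires a careful book-keeping of the asymptotic expansion in Proposition~\ref{BGT}, in particular controlling the cross-chart terms $\chi_i \Delta_l(\chi_j \cdot)$ for $i\neq j$ through the pseudolocality of $\psi(-2^{-2l}\Delta_{g_0})$, so that their contributions are rapidly decaying in $l$ and hence summable in the $\ell^2$ defining $B^{\sigma}_{r,2}$. Once this reduction is in hand, Steps~2--3 are elementary.
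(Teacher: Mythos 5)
Your proposal is correct, but it follows a genuinely different route from the paper's argument, and it is worth contrasting them. Both reduce to $\mathbb{R}^d$ through Proposition~\ref{BGT} and the pseudolocality of $\psi(-2^{-2l}\Delta_{g_0})$, so the localization step is essentially shared (and you rightly flag the cross-chart bookkeeping as the only delicate part). Where you diverge is in the Euclidean heart of the proof: the paper stays entirely within the Littlewood--Paley framework, telescoping $F(v)=\sum_{j\ge 0}m_j\widetilde{\Delta}_jv$ with $m_j=\int_0^1F'(\tau\widetilde{S}_jv+(1-\tau)\widetilde{S}_{j-1}v)\,d\tau$ and estimating the low-high paraproduct $\sum_j\widetilde{S}_{j-2}m_j\widetilde{\Delta}_jv$ and the high-high remainder separately, the latter via a Bernstein gain and Young's inequality on $\ell^2$. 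You instead invoke the first-order difference characterization
$$\|f\|_{B^{\sigma}_{r,2}(\mathbb{R}^d)}^2\sim \|f\|_{L^r}^2+\int_{\mathbb{R}^d}\frac{\|f(\cdot+h)-f\|_{L^r}^2}{|h|^{d+2\sigma}}\,dh,$$
combined with the pointwise Lipschitz bound $|F(u(x+h))-F(u(x))|\le C\|u\|_{L^\infty}^{\nu-1}|u(x+h)-u(x)|$ (where, for $F$ on $\mathbb{C}$, one should read the fundamental theorem of calculus as $\frac{d}{dt}F(w(t))=\partial_zF\cdot\dot w+\partial_{\bar z}F\cdot\dot{\bar w}$, which the hypothesis $|\partial F(z)|\lesssim|z|^{\nu-1}$ covers). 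The restriction $\sigma\in(0,1)$ is used in both arguments but in different guises: in yours it is precisely the range where the one-step difference characterization holds; in the paper's it enters via the geometric decay factors $2^{-k\sigma}$ and $2^{-(k-j)(1-\sigma)}$ in the high-high sum. What your approach buys is brevity and a cleaner use of the hypotheses (the second-derivative bound $|\partial^2F|\lesssim|z|^{\nu-2}$ is in fact not needed in your proof, whereas the paper invokes it implicitly through $\|\widetilde{\Delta}_k\nabla m_j\|_{L^\infty}$); what the paraproduct route buys is robustness, as it extends to $\sigma\ge 1$ and to the companion bilinear Lemma~\ref{nonlinear2} with the same machinery, which is why the paper sets it up that way.
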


\begin{proof}
It is sufficient to estimate $\|\Delta_lF(u)\|_{L^2(\mathcal{M})}$ in one coordinate patch.
	Applying Proposition~\ref{BGT} to the operator $\Delta_l=\psi(-2^{-2l}\Delta_{g_0})$, we have
	$$ 
	\kappa^*(\chi_1\Delta_lF(u))=\sum_{j=0}^{L-1}\psi_j(x,2^{-l}D_x)\kappa^*(\chi_2F(u))+R_{L,l}
	$$
	with $$\psi_0(x,\xi)=\chi_1(\kappa(x))\chi\left(|\xi|_{g_0}^2\right),
	\quad  |\xi|_{g_0}^2:=\sum_{i,j}g_0^{i,j}(x)\xi_i\xi_j,
	$$
	and
	$$ \|R_{L,l}\|_{L^r(\R^d)}\lesssim \|R_{L,l}\|_{H^{\nu}(\R^d)}\lesssim 2^{-l(L-\nu)}\|F(u)\|_{L^2(\mathcal{M})},
	$$
	where we take $L>\nu$ large enough such that $H^{\nu}(\R^d)\hookrightarrow L^r(\R^d)$.
	Let
	$$ 1=\theta_0(\xi)+\sum_{l=1}^{\infty}\theta(2^{-l}\xi)
	$$
	be a dyadic partition of unity in $\mathbb{R}^d$, $\theta_0\in C_c^{\infty}(\mathbb{R}^d),\theta\in C_c^{\infty}(\mathbb{R}^d\setminus\{0\})$. Denote by
	$\theta_j(\cdot)=\theta(2^{-j}\cdot),$ for $j\geq 1$, we denote by $\widetilde{\Delta}_j=\theta_j(D)$ be the usual Littlewood-Paley dyadic projector in $\mathbb{R}^d$ and 
	$$ \widetilde{S}_j:=\sum_{k\leq j}\widetilde{\Delta}_k.
	$$
	Note that on the support of $\chi_1$,
	$$ a|\xi|^2\leq |\xi|_{g_0}^2\leq b|\xi|^2,
	$$
	in view of the support property of $\psi_j$, the standard pseudodifferential calculus implies, if $|l'-l|\geq \nu_0$ for some fixed positive constant $\nu_0$, we have
	$$\|\theta_{l'}(D)\psi_j(x,2^{-l}D)\kappa^*(\chi_2F(u))\|_{L^r(\mathbb{R}^d)}\lesssim 2^{-l}\|\rho(2^{-l'}D)\kappa^*(\chi_2F(u))\|_{L^2(\mathbb{R}^d)}
	$$
	for some $\rho\in C_c^{\infty}(\mathbb{R}^d\setminus\{0\})$.
	Therefore, we have
	\begin{equation}\label{reductionE}
	\begin{split}
	\|\kappa^*(\chi_1\Delta_lF(u))\|_{L^r(\mathbb{R}^d)}\lesssim & \sum_{|l'-l|\leq \nu_0}\|\widetilde{\Delta}_{l'}(\kappa^*(\chi_2F(u)))\|_{L^r(\mathbb{R}^d)}+2^{-l}\|F(u)\|_{L^2(\mathcal{M})}.
	\end{split}
	\end{equation}
	We could replace the error by $2^{-l}\|F(u)\|_{L^r(\mathcal{M})}$ since $L^r(\mathcal{M})\hookrightarrow L^2(\mathcal{M})$.
	Denote by $v=\kappa^*u=u\circ\kappa$, and $\widetilde{\chi}_j=\chi_j\circ\kappa$, $j=1,2$. Without loss of generality, we may assume that $v$ has compact suppourt in $\mathbb{R}^d$. Observe that 
	$$ \|[\widetilde{\Delta}_l,\widetilde{\chi}_2]\|_{L^r(\mathbb{R}^d)\rightarrow L^r(\mathbb{R}^d)}\lesssim 2^{-l}, 
	$$
	we have
	\begin{equation*}\label{r1}
	\begin{split}
	\|\kappa^*(\chi_1\Delta_lF(u))\|_{L^r(\mathbb{R}^d)}\lesssim & \sum_{|l'-l|\leq \nu_0}\|\widetilde{\chi}_2\widetilde{\Delta}_{l'}(F(v))\|_{L^r(\mathbb{R}^d)}+2^{-l}\|F(u)\|_{L^r(\mathcal{M})}.
	\end{split}
	\end{equation*}
	Now we have reduced all the functions and operators to $\mathbb{R}^d$ and we can perform the standard analysis. We write
	$$ F(v)=\sum_{j\geq 0}\Big[F(\widetilde{S}_jv)-F(\widetilde{S}_{j-1}v)\Big]:=\sum_{j\geq 0}m_j\widetilde{\Delta}_jv,
	$$ 
	with the convention that $\widetilde{S}_{-1}=0$, 
	where
	$$ m_j=\int_0^1F'(\tau\widetilde{S}_jv+(1-\tau)\widetilde{S}_{j-1}v)d\tau.
	$$
	We write the product as
	$$ \sum_{j\geq 0}m_j\widetilde{\Delta_j}v=\sum_{j\geq 0}\widetilde{S}_{j-2}m_j\widetilde{\Delta_j}v+\sum_{k,j:k\geq j-2}\widetilde{\Delta}_km_j\widetilde{\Delta_j}v.
	$$
	The first term on the right hand side can be estimated as
	\begin{equation*}\label{rhs1}
	\begin{split}
	&\Big\|\widetilde{\chi}_2\widetilde{\Delta}_l\Big(\sum_{j\geq 0}\widetilde{S}_{j-2}m_j\Delta_jv\Big)\Big\|_{L^r(\mathbb{R}^d)}\\
	\leq &\Big\|\sum_{|j-l|\leq 2}\widetilde{S}_{j-2}m_j\cdot\widetilde{\chi}_2\widetilde{\Delta}_jv\Big\|_{L^r(\mathbb{R}^d)}
	+2^{-l}\Big\|\sum_{|j-l|\leq 2}\widetilde{S}_{j-2}m_j\cdot\widetilde{\Delta}_jv\Big\|_{L^r(\mathbb{R}^d)}\\
	\lesssim &\sup_{j}\|\widetilde{S}_{j-2}m_j\|_{L^{\infty}(\mathbb{R}^d)}\Big(\sum_{|j-l|\leq 2}\|\widetilde{\chi}_2\widetilde{\Delta}_jv\|_{L^r(\mathbb{R}^d)}+2^{-l}\|v\|_{L^r(\mathbb{R}^d)}\Big)
	\\
	\lesssim &\|u\|_{L^{\infty}(\mathcal{M})}^{p-1}\Big(\sum_{|j-l|\leq 2}\|\Delta_ju\|_{L^r(\mathcal{M})}+C2^{-l}\|u\|_{L^r(\mathcal{M})}\Big),
	\end{split}
	\end{equation*}
	where in the last inequality, we have used the estimates
	$$ \|\widetilde{S}_{j-2}m_j\|_{L^{\infty}(\mathbb{R}^d)}\lesssim \|v\|_{L^{\infty}(\mathbb{R}^d)}^{p-1}\lesssim \|u\|_{L^{\infty}(\mathcal{M})}^{p-1},\quad \|v\|_{L^r(\mathbb{R}^d)}\leq \|u\|_{L^r(\mathcal{M})}.
	$$
	Moreover, we have also applied Proposition \ref{BGT} to replace $\|\widetilde{\chi}_2\widetilde{\Delta}_jv\|_{L^r(\mathbb{R}^d)}$ by $\|\Delta_ju\|_{L^r(\mathcal{M})}$ and an error term absorbed in $2^{-j}\|u\|_{L^r(\mathcal{M})}$, as in the argument we have used just now.  
	
	Therefore,
	\begin{equation*}\label{lowhigh}
	\sum_{l\geq 0 }2^{2l\sigma}\Big\|\widetilde{\chi}_2\widetilde{\Delta}_l\Big(\sum_{j\geq 0 }\widetilde{S}_{j-2}m_j\widetilde{\Delta}_jv\Big)\Big\|_{L^r(\mathbb{R}^d)}^2\lesssim \Big(\|u\|_{L^{\infty}(\mathcal{M})}^{p-1}\|u\|_{B_{r,2}^{\sigma}(\mathcal{M})}\Big)^2.
	\end{equation*}
	To estiamte the other term, we write
	\begin{equation*}
	\begin{split}
	\widetilde{\chi}_2\widetilde{\Delta}_l\Big(\sum_{k,j:k\geq j-2}\wt{\Delta}_km_j\wt{\Delta_j}v\Big)=\wt{\chi}_2\wt{\Delta}_l\Big(\sum_{k\geq l-10}\sum_{j\leq k+2}\wt{\Delta}_km_j\wt{\Delta}_jv\Big).
	\end{split}
	\end{equation*}
	Thanks to Bernstein, we apply the following type of control
	$$ \Big\|\sum_{k\geq l-10}\sum_{j\leq k+2}\wt{\Delta}_kG\cdot\wt{\Delta}_jH\Big\|_{L^r(\mathbb{R}^d)}\lesssim \sum_{k\geq l-10}\sum_{j\leq k+2}2^{-k}\|\nabla_x\left(\wt{\Delta}_kG\cdot\wt{\Delta}_jH\right)\|_{L^r(\mathbb{R}^d)}
	$$
	and obtain that
	\begin{equation}\label{highhigh}
	\begin{split}
	&\Big\|\wt{\chi}_2\wt{\Delta}_l\Big(\sum_{k\geq l-10}\sum_{j\leq k+2}\wt{\Delta}_km_j\wt{\Delta}_jv\Big)\Big\|_{L^r(\mathbb{R}^d)}\\
	\lesssim & \sum_{k\geq l-10}\sum_{j\leq k+2}2^{-k}\Big(\|\wt{\Delta}_k\nabla m_j\|_{L^{\infty}(\mathbb{R}^d)}\|\wt{\Delta}_jv\|_{L^r(\mathbb{R}^d)}+\|\wt{\Delta}_km_j\|_{L^{\infty}(\mathbb{R}^d)}\|\wt{\Delta}_j\nabla  v\|_{L^r(\mathbb{R}^d)}\Big).
	\end{split}
	\end{equation}
	Note that $v=\widetilde{\chi}_3v$ for some $\widetilde{\chi}_3\in C_c^{\infty}(\mathbb{R}^d)$, and we have from commutator estimate that 
	$$ \|\wt{\Delta}_jv\|_{L^r(\mathbb{R}^d)}\lesssim \|\wt{\chi}_3\wt{\Delta}_jv\|_{L^r(\mathbb{R}^d)}+2^{-j}\|u\|_{L^r(\mathcal{M})},
	$$
	$$ \|\wt{\Delta}_j\nabla v\|_{L^r(\mathbb{R}^d)}\lesssim 2^j\|\wt{\chi}_3\wt{\Delta}_jv\|_{L^r(\mathbb{R}^d)}+\|u\|_{L^r(\mathcal{M})}.
	$$
	Now from the pointwise estimate 
	$$ \|\wt{\Delta}_k\nabla m_j\|_{L^{\infty}(\mathbb{R}^d)}\lesssim 2^j\left(\|\wt{S}_{j}v\|_{L^{\infty}(\mathbb{R}^d)}^{p-1}+\|\wt{S}_{j-1}v\|_{L^{\infty}(\mathbb{R}^d)}^{p-1}\right)\lesssim 2^{j}\|u\|_{L^{\infty}(\mathcal{M})}^{p-1},
	$$
we have
	\begin{equation*}
	\begin{split}
	\eqref{highhigh}\lesssim &\|u\|_{L^{\infty}(\mathcal{M})}^{p-1}\sum_{k\geq l-10}\sum_{j\leq k+2}\Big(2^{-(k-j)}\|\wt{\chi}_3\wt{\Delta}_jv\|_{L^r(\mathbb{R}^d)}+2^{-k}\|u\|_{L^r(\mathcal{M})}\Big)\\
	\lesssim &\|u\|_{L^{\infty}(\mathcal{M})}^{p-1}\sum_{k\geq l-10}\sum_{j\leq k+2}\Big(2^{-(k-j)}\|\Delta_ju\|_{L^r(\mathcal{M})}+2^{-k}\|u\|_{L^r(\mathcal{M})}\Big)\\
	\lesssim &\|u\|_{L^{\infty}(\mathcal{M})}^{p-1}\sum_{k\geq l-10}2^{-k\sigma}\sum_{j\leq k+2}2^{-(k-j)(1-\sigma)}2^{j\sigma}\|\Delta_ju\|_{L^r(\mathcal{M})}\\+&l2^{-l}\|u\|_{L^{\infty}(\mathcal{M})}^{p-1}\|u\|_{L^r(\mathcal{M})}.
	\end{split}
	\end{equation*}
	Thus Young's convolution inequality on $l^2$ yields
	$$ \sum_{l\geq 0}2^{2l\sigma}\Big\|\wt{\chi}_2\wt{\Delta}_l\Big(\sum_{k\geq l-10}\sum_{j\leq k+2}\wt{\Delta}_km_j\wt{\Delta}_jv\Big)\Big\|_{L^r(\mathbb{R}^d)}^2\lesssim \|u\|_{B_{r,2}^{\sigma}(\mathcal{M})}^2\|u\|_{L^{\infty}(\mathcal{M})}^{p-1}.
	$$
	This completes the proof of Lemma \ref{nonlinear1}.
\end{proof}
We also need the following type of paraproduct estimate.
\begin{lemme}\label{nonlinear2} 
	We have
	$$ \|fg\|_{H^s(\mathcal{M})}\leq C_{s,\sigma_1,r}\|f\|_{H^s(\mathcal{M})}\|g\|_{B_{r,2}^{\sigma_1}(\mathcal{M})}
	$$	
	for any $0<s<\sigma_1<1$ and $r>\frac{d}{\sigma_1-s}$.
\end{lemme}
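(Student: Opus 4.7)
The plan is to run the argument of Lemma~\ref{nonlinear1} almost verbatim, replacing the pointwise superposition $F(v)$ by the ordinary product of two functions. Via a finite partition of unity subordinate to a covering of $\mathcal{M}$ by coordinate patches, I reduce the question to estimating $\chi_1\Delta_l(fg)$ in a single chart $\kappa:U\subset\R^d\to V\subset\mathcal{M}$ with $\chi_1\prec\chi_2\in C_c^\infty(V)$. Applying Proposition~\ref{BGT} to $\Delta_l=\psi(-2^{-2l}\Delta_{g_0})$ and running the same commutator manipulations that produced \eqref{reductionE} in Lemma~\ref{nonlinear1}, this reduces, up to $O(2^{-l})$ remainders, to the Euclidean estimate
\[
\sum_{|l'-l|\le\nu_0}\|\wt{\chi}_2\,\wt{\Delta}_{l'}(\phi\psi)\|_{L^2(\R^d)},
\]
where $\phi=\wt{\chi}_3\,\kappa^*f$ and $\psi=\wt{\chi}_3\,\kappa^*g$ are compactly supported extensions with $\wt{\chi}_3\equiv 1$ near the support of $\wt{\chi}_2$. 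From here the argument is purely Euclidean.

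Next, I apply Bony's decomposition $\phi\psi=T_\phi\psi+T_\psi\phi+R(\phi,\psi)$, with $T_\phi\psi=\sum_j\wt{S}_{j-2}\phi\,\wt{\Delta}_j\psi$ and $R(\phi,\psi)=\sum_{|j-k|\le2}\wt{\Delta}_j\phi\,\wt{\Delta}_k\psi$. Set $a_k:=2^{ks}\|\wt{\Delta}_k\phi\|_{L^2}$ and $c_k:=2^{k\sigma_1}\|\wt{\Delta}_k\psi\|_{L^r}$, both in $\ell^2$ with norms $\lesssim\|f\|_{H^s(\mathcal{M})}$ and $\lesssim\|g\|_{B_{r,2}^{\sigma_1}(\mathcal{M})}$. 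The hypothesis $r>d/(\sigma_1-s)$ together with $s>0$ gives $\sigma_1>d/r$, hence $B_{r,2}^{\sigma_1}(\R^d)\hookrightarrow L^\infty$, and $T_\psi\phi$ is controlled at once by almost-orthogonality since $\|\wt{S}_{j-2}\psi\|_{L^\infty}\lesssim\|g\|_{B_{r,2}^{\sigma_1}}$. For $T_\phi\psi$, defining $r_*$ by $\tfrac1{r_*}+\tfrac1r=\tfrac12$, H\"older and Bernstein give
\[
\|\wt{S}_{j-2}\phi\|_{L^{r_*}}\lesssim\sum_{k\le j-2}2^{k(d/r-s)}a_k\lesssim 2^{j\max(d/r-s,0)}\|f\|_{H^s},
\]
so that $2^{js}\|\wt{S}_{j-2}\phi\,\wt{\Delta}_j\psi\|_{L^2}\lesssim 2^{j(\max(d/r,s)-\sigma_1)}\,c_j\,\|f\|_{H^s}$, which lies in $\ell^2_j$ because $\sigma_1>\max(d/r,s)$. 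Finally, the resonant remainder is handled via $\|\wt{\Delta}_k\psi\|_{L^\infty}\lesssim 2^{kd/r}\|\wt{\Delta}_k\psi\|_{L^r}$: for $|j-k|\le 2$ one has $\|\wt{\Delta}_j\phi\,\wt{\Delta}_k\psi\|_{L^2}\lesssim 2^{k(d/r-\sigma_1-s)}a_kc_k$, and the Fourier-support constraint $l\le k+C$ together with Cauchy--Schwarz and the negative exponent $d/r-\sigma_1-s<0$ closes the estimate.

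The main obstacle is the paraproduct $T_\phi\psi$: since $f$ lies only in the $L^2$-based space $H^s$, there is no free Sobolev embedding $H^s\hookrightarrow L^{r_*}$ unless $s\ge d/r$, which is not assumed. The dyadic Bernstein bound above compensates by paying the factor $2^{j\max(d/r-s,0)}$, and it is precisely the sharp assumption $r>d/(\sigma_1-s)$, i.e.\ $\sigma_1-s>d/r$, that makes the residual exponent $\max(d/r,s)-\sigma_1$ strictly negative so that the geometric series in $j$ converges. The borderline case $d/r=s$ produces only a harmless logarithmic factor absorbable by a small $\epsilon$ in the exponent. Handling the $O(2^{-l})$ errors from Proposition~\ref{BGT}, the manifold-to-Euclidean transfer of Besov norms, and the degenerate case $r\le 2$ (which one reduces to $r=2$ by a trivial embedding on the compact manifold) are all routine.
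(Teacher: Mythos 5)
Your proposal is correct and follows essentially the same route as the paper: transfer to a Euclidean chart via Proposition~\ref{BGT}, Bony's paraproduct decomposition $\phi\psi=T_\phi\psi+T_\psi\phi+R(\phi,\psi)$, then term-by-term dyadic estimates, with the commutator errors $O(2^{-l})$ absorbed exactly as in Lemma~\ref{nonlinear1}. The only substantive local variation is in the low-high term $T_\phi\psi$, which is the crux (the paper labels it $T_v w$ and singles it out as ``a little different''): you apply H\"older with the conjugate exponent $r_*$ defined by $\tfrac1{r_*}+\tfrac1r=\tfrac12$ and a dyadic Bernstein bound on $\|\wt S_{j-2}\phi\|_{L^{r_*}}$, whereas the paper uses the $L^2\times L^\infty$ split $\|\wt S_{j-2}v\|_{L^2}\|\wt\Delta_j(\wt\chi_3 w)\|_{L^\infty}$ followed by Bernstein on the $L^\infty$ factor, so that only $\|f\|_{L^2}$ enters this term. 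Both close under the hypothesis $s+d/r<\sigma_1$; your version in fact closes under the marginally weaker condition $\max(d/r,s)<\sigma_1$, but that gain is immaterial here since the lemma is stated with the stronger assumption. Your treatment of the resonant block $R(\phi,\psi)$ via $\|\wt\Delta_k\psi\|_{L^\infty}\lesssim 2^{kd/r}\|\wt\Delta_k\psi\|_{L^r}$ also differs slightly in bookkeeping from the paper's, which just uses $\|g\|_{L^\infty}$ directly, but both are straightforward.

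One small caveat you flagged yourself: the exponent $r_*$ is only defined when $r\ge 2$. The Besov--Sobolev embedding $B_{r,2}^{\sigma_1}(\mathcal M)\hookrightarrow B_{2,2}^{\sigma_1-d(1/r-1/2)}(\mathcal M)$ reduces $r<2$ to $r=2$ while preserving the gap $\sigma_1'-s>d/2$, so this is indeed harmless, but it is worth being precise that the reduction trades $r$ against $\sigma_1$ rather than naively replacing $r$ by $2$; the paper's $L^2\times L^\infty$ argument sidesteps this altogether since it never introduces $r_*$.
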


\begin{proof}
	Applying \eqref{reductionE} by replacing $F(u)$ to $f\cdot g$, we have
	\begin{equation*}
	\begin{split}
	\|\kappa^*(\chi_1\Delta_l(fg))\|_{L^2(\mathbb{R}^d)}\leq & C\sum_{|l'-l|\leq \nu_0}\|\widetilde{\Delta}_{l'}(\kappa^*(\chi_2(fg)))\|_{L^2(\mathbb{R}^d)}+C2^{-l}\|fg\|_{L^2(\mathcal{M})}.
	\end{split}
	\end{equation*}
	Again, we denote by $\wt{\chi}_1=\chi_1\circ\kappa$,  $v=f\circ\kappa=\wt{\chi}_3v,$ and $ w=g\circ\kappa=\wt{\chi}_3w$ with $\wt{\chi}_3\in C_c^{\infty}(U)$. Now we write
	$$v\cdot w=T_vw+T_wv+R(v,w),
	$$
	with
	$$ T_vw=\sum_{j\geq 0}\wt{S}_{j-2}v\wt{\Delta}_jw,\quad T_wv=\sum_{j\geq 0}\wt{S}_{j-2}w\wt{\Delta}_jv,
	\text{ and }
 R(v,w)=\sum_{|j-k|\leq 2}\wt{\Delta}_jv\wt{\Delta}_kw.
	$$
We estimate
	\begin{equation*}\label{3}
	\begin{split}
	\|\wt{\chi}_1\wt{\Delta}_l(T_wv)\|_{L^2(\mathbb{R}^d)}\leq &\Big\|\wt{\Delta}_l\Big(\sum_{|j-l|\leq 2}\wt{S}_{j-2}w\wt{\Delta}_jv\Big)\Big\|_{L^2(\mathbb{R}^d)}\\
	\leq &\sum_{|j-l|\leq 2}\|\wt{S}_{j-2}w\|_{L^{\infty}(\mathbb{R}^d)}\|\wt{\Delta}_j(\wt{\chi}_3v)\|_{L^2(\mathbb{R}^d)}\\
	\lesssim & \|g\|_{L^{\infty}(\mathcal{M})}\sum_{|j-l|\leq 2}\Big(\|\Delta_j f\|_{L^2(\mathcal{M})}+2^{-l}\|f\|_{L^2(\mathcal{M})}\Big),
	\end{split}
	\end{equation*}
	where in the last inequality, we have used the $\|[\wt{\Delta}_j,\wt{\chi}_3]\|_{L^2\rightarrow L^2}\leq C2^{-j}$ and Proposition \ref{BGT} as in the proof of Lemma \ref{nonlinear1}. Therefore, from the embedding $B_{r,2}^{\sigma_1}\hookrightarrow L^{\infty}$, we have 
	$$ \|\wt{\chi}_1(T_{w}v)\|_{H^{s}(\mathbb{R}^d)}\lesssim \|g\|_{L^{\infty}(\mathcal{M})}\|f\|_{H^s(\mathcal{M})}\lesssim \|g\|_{B_{r,2}^{\sigma_1}(\mathcal{M})}\|f\|_{H^s(\mathcal{M})}.
	$$
	Similarly,
	\begin{equation*}
	\begin{split}
	2^{ls}\|\wt{\chi}_1\wt{\Delta}_lR(v,w))\|_{L^2(\mathbb{R}^d)}\leq &2^{ls}\Big\|\wt{\Delta}_l\Big(\sum_{|j-k|\leq 2,j\geq l-10}\wt{\Delta}_jv\wt{\Delta}_kw\Big)\Big\|_{L^2(\mathbb{R}^d)}\\
	\lesssim &2^{ls}\sum_{|j-k|\leq 2,j\geq l-10}\|\wt{\Delta}_kw\|_{L^{\infty}(\mathbb{R}^d)}\|\wt{\Delta}_j(\wt{\chi}_3v)\|_{L^2(\mathbb{R}^d)}\\
	\lesssim &2^{ls}\|g\|_{L^{\infty}(\mathcal{M})}\sum_{j\geq l-10}\left(\|\Delta_jf\|_{L^2(\mathcal{M})}+2^{-j}\|f\|_{L^2(\mathcal{M})}\right)\\
	\lesssim &\|g\|_{L^{\infty}(\mathcal{M})}\Big(\sum_{j\geq l-10}2^{js}\|\Delta_jf\|_{L^2(\mathcal{M})}\cdot 2^{-(j-l)s}+2^{-(1-s)l}\|f\|_{L^2(\mathcal{M})}\Big).
	\end{split}
	\end{equation*}
	Young's convolution inequality gives
	$$\|\wt{\chi}_1R(v,w)\|_{H^{s}(\mathbb{R}^d)}\leq C\|g\|_{L^{\infty}(\mathcal{M})}\|f\|_{H^s(\mathcal{M})}. 
	$$
	The treatement for the term $T_vw$ is a little different, since we still need put $L^2$ norm on $f$. We estimate
	\begin{equation*}
	\begin{split}
	2^{ls}\|\wt{\chi}_1\wt{\Delta}_l(T_vw)\|_{L^2(\mathbb{R}^d)}\leq &C2^{ls}\sum_{|j-l|\leq 2}\|\wt{S}_{j-2}v\|_{L^2(\mathbb{R}^d)}\|\wt{\Delta}_j(\wt{\chi}_3w)\|_{L^{\infty}(\mathbb{R}^d)}\\
	\leq &C2^{ls}\|f\|_{L^2(\mathcal{M})}\sum_{|j-l|\leq 2}\|\wt{\Delta}_j(\wt{\chi}_3w)\|_{L^{\infty}(\mathbb{R}^d)}\\
	\leq &C2^{ls}\|f\|_{L^2(\mathcal{M})}\sum_{|j-l|\leq 2}2^{\frac{jd}{r}}\|\wt{\Delta}_j(\wt{\chi}_3w)\|_{L^{r}(\mathbb{R}^d)},
	\end{split}
	\end{equation*}
	where we have used Bernstein in the last inequality. Thanks to $s+\frac{d}{r}< \sigma_1<1$, we can bound the right hand side by
	$$ C2^{l\left(s+\frac{d}{r}\right)}\|f\|_{L^2(\mathcal{M})}\sum_{|j-l|\leq 2}\|\Delta_jg\|_{L^r(\mathcal{M})}+C2^{-l(1-\sigma_1)}\|f\|_{L^2(\mathcal{M})}\|g\|_{L^r(\mathcal{M})}.
	$$
	Finally, we complete the proof of Lemma \ref{nonlinear2} by taking the $l^2$ norm of the above quantity. 
\end{proof}
Thanks to the established estimates, the proof of Theorem~\ref{thmA}  can be done exactly as we did in the proof of Theorem~\ref{thm3}.

\end{document}